\documentclass[aap]{imsart}

\RequirePackage{amsthm,amsmath,amsfonts,amssymb}
\RequirePackage[numbers]{natbib}
\RequirePackage[colorlinks,citecolor=blue,urlcolor=blue]{hyperref}
\RequirePackage{graphicx}
\RequirePackage{enumitem}
\RequirePackage{tikz}

\startlocaldefs
\makeatletter\def\journal@name{}\makeatother
\numberwithin{equation}{section}
\theoremstyle{plain}
\newtheorem{theorem}{Theorem}[section]
\newtheorem{proposition}[theorem]{Proposition}
\newtheorem{lemma}[theorem]{Lemma}
\newtheorem{corollary}[theorem]{Corollary}
\theoremstyle{definition}
\newtheorem{definition}[theorem]{Definition}
\newtheorem{example}[theorem]{Example}
\newtheorem{remark}[theorem]{Remark}

\newcommand{\R}{\mathbb{R}}

\newcommand{\E}{\mathbb{E}}
\renewcommand{\P}{\mathbb{P}}
\newcommand{\ip}[2]{\langle #1,#2\rangle}
\newcommand{\cZ}{\mathcal{Z}}

\newcommand{\cH}{\mathcal{H}}
\newcommand{\cL}{\mathcal{L}}

\newcommand{\cE}{\mathcal{E}}
\newcommand{\Wt}{\mathsf{W}_2}
\newcommand{\TV}{d_{\mathrm{TV}}}
\newcommand{\eps}{\varepsilon}
\newcommand{\one}{\mathbf{1}}
\DeclareMathOperator{\haus}{haus}

\DeclareMathOperator{\tr}{tr}
\DeclareMathOperator{\Law}{Law}
\DeclareMathOperator{\diam}{diam}
\DeclareMathOperator{\aff}{aff}
\DeclareMathOperator{\Leb}{Leb}

\DeclareMathOperator{\osc}{osc}
\endlocaldefs

\begin{document}

\begin{frontmatter}
\title{Zero temperature versus long time for diffusions driven by metric projections onto convex sets}
\runtitle{Diffusions driven by projections onto convex sets}

\begin{aug}
\author[A]{\fnms{Narayana}~\snm{Darapaneni}\ead[label=e1]{narayana.darapaneni@northwestern.edu}}
\address[A]{Northwestern University SPS\printead[presep={,\ }]{e1}}
\runauthor{N. Darapaneni}
\end{aug}

\begin{abstract}
For a closed subset $C$ of Euclidean space, we study the diffusion whose drift pulls the state towards its nearest point in $C$, with noise intensity $T$. We let $T$ tend to zero and time tend to infinity. Convexity of $C$ is equivalent to a pathwise bound of the distance from $C$ by the norm of the process of Ornstein and Uhlenbeck. For arbitrary closed sets the bound holds in law. For compact convex $C$, we describe the invariant law by the curvature measures and the intrinsic volumes of $C$. The limit points of the law of the state form the orbit of reflected Brownian motion in $C$ started from the projection of the initial law. So the two limits commute only when this projection is the uniform law on $C$. Unless $C$ is a point, the mixing time is of order $1/T$ up to a logarithm. For the control problem the two limits of the value function commute for every compact convex set and every bounded Lipschitz terminal cost. A companion paper treats finite disjoint unions of convex sets.
\end{abstract}

\begin{keyword}[class=MSC]
\kwdgroup[type=primary]{\kwd{60J60}
\kwd{60H10}}
\kwdgroup[type=secondary]{\kwd{60F10}
\kwd{60J70}
\kwd{52A39}
\kwd{35Q84}
\kwd{93E20}}
\end{keyword}

\begin{keyword}
\kwd{Small noise}
\kwd{mixing time}
\kwd{metric projection}
\kwd{reflected Brownian motion}
\kwd{penalisation}
\kwd{intrinsic volumes}
\kwd{Ornstein and Uhlenbeck process}
\end{keyword}

\end{frontmatter}

\tableofcontents

\section{Introduction}\label{sec:intro}

\subsection{The model}
Let $C$ be a nonempty closed subset of $\R^d$. We write
\[
d_C(x)=\min_{c\in C}|x-c| ,\qquad V_C(x)=\tfrac12\,d_C(x)^2 ,
\]
and we denote by $P_Cx$ a nearest point of $x$ in $C$. The nearest point is unique for almost every $x$, and for every $x$ when $C$ is convex. This paper is about the diffusion
\begin{equation}\label{eq:SDEintro}
dX_t=-(X_t-P_CX_t)\,dt+\sqrt{2T}\,dW_t ,
\end{equation}
where $W$ is a standard Brownian motion in $\R^d$ and $T>0$ is the intensity of the noise. We call $T$ the temperature and $X_t$ the state. The drift moves each state along the straight segment to its metric projection on $C$. We have $\nabla V_C(x)=x-P_Cx$ at every point $x$ where the nearest point is unique. So \eqref{eq:SDEintro} is a gradient diffusion with potential $V_C$. The law $\rho_{T,t}$ of $X_t$ solves the forward Kolmogorov equation
\begin{equation}\label{eq:FP}
\partial_t\rho=\nabla\cdot(\rho\,\nabla V_C)+T\,\Delta\rho .
\end{equation}
When $C$ is compact, the invariant law is the Gibbs state
\[
\gamma_T^C(dx)=\cZ_T(C)^{-1}e^{-d_C(x)^2/2T}\,dx ,
\]
where $\cZ_T(C)$ is the normalising constant.

The diffusion \eqref{eq:SDEintro} appears in several places. After the change of time $s=tT$ it is the penalised equation used to construct reflected diffusions in convex domains \cite{Menaldi1983,LionsSznitman1984}. The function $d_C^2/2T$ is the Moreau envelope, with parameter $T$, of the function which is $0$ on $C$ and $+\infty$ outside $C$. For this reason \eqref{eq:SDEintro}, in the time $s$, is the continuous time limit of the proximal Langevin algorithm of \cite{Brosse2017} for sampling from the uniform law on a convex set. The projected algorithm of \cite{BubeckEldanLehec2018} is a discretisation of the reflected Brownian motion, which appears below as a limit of \eqref{eq:SDEintro}. One can also read \eqref{eq:SDEintro} as a simple model for the internal state of a system perturbed by noise. The drift pulls this state towards the set $C$ of states that are feasible in a given context. Models with a latent state which evolves in time are common in machine learning, see for example \cite{HaSchmidhuber2018,Song2021}.

There are two parameters, the time $t$ and the temperature $T$. We ask how the law of $X_t$ behaves when $t\to\infty$ and $T\to0$ together. The simplest form of the question is whether the two limits commute:
\begin{equation}\label{eq:question}
\lim_{T\to0}\ \lim_{t\to\infty}\rho_{T,t}\;\overset{?}{=}\;\lim_{t\to\infty}\ \lim_{T\to0}\rho_{T,t} .
\end{equation}
When the potential is smooth and has one nondegenerate minimum, both sides are the Dirac mass at the minimum. The potential $V_C$ is different in two respects. First, $V_C$ vanishes on $C$, so its set of minima is the whole set $C$. Thus the wells are flat. At zero temperature the selection is decided by geometry and not by energy. Second, $V_C$ is of class $C^{1,1}$ when $C$ is convex, and in general not better. For a set which is not convex, the drift of \eqref{eq:SDEintro} is discontinuous across the set of points which have more than one nearest point. The classical tools for small noise are the Laplace method for Gibbs measures \cite{Hwang1980} and the theory of \cite{FreidlinWentzell2012}, which is written for Lipschitz coefficients and for domains with smooth boundary. When $C$ has nonempty interior, the zero temperature limit $u_C$ of the Gibbs state can also be obtained by the Laplace method. These tools do not describe the joint behaviour in $t$ and $T$ inside a flat well. For a set which is not convex they do not apply directly, because the drift is not continuous.

One more process appears throughout the paper. We denote by $\zeta$ the process of Ornstein and Uhlenbeck (OU process for short) which solves $d\zeta=-\zeta\,dt+\sqrt{2T}\,dW$, $\zeta_0=0$, and which is driven by the same Brownian motion as the state.

\subsection{Results}
We describe the results in four items, (A) to (D). Most of them are about compact convex sets.

\medskip\noindent\textbf{(A) Envelopes.} Let $C(t)$, $t\ge0$, be closed convex sets in a separable Hilbert space such that the Hausdorff distance of $C(s)$ and $C(t)$ is at most $v|t-s|$. Consider the state equation \eqref{eq:SDEintro} with $C(t)$ in place of $C$. Theorem \ref{thm:envelope} states that every path of the state satisfies
\begin{equation}\label{eq:introenv}
d_{C(t)}(X_t)\le e^{-t}d_{C(0)}(X_0)+v(1-e^{-t})+|\zeta_t|\qquad(t\ge0).
\end{equation}
The right side does not depend on the shape of the sets. In $\R^d$ convexity is also necessary. Indeed, if $C$ is closed and not convex, then \eqref{eq:introenv} fails with positive probability (Theorem \ref{thm:converse}). The closed convex subsets of $\R^d$ are exactly the Chebyshev sets \cite[Chapter 12]{Deutsch2001}, that is, the sets in which every point of the space has exactly one nearest point. So the pathwise envelope characterises the sets for which the metric projection is single valued.

For an arbitrary closed set a weaker statement remains true (Theorem \ref{thm:compare}). The distance $d_C(X_t)$ is dominated, path by path, by a process which has the law of the norm of an OU process started at distance $d_C(X_0)$ from the origin. Hence, in $\R^d$, a state which starts in $C$ stays in $\{d_C<r\}$ up to time $\exp(c/T)$ with probability tending to one, for every $c<r^2/2$ (Corollaries \ref{cor:viability} and \ref{cor:viabilityclosed}). The dominating process is driven by a Brownian motion which depends on $C$, so this comparison is weaker than \eqref{eq:introenv}.

\medskip\noindent\textbf{(B) The Gibbs state.} For a compact convex set $C$, Theorem \ref{thm:gibbs} gives the exact structure of $\gamma_T^C$. Let $x$ be a random point with law $\gamma_T^C$. Then the law of the pair $(P_Cx,d_C(x))$ is a mixture over $j=0,\dots,k$, where $k$ is the dimension of $C$. The term of index $j$ is the product of the normalised curvature measure of order $j$ of $C$ and the law of $\sqrt T$ times a chi variable with $d-j$ degrees of freedom. The weights are proportional to $(2\pi T)^{(d-j)/2}V_j(C)$, where $V_j$ are the intrinsic volumes. In particular $\cZ_T(C)=\sum_j(2\pi T)^{(d-j)/2}V_j(C)$. As $T\to0$, the Gibbs state $\gamma_T^C$ converges to the uniform probability $u_C$ on $C$.

\medskip\noindent\textbf{(C) Convex sets: the limit diagram and the mixing time.} Here $C$ is compact and convex. The initial law $\rho_0$ has a finite second moment. Theorem \ref{thm:diagram} shows that, when $T\to0$ and $t\to\infty$ with $tT\to s\in[0,\infty]$,
\begin{equation}\label{eq:introdiagram}
\rho_{T,t}\ \longrightarrow\ \mu_s:=\begin{cases}(P_C)_\#\rho_0,& s=0,\\[2pt] \Law(R_s),&0<s<\infty,\\[2pt] u_C,& s=\infty,\end{cases}
\end{equation}
where $R$ is the reflected Brownian motion in $C$ started from the law $(P_C)_\#\rho_0$. Thus the left side of \eqref{eq:question} is $u_C$ and the right side is $(P_C)_\#\rho_0$. Every other limit point lies on the curve $s\mapsto\mu_s$ which joins them. The two limits commute if and only if $(P_C)_\#\rho_0=u_C$ (Corollary \ref{cor:commute}).

Theorem \ref{thm:tvmix} gives convergence in total variation. There is a constant $\alpha_d\in(0,1)$, depending only on the dimension, with the following property. Uniformly over starting points at bounded distance from $C$, the total variation distance of the law of $X_t$ from $\gamma_T^C$ is at most $(1-\alpha_d)^n$, up to an exponentially small term, as soon as $t\ge nA\log(1/T)/T$. Here $A$ is a constant which does not depend on $T$, and $T$ is small. By the mixing time we mean the first time $t$ at which the law of $X_t$ is within total variation distance $\frac14$ of $\gamma_T^C$ for every starting point in $C$. Together with \eqref{eq:introdiagram}, the estimate above shows that, when $C$ is not a point, the mixing time is of order $1/T$ up to a logarithm (Corollary \ref{cor:mixtime}).

\medskip\noindent\textbf{(D) The control problem.} Suppose that we may add a drift $a$ to \eqref{eq:SDEintro} at the running cost $\frac12|a|^2$, and that we pay a terminal cost $g$ at the time $t$. Let $v_T(t,x)$ be the least expected cost of steering the state from $x$ over the horizon $t$. By the logarithmic transformation, $v_T(t,x)=-2T\log\E\exp(-g(X^x_t)/2T)$, where $X^x$ is the solution of \eqref{eq:SDEintro} started at $x$. In Section \ref{sec:hjb} we take this formula as the definition of $v_T$ for every compact set $C$. Theorem \ref{thm:hjb} shows that for compact convex $C$
\[
\lim_{T\to0}\lim_{t\to\infty}v_T(t,x)=\lim_{t\to\infty}\lim_{T\to0}v_T(t,x)=\min_{y\in\R^d}\{g(y)+d_C(y)^2\}.
\]
So the two limits commute for every bounded Lipschitz function $g$, although by (C) they do not commute in general for the law of the state. The equality $\lim_{T\to0}\lim_{t\to\infty}v_T(t,x)=\min_y\{g(y)+d_C(y)^2\}$ holds for every compact set $C$ (Theorem \ref{thm:hjb}(a)).

\medskip\noindent\textbf{The companion paper.} In \cite{DarapaneniMeta} we treat the case in which $C$ is the union of finitely many pairwise disjoint compact convex sets $C_1,\dots,C_m$. Let $\delta_i$ be the distance from $C_i$ to the nearest other component. Let $B_i$ be the set of points which are nearer to $C_i$ than to the other components. A state which starts in $B_i$ leaves $B_i$ at a time $\exp((\delta_i^2/8+o(1))/T)$, with probability tending to one. Before the smallest of these times the weight of each set $B_i$ is frozen. For two and three components the law of the state is close to equilibrium after a time $\exp(c/T)$ with $c>\max_i\delta_i^2/8$. For the control problem the two limits commute within this class exactly when $C$ is convex. The proofs in \cite{DarapaneniMeta} use the following tools of the present paper: the envelope (Theorem \ref{thm:envelope}), the comparison theorem (Theorem \ref{thm:compare}), the maximal inequality (Proposition \ref{prop:oumax}), the limit diagram (Theorem \ref{thm:diagram}) and the lower bound for the transition density (Lemma \ref{lem:density}).

\subsection{Methods and relation to earlier work}
\emph{Penalisation.} The middle line of \eqref{eq:introdiagram} is the classical convergence of the penalised equation to the reflected one \cite{Menaldi1983,LionsSznitman1984,Pettersson1997,Slominski2013}, read in another time scale. Reflected Brownian motion in convex sets is constructed in \cite{Tanaka1979}, see also \cite{Cepa1998}. The rate $T^{1/4}$ which we obtain agrees, up to a logarithmic factor, with the order of the strong rate of \cite{Slominski2013} for general convex sets. Our proof is different. It uses the envelope \eqref{eq:introenv} to control the distance to $C$. As far as we know, the description of all joint limits and the commutation criterion in (C) are new. The total variation estimate gives the order $1/T$ of the mixing time, up to a logarithm. Bounds in total variation for a discretisation of \eqref{eq:SDEintro} in the time $s$ are given in \cite{Brosse2017}. Our estimate uses a lower bound for the transition density at the scale $\sqrt T$ and the Poincar\'e inequality for logarithmically concave measures \cite{Bobkov1999}, see also \cite{KLS1995}.

\emph{Metastability.} We do not study here the passage of the state between several wells. For smooth potentials this passage is described in \cite{FreidlinWentzell2012,BEGK2004,BovierdenHollander2015}. For wells that are disjoint compact convex sets it is the subject of \cite{DarapaneniMeta}.

\emph{Convex geometry.} After a change of variable, the formula for $\cZ_T(C)$ is the integral representation of the Wills functional \cite{Hadwiger1975}. In \cite{Lotz2020} the same Gaussian weight is used to study the intrinsic volumes, and the law of $d_C(x)$ alone is closely related to the distance integrals studied there. Theorem \ref{thm:gibbs} gives in addition the joint law of $d_C(x)$ and $P_Cx$. The refinement in Theorem \ref{thm:gibbs} follows from the local Steiner formula \cite{Federer1959,Schneider2014}.

\emph{Moving sets and control.} Moving convex sets in Hilbert space are the subject of the sweeping process \cite{Moreau1977}. The equation \eqref{eq:SDEintro} without noise and with a moving set is a regularisation of this process. Whether a state stays near a prescribed set is a question of viability theory \cite{Aubin1991}. The link between \eqref{eq:FP} and the control problem through a logarithmic change of the unknown goes back to \cite{Fleming1978}. See \cite[Chapter VI]{FlemingSoner2006} for a complete account, and also \cite{BoueDupuis1998,EvansIshii1985}. The gradient flow structure of \eqref{eq:FP} is described in \cite{JKO1998,AGS2008}. Simulated annealing, in which the temperature decreases along a schedule, is treated in \cite{GemanHwang1986}.

\subsection{Organisation}
Section \ref{sec:prelim} contains the notation and the basic facts on projections, on the state equation and on the Gibbs state. We prove the envelopes in Section \ref{sec:envelope}. Section \ref{sec:gibbs} is about the Gibbs state, Section \ref{sec:diagram} about compact convex sets and Section \ref{sec:hjb} about the control problem. Section \ref{sec:open} lists some open questions.

\section{Preliminaries}\label{sec:prelim}

\subsection{Metric projections}
Let $H$ be a real separable Hilbert space with inner product $\ip{\cdot}{\cdot}$ and norm $|\cdot|$. For a nonempty closed set $C\subset H$ we put $d_C(x)=\inf\{|x-c|:c\in C\}$ and $V_C=\frac12d_C^2$. If $C$ is convex, then every $x\in H$ has a unique nearest point $P_Cx$ in $C$. We now recall some standard facts. They hold for every nonempty closed convex set $C$ and all $x,y\in H$. Proofs are given in \cite[Chapters 3, 4 and 12]{BauschkeCombettes2017}.
\begin{enumerate}[label=\textup{(P\arabic*)},leftmargin=3em]
\item\label{P1} $\ip{x-P_Cx}{c-P_Cx}\le0$ for every $c\in C$. The point $P_Cx$ is the only point of $C$ with this property.
\item\label{P2} $\ip{(x-P_Cx)-(y-P_Cy)}{x-y}\ge|(x-P_Cx)-(y-P_Cy)|^2$. Hence the map $x\mapsto x-P_Cx$ is monotone and Lipschitz with constant $1$. A map which is Lipschitz with constant $1$ is called nonexpansive. The map $P_C$ is also nonexpansive.
\item\label{P3} The function $d_C$ is convex and nonexpansive.
\item\label{P4} The function $V_C$ is Fr\'echet differentiable and $\nabla V_C(x)=x-P_Cx$.
\item\label{P5} For $\lambda\in[0,1]$ we have $P_C\big(P_Cx+\lambda(x-P_Cx)\big)=P_Cx$.
\end{enumerate}
The Hausdorff distance of two nonempty closed sets is
\[
\haus(C,C')=\max\Big\{\sup_{c\in C}d_{C'}(c),\ \sup_{c'\in C'}d_C(c')\Big\}\in[0,\infty].
\]
We shall use the inequality
\begin{equation}\label{eq:dhaus}
|d_C(x)-d_{C'}(x)|\le\haus(C,C')\qquad(x\in H).
\end{equation}
It follows from the triangle inequality. Indeed, let $c\in C$ and $c'\in C'$. Then
\[
d_{C'}(x)\le|x-c'|\le|x-c|+|c-c'| .
\]
We take the infimum over $c'\in C'$ and get
\[
d_{C'}(x)\le|x-c|+d_{C'}(c)\le|x-c|+\haus(C,C') .
\]
Now we take the infimum over $c\in C$ and get $d_{C'}(x)\le d_C(x)+\haus(C,C')$. In the same way $d_C(x)\le d_{C'}(x)+\haus(C,C')$. This proves \eqref{eq:dhaus}.

The next lemma compares the projections onto two convex sets. Estimates of this type go back to \cite{Moreau1977}. We include the proof, since we use the constant later.

\begin{lemma}\label{lem:twosets}
Let $C,C'\subset H$ be nonempty closed convex sets with $\haus(C,C')<\infty$. Then for every $x\in H$
\[
|P_Cx-P_{C'}x|^2\le\big(d_C(x)+d_{C'}(x)\big)\,\haus(C,C').
\]
\end{lemma}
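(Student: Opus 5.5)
Write $p=P_Cx$, $p'=P_{C'}x$ and $h=\haus(C,C')$. The plan is to use the variational characterisation \ref{P1} twice, once for each set, with a test point that is a near-nearest point of the \emph{other} projection. Fix $\eps>0$. Since $d_C(p')\le h$, there is $c\in C$ with $|p'-c|\le h+\eps$. Likewise there is $c'\in C'$ with $|p-c'|\le h+\eps$.

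By \ref{P1} for $C$ with the test point $c$,
\[
\ip{x-p}{c-p}\le0,\qquad\text{hence}\qquad \ip{x-p}{p'-p}\le\ip{x-p}{p'-c}\le d_C(x)(h+\eps).
\]
The last step uses Cauchy--Schwarz and $|x-p|=d_C(x)$. In the same way, \ref{P1} for $C'$ with the test point $c'$ gives
\[
\ip{x-p'}{p-p'}\le d_{C'}(x)(h+\eps).
\]

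Now add the two inequalities. The left sides combine as
\[
\ip{x-p}{p'-p}+\ip{x-p'}{p-p'}=\ip{(x-p)-(x-p')}{p'-p}=\ip{p'-p}{p'-p}=|p-p'|^2.
\]
Therefore $|p-p'|^2\le(d_C(x)+d_{C'}(x))(h+\eps)$, and letting $\eps\to0$ gives the claim.

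The only delicate point is that the Hausdorff distance is a supremum of infima, and nearest points need not be attained in a general Hilbert space. This is why $\eps$ is introduced above. Alternatively, one can note that $C$ is closed and convex, so $P_C p'$ exists and satisfies $|p'-P_Cp'|=d_C(p')\le h$; taking $c=P_Cp'$ and $c'=P_{C'}p$ then avoids $\eps$ entirely. The hypothesis $h<\infty$ is needed only to make the right side meaningful. Otherwise the argument is a direct two-line computation, and I do not expect a real obstacle.
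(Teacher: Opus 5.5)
Your proof is correct and is essentially the paper's argument: the paper also writes $|p-p'|^2=\ip{x-p'}{p-p'}+\ip{x-p}{p'-p}$ and bounds each term using \ref{P1} and the Schwarz inequality, with exactly your $\eps$-free choice of test points $c'=P_{C'}p$ and $c=P_Cp'$. As you note yourself, the $\eps$ is unnecessary, because projections onto nonempty closed convex sets always exist in a Hilbert space.
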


\begin{proof}
Put $h=\haus(C,C')$. Write $p=P_Cx$ and $p'=P_{C'}x$, so that
\[
|x-p|=d_C(x),\qquad|x-p'|=d_{C'}(x).
\]
Put $c'=P_{C'}p\in C'$ and $c=P_Cp'\in C$. Since $p\in C$ and $p'\in C'$, the definition of $h$ gives
\[
|p-c'|=d_{C'}(p)\le\sup_{a\in C}d_{C'}(a)\le h,\qquad |p'-c|=d_C(p')\le\sup_{a'\in C'}d_C(a')\le h.
\]
We write $p-p'=(x-p')-(x-p)$ in the first factor of $\ip{p-p'}{p-p'}$. This gives
\begin{align*}
|p-p'|^2&=\ip{(x-p')-(x-p)}{p-p'}\\
&=\ip{x-p'}{p-p'}-\ip{x-p}{p-p'}\\
&=\ip{x-p'}{p-p'}+\ip{x-p}{p'-p}.
\end{align*}
We estimate the two terms separately. In the first term we write $p-p'=(c'-p')+(p-c')$. Since $c'\in C'$ and $p'=P_{C'}x$, we have $\ip{x-p'}{c'-p'}\le0$ by \ref{P1} for the set $C'$. For the other part we use the Schwarz inequality. Thus
\begin{align*}
\ip{x-p'}{p-p'}&=\ip{x-p'}{c'-p'}+\ip{x-p'}{p-c'}\\
&\le0+|x-p'|\,|p-c'|\\
&\le d_{C'}(x)\,h.
\end{align*}
In the second term we write $p'-p=(c-p)+(p'-c)$. Since $c\in C$ and $p=P_Cx$, we have $\ip{x-p}{c-p}\le0$ by \ref{P1} for the set $C$. Thus
\begin{align*}
\ip{x-p}{p'-p}&=\ip{x-p}{c-p}+\ip{x-p}{p'-c}\\
&\le0+|x-p|\,|p'-c|\\
&\le d_C(x)\,h.
\end{align*}
We add the two estimates and obtain
\[
|p-p'|^2\le d_{C'}(x)\,h+d_C(x)\,h=\big(d_C(x)+d_{C'}(x)\big)\,h .
\]
This completes the proof.
\end{proof}

\begin{definition}\label{def:path}
A \emph{moving convex set of speed $v\ge0$} is a family $\{C(t)\}_{t\ge0}$ of nonempty closed convex subsets of $H$ such that
\[
\haus(C(s),C(t))\le v\,|t-s|\qquad(s,t\ge0).
\]
\end{definition}

We give two examples. Let $q\mapsto C_q$ be a map from a metric space of parameters into the closed convex sets with the Hausdorff distance. Suppose that this map is Lipschitz with constant $\ell$ and that $t\mapsto q(t)$ is Lipschitz with constant $\ell'$. Then $C(t)=C_{q(t)}$ is a moving convex set of speed $\ell\ell'$. Indeed, $\haus(C_{q(s)},C_{q(t)})$ is at most $\ell$ times the distance of $q(s)$ and $q(t)$. This distance is at most $\ell'|t-s|$. A constant family $C(t)=C$ has speed $0$.

\subsection{The state equation}
Let $\{C(t)\}$ be a moving convex set of speed $v$, let $x_0\in H$ and let $w:[0,\infty)\to H$ be continuous with $w(0)=0$. We consider the integral equation
\begin{equation}\label{eq:state}
x(t)=x_0-\int_0^t\big(x(u)-P_{C(u)}x(u)\big)\,du+w(t)\qquad(t\ge0),
\end{equation}
and together with it the linear equation
\begin{equation}\label{eq:ou}
\zeta(t)=-\int_0^t\zeta(u)\,du+w(t)\qquad(t\ge0).
\end{equation}

\begin{lemma}\label{lem:wellposed}
Equation \eqref{eq:state} has a unique continuous solution $x$. Equation \eqref{eq:ou} has the unique continuous solution
\begin{equation}\label{eq:ouformula}
\zeta(t)=w(t)-\int_0^te^{-(t-u)}w(u)\,du .
\end{equation}
\end{lemma}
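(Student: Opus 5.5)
We reduce \eqref{eq:state} to a fixed point problem with a Lipschitz, time-dependent vector field. Put $F(u,y)=y-P_{C(u)}y$. By \ref{P2}, $y\mapsto F(u,y)$ is nonexpansive for each fixed $u$. By Lemma \ref{lem:twosets},
\[
|P_{C(u)}y-P_{C(u')}y|^2\le\big(d_{C(u)}(y)+d_{C(u')}(y)\big)\,v|u-u'| .
\]
Also $d_{C(u)}(y)\le d_{C(0)}(y)+vu$ by \eqref{eq:dhaus}. So $u\mapsto F(u,y)$ is continuous, locally uniformly for $y$ in bounded sets. It follows that $u\mapsto F(u,x(u))$ is continuous, hence Bochner integrable on compacts, for every continuous path $x$. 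Thus the integral in \eqref{eq:state} makes sense.

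\textbf{Uniqueness.} Let $x,\tilde x$ be two continuous solutions. Then
\[
|x(t)-\tilde x(t)|\le\int_0^t|x(u)-\tilde x(u)|\,du ,
\]
since the noise $w$ cancels and $F(u,\cdot)$ is nonexpansive. Gronwall's lemma then gives $x=\tilde x$.

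\textbf{Existence.} Fix a horizon $S>0$ and use Picard iteration, or equivalently Banach's fixed point theorem, on $C([0,S];H)$. Define
\[
(\Phi x)(t)=x_0-\int_0^tF(u,x(u))\,du+w(t).
\]
Continuity of $\Phi x$ follows from the integrability above. The contraction property uses the weighted sup-norm $\sup_t e^{-2t}|x(t)|$, for which $\Phi$ has Lipschitz constant $1/2$. Alternatively, the iterates satisfy $\|x_{n+1}-x_n\|_t\le (t^n/n!)\,\|x_1-x_0\|_S$. The fixed point on $[0,S]$ is unique by the uniqueness step, so the fixed points for different $S$ agree on their common interval. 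This gives a global solution on $[0,\infty)$.

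\textbf{The linear equation.} Uniqueness for \eqref{eq:ou} follows from the same Gronwall argument, since it is the case $C(u)=\{0\}$: then $F(u,y)=y$. For existence we verify \eqref{eq:ouformula} directly. Set $I(t)=\int_0^te^{-(t-u)}w(u)\,du$, so that $\zeta=w-I$. Exchanging the order of integration gives
\[
\int_0^t I(r)\,dr=\int_0^t w(u)\big(1-e^{-(t-u)}\big)\,du=\int_0^t w(u)\,du-I(t).
\]
Hence
\[
\int_0^t\zeta(r)\,dr=\int_0^t w\,du-\int_0^t w\,du+I(t)=I(t)=w(t)-\zeta(t),
\]
which is exactly \eqref{eq:ou}. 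Continuity of $\zeta$ is clear from \eqref{eq:ouformula}.

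The only delicate point is the measurability and continuity of $u\mapsto P_{C(u)}x(u)$ when the set moves, and Lemma \ref{lem:twosets} handles it. Everything else is routine.
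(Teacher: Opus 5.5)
Your proof is correct and follows the paper's argument in substance. Both use joint continuity of $(u,y)\mapsto P_{C(u)}y$ via Lemma \ref{lem:twosets}, the Lipschitz constant $1$ from \ref{P2}, a Picard/Gronwall argument for existence and uniqueness, and direct verification of \eqref{eq:ouformula}. The differences are cosmetic: the paper substitutes $z=x-w$ to get a classical ODE $z'=F(u,z)$, cites the Lipschitz existence theorem, and checks \eqref{eq:ouformula} by differentiating $\int_0^te^{-(t-u)}w(u)\,du$. You instead run the contraction directly on the integral equation and use Fubini; this also yields the $t^n/n!$ Picard bound that the paper uses right after the lemma for adaptedness.
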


\begin{proof}
First we show that for $x,x'\in H$ and $u,u'\ge0$,
\begin{equation}\label{eq:jointcont}
|P_{C(u)}x-P_{C(u')}x'|\le|x-x'|+\big((d_{C(u)}(x')+d_{C(u')}(x'))\,v\,|u-u'|\big)^{1/2}.
\end{equation}
By the triangle inequality,
\[
|P_{C(u)}x-P_{C(u')}x'|\le|P_{C(u)}x-P_{C(u)}x'|+|P_{C(u)}x'-P_{C(u')}x'| .
\]
The first term on the right is at most $|x-x'|$, because $P_{C(u)}$ is nonexpansive by \ref{P2}. For the second term we apply Lemma \ref{lem:twosets} to the sets $C(u)$ and $C(u')$ and the point $x'$. Then we use Definition \ref{def:path}:
\begin{align*}
|P_{C(u)}x'-P_{C(u')}x'|^2&\le\big(d_{C(u)}(x')+d_{C(u')}(x')\big)\,\haus(C(u),C(u'))\\
&\le\big(d_{C(u)}(x')+d_{C(u')}(x')\big)\,v\,|u-u'| .
\end{align*}
This proves \eqref{eq:jointcont}. Now fix $(u',x')$. By \eqref{eq:dhaus} we have $d_{C(u)}(x')\le d_{C(u')}(x')+v\,|u-u'|$, so the right side of \eqref{eq:jointcont} is at most
\[
|x-x'|+\big((2d_{C(u')}(x')+v\,|u-u'|)\,v\,|u-u'|\big)^{1/2},
\]
which tends to zero as $(u,x)\to(u',x')$. Hence the map $(u,x)\mapsto P_{C(u)}x$ is continuous on $[0,\infty)\times H$.

Put
\[
F(u,z)=-\big(z+w(u)-P_{C(u)}(z+w(u))\big)\qquad(u\ge0,\ z\in H).
\]
Then $F$ is continuous on $[0,\infty)\times H$, because $w$ is continuous and $(u,x)\mapsto P_{C(u)}x$ is continuous. Since $y\mapsto y-P_{C(u)}y$ is nonexpansive by \ref{P2}, we have
\[
|F(u,z)-F(u,z')|\le|(z+w(u))-(z'+w(u))|=|z-z'| .
\]
By the existence and uniqueness theorem for ordinary differential equations with a Lipschitz right side, the equation $z'=F(u,z)$, $z(0)=x_0$, has a unique solution on $[0,\infty)$. In integral form,
\[
z(t)=x_0+\int_0^tF(u,z(u))\,du=x_0-\int_0^t\big(z(u)+w(u)-P_{C(u)}(z(u)+w(u))\big)\,du .
\]
Put $x=z+w$. Then the last identity reads $x(t)-w(t)=x_0-\int_0^t(x(u)-P_{C(u)}x(u))\,du$, which is \eqref{eq:state}. Conversely, let $x$ be a continuous solution of \eqref{eq:state} and put $z=x-w$. Then $z(t)=x_0+\int_0^tF(u,z(u))\,du$ and the integrand is continuous in $u$. So $z$ is continuously differentiable and solves $z'=F(u,z)$ with $z(0)=x_0$. Hence it is the unique solution of this problem. Thus $x=z+w$ is the unique continuous solution of \eqref{eq:state}.

We now turn to \eqref{eq:ou}. Let
\[
y(t)=\int_0^te^{-(t-u)}w(u)\,du=e^{-t}\int_0^te^{u}w(u)\,du .
\]
We differentiate the product and get
\[
y'(t)=-e^{-t}\int_0^te^{u}w(u)\,du+e^{-t}e^{t}w(t)=-y(t)+w(t),\qquad y(0)=0 .
\]
Let $\zeta=w-y$, which is the function in \eqref{eq:ouformula}. Then $y'=\zeta$, and so
\[
\int_0^t\zeta(u)\,du=\int_0^ty'(u)\,du=y(t)-y(0)=y(t)=w(t)-\zeta(t) .
\]
That is, $\zeta(t)=w(t)-\int_0^t\zeta(u)\,du$. So $\zeta$ solves \eqref{eq:ou}. To prove uniqueness, let $\zeta_1$ and $\zeta_2$ be two continuous solutions and put $\eta=\zeta_1-\zeta_2$. Then $\eta(t)=-\int_0^t\eta(u)\,du$. So $\eta$ is continuously differentiable, $\eta'=-\eta$ and $\eta(0)=0$. Hence
\[
\frac{d}{dt}\big(e^{t}\eta(t)\big)=e^{t}\big(\eta'(t)+\eta(t)\big)=0 ,
\]
so that $e^{t}\eta(t)=\eta(0)=0$ for all $t$. Thus $\zeta_1=\zeta_2$. This completes the proof.
\end{proof}

In the stochastic setting $W$ is a Wiener process in $H$ with covariance operator $Q$, where $Q$ is symmetric, nonnegative and of trace class. For these notions see \cite{DPZ2014}. We write $\tr Q$ for the trace of $Q$ and $\|Q\|$ for its operator norm. For $T>0$ and an initial state $X_0$ independent of $W$, the \emph{state process} $X=X^T$ and the \emph{envelope process} $\zeta=\zeta^T$ are the solutions of \eqref{eq:state} and \eqref{eq:ou} with $w=\sqrt{2T}\,W$. In differential form,
\begin{equation}\label{eq:SDE}
dX_t=-\big(X_t-P_{C(t)}X_t\big)\,dt+\sqrt{2T}\,dW_t,\qquad d\zeta_t=-\zeta_t\,dt+\sqrt{2T}\,dW_t,\quad\zeta_0=0 .
\end{equation}
Both processes are adapted to the filtration generated by $X_0$ and $W$. For $\zeta$ this is clear from \eqref{eq:ouformula}. For $X$ we use the Picard iterates $x^0=x_0+w$ and
\[
x^{k+1}(t)=x_0-\int_0^t\big(x^k(u)-P_{C(u)}x^k(u)\big)\,du+w(t)\qquad(k\ge0).
\]
The value $x^k(t)$ depends only on $x_0$ and on the values of $w$ on $[0,t]$. By \ref{P2}, $|x^{k+1}(t)-x^k(t)|\le\int_0^t|x^k(u)-x^{k-1}(u)|\,du$, for $k\ge1$. Since the bound below is trivial for $k=0$, induction on $k$ gives
\[
|x^{k+1}(t)-x^k(t)|\le\frac{t^k}{k!}\,\sup_{u\le S}|x^1(u)-x^0(u)|\qquad(0\le t\le S).
\]
So the iterates converge uniformly on every bounded interval. The limit is continuous and solves \eqref{eq:state}. Hence it is the solution $x$ of Lemma \ref{lem:wellposed}. Thus $X_t$ is a limit of variables which are measurable with respect to $X_0$ and $(W_u)_{u\le t}$.

Next we compute the law of $\zeta_t$. For the Wiener integral of the smooth function $u\mapsto e^{u}$, integration by parts gives
\[
\int_0^te^{u}\,dW_u=e^{t}W_t-\int_0^te^{u}W_u\,du .
\]
We multiply by $\sqrt{2T}\,e^{-t}$ and compare with \eqref{eq:ouformula} for $w=\sqrt{2T}\,W$:
\[
\sqrt{2T}\int_0^te^{-(t-u)}\,dW_u=\sqrt{2T}\,W_t-\int_0^te^{-(t-u)}\sqrt{2T}\,W_u\,du=\zeta_t .
\]
So $\zeta_t$ is a Wiener integral of a deterministic function. Hence $\zeta_t$ is a centred Gaussian variable. For $h,k\in H$ we have
\[
\E\,\ip{\zeta_t}{h}\ip{\zeta_t}{k}=2T\int_0^te^{-2(t-u)}\,du\ \ip{Qh}{k} .
\]
The substitution $r=t-u$ gives $\int_0^te^{-2(t-u)}\,du=\int_0^te^{-2r}\,dr=\frac12(1-e^{-2t})$. Thus the covariance operator of $\zeta_t$ is
\begin{equation}\label{eq:oucov}
2T\int_0^te^{-2(t-u)}du\,Q=T(1-e^{-2t})\,Q .
\end{equation}
Taking the trace we also get $\E|\zeta_t|^2=T(1-e^{-2t})\tr Q$. When $H=\R^d$ we always take $Q$ to be the identity. Then $W$ is a standard Brownian motion and $\zeta$ is the OU process of the Introduction.

\subsection{Sets which are not convex}\label{subsec:nonconvex}
Let $C\subset\R^d$ be nonempty and closed. The set of nearest points $\Pi_C(x)=\{c\in C:|x-c|=d_C(x)\}$ is nonempty and compact for every $x$. The function $V_C$ is locally Lipschitz and differentiable almost everywhere. At each point $x$ of differentiability the set $\Pi_C(x)$ has exactly one point $p$. At such a point $\nabla V_C(x)=x-p$. We fix once and for all a Borel map $P_C:\R^d\to C$ with $P_Cx\in\Pi_C(x)$ for all $x$. Such a map exists by the measurable selection theorem \cite[Chapter 14]{RockafellarWets1998}. When $C$ is convex this is the metric projection. The state equation in $\R^d$ is the equation \eqref{eq:SDEintro} of the Introduction, which we repeat for the closed set $C$:
\begin{equation}\label{eq:SDEgen}
dX_t=-\big(X_t-P_CX_t\big)\,dt+\sqrt{2T}\,dW_t .
\end{equation}
Its drift $b(x)=-(x-P_Cx)$ is Borel and has linear growth. To check the growth, note that $|b(x)|=|x-P_Cx|=d_C(x)$. For every $c\in C$ we have $d_C(x)\le|x-c|\le|x|+|c|$. Taking the infimum over $c\in C$ we get
\[
|b(x)|=d_C(x)\le|x|+d_C(0) .
\]
We recall three facts.
\begin{enumerate}[label=\textup{(S\arabic*)},leftmargin=3em]
\item\label{S1} For every initial condition independent of $W$, equation \eqref{eq:SDEgen} has a unique strong solution. The drift is bounded on every ball, so strong existence and pathwise uniqueness hold up to the exit time from each ball \cite{Veretennikov1981,KrylovRockner2005}. The linear growth of the drift together with the Gronwall lemma excludes explosion. To see this, fix $S>0$. As long as the solution is defined and $t\le S$, we have
\[
|X_t|\le|X_0|+\int_0^t\big(|X_u|+d_C(0)\big)\,du+\sqrt{2T}\,|W_t|\le a_S+\int_0^t|X_u|\,du ,
\]
where $a_S=|X_0|+d_C(0)\,S+\sqrt{2T}\,\sup_{u\le S}|W_u|$. The Gronwall lemma gives $|X_t|\le a_Se^{t}\le a_Se^{S}$, which is finite almost surely. It follows that the martingale problem for \eqref{eq:SDEgen} is well posed. The solutions started from the points of $\R^d$ then form a strong Markov family with measurable transition kernels \cite[Chapter 6]{StroockVaradhan1979}. Also, solutions driven by the same Brownian motion restart consistently at stopping times. This means that after a finite stopping time $\tau$ the solution is the solution which starts at $X_\tau$ and is driven by the Brownian motion $W_{\tau+t}-W_\tau$, $t\ge0$.
\item\label{S2} Let $t_0>0$ and $x\in\R^d$. Then the law on $C([0,t_0],\R^d)$ of the solution started at $x$ is equivalent to the law of $x+\sqrt{2T}\,W$. This follows from the Girsanov theorem, which gives a weak solution whose law has this property. Here the exponential density is a martingale because the drift has linear growth \cite[Section 3.5]{KaratzasShreve1991}. By \ref{S1} the solution is unique in law. It follows that for $t>0$ the law of $X_t$ is equivalent to the Lebesgue measure. Let $N$ be the set of points with more than one nearest point. It is contained in the set where $V_C$ is not differentiable, which is Lebesgue null. The set $N$ is Borel, because the diameter of the compact set $\Pi_C(x)$ is an upper semicontinuous function of $x$. So $\P(X_t\in N)=0$ for every $t>0$. By the Fubini theorem,
\[
\E\int_0^\infty\one_N(X_t)\,dt=\int_0^\infty\P(X_t\in N)\,dt=0 .
\]
Thus the time which $X$ spends in $N$ is zero almost surely. Two selections $P_C$ differ only on $N$, so they give the same drift integral $\int_0^t(X_u-P_CX_u)\,du$ along almost every path. Hence the solution does not depend on the choice of the selection $P_C$.
\item\label{S3} By the It\^o formula, $\rho_{T,t}=\Law(X_t)$ is a solution of the forward Kolmogorov equation \eqref{eq:FP} in the sense of distributions, with $\nabla V_C(x)=x-P_Cx$. Indeed, let $f$ be a smooth function with compact support. The It\^o formula for \eqref{eq:SDEgen} reads
\begin{align*}
f(X_t)=f(X_0)&+\int_0^t\Big(T\Delta f(X_u)-\ip{X_u-P_CX_u}{\nabla f(X_u)}\Big)\,du\\
&+\sqrt{2T}\int_0^t\ip{\nabla f(X_u)}{dW_u} .
\end{align*}
The stochastic integral has mean zero because $\nabla f$ is bounded. Taking expectations we get
\[
\int f\,d\rho_{T,t}=\int f\,d\rho_0+\int_0^t\int\Big(T\Delta f-\ip{\nabla V_C}{\nabla f}\Big)\,d\rho_{T,u}\,du ,
\]
where $\rho_0$ is the law of $X_0$. This is \eqref{eq:FP} tested against $f$.
\end{enumerate}
We denote by $P_t(x,\cdot)$ the transition probabilities of \eqref{eq:SDEgen}.

\subsection{The Gibbs state}
Let $C\subset\R^d$ be compact and $T>0$. Put $\|C\|=\max_{c\in C}|c|$. For every $c\in C$ we have $|x|-\|C\|\le|x-c|\le|x|+\|C\|$. This gives
\[
|x|-\|C\|\le d_C(x)\le|x|+\|C\|\qquad(x\in\R^d).
\]
If $|x|\ge2\|C\|$, then $d_C(x)\ge\frac12|x|$. If $|x|<2\|C\|$, then $d_C(x)\le3\|C\|$. Hence, for every $a>0$ and every $k\ge0$,
\begin{equation}\label{eq:xs1a}
\begin{aligned}
\int_{\R^d}\big(1+d_C(x)\big)^ke^{-a\,d_C(x)^2}\,dx&\le\int_{|x|<2\|C\|}(1+3\|C\|)^k\,dx\\
&\quad+\int_{\R^d}\big(1+\|C\|+|x|\big)^ke^{-a|x|^2/4}\,dx<\infty .
\end{aligned}
\end{equation}
With $k=0$ and $a=1/2T$ this shows that the number
\[
\cZ_T(C)=\int_{\R^d}e^{-d_C(x)^2/2T}\,dx
\]
is finite. We define the \emph{Gibbs state} of $C$ at temperature $T$ by
\[
\gamma_T^C(dx)=\cZ_T(C)^{-1}e^{-d_C(x)^2/2T}\,dx=\cZ_T(C)^{-1}e^{-V_C(x)/T}\,dx .
\]
A formal computation shows that the density of $\gamma_T^C$ is a stationary solution of \eqref{eq:FP}. Indeed, for $\rho=e^{-V_C/T}$ we have $T\nabla\rho=-\rho\,\nabla V_C$, so that
\[
\nabla\cdot(\rho\,\nabla V_C)+T\Delta\rho=\nabla\cdot\big(\rho\,\nabla V_C+T\nabla\rho\big)=0 .
\]
This computation is only formal, since $\nabla V_C$ may be discontinuous. So we give a proof that $\gamma_T^C$ is invariant for \eqref{eq:SDEgen} and that it attracts every initial law.

\begin{lemma}\label{lem:invariant}
Let $C\subset\R^d$ be compact and $T>0$. Then $\gamma_T^C$ is an invariant probability measure for \eqref{eq:SDEgen}. For every initial law $\rho_0$ and every Borel set $A\subset\R^d$,
\[
\rho_{T,t}(A)\longrightarrow\gamma_T^C(A)\qquad(t\to\infty).
\]
\end{lemma}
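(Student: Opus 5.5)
We prove invariance through the forward equation and convergence through a Lyapunov function together with positivity of the transition kernels. The drift is only Borel, so the computation above showing that $e^{-V_C/T}$ solves \eqref{eq:FP} cannot be used directly. We replace it by an integration by parts that needs only the local Lipschitz property of $V_C$.

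\emph{Step 1: the generator annihilates $\gamma_T^C$.} Let $L f=T\Delta f-\ip{\nabla V_C}{\nabla f}$ for smooth compactly supported $f$. The density $\rho=e^{-V_C/T}$ is locally Lipschitz, and $T\nabla\rho=-\rho\nabla V_C$ almost everywhere. Hence
\[
\int L f\,\rho\,dx=\int\big(-T\ip{\nabla f}{\nabla\rho}-\ip{\nabla V_C}{\nabla f}\rho\big)\,dx=0 .
\]
This is integration by parts for Lipschitz functions against a compactly supported test function.

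\emph{Step 2: from the generator to invariance.} Take $X_0\sim\gamma_T^C$ and put $\nu_t=\Law(X_t)$. By \ref{S3}, $\nu_t$ solves \eqref{eq:FP} weakly, and $\gamma_T^C$ is a stationary solution by Step 1. To conclude $\nu_t=\gamma_T^C$ we need uniqueness for the linear Fokker--Planck equation with bounded measurable drift on balls and nondegenerate constant diffusion. We would prove it by duality. One solves the backward Kolmogorov equation $\partial_u g=Lg$ with a smooth terminal function. This problem has a solution in $W^{1,2}_{p,\mathrm{loc}}$ for large $p$ by Krylov--R\"ockner type theory, and the solution equals $\E f(X^x_{t-u})$ by well-posedness of the martingale problem \ref{S1}. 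One then tests against $\nu_t-\gamma_T^C$ after cutting off at large radius, using the moment bound \eqref{eq:xs1a} and a second-moment bound for $X$.

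A cleaner alternative avoids the duality argument. Approximate $V_C$ by smooth potentials $V_n=V_C*\phi_n$. Their gradients converge almost everywhere and are locally uniformly bounded, and the Gibbs measures $\gamma_n\propto e^{-V_n/T}$ are invariant for the smooth diffusions. Stability of well-posed martingale problems under almost-everywhere convergence of drifts, valid when the limit laws are absolutely continuous (Krylov's estimate), then gives $\gamma_n P^n_t\to\gamma_T^C P_t$ and $\gamma_n\to\gamma_T^C$. We expect this passage to the limit, with only Borel control of the drift, to be the main obstacle. We would try the Krylov estimate route first.

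\emph{Step 3: convergence.} The drift satisfies $\ip{x}{-(x-P_Cx)}\le-|x|^2+\|C\||x|$. So $U(x)=|x|^2$ is a Lyapunov function, since $LU\le-cU+K$, and every compact set is petite. Irreducibility comes from \ref{S2}: for each $t>0$ the kernel $P_t(x,\cdot)$ is equivalent to Lebesgue measure. Moreover $x\mapsto P_t(x,A)$ is lower bounded uniformly on compacts, by the Girsanov density with drift bounded on the relevant ball, which gives a local Doeblin minorisation. The Meyn--Tweedie (Harris) theorem then yields total variation convergence $P_t(x,\cdot)\to\gamma_T^C$ for every $x$, uniqueness of the invariant law, and hence $\rho_{T,t}(A)=\int P_t(x,A)\rho_0(dx)\to\gamma_T^C(A)$ by dominated convergence.
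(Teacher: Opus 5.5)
Your Step 1 is correct, but it only gives infinitesimal invariance, $\int Lf\,d\gamma_T^C=0$. That is exactly the formal computation the paper makes just before the lemma and declares insufficient. The real content of the invariance claim is your Step 2, and there you do not give a proof. You list two routes (duality through $W^{1,2}_p$ solutions of the backward equation, and stability of martingale problems via Krylov's estimate), leave both unexecuted, and flag the passage to the limit as the main obstacle. So the proposal, as it stands, does not establish that $\gamma_T^C$ is invariant.

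Your Step 3 is a legitimate alternative to the paper's argument, but it rests on an unproved claim, the uniform local minorisation on compacts. For nonconvex $C$ the Girsanov density cannot be bounded below as in Lemma \ref{lem:density}, whose lower bound uses the convexity of $V_{C'}$. H\"older's inequality applied to that density only gives $P_t(x,A)\ge c\,\Leb(A)^p$ with $p>1$, and this still has to be converted into $\sup_{x,y\in K}\TV(P_t(x,\cdot),P_t(y,\cdot))<1$. The paper avoids this by using Doob's theorem. It needs only stochastic continuity, an invariant probability measure, and the mutual equivalence of all $P_t(x,\cdot)$, which \ref{S2} supplies. Your Harris route would give more (an exponential rate at fixed $T$) once the minorisation is proved.

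The idea that closes Step 2 is to compare path measures started from the stationary law of the smoothed process. It makes both Step 1 and Krylov's estimate unnecessary.

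\begin{itemize}
\item Write $V_C=\frac12|x|^2-\psi$ with $\psi$ convex and $\|C\|$-Lipschitz. Then a mollification $V_n$ of $V_C$ has bounded Hessian, so $dX=-\nabla V_n(X)\,dt+\sqrt{2T}\,dW$ is well posed and $\gamma_n\propto e^{-V_n/T}$ is invariant for it.
\item Let $\mathbf P^n$ be the law of this smooth diffusion on $C([0,t],\R^d)$ started from $\gamma_n$. Let $\widetilde{\mathbf P}^n$ be the law of \eqref{eq:SDEgen} started from $\gamma_n$.
\item By the Girsanov theorem, uniqueness in law from \ref{S1}, and stationarity of $\mathbf P^n$, the relative entropy of $\mathbf P^n$ with respect to $\widetilde{\mathbf P}^n$ is
\[
\frac1{4T}\,\E_{\mathbf P^n}\int_0^t|\nabla V_n-\nabla V_C|^2(X_u)\,du=\frac t{4T}\int|\nabla V_n-\nabla V_C|^2\,d\gamma_n\longrightarrow0 .
\]
\item The limit follows by dominated convergence. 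Indeed $\nabla V_n\to\nabla V_C$ almost everywhere, $|\nabla V_n|\le d_C+1$, and the densities of $\gamma_n$ are bounded by $Ke^{-d_C^2/4T}$ uniformly in $n$.
\item Pinsker's inequality then yields $\TV(\gamma_n,\gamma_nP_t)\to0$.
\item Together with $\gamma_n\to\gamma_T^C$ in total variation and $\TV(\gamma_nP_t,\gamma_T^CP_t)\le\TV(\gamma_n,\gamma_T^C)$, this gives $\gamma_T^CP_t=\gamma_T^C$.
\end{itemize}

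Stationarity supplies for free the control of occupation measures on null sets that you hoped to get from Krylov's estimate. The entropy bound also replaces the tightness-and-identification argument for a discontinuous drift by a one-line quantitative estimate.
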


\begin{proof}
We write $V=V_C$ and $\gamma=\gamma_T^C$. As above, $b(x)=-(x-P_Cx)$ is the drift of \eqref{eq:SDEgen}. Here $b=-\nabla V$ almost everywhere and $|b|=d_C$. We use the total variation distance $\TV(\mu,\nu)=\sup_A|\mu(A)-\nu(A)|$ of two probability measures $\mu$ and $\nu$, where $A$ runs over the Borel sets. For measures with densities $g$ and $g'$ it is equal to $\frac12\int|g-g'|\,dx$.

\emph{Step 1: smoothing.} Let $\theta$ be a smooth probability density supported in the unit ball, let $\theta_n(x)=n^d\theta(nx)$ and put
\[
V_n(x)=(V*\theta_n)(x)=\int_{\R^d}V(x-z)\,\theta_n(z)\,dz .
\]
The density $\theta_n$ is supported in the ball of radius $1/n$ and its integral is one. The function $V_n$ is smooth. Since $V$ is locally Lipschitz, we have $\nabla V_n=(\nabla V)*\theta_n$. Let $|z|\le1/n$. As $d_C$ is nonexpansive, we have $|d_C(x-z)-d_C(x)|\le\frac1n$ and $d_C(x-z)\le d_C(x)+\frac1n$. Hence
\begin{align*}
|V(x-z)-V(x)|&=\tfrac12\,\big|d_C(x-z)^2-d_C(x)^2\big|\\
&=\tfrac12\,|d_C(x-z)-d_C(x)|\,\big(d_C(x-z)+d_C(x)\big)\\
&\le\tfrac1{2n}\Big(2d_C(x)+\tfrac1n\Big)=\frac{d_C(x)}{n}+\frac1{2n^2}\le\frac{d_C(x)+1}{n}.
\end{align*}
We integrate against $\theta_n(z)\,dz$ and get
\begin{align*}
|V_n(x)-V(x)|&=\Big|\int\big(V(x-z)-V(x)\big)\theta_n(z)\,dz\Big|\\
&\le\int|V(x-z)-V(x)|\,\theta_n(z)\,dz\le\frac{d_C(x)+1}{n} .
\end{align*}
For the gradient we use $|\nabla V|=d_C$ almost everywhere:
\[
|\nabla V_n(x)|=\Big|\int\nabla V(x-z)\,\theta_n(z)\,dz\Big|\le\int d_C(x-z)\,\theta_n(z)\,dz\le d_C(x)+\frac1n .
\]
We have proved that
\begin{equation}\label{eq:Vn}
|V_n(x)-V(x)|\le\frac{d_C(x)+1}{n},\qquad|\nabla V_n(x)|\le d_C(x)+\frac1n .
\end{equation}
Next we show that $\nabla V_n\to\nabla V$ almost everywhere. Since $\theta_n\le n^d\sup\theta$, we have
\begin{align*}
|\nabla V_n(x)-\nabla V(x)|&\le\int|\nabla V(x-z)-\nabla V(x)|\,\theta_n(z)\,dz\\
&\le\sup\theta\cdot n^d\int_{|z|\le1/n}|\nabla V(x-z)-\nabla V(x)|\,dz .
\end{align*}
The function $\nabla V$ is locally bounded. By the Lebesgue differentiation theorem the right side tends to zero for almost every $x$.

Now we bound the densities. By \eqref{eq:Vn} and $n\ge1$,
\[
V_n\ge V-\frac{d_C+1}{n}\ge\tfrac12d_C^2-d_C-1 .
\]
For every real $r$ we have $\frac12r^2-r-1-\big(\frac14r^2-2\big)=\frac14r^2-r+1=\frac14(r-2)^2\ge0$. With $r=d_C(x)$ this gives $V_n\ge\frac14d_C^2-2$. Hence
\begin{equation}\label{eq:xs1b}
e^{-V_n/T}\le e^{2/T}e^{-d_C^2/4T}\qquad(n\ge1).
\end{equation}
The right side is integrable by \eqref{eq:xs1a}. Put $\cZ_n=\int e^{-V_n/T}\,dx$ and let
\[
\gamma_n(dx)=g_n(x)\,dx,\qquad g_n=\cZ_n^{-1}e^{-V_n/T},\qquad g=\cZ_T(C)^{-1}e^{-V/T} .
\]
So $g$ is the density of $\gamma$. By \eqref{eq:Vn}, $V_n\to V$ at every point. So $e^{-V_n/T}\to e^{-V/T}$ at every point as well. From \eqref{eq:xs1b} and dominated convergence we get
\[
\int\big|e^{-V_n/T}-e^{-V/T}\big|\,dx\longrightarrow0,\qquad\cZ_n\longrightarrow\cZ_T(C) .
\]
Now
\[
|g_n-g|\le\frac1{\cZ_n}\big|e^{-V_n/T}-e^{-V/T}\big|+e^{-V/T}\Big|\frac1{\cZ_n}-\frac1{\cZ_T(C)}\Big| ,
\]
while $\int e^{-V/T}\,dx=\cZ_T(C)$. Hence
\[
2\,\TV(\gamma_n,\gamma)=\int|g_n-g|\,dx\le\frac1{\cZ_n}\int\big|e^{-V_n/T}-e^{-V/T}\big|\,dx+\frac{|\cZ_T(C)-\cZ_n|}{\cZ_n}\longrightarrow0 .
\]
Thus the probability measures $\gamma_n$ converge to $\gamma$ in total variation. Since $\cZ_n>0$ for all $n$ and $\cZ_n\to\cZ_T(C)>0$, we have $\inf_n\cZ_n>0$. By \eqref{eq:xs1b} there is a constant $K$ such that
\begin{equation}\label{eq:xs1c}
g_n\le K\,e^{-d_C^2/4T}\qquad(n\ge1).
\end{equation}

\emph{Step 2: the smooth equation.} For $c\in C$ we have $\frac12|x-c|^2=\frac12|x|^2-\ip xc+\frac12|c|^2$. Taking the minimum over $c\in C$ we get
\[
V(x)=\tfrac12|x|^2-\psi(x),\qquad\psi(x)=\max_{c\in C}\big\{\ip xc-\tfrac12|c|^2\big\} .
\]
The function $\psi$ is convex, because it is a maximum of affine functions. It is Lipschitz with constant $\|C\|=\max_{c\in C}|c|$. Indeed, for every $c\in C$,
\[
\ip xc-\tfrac12|c|^2\le\ip yc-\tfrac12|c|^2+\|C\|\,|x-y|\le\psi(y)+\|C\|\,|x-y| ,
\]
and we take the maximum over $c$. So $|\nabla\psi|\le\|C\|$ almost everywhere. We have $V_n=\frac12|\cdot|^2*\theta_n-\psi*\theta_n$. The first term is
\[
\tfrac12\int|x-z|^2\theta_n(z)\,dz=\tfrac12|x|^2-\Big\langle x,\int z\,\theta_n(z)\,dz\Big\rangle+\tfrac12\int|z|^2\theta_n(z)\,dz ,
\]
and its matrix of second derivatives is the identity. For the second term we use $\partial_i\partial_j(\psi*\theta_n)=(\partial_i\psi)*(\partial_j\theta_n)$ and get
\[
|\partial_i\partial_j(\psi*\theta_n)|\le\|C\|\int|\partial_j\theta_n(z)|\,dz=\|C\|\,n\int|\partial_j\theta(z)|\,dz .
\]
Hence the second derivatives of $V_n$ are bounded and $\nabla V_n$ is globally Lipschitz. So the equation $dX^n=-\nabla V_n(X^n)dt+\sqrt{2T}dW$ has a unique global solution. The measure $\gamma_n$ is invariant for it. This is the classical case of a smooth potential \cite[Chapter 4]{Pavliotis2014}, \cite{BGL2014}.

Fix $t>0$. Let $\mathbf P^n$ be the law on $C([0,t],\R^d)$ of $X^n$ started from $\gamma_n$. Similarly, $\widetilde{\mathbf P}^n$ denotes the law of the solution of \eqref{eq:SDEgen} started from $\gamma_n$. Both are laws of diffusions with the same diffusion coefficient and with drifts $-\nabla V_n$ and $b$ of linear growth. We compare them by the Girsanov theorem. Let $X$ denote the coordinate process on $C([0,t],\R^d)$. Under $\mathbf P^n$ the process
\[
B_u=\frac1{\sqrt{2T}}\Big(X_u-X_0+\int_0^u\nabla V_n(X_r)\,dr\Big)\qquad(0\le u\le t)
\]
is a standard Brownian motion. Let $\beta_n=(2T)^{-1/2}(\nabla V_n+b)$, which is the difference of the two drifts divided by $\sqrt{2T}$, and put
\[
D_n=\exp\Big(\int_0^t\ip{\beta_n(X_u)}{dB_u}-\frac12\int_0^t|\beta_n(X_u)|^2\,du\Big) .
\]
By \eqref{eq:Vn} we have $|\beta_n|\le(2T)^{-1/2}(2d_C+1)$, so $\beta_n$ has linear growth. As in \ref{S2}, this gives $\E_{\mathbf P^n}D_n=1$. The Girsanov theorem says that under the probability measure $D_n\,d\mathbf P^n$ the process $\widetilde B_u=B_u-\int_0^u\beta_n(X_r)\,dr$ is a standard Brownian motion. Under this measure the law of $X_0$ is still $\gamma_n$. Since $\sqrt{2T}\,\beta_n=\nabla V_n+b$, we get
\begin{align*}
X_u&=X_0-\int_0^u\nabla V_n(X_r)\,dr+\sqrt{2T}\,B_u\\
&=X_0-\int_0^u\nabla V_n(X_r)\,dr+\int_0^u\big(\nabla V_n(X_r)+b(X_r)\big)\,dr+\sqrt{2T}\,\widetilde B_u\\
&=X_0+\int_0^ub(X_r)\,dr+\sqrt{2T}\,\widetilde B_u .
\end{align*}
So under $D_n\,d\mathbf P^n$ the process $X$ is a solution of \eqref{eq:SDEgen} with initial law $\gamma_n$. The solution of \eqref{eq:SDEgen} is unique in law by \ref{S1}. Hence
\[
\frac{d\widetilde{\mathbf P}^n}{d\mathbf P^n}=D_n,\qquad\log\frac{d\mathbf P^n}{d\widetilde{\mathbf P}^n}=-\int_0^t\ip{\beta_n(X_u)}{dB_u}+\frac12\int_0^t|\beta_n(X_u)|^2\,du .
\]
The relative entropy of $\mathbf P^n$ with respect to $\widetilde{\mathbf P}^n$ is the mean of the last expression under $\mathbf P^n$. The measure $\gamma_n$ is invariant under $\mathbf P^n$, which means that the law of $X_u$ under $\mathbf P^n$ is $\gamma_n$ for every $u$. Also $|\beta_n|^2=\frac1{2T}|\nabla V_n-\nabla V|^2$ almost everywhere, while $\gamma_n$ has a density. By the Fubini theorem,
\[
\E_{\mathbf P^n}\int_0^t|\beta_n(X_u)|^2\,du=\int_0^t\int_{\R^d}|\beta_n|^2\,d\gamma_n\,du=\frac{t}{2T}\int_{\R^d}|\nabla V_n-\nabla V|^2\,d\gamma_n .
\]
From \eqref{eq:Vn} and \eqref{eq:xs1c} we get
\begin{equation}\label{eq:xs1d}
|\nabla V_n-\nabla V|^2\,g_n\le\big(|\nabla V_n|+|\nabla V|\big)^2g_n\le K\,(2d_C+1)^2e^{-d_C^2/4T}
\end{equation}
almost everywhere. The right side is integrable by \eqref{eq:xs1a}, so the last integral is finite. Since $B$ is a Brownian motion under $\mathbf P^n$, it follows that the stochastic integral $\int_0^t\ip{\beta_n(X_u)}{dB_u}$ has mean zero under $\mathbf P^n$. Thus the relative entropy of $\mathbf P^n$ with respect to $\widetilde{\mathbf P}^n$ is
\[
\frac1{4T}\,\E_{\mathbf P^n}\int_0^t|\nabla V_n-\nabla V|^2(X_u)\,du=\frac{t}{4T}\int_{\R^d}|\nabla V_n-\nabla V|^2\,d\gamma_n .
\]
We let $n\to\infty$. The function $|\nabla V_n-\nabla V|^2g_n$ tends to zero almost everywhere by Step 1. It is bounded by the integrable function in \eqref{eq:xs1d}, which does not depend on $n$. By dominated convergence the right side tends to zero. The Pinsker inequality says that the square of the total variation distance of two probability measures is at most one half of the relative entropy. Hence
\[
\TV\big(\mathbf P^n,\widetilde{\mathbf P}^n\big)^2\le\frac{t}{8T}\int_{\R^d}|\nabla V_n-\nabla V|^2\,d\gamma_n\longrightarrow0 .
\]
Under $\mathbf P^n$ the law of $X_t$ is $\gamma_n$. Under $\widetilde{\mathbf P}^n$ it is $\gamma_nP_t=\int\gamma_n(dx)P_t(x,\cdot)$. So for every Borel set $A$,
\[
|\gamma_n(A)-\gamma_nP_t(A)|=\big|\mathbf P^n(X_t\in A)-\widetilde{\mathbf P}^n(X_t\in A)\big|\le\TV\big(\mathbf P^n,\widetilde{\mathbf P}^n\big) .
\]
We take the supremum over $A$ and obtain
\[
\TV\big(\gamma_n,\ \gamma_nP_t\big)\longrightarrow0 .
\]

\emph{Step 3: invariance.} First we show that $\TV(\gamma_nP_t,\gamma P_t)\le\TV(\gamma_n,\gamma)$. Let $A$ be a Borel set and put $f(x)=P_t(x,A)$, so that $0\le f\le1$. Since $\int(g_n-g)\,dx=0$,
\[
\gamma_nP_t(A)-\gamma P_t(A)=\int f\,(g_n-g)\,dx=\int\big(f-\tfrac12\big)(g_n-g)\,dx .
\]
Since $|f-\frac12|\le\frac12$, we get
\[
|\gamma_nP_t(A)-\gamma P_t(A)|\le\tfrac12\int|g_n-g|\,dx=\TV(\gamma_n,\gamma) ,
\]
and the claim follows by taking the supremum over $A$. By the triangle inequality,
\begin{align*}
\TV(\gamma,\gamma P_t)&\le\TV(\gamma,\gamma_n)+\TV(\gamma_n,\gamma_nP_t)+\TV(\gamma_nP_t,\gamma P_t)\\
&\le2\,\TV(\gamma,\gamma_n)+\TV(\gamma_n,\gamma_nP_t) .
\end{align*}
By Step 1 and Step 2 the right side tends to zero as $n\to\infty$. Since the left side does not depend on $n$, we get $\TV(\gamma,\gamma P_t)=0$. Thus $\gamma P_t=\gamma$ for every $t>0$.

\emph{Step 4: convergence.} By \ref{S2} the measures $P_t(x,\cdot)$, $x\in\R^d$, $t>0$, are all equivalent to the Lebesgue measure, hence to each other. Moreover $P_t(x,\cdot)\to\delta_x$ weakly as $t\to0$. Indeed, let $f$ be a bounded continuous function and let $X$ be the solution started at $x$. The paths are continuous, so $f(X_t)\to f(x)$ as $t\to0$. By bounded convergence, $\int f(y)\,P_t(x,dy)=\E f(X_t)\to f(x)$. A Markov semigroup with these properties which has an invariant probability measure $\gamma$ satisfies $P_t(x,A)\to\gamma(A)$ for all $x$ and all Borel sets $A$. This is the theorem of \cite{Doob1948}, in the form \cite[Theorem 4.2.1]{DPZ1996}. Finally,
\[
\rho_{T,t}(A)=\int P_t(x,A)\,\rho_0(dx)\longrightarrow\int\gamma(A)\,\rho_0(dx)=\gamma(A)
\]
by dominated convergence, since $0\le P_t(x,A)\le1$ and $\rho_0$ is a probability measure. This completes the proof.
\end{proof}

\subsection{Distances between probability measures}
For probability measures $\mu,\nu$ on $\R^d$ we use the total variation distance $\TV$ defined above. When the second moments are finite, we also use the Wasserstein distance
\[
\Wt(\mu,\nu)=\inf\big\{(\E|\xi-\xi'|^2)^{1/2}:\ \Law(\xi)=\mu,\ \Law(\xi')=\nu\big\}.
\]
We recall \cite[Chapter 7]{Villani2003} that $\Wt(\mu_n,\mu)\to0$ if and only if $\mu_n\to\mu$ weakly and the second moments converge, and that $\Wt^2$ is convex in each argument. Suppose that $\mu$ and $\nu$ are carried by a set of diameter $D$. Then
\begin{equation}\label{eq:WTV}
\Wt(\mu,\nu)^2\le D^2\,\TV(\mu,\nu).
\end{equation}
To see this, we take a coupling $(\xi,\xi')$ of $\mu$ and $\nu$ with $\P(\xi\ne\xi')=\TV(\mu,\nu)$. Almost surely both variables lie in the set which carries $\mu$ and $\nu$, so $|\xi-\xi'|\le D$. Hence
\[
\Wt(\mu,\nu)^2\le\E|\xi-\xi'|^2=\E\big(|\xi-\xi'|^2\,\one_{\{\xi\ne\xi'\}}\big)\le D^2\,\P(\xi\ne\xi')=D^2\,\TV(\mu,\nu) .
\]
We recall the formula $\TV(\mu,\nu)=\frac12\sup\{|\int f\,d\mu-\int f\,d\nu|:f\text{ continuous},\ |f|\le1\}$. It shows that the total variation distance is lower semicontinuous for weak convergence. That is, if $\mu_n\to\mu$ and $\nu_n\to\nu$ weakly, then
\begin{equation}\label{eq:TVlsc}
\TV(\mu,\nu)\le\liminf_{n\to\infty}\TV(\mu_n,\nu_n).
\end{equation}
For the proof, let $f$ be continuous with $|f|\le1$. By weak convergence and by the same formula for $\mu_n$ and $\nu_n$,
\[
\frac12\Big|\int f\,d\mu-\int f\,d\nu\Big|=\lim_{n\to\infty}\frac12\Big|\int f\,d\mu_n-\int f\,d\nu_n\Big|\le\liminf_{n\to\infty}\TV(\mu_n,\nu_n) ,
\]
and we take the supremum over $f$. We denote by $f_\#\mu$ the image of a measure $\mu$ under a Borel map $f$.

We shall use two elementary facts about total variation. For a nonnegative measure $\mu$ and a number $a\ge0$ we write $\mu\ge a\,m$ if $\mu(A)\ge a\,m(A)$ for all Borel sets $A$.

\begin{lemma}\label{lem:doeblin}
\begin{enumerate}[label=\textup{(\roman*)}]
\item Let $\mu_1,\mu_2,m_1,m_2$ be probability measures on $\R^d$ and $a\in[0,1]$ with $\mu_1\ge a\,m_1$ and $\mu_2\ge a\,m_2$. Then $\TV(\mu_1,\mu_2)\le1-a\,(1-\TV(m_1,m_2))$.
\item Let $\Pi$ be a Markov kernel on $\R^d$, let $M\subset\R^d$ be a Borel set and let $\alpha\in[0,1]$ be such that $\TV(\Pi(y,\cdot),\Pi(y',\cdot))\le1-\alpha$ for all $y,y'\in M$. Then for all probability measures $\mu,\mu'$ on $\R^d$
\[
\TV(\mu \Pi,\mu'\Pi)\le(1-\alpha)\,\TV(\mu,\mu')+\alpha\,\big(\mu(M^c)+\mu'(M^c)\big).
\]
\end{enumerate}
\end{lemma}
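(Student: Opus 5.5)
For part (i) I would use the overlap characterisation of total variation. For probability measures $\mu,\nu$ we have $1-\TV(\mu,\nu)=(\mu\wedge\nu)(\R^d)$. Here $\mu\wedge\nu$ is the largest measure below both $\mu$ and $\nu$; it has density $\min(g,g')$ with respect to any dominating measure such as $\mu+\nu$. So it suffices to bound $(\mu_1\wedge\mu_2)(\R^d)$ from below by $a\,(m_1\wedge m_2)(\R^d)$. Since $\mu_i\ge a\,m_i\ge a\,(m_1\wedge m_2)$ for $i=1,2$, the measure $a\,(m_1\wedge m_2)$ lies below both $\mu_1$ and $\mu_2$. Hence it lies below $\mu_1\wedge\mu_2$. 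This gives
\[
1-\TV(\mu_1,\mu_2)\ge a\,\big(1-\TV(m_1,m_2)\big),
\]
which is the claim. The only point to check is that the densities work with a common dominating measure; I would take $\mu_1+\mu_2+m_1+m_2$.

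For part (ii) I would use a coupling argument. Take an optimal coupling $(\xi,\xi')$ of $\mu$ and $\mu'$ with $\P(\xi\ne\xi')=\TV(\mu,\mu')$; it exists, for instance by the maximal coupling construction. Set $\pi=\Law(\xi,\xi')$. Then
\[
\mu\Pi-\mu'\Pi=\int\big(\Pi(y,\cdot)-\Pi(y',\cdot)\big)\,\pi(dy,dy'),
\]
and by convexity of total variation,
\[
\TV(\mu\Pi,\mu'\Pi)\le\int\TV\big(\Pi(y,\cdot),\Pi(y',\cdot)\big)\,\pi(dy,dy').
\]
The integrand is $0$ on the diagonal $\{y=y'\}$, at most $1-\alpha$ when $y\ne y'$ and $y,y'\in M$, and at most $1$ otherwise. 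Hence
\[
\TV(\mu\Pi,\mu'\Pi)\le(1-\alpha)\,\P(\xi\ne\xi')+\alpha\,\P\big(\xi\ne\xi',\ \{\xi\notin M\}\cup\{\xi'\notin M\}\big).
\]
The last probability is at most $\P(\xi\notin M)+\P(\xi'\notin M)=\mu(M^c)+\mu'(M^c)$. Together with $\P(\xi\ne\xi')=\TV(\mu,\mu')$, this yields the stated inequality.

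The step I expect to need the most care is measurability: the map $(y,y')\mapsto\TV(\Pi(y,\cdot),\Pi(y',\cdot))$ must be measurable so that the integral above makes sense. To avoid this, I would argue directly with a Borel set $A$ instead. Write
\[
\mu\Pi(A)-\mu'\Pi(A)=\int\big(\Pi(y,A)-\Pi(y',A)\big)\,\pi(dy,dy').
\]
For every $y,y'$ we have $|\Pi(y,A)-\Pi(y',A)|\le\TV(\Pi(y,\cdot),\Pi(y',\cdot))$, so this is bounded pointwise by the same piecewise bound as before. The pointwise bound is itself a measurable function of $(y,y')$, so the estimate holds for each $A$. Taking the supremum over $A$ then gives (ii).
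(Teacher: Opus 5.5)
Your proposal is correct and follows the paper's proof essentially step by step. For (i) you use the overlap formula $1-\TV=(\mu_1\wedge\mu_2)(\R^d)$ with densities relative to $\mu_1+\mu_2+m_1+m_2$, and for (ii) an optimal coupling with the same piecewise bound on $\TV(\Pi(y,\cdot),\Pi(y',\cdot))$. The one small difference is how measurability is handled: you dominate $|\Pi(y,A)-\Pi(y',A)|$ by an explicitly Borel function of $(y,y')$, whereas the paper shows that $(y,y')\mapsto\TV(\Pi(y,\cdot),\Pi(y',\cdot))$ is itself measurable via a countable generating algebra. Both ways work.
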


\begin{proof}
(i) Put $\lambda=\mu_1+\mu_2+m_1+m_2$, and let $f_1,f_2,g_1,g_2$ be the densities of $\mu_1,\mu_2,m_1,m_2$ with respect to $\lambda$. The largest measure below $\mu_1$ and $\mu_2$ is the measure $\mu_1\wedge\mu_2$ with density $\min(f_1,f_2)$. We first recall that
\begin{equation}\label{eq:xs1e}
\TV(\mu_1,\mu_2)=1-\int\min(f_1,f_2)\,d\lambda=1-(\mu_1\wedge\mu_2)(\R^d) .
\end{equation}
Indeed, for a Borel set $B$ the difference $\mu_1(B)-\mu_2(B)=\int_B(f_1-f_2)\,d\lambda$ is largest for $B=\{f_1>f_2\}$. On this set $f_1-f_2=f_1-\min(f_1,f_2)$, while $f_1-\min(f_1,f_2)=0$ outside it. So
\[
\sup_B\big(\mu_1(B)-\mu_2(B)\big)=\int\big(f_1-\min(f_1,f_2)\big)\,d\lambda=1-\int\min(f_1,f_2)\,d\lambda .
\]
Since $\mu_2(B)-\mu_1(B)=\mu_1(B^c)-\mu_2(B^c)$, the supremum of $|\mu_1(B)-\mu_2(B)|$ has the same value. This proves \eqref{eq:xs1e}. The same formula holds for $m_1$ and $m_2$. Now the hypothesis $\mu_i\ge a\,m_i$ means that $f_i\ge a\,g_i$ almost everywhere with respect to $\lambda$, for $i=1,2$. Hence $\min(f_1,f_2)\ge a\min(g_1,g_2)$. We integrate and get
\[
1-\TV(\mu_1,\mu_2)=\int\min(f_1,f_2)\,d\lambda\ge a\int\min(g_1,g_2)\,d\lambda=a\,\big(1-\TV(m_1,m_2)\big) .
\]
This is the claim.

(ii) Put $\varkappa(y,y')=\TV(\Pi(y,\cdot),\Pi(y',\cdot))$. The function $\varkappa$ is measurable, because the supremum in the definition of the total variation distance may be taken over a countable algebra of sets which generates the Borel sets. Let $(Y,Y')$ be a coupling of $\mu$ and $\mu'$ with $\P(Y\ne Y')=\TV(\mu,\mu')$. Such a coupling exists. For every Borel set $A$ we have $\mu \Pi(A)=\E\,\Pi(Y,A)$ and $\mu'\Pi(A)=\E\,\Pi(Y',A)$. Hence
\[
|\mu \Pi(A)-\mu'\Pi(A)|\le\E\,|\Pi(Y,A)-\Pi(Y',A)|\le\E\,\varkappa(Y,Y') .
\]
Taking the supremum over $A$ we get $\TV(\mu \Pi,\mu'\Pi)\le\E\,\varkappa(Y,Y')$. Now $\varkappa(y,y')=0$ if $y=y'$, $\varkappa(y,y')\le1-\alpha$ if $y$ and $y'$ are in $M$, and $\varkappa\le1$ always. Hence
\[
\varkappa(Y,Y')\le\one_{\{Y\ne Y'\}}-\alpha\,\one_{\{Y\ne Y',\ Y\in M,\ Y'\in M\}} .
\]
We take expectations and obtain
\[
\TV(\mu \Pi,\mu'\Pi)\le\E\,\TV\big(\Pi(Y,\cdot),\Pi(Y',\cdot)\big)\le\P(Y\ne Y')-\alpha\,\P\big(Y\ne Y',\ Y\in M,\ Y'\in M\big).
\]
The event $\{Y\ne Y'\}$ is contained in the union of the events $\{Y\ne Y',\ Y\in M,\ Y'\in M\}$, $\{Y\notin M\}$ and $\{Y'\notin M\}$. So the last probability is at least
\[
\P(Y\ne Y')-\P(Y\notin M)-\P(Y'\notin M)=\TV(\mu,\mu')-\mu(M^c)-\mu'(M^c) .
\]
We insert this bound and get
\begin{align*}
\TV(\mu \Pi,\mu'\Pi)&\le\TV(\mu,\mu')-\alpha\,\big(\TV(\mu,\mu')-\mu(M^c)-\mu'(M^c)\big)\\
&=(1-\alpha)\,\TV(\mu,\mu')+\alpha\,\big(\mu(M^c)+\mu'(M^c)\big) .
\end{align*}
This completes the proof.
\end{proof}

\section{Envelopes by OU processes}\label{sec:envelope}

The simplest feasible set is a single point, say $C=\{0\}$. Then \eqref{eq:state} is the linear equation $x(t)=x_0-\int_0^tx(u)du+w(t)$. Its solution is $x(t)=e^{-t}x_0+\zeta(t)$ with $\zeta$ given by \eqref{eq:ouformula}, and so $d_C(x(t))\le e^{-t}d_C(x_0)+|\zeta(t)|$. The first theorem of this section says that the same inequality is true for every closed convex set in every Hilbert space. It also says that a moving set costs only an additive term equal to its speed. By the second theorem, the inequality fails for sets which are not convex. The third theorem says what remains true for an arbitrary closed set.

\subsection{The envelope for convex sets}

\begin{theorem}\label{thm:envelope}
Let $\{C(t)\}_{t\ge0}$ be a moving convex set of speed $v$ in $H$, let $x_0\in H$ and let $w:[0,\infty)\to H$ be continuous with $w(0)=0$. Let $x$ and $\zeta$ be the solutions of \eqref{eq:state} and \eqref{eq:ou}. Then for every $t\ge0$
\begin{align}
d_{C(t)}\big(x(t)-\zeta(t)\big)&\le e^{-t}\,d_{C(0)}(x_0)+v\,(1-e^{-t}),\label{eq:env1}\\
d_{C(t)}\big(x(t)\big)&\le e^{-t}\,d_{C(0)}(x_0)+v\,(1-e^{-t})+|\zeta(t)|.\label{eq:env2}
\end{align}
In particular, the state process and the envelope process of \eqref{eq:SDE} satisfy, almost surely and for all $t\ge0$,
\begin{equation}\label{eq:env3}
d_{C(t)}(X_t)\le e^{-t}\,d_{C(0)}(X_0)+v\,(1-e^{-t})+|\zeta_t| .
\end{equation}
\end{theorem}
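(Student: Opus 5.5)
The idea is to remove the noise by subtracting the envelope process. Put $y(t)=x(t)-\zeta(t)$. Subtracting \eqref{eq:ou} from \eqref{eq:state}, the term $w$ cancels and
\[
y(t)=x_0-\int_0^t\big(y(u)-P_{C(u)}x(u)\big)\,du .
\]
So $y$ is continuously differentiable, with $y'(t)=-(y(t)-p(t))$ and $p(t)=P_{C(t)}x(t)\in C(t)$. The point $p(t)$ is continuous in $t$ by \eqref{eq:jointcont}. Thus $y$ is pulled towards a point of the current set, and the noise no longer appears. The inequality \eqref{eq:env2} follows from \eqref{eq:env1} because $d_{C(t)}$ is nonexpansive \ref{P3}. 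Then \eqref{eq:env3} is \eqref{eq:env2} applied path by path with $w=\sqrt{2T}\,W$.

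For \eqref{eq:env1}, put $\phi(t)=d_{C(t)}(y(t))$ and aim for the differential inequality $D^+\phi(t)\le-\phi(t)+v$, where $D^+$ is the upper right Dini derivative. Fix $t$ and $h>0$. Since $p(t)\in C(t)$ and $C(t)$ is convex, the point
\[
e^{-h}y(t)+(1-e^{-h})p(t)
\]
lies on the segment from $y(t)$ to a point of $C(t)$. By convexity of $d_{C(t)}$ \ref{P3}, and since $d_{C(t)}(p(t))=0$,
\[
d_{C(t)}\big(e^{-h}y(t)+(1-e^{-h})p(t)\big)\le e^{-h}\phi(t).
\]
Next write
\[
y(t+h)=y(t)-h\,(y(t)-p(t))+o(h),
\]
which differs from $e^{-h}y(t)+(1-e^{-h})p(t)$ by $o(h)$. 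Here $o(h)$ depends on $t$, which is fine for a pointwise Dini derivative. Finally, \eqref{eq:dhaus} together with Definition \ref{def:path} gives $|d_{C(t+h)}-d_{C(t)}|\le vh$. Combining the three estimates,
\[
\phi(t+h)\le e^{-h}\phi(t)+vh+o(h),
\]
hence $D^+\phi\le-\phi+v$.

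Then integrate. The function $\phi$ is continuous: $y$ is continuous, $d_{C(t)}$ is $1$-Lipschitz in the point, and $t\mapsto d_{C(t)}(z)$ is $v$-Lipschitz by \eqref{eq:dhaus}. Set $\psi(t)=e^{t}\phi(t)-v(e^{t}-1)$. Then $D^+\psi\le e^{t}(D^+\phi+\phi-v)\le0$. A continuous function with nonpositive upper right Dini derivative is nonincreasing. This standard comparison lemma can be proved with an $\varepsilon$-perturbation: apply it to $\psi-\varepsilon t$ and look at the first time it exceeds its initial value. So $\psi(t)\le\psi(0)=d_{C(0)}(x_0)$, which is \eqref{eq:env1}. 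Note $y(0)=x_0$.

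The main obstacle is making the one-step estimate rigorous without differentiability of $d_{C(t)}$ or of $t\mapsto C(t)$. I would keep to the Dini-derivative route above rather than differentiating $V_{C(t)}(y(t))$. That route needs only convexity of $d_{C(t)}$, the $C^1$ regularity of $y$, and the Hausdorff-Lipschitz bound. It also works verbatim in a separable Hilbert space, since no compactness is used.
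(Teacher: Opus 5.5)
Your proof is correct, and it reaches \eqref{eq:env1} by a different route from the paper's. Both arguments begin with the same reduction: $y=x-\zeta$ solves $y'=-(y-p)$, where $p(u)=P_{C(u)}x(u)$ is continuous.

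The paper then solves this linear equation explicitly, $y(t)=e^{-t}x_0+\int_0^te^{-(t-u)}p(u)\,du$. It reads $y(t)$ as the barycentre of a probability measure carried by $x_0$ and the past projections $p(u)$. It applies Jensen's inequality, proved via Riemann sums, to the convex function $d_{C(t)}$ at the final time. It concludes with the bounds $d_{C(t)}(x_0)\le d_{C(0)}(x_0)+vt$ and $d_{C(t)}(p(u))\le v(t-u)$.

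You instead work locally in time. Convexity of the current $d_{C(t)}$ along the single segment $[y(t),p(t)]$, together with the Hausdorff step $d_{C(t+h)}\le d_{C(t)}+vh$, gives $D^+\phi\le-\phi+v$. A Dini comparison lemma then integrates this inequality.

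What your route buys:
\begin{itemize}
\item It avoids the vector-valued Jensen inequality.
\item It never has to compare past projections $p(u)\in C(u)$ with the final set $C(t)$.
\item It is a plain Lyapunov/Gr\"onwall argument.
\item The Hausdorff step only uses the excess of $C(t)$ over $C(t+h)$, so it also gives Remark \ref{rem:excess}.
\end{itemize}
The price is the comparison lemma and the need for $y$ to be differentiable at every point, hence continuity of $p$. The paper's uniform-continuity argument needs the same continuity.

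What the paper's route buys is a structural formula. The barycentre representation \eqref{eq:bary} holds without convexity (Remark \ref{rem:step2}), so it isolates the single place where convexity enters. It is also exactly what the proof of Theorem \ref{thm:converse} reuses, to show that for a nonconvex set a barycentre of past projections can lie far from $C$.
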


\begin{proof}
\emph{Step 1.} Put $p(u)=P_{C(u)}x(u)$. Then $p(u)\in C(u)$ for every $u$. We show that the function $p:[0,\infty)\to H$ is continuous. By \eqref{eq:jointcont}, for $u,u'\ge0$,
\[
|p(u)-p(u')|\le|x(u)-x(u')|+\Big(\big(d_{C(u)}(x(u'))+d_{C(u')}(x(u'))\big)\,v\,|u-u'|\Big)^{1/2}.
\]
Moreover $d_{C(u)}(x(u'))\le d_{C(u')}(x(u'))+v\,|u-u'|$ by \eqref{eq:dhaus} and Definition \ref{def:path}. Since $x$ is continuous, the right side of the last display tends to zero when $u\to u'$. Hence $p$ is continuous.

\emph{Step 2.} Let $y=x-\zeta$. We subtract \eqref{eq:ou} from \eqref{eq:state}. The terms $w(t)$ cancel and we get
\[
y(t)=x_0-\int_0^t\big(x(u)-p(u)-\zeta(u)\big)\,du=x_0-\int_0^t\big(y(u)-p(u)\big)\,du .
\]
The function under the integral sign is continuous. So $y$ is continuously differentiable, $y'=-y+p$ and $y(0)=x_0$. We solve this linear equation. For every $u\ge0$,
\[
\frac{d}{du}\big(e^{u}y(u)\big)=e^{u}\big(y'(u)+y(u)\big)=e^{u}p(u).
\]
Integrating from $0$ to $t$ we get
\[
e^{t}y(t)-x_0=\int_0^te^{u}p(u)\,du .
\]
Now we multiply by $e^{-t}$ and obtain
\begin{equation}\label{eq:bary}
y(t)=e^{-t}x_0+\int_0^te^{-(t-u)}p(u)\,du .
\end{equation}

\emph{Step 3.} Fix $t\ge0$. Since
\[
e^{-t}+\int_0^te^{-(t-u)}\,du=e^{-t}+(1-e^{-t})=1 ,
\]
formula \eqref{eq:bary} shows that $y(t)$ is the barycentre of a probability measure. This measure gives the mass $e^{-t}$ to the point $x_0$ and the mass $e^{-(t-u)}du$ to the point $p(u)$, $0\le u\le t$. The function $f=d_{C(t)}$ is convex and continuous by \ref{P3}. By the Jensen inequality,
\begin{equation}\label{eq:jensen}
f(y(t))\le e^{-t}f(x_0)+\int_0^te^{-(t-u)}f(p(u))\,du .
\end{equation}
We give the proof of \eqref{eq:jensen} in the present vector valued setting. For $t=0$ there is nothing to prove, so let $t>0$. Let $n\ge1$, put $u_i=it/n$ for $i=0,1,\dots,n$, and let $I_i=[u_{i-1},u_i]$ and
\[
m_i=\int_{I_i}e^{-(t-u)}\,du\qquad(i=1,\dots,n).
\]
Then $m_i\ge0$ and $e^{-t}+\sum_{i=1}^nm_i=1$. So the point
\[
y_n=e^{-t}x_0+\sum_{i=1}^nm_i\,p(u_i)
\]
is a finite convex combination. By the definition of convexity
\begin{equation}\label{eq:xs2a}
f(y_n)\le e^{-t}f(x_0)+\sum_{i=1}^nm_i\,f(p(u_i)).
\end{equation}
Let $\delta_n=\sup\{|p(u)-p(u')|:u,u'\in[0,t],\ |u-u'|\le t/n\}$. The function $p$ is uniformly continuous on the compact interval $[0,t]$, so $\delta_n\to0$ as $n\to\infty$. By \eqref{eq:bary},
\[
y_n-y(t)=\sum_{i=1}^n\int_{I_i}e^{-(t-u)}\big(p(u_i)-p(u)\big)\,du ,\qquad\text{so}\qquad|y_n-y(t)|\le\delta_n\sum_{i=1}^nm_i\le\delta_n .
\]
The function $f$ is nonexpansive by \ref{P3}. Therefore $|f(y_n)-f(y(t))|\le\delta_n$. Also $|f(p(u_i))-f(p(u))|\le\delta_n$ for $u\in I_i$, and this gives
\begin{align*}
\Big|\sum_{i=1}^nm_i\,f(p(u_i))-\int_0^te^{-(t-u)}f(p(u))\,du\Big|&=\Big|\sum_{i=1}^n\int_{I_i}e^{-(t-u)}\big(f(p(u_i))-f(p(u))\big)\,du\Big|\\
&\le\delta_n\sum_{i=1}^nm_i\le\delta_n .
\end{align*}
Letting $n\to\infty$ in \eqref{eq:xs2a} we obtain \eqref{eq:jensen}.

We estimate the two terms on the right of \eqref{eq:jensen}. By \eqref{eq:dhaus} and Definition \ref{def:path},
\[
f(x_0)=d_{C(t)}(x_0)\le d_{C(0)}(x_0)+\haus(C(0),C(t))\le d_{C(0)}(x_0)+vt .
\]
Since $p(u)\in C(u)$, the definition of the Hausdorff distance gives
\[
f(p(u))=d_{C(t)}(p(u))\le\haus(C(u),C(t))\le v\,(t-u).
\]
With the substitution $r=t-u$, and since the derivative of $-(1+r)e^{-r}$ is $r\,e^{-r}$,
\[
\int_0^te^{-(t-u)}v\,(t-u)\,du=v\int_0^tr\,e^{-r}\,dr=v\,\Big[-(1+r)\,e^{-r}\Big]_{r=0}^{r=t}=v\,\big(1-e^{-t}-t\,e^{-t}\big).
\]
We insert these three facts in \eqref{eq:jensen} and get
\[
f(y(t))\le e^{-t}d_{C(0)}(x_0)+v\,t\,e^{-t}+v\,\big(1-e^{-t}-t\,e^{-t}\big)=e^{-t}d_{C(0)}(x_0)+v\,(1-e^{-t}).
\]
This is \eqref{eq:env1}.

\emph{Step 4.} The function $d_{C(t)}$ is nonexpansive and $x(t)-y(t)=\zeta(t)$. Hence
\[
d_{C(t)}(x(t))\le d_{C(t)}(y(t))+|x(t)-y(t)|=d_{C(t)}(y(t))+|\zeta(t)|,
\]
and \eqref{eq:env2} follows from \eqref{eq:env1}. Finally \eqref{eq:env3} is \eqref{eq:env2} applied to each path $w=\sqrt{2T}\,W(\omega)$. This completes the proof.
\end{proof}

\begin{remark}\label{rem:sharp}
Each term on the right of \eqref{eq:env2} is needed. Take $H=\R$. For $C(t)=\{0\}$ and $x_0=0$ we have $x=\zeta$, so \eqref{eq:env2} is an equality. Now let $w=0$, $C(t)=\{vt\}$ and $x_0=-a$ with $a\ge0$. Here \eqref{eq:state} reads $x'(t)=vt-x(t)$, so the function $z(t)=vt-x(t)$ solves $z'=v-z$, $z(0)=a$. Hence $z(t)=v+(a-v)e^{-t}\ge0$ and
\[
d_{C(t)}(x(t))=z(t)=e^{-t}a+v\,(1-e^{-t}),
\]
which is again an equality. So a state which follows a set moving with speed $v$ lags behind it by $v$ in the long run. This lag cannot be improved.
\end{remark}

\begin{remark}\label{rem:excess}
In Step 3 we used only the one sided quantity $\sup_{c\in C(u)}d_{C(t)}(c)\le v\,(t-u)$ for $u\le t$, and not the full Hausdorff distance. Hence, if the moving set is increasing, that is, $C(u)\subset C(t)$ for $u\le t$, then \eqref{eq:env1}, \eqref{eq:env2} and \eqref{eq:env3} hold with $v=0$, whatever its speed is. A feasible set which grows causes no lag.
\end{remark}

\begin{remark}\label{rem:step2}
Step 2 of the proof does not use convexity. If $C\subset\R^d$ is any closed set and a continuous function $x$ satisfies $x(t)=x_0-\int_0^t(x(u)-p(u))du+w(t)$ with a locally bounded Borel function $p$, then $y=x-\zeta$ is given by \eqref{eq:bary}. Indeed, in this case $y(t)=x_0-\int_0^t(y(u)-p(u))du$. So $e^{u}y(u)$ is the integral of its derivative $e^{u}p(u)$, which exists for almost every $u$, and the computation of Step 2 applies. Convexity enters only in Step 3, where we use that a barycentre of points of $C$ is close to $C$.
\end{remark}

\subsection{The converse}
We now work in $\R^d$ and use the notation of Subsection \ref{subsec:nonconvex}.

\begin{theorem}\label{thm:converse}
Let $C\subset\R^d$ be nonempty and closed. The following statements are equivalent.
\begin{enumerate}[label=\textup{(\roman*)}]
\item $C$ is convex.
\item For every $x_0\in\R^d$, every continuous $w:[0,\infty)\to\R^d$ with $w(0)=0$ and every pair $(x,p)$ of a continuous function $x$ and a Borel function $p$ such that $p(u)\in\Pi_C(x(u))$ for all $u$ and $x(t)=x_0-\int_0^t(x(u)-p(u))du+w(t)$ for all $t$, we have
\begin{equation}\label{eq:env4}
d_C(x(t))\le e^{-t}d_C(x_0)+|\zeta(t)|\qquad(t\ge0),
\end{equation}
where $\zeta$ is given by \eqref{eq:ouformula}.
\item There is $T>0$ such that for every $x_0\in C$ the solution $X$ of \eqref{eq:SDEgen} with $X_0=x_0$ satisfies $d_C(X_t)\le|\zeta_t|$ for all $t\ge0$, almost surely.
\end{enumerate}
More precisely, if $C$ is not convex, then for every $T>0$ there are $x_0\in C$ and $t_0>0$ such that the solution of \eqref{eq:SDEgen} with $X_0=x_0$ satisfies $\P\big(d_C(X_{t_0})>|\zeta_{t_0}|\big)>0$.
\end{theorem}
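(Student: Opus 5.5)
For (i)$\Rightarrow$(ii) I will reduce to Theorem \ref{thm:envelope} with the constant family $C(t)=C$, so $v=0$. If $C$ is convex, then $\Pi_C(x(u))=\{P_Cx(u)\}$ for every $u$. Hence any pair $(x,p)$ as in (ii) has $p=P_Cx$, and $x$ is a continuous solution of \eqref{eq:state}. By Lemma \ref{lem:wellposed} it is the unique solution, and \eqref{eq:env2} with $v=0$ is exactly \eqref{eq:env4}.

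For (ii)$\Rightarrow$(iii) I fix any $T>0$ and $x_0\in C$. By \ref{S1}, almost every path of the solution of \eqref{eq:SDEgen}, together with $p(u)=P_CX_u$ (a Borel selection of $\Pi_C$) and $w=\sqrt{2T}W$, satisfies the hypotheses of (ii). Since $d_C(x_0)=0$, \eqref{eq:env4} gives $d_C(X_t)\le|\zeta_t|$ for all $t$.

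The substance is the last statement, which gives (iii)$\Rightarrow$(i) by contraposition. Suppose $C$ is not convex. Pick $a,b\in C$ and $\lambda\in(0,1)$ with $c:=\lambda a+(1-\lambda)b\notin C$, and put $r=d_C(c)>0$. I use the barycentre formula of Remark \ref{rem:step2}, which needs no convexity. For any solution with projection selection $p$,
\[
x(t)-\zeta(t)=y(t)=e^{-t}x_0+\int_0^te^{-(t-u)}p(u)\,du .
\]
So it suffices to find $t_0$ and an event of positive probability on which $y(t_0)$ lies within $r/4$ of $c$ and $|\zeta_{t_0}|<r/4$. Then $d_C(X_{t_0})\ge d_C(y(t_0))-|\zeta_{t_0}|>r/2>|\zeta_{t_0}|$.

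I build this event from a deterministic template path and then perturb it. The template starts at $x_0=a$ and sits at $a$ on $[0,s]$. It then moves along a short path of duration $\eta$ to $b$ and sits at $b$ on $[s+\eta,t_0]$. The times $s,t_0$ are chosen so that the weights $e^{-t_0}+\int_0^se^{-(t_0-u)}du$ on $a$ and $\int_{s+\eta}^{t_0}e^{-(t_0-u)}du$ on $b$ are within a small error of $\lambda$ and $1-\lambda$. For the stochastic version, let $w^\ast$ be the corresponding control, defined by $w^\ast(t)=x(t)-x_0+\int_0^t(x-p)\,du$, and let $\zeta^\ast$ be its OU path. I then modify $w^\ast$ on a final window of length $\eta$ so that $\zeta^\ast(t_0)=0$. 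The OU equation \eqref{eq:ou} is linear in $w$, so a quick steep correction achieves this and moves $x$ only during that window.

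On the event $\{\sup_{u\le t_0}|\sqrt{2T}W_u-w^\ast(u)|<\varepsilon\}$, which has positive probability by the Cameron--Martin/Girsanov support property \ref{S2}, I compare $X$ with the template using only two facts. First, the drift satisfies $|b(x)|=d_C(x)$. Second, if $|x-c_0|\le\delta$ with $c_0\in C$, then $|P_Cx-c_0|\le2\delta$. While the template sits at $a$ or $b$, a Gronwall argument keeps $X$ within $O(\varepsilon)$ of it, and hence $p(u)=P_CX_u$ within $O(\varepsilon)$ of $a$ or $b$. During the transition and final windows, the drift is bounded by the linear growth bound on a ball, so these windows contribute $O(\eta)$ to both $y(t_0)$ and the deviation of $X$. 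Since $\zeta$ depends continuously on $w$ through \eqref{eq:ouformula}, we also get $|\zeta_{t_0}|=O(\varepsilon)$. Choosing $\eta$, then $\varepsilon$, small yields $|y(t_0)-c|<r/4$ and $|\zeta_{t_0}|<r/4$.

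The main obstacle is exactly this stability step. The drift $x-P_Cx$ is discontinuous off $C$ when $C$ is not convex, so I cannot invoke continuous dependence of the solution on $w$. The plan avoids it by keeping the template on $C$ except during windows of length $\eta$. On $C$ the drift is continuous, indeed of size $d_C$, and $P_C$ is $2$-Hölder-like at points of $C$. During the short windows only crude boundedness of the drift is used.
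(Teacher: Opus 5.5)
Your proof is correct and has the same architecture as the paper's. It uses the barycentre formula of Remark \ref{rem:step2} and a template that puts exponential weight about $\lambda$ on one point of $C$ and about $1-\lambda$ on another, with short transition windows. A final quick move then makes $\zeta_{t_0}$ small while $y(t_0)$ stays near the excluded point. Your waiting time $s$ at $a$ is not needed: $s=0$ with $e^{-t_0}\approx\lambda$ is, after relabelling, the paper's choice $e^{-t_\lambda}=1-\lambda$.

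The real difference is where the positive-probability tube lives. You place it on the noise, $\{\sup_{u\le t_0}|\sqrt{2T}W_u-w^\ast(u)|<\eps\}$. Its positivity comes from the Cameron--Martin/small-ball property of $W$, not from \ref{S2}. You then need a pathwise Gronwall stability step, which works because the template rests on $C$, where $d_C(X)\le|X-a|$ and $|P_CX-a|\le2|X-a|$.

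The paper places the tube on the state instead. By \ref{S2}, the law of $X$ on $C([0,t_0],\R^d)$ is equivalent to that of $c_1+\sqrt{2T}W$, so $\P(\sup_u|X_u-\bar x(u)|<\eps')>0$ directly. The bounds on $p_u=P_CX_u$ are then read off from the tube. Also $|\zeta_{t_0}|=|X_{t_0}-y(t_0)|$ is small simply because both points are near $c_\lambda$. So the step you call the main obstacle never arises there: the discontinuity of the drift is absorbed into the Girsanov equivalence.

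Your route costs some Gronwall bookkeeping. The deviation while sitting at $b$ is $O(\eta+\eps)$, so $\eta$ must be fixed before $\eps$, as you do. In return it exhibits an explicit open set of driving paths on which the envelope fails. The normalisation $\zeta^\ast(t_0)=0$ also gives $|\zeta_{t_0}|<2\eps$ directly from \eqref{eq:ouformula}.

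One phrasing point: after the final-window correction you never need a deterministic template $x$ on that window. The equation with discontinuous drift need not be solvable for a given $w$. You only need $X$ to stay bounded there, which the linear growth of the drift gives.
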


\begin{proof}
(i)$\Rightarrow$(ii). If $C$ is convex, then $\Pi_C(x)=\{P_Cx\}$ for every $x$. So $p(u)=P_Cx(u)$. Thus $x$ is the solution of \eqref{eq:state} for the constant family $C(t)=C$, which has speed $0$. Hence \eqref{eq:env4} is \eqref{eq:env2} with $v=0$.

(ii)$\Rightarrow$(iii). Let $T>0$ and $x_0\in C$. Almost every path of the solution of \eqref{eq:SDEgen} satisfies the integral equation in (ii) with $w=\sqrt{2T}W$ and $p(u)=P_C(X_u)$. Also $d_C(x_0)=0$. So \eqref{eq:env4} gives $d_C(X_t)\le|\zeta_t|$ for all $t\ge0$.

(iii)$\Rightarrow$(i). We prove the last statement of the theorem, which is the negation of (iii) when $C$ is not convex. Let $T>0$ and assume that $C$ is not convex. Then there are $c_1,c_2\in C$ and $\lambda\in(0,1)$ such that the point
\[
c_\lambda:=(1-\lambda)c_1+\lambda c_2
\]
does not belong to $C$. Since $C$ is closed, the distance of $c_\lambda$ from $C$ is positive. Put
\[
\eta_0:=d_C(c_\lambda)>0,\qquad R:=|c_1-c_2|>0,\qquad t_\lambda:=\log\frac1{1-\lambda},\quad\text{so that }e^{-t_\lambda}=1-\lambda .
\]

\emph{Step 1: a path to follow.} Let $0<\eps<t_\lambda$ and $t_0=t_\lambda+\eps$. Define $\bar x:[0,t_0]\to\R^d$ by
\[
\bar x(u)=\begin{cases}c_1+\dfrac u\eps\,(c_2-c_1),&0\le u\le\eps,\\[6pt] c_2,&\eps\le u\le t_\lambda,\\[4pt] c_2+\dfrac{u-t_\lambda}\eps\,(c_\lambda-c_2),&t_\lambda\le u\le t_0.\end{cases}
\]
This is a continuous path which starts at $c_1$, moves quickly to $c_2$, waits there until time $t_\lambda$ and then moves quickly to $c_\lambda$. All its values lie on the segment $[c_1,c_2]$.

\emph{Step 2: the state follows this path with positive probability.} Now $X$ denotes the solution of \eqref{eq:SDEgen} with $X_0=c_1$. Let $\eps'>0$ and consider the event
\[
G=\Big\{\sup_{0\le u\le t_0}|X_u-\bar x(u)|<\eps'\Big\}.
\]
By \ref{S2} the law of $(X_u)_{u\le t_0}$ is equivalent to the law of $(c_1+\sqrt{2T}W_u)_{u\le t_0}$. So it is enough to show that
\begin{equation}\label{eq:xs2b}
\P\Big(\sup_{0\le u\le t_0}\big|c_1+\sqrt{2T}\,W_u-\bar x(u)\big|<\eps'\Big)>0 .
\end{equation}
It is well known that the Wiener measure gives positive mass to every uniform neighbourhood of every continuous path $h$ starting at the origin. We need this only for the path
\[
h(u)=\frac{\bar x(u)-c_1}{\sqrt{2T}}\qquad(0\le u\le t_0),
\]
and for this path we give the proof. The function $h$ is continuous and piecewise linear with $h(0)=0$. So $h(u)=\int_0^uh'(s)\,ds$, where $h'$ is defined except at the two points $\eps$ and $t_\lambda$ and $|h'|\le R/(\eps\sqrt{2T})$. By the Girsanov theorem the law of $(W_u-h(u))_{u\le t_0}$ is equivalent to the law of $(W_u)_{u\le t_0}$. Moreover $\P(\sup_{u\le t_0}|W_u|<r)>0$ for every $r>0$. Taking $r=\eps'/\sqrt{2T}$ we get
\[
\P\Big(\sup_{0\le u\le t_0}\big|c_1+\sqrt{2T}\,W_u-\bar x(u)\big|<\eps'\Big)=\P\Big(\sup_{0\le u\le t_0}|W_u-h(u)|<r\Big)>0 ,
\]
which is \eqref{eq:xs2b}. Hence $\P(G)>0$.

\emph{Step 3: bounds on the event $G$.} We work on $G$ and write $p_u=P_C(X_u)$. Every point of the segment $[c_1,c_2]$ has distance at most $R/2$ from the set $\{c_1,c_2\}\subset C$. So $d_C(\bar x(u))\le R/2$. Since the function $d_C$ is nonexpansive, we get for all $u\le t_0$
\[
d_C(X_u)\le d_C(\bar x(u))+|X_u-\bar x(u)|\le d_C(\bar x(u))+\eps'\le\frac R2+\eps' .
\]
We have $|p_u-X_u|=d_C(X_u)$. Also $|\bar x(u)-c_2|\le R$, because both points lie on the segment $[c_1,c_2]$. Therefore
\begin{equation}\label{eq:pbound1}
|p_u-c_2|\le|p_u-X_u|+|X_u-\bar x(u)|+|\bar x(u)-c_2|\le\Big(\frac R2+\eps'\Big)+\eps'+R=\frac{3R}2+2\eps' .
\end{equation}
For $\eps\le u\le t_\lambda$ we have $\bar x(u)=c_2\in C$. So $|X_u-c_2|<\eps'$ and $d_C(X_u)\le|X_u-c_2|<\eps'$, which gives the better bound
\begin{equation}\label{eq:pbound2}
|p_u-c_2|\le|p_u-X_u|+|X_u-c_2|<2\eps'\qquad(\eps\le u\le t_\lambda).
\end{equation}

\emph{Step 4: the barycentre is close to $c_\lambda$.} Let $y=X-\zeta$. The function $u\mapsto p_u$ is Borel and locally bounded, because $|p_u|\le|X_u|+d_C(X_u)$. By Remark \ref{rem:step2}, formula \eqref{eq:bary} holds with $x_0=c_1$ and $p(u)=p_u$, that is,
\[
y(t_0)=e^{-t_0}c_1+\int_0^{t_0}e^{-(t_0-u)}p_u\,du .
\]
The weights in \eqref{eq:bary} have total mass one, so $c_2=e^{-t_0}c_2+\int_0^{t_0}e^{-(t_0-u)}c_2\,du$. We subtract this from the last display and get
\[
y(t_0)-c_2=e^{-t_0}(c_1-c_2)+\int_0^{t_0}e^{-(t_0-u)}(p_u-c_2)\,du .
\]
On the other hand $c_\lambda-c_2=(1-\lambda)(c_1-c_2)=e^{-t_\lambda}(c_1-c_2)$. A second subtraction gives
\[
y(t_0)-c_\lambda=-\big(e^{-t_\lambda}-e^{-t_0}\big)(c_1-c_2)+\int_0^{t_0}e^{-(t_0-u)}(p_u-c_2)\,du .
\]
We take norms, use $e^{-(t_0-u)}\le1$ and split the integral into the integrals over $[0,\eps]$, $[\eps,t_\lambda]$ and $[t_\lambda,t_0]$. On the first and on the third interval, both of length $\eps$, we use \eqref{eq:pbound1}. On $[\eps,t_\lambda]$ we use \eqref{eq:pbound2}. This gives
\begin{align*}
|y(t_0)-c_\lambda|&\le\big(e^{-t_\lambda}-e^{-t_0}\big)R+2\eps\Big(\frac{3R}2+2\eps'\Big)+(t_\lambda-\eps)\,2\eps'\\
&\le\eps R+3\eps R+4\eps\eps'+2t_\lambda\eps'=4\eps R+4\eps\eps'+2t_\lambda\eps' ,
\end{align*}
where we used $e^{-t_\lambda}-e^{-t_0}=e^{-t_\lambda}(1-e^{-\eps})\le1-e^{-\eps}\le\eps$ and $(t_\lambda-\eps)\,2\eps'\le2t_\lambda\eps'$.

\emph{Step 5: the envelope is violated.} On $G$ we have $|X_{t_0}-c_\lambda|<\eps'$ since $\bar x(t_0)=c_\lambda$. Therefore, by Step 4 and since $d_C$ is nonexpansive,
\begin{align*}
|\zeta_{t_0}|&=|X_{t_0}-y(t_0)|\le|X_{t_0}-c_\lambda|+|c_\lambda-y(t_0)|<\eps'+4\eps R+4\eps\eps'+2t_\lambda\eps' ,\\
d_C(X_{t_0})&\ge d_C(c_\lambda)-|X_{t_0}-c_\lambda|>\eta_0-\eps' .
\end{align*}
Now choose
\[
\eps=\min\Big\{\frac{t_\lambda}2,\ \frac{\eta_0}{32R}\Big\},\qquad\eps'=\frac{\eta_0}{4(1+2\eps+t_\lambda)} .
\]
Then $0<\eps<t_\lambda$, as required in Step 1. Further $4\eps R\le4R\,\eta_0/(32R)=\eta_0/8$ and
\[
\eps'(2+4\eps+2t_\lambda)=2\eps'(1+2\eps+t_\lambda)=\frac{\eta_0}2 .
\]
We subtract the two bounds above and obtain
\begin{align*}
d_C(X_{t_0})-|\zeta_{t_0}|&>\eta_0-\eps'-\eps'-4\eps R-4\eps\eps'-2t_\lambda\eps'\\
&\ge\eta_0-\eps'-\eps'-\frac{\eta_0}8-4\eps\eps'-2t_\lambda\eps'=\frac{7\eta_0}8-\eps'(2+4\eps+2t_\lambda)\\
&=\frac{7\eta_0}8-\frac{\eta_0}2=\frac{3\eta_0}8>0 .
\end{align*}
Thus $d_C(X_{t_0})>|\zeta_{t_0}|$ on $G$, and $\P(G)>0$. Since $X_0=c_1\in C$, this proves the last statement. Hence (iii) fails.
\end{proof}

\begin{remark}
A closed subset of $\R^d$ is convex if and only if every point of $\R^d$ has a unique nearest point in it, that is, if and only if it is a Chebyshev set \cite[Chapter 12]{Deutsch2001}. So Theorem \ref{thm:converse} says that the envelope \eqref{eq:env4} holds exactly for the Chebyshev sets. The proof also shows why it fails for a set which is not convex. The barycentre \eqref{eq:bary} of past projections may lie far from the set although all the projections lie in it.
\end{remark}

\subsection{Arbitrary closed sets: a comparison in law}
By Theorem \ref{thm:converse}, when $C$ is not convex the distance $d_C(X_t)$ is not bounded by the envelope process driven by the Brownian motion of the state. We now show that it is always bounded by a process which has the law of the norm of an OU process. This process is driven by a real Brownian motion constructed from the state. So the laws of the two sides of \eqref{eq:env4} are ordered for every closed set. For a convex set Theorem \ref{thm:envelope} says more, because there the bound holds with the OU process $\zeta$ which is driven by the Brownian motion of the state.

\begin{theorem}\label{thm:compare}
Let $C\subset\R^d$ be nonempty and closed. Let $T>0$, $x\in\R^d$ and let $X$ be the solution of \eqref{eq:SDEgen} with $X_0=x$.
\begin{enumerate}[label=\textup{(\roman*)}]
\item There is a continuous adapted increasing process $A$ with $A_0=0$ such that, for all $t\ge0$,
\begin{equation}\label{eq:Vito}
V_C(X_t)=V_C(x)+\int_0^t\big(Td-2V_C(X_u)\big)\,du+\sqrt{2T}\int_0^t\ip{X_u-P_CX_u}{dW_u}-A_t .
\end{equation}
\item Let $v\in\R^d$ with $|v|=d_C(x)$. On an extension of the probability space there is a continuous process $\varrho$ which has the law of $(|e^{-t}v+\zeta_t|)_{t\ge0}$ and satisfies
\[
d_C(X_t)\le\varrho_t\qquad\text{for all }t\ge0,\ \text{almost surely}.
\]
\item For all $t\ge0$, $N>0$ and $r>0$,
\begin{equation}\label{eq:msq}
\begin{aligned}
\E\,d_C(X_t)^2&\le e^{-2t}d_C(x)^2+dT\,(1-e^{-2t}),\\
\P\big(d_C(X_t)\ge r\big)&\le\P\big(|\zeta_t|\ge r-d_C(x)\big),\\
\P\Big(\sup_{0\le t\le N}d_C(X_t)\ge r\Big)&\le\P\Big(\sup_{0\le t\le N}|\zeta_t|\ge r-d_C(x)\Big).
\end{aligned}
\end{equation}
\end{enumerate}
\end{theorem}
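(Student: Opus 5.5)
The plan is to treat the three parts in order, with (i) feeding (ii) and (ii) feeding (iii).

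\emph{Part (i).} I will reuse the decomposition from Step 2 of the proof of Lemma \ref{lem:invariant}:
\[
V_C=\tfrac12|\cdot|^2-\psi,\qquad \psi(x)=\max_{c\in C}\{\ip xc-\tfrac12|c|^2\},
\]
where $\psi$ is convex and Lipschitz. Since $\psi$ is convex, $\nabla\psi(x)=P_Cx$ wherever $\psi$ is differentiable, hence almost everywhere.

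I mollify, $\psi_n=\psi*\theta_n$. Each $\psi_n$ is smooth and convex, so $\Delta\psi_n\ge0$. Applying the It\^o formula to $\tfrac12|X_t|^2-\psi_n(X_t)$ gives \eqref{eq:Vito}, with $V_C$ replaced by $\tfrac12|\cdot|^2-\psi_n$, $P_C$ replaced by $\nabla\psi_n$, and $A$ replaced by the increasing process $A^n_t=T\int_0^t\Delta\psi_n(X_u)\,du$.

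Then I let $n\to\infty$ term by term:
\begin{itemize}
\item $\psi_n\to\psi$ locally uniformly, so the left side converges.
\item $\nabla\psi_n\to\nabla\psi$ almost everywhere and $|\nabla\psi_n|\le\|C\|$.
\item By \ref{S2} the time $X$ spends in a Lebesgue null set is zero. So dominated convergence handles the $du$ integral, and the It\^o isometry, localized by exit times of balls, handles the stochastic integral.
\end{itemize}
Hence $A^n_t$ converges for each $t$, in probability, to the process $A_t$ defined by \eqref{eq:Vito}. That process is continuous because every other term in \eqref{eq:Vito} is continuous. It is increasing as a limit of increasing processes: pass along a subsequence that converges almost surely at all rational times, then use continuity.

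\emph{Part (ii).} Put $U_t=d_C(X_t)^2=2V_C(X_t)$. Let $n_u=(X_u-P_CX_u)/d_C(X_u)$ when $d_C(X_u)>0$, and a fixed unit vector otherwise. Set $\beta_t=\int_0^t\ip{n_u}{dW_u}$; this is a real Brownian motion by the L\'evy characterization. Part (i) then reads
\[
dU_t=(2Td-2U_t)\,dt+2\sqrt{2T}\sqrt{U_t}\,d\beta_t-2\,dA_t .
\]
Let $R$ solve $dR=(2Td-2R)\,dt+2\sqrt{2T}\sqrt{R^+}\,d\beta$ with $R_0=|v|^2$, driven by the same $\beta$. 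The drift is Lipschitz and the diffusion coefficient is $1/2$-H\"older, so the Yamada--Watanabe theorem gives pathwise uniqueness and a strong solution.

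The squared norm $|e^{-t}v+\zeta_t|^2$ of a $d$-dimensional OU process solves the same equation, by It\^o's formula and L\'evy's characterization again. So $R$ has that law, and I take $\varrho=\sqrt R$. If one prefers to build $R$ from an independent Brownian motion used on the set $\{d_C(X)=0\}$, an extension of the probability space may be needed; that is the reason for the word \emph{extension} in the statement.

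\emph{Main obstacle: the comparison $U\le R$.} I expect this to be the hardest step, because the $\sqrt{\cdot}$ coefficient is not Lipschitz and $U$ carries the extra term $-2\,dA$. I will run the Yamada--Watanabe local time argument directly:
\begin{itemize}
\item Take the standard functions $\phi_k\uparrow|\cdot|$ with $\phi_k''(z)\le 2/(kz)$ on the usual intervals, and apply them to $(U-R)^+$, equivalently use Tanaka's formula.
\item The local time at $0$ of $U-R$ vanishes, because the quadratic variation density is at most $8T|U-R|$, via $(\sqrt a-\sqrt b)^2\le|a-b|$.
\item The term $-2\,dA$ enters $(U-R)^+$ with a nonpositive sign, so it can be dropped.
\item The drift difference is $-2(U-R)$.
\end{itemize}
Taking expectations, with localization, gives $\E(U_t-R_t)^+\le0$. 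So $U\le R$ for all $t$ almost surely, by continuity.

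\emph{Part (iii).} This follows from (ii). First, $\E\,d_C(X_t)^2\le\E\,\varrho_t^2=e^{-2t}|v|^2+\E|\zeta_t|^2$, and $\E|\zeta_t|^2=dT(1-e^{-2t})$ by \eqref{eq:oucov}. For the two tail bounds, I use $\varrho_t\le|v|+|\zeta'_t|$ pathwise for an OU process $\zeta'$ with the law of $\zeta$. Because the comparison in (ii) holds for the whole path, the same bound works for the supremum over $[0,N]$.
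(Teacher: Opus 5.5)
Your proposal is correct and follows the paper's proof in all essentials. The shared route is: mollification and It\^o's formula to produce the increasing process $A$; a real Brownian motion $\beta$ from L\'evy's characterization, which turns \eqref{eq:Vito} into a CIR-type equation with an extra $-dA$ term; the Yamada--Watanabe comparison with the $A$-free equation, in which the $A$-term has a favourable sign; and identification of that solution's law with the law of $|e^{-t}v+\zeta_t|^2$. Your fixed unit vector on $\{d_C(X_u)=0\}$ is a small simplification, since it makes the paper's independent Brownian motion on an extension unnecessary.

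Three small repairs are needed.
\begin{enumerate}
\item The theorem allows unbounded closed $C$. Then $\psi$ is only locally Lipschitz and $\|C\|$ is undefined, so replace $|\nabla\psi_n|\le\|C\|$ by $|\nabla\psi_n(y)|\le 2|y|+d_C(0)+2$, which follows from $|P_Cy|\le 2|y|+d_C(0)$. This local bound is all your localized limit passage needs.
\item At level $n$ the $du$ integrand is $Td-\ip{X_u-\nabla\psi_n(X_u)}{X_u-P_CX_u}$, not $Td-|X_u|^2+2\psi_n(X_u)$, because the drift of $X$ is still $-(X-P_CX)$. This integrand converges to $Td-2V_C(X_u)$ for almost every $u$, so your limit is unaffected.
\item In (iii) you need no coupled process $\zeta'$, whose existence you did not justify. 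Comparing laws directly suffices: $\P(\sup_{t\le N}\varrho_t\ge r)=\P(\sup_{t\le N}|e^{-t}v+\zeta_t|\ge r)\le\P(d_C(x)+\sup_{t\le N}|\zeta_t|\ge r)$.
\end{enumerate}
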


\begin{proof}
We write $V=V_C$.

(i) For $y\in\R^d$ and $c\in C$ we have $\frac12|y-c|^2=\frac12|y|^2-\big(\ip yc-\frac12|c|^2\big)$. Taking the infimum over $c\in C$ we see that $V(y)=\frac12|y|^2-\psi(y)$ with $\psi(y)=\sup_{c\in C}\{\ip yc-\frac12|c|^2\}$. The function $\psi$ is a supremum of affine functions, so it is convex. It is finite because $\psi=\frac12|\cdot|^2-V$. Let $\theta_n$ be the densities of Step 1 of the proof of Lemma \ref{lem:invariant} and $V_n=V*\theta_n$. Then $V_n=\frac12|\cdot|^2*\theta_n-\psi*\theta_n$. The function $\psi*\theta_n$ is convex and smooth, so its matrix of second derivatives is nonnegative. Next,
\[
\big(\tfrac12|\cdot|^2*\theta_n\big)(y)=\int_{\R^d}\tfrac12|y-z|^2\,\theta_n(z)\,dz ,
\]
and we may differentiate twice under the integral sign. Since $\int\theta_n(z)\,dz=1$, the matrix of second derivatives of this function is the identity matrix. Therefore the second derivative of $V_n$ is at most the identity matrix. Taking the trace we get
\[
\Delta V_n\le d .
\]
The function $V_n$ is smooth and $dX_u=-(X_u-P_CX_u)\,du+\sqrt{2T}\,dW_u$. By the It\^o formula, for $0\le s\le t$,
\begin{align*}
V_n(X_t)-V_n(X_s)&=\int_s^t\big[-\ip{\nabla V_n(X_u)}{X_u-P_CX_u}+T\Delta V_n(X_u)\big]\,du\\
&\quad+\sqrt{2T}\int_s^t\ip{\nabla V_n(X_u)}{dW_u},
\end{align*}
and hence
\begin{equation}\label{eq:Vnito}
V_n(X_t)-V_n(X_s)\le\int_s^t\big[-\ip{\nabla V_n(X_u)}{X_u-P_CX_u}+Td\big]\,du+\sqrt{2T}\int_s^t\ip{\nabla V_n(X_u)}{dW_u} .
\end{equation}
We let $n\to\infty$ and consider the three terms of \eqref{eq:Vnito} one after the other.

\emph{The left side.} The estimates \eqref{eq:Vn} hold for every closed set, with the same proof. So $|V_n-V|\le(d_C+1)/n$ and $|\nabla V_n|\le d_C+1$. Hence $V_n\to V$ uniformly on compact sets, and $V_n(X_t)-V_n(X_s)\to V(X_t)-V(X_s)$.

\emph{The integral with respect to $du$.} We have $\nabla V_n(y)\to\nabla V(y)=y-P_Cy$ for all $y$ outside a Lebesgue null set $F$. By \ref{S2} the law of $X_u$ is equivalent to the Lebesgue measure for every $u>0$, so $\P(X_u\in F)=0$ and
\[
\E\int_0^\infty\one_F(X_u)\,du=\int_0^\infty\P(X_u\in F)\,du=0 .
\]
Thus the time which $X$ spends in $F$ is zero almost surely. We fix a path with this property. For almost every $u\in[s,t]$ we then have
\[
\ip{\nabla V_n(X_u)}{X_u-P_CX_u}\longrightarrow|X_u-P_CX_u|^2=d_C(X_u)^2=2V(X_u),
\]
and for all $u\in[s,t]$ and all $n$
\[
\big|\ip{\nabla V_n(X_u)}{X_u-P_CX_u}\big|\le\big(d_C(X_u)+1\big)\,d_C(X_u)\le\sup_{s\le u'\le t}\big(d_C(X_{u'})+1\big)^2<\infty .
\]
By dominated convergence along each such path,
\[
\int_s^t\ip{\nabla V_n(X_u)}{X_u-P_CX_u}\,du\longrightarrow\int_s^t|X_u-P_CX_u|^2\,du=\int_s^t2V(X_u)\,du .
\]

\emph{The stochastic integral.} Put $M^n_t=\int_0^t\ip{\nabla V_n(X_u)}{dW_u}$ and $M_t=\int_0^t\ip{X_u-P_CX_u}{dW_u}$. Let $\tau_k=\inf\{u\ge0:|X_u|\ge k\}$ be the exit time of $X$ from the ball of radius $k$. For $u<\tau_k$ we have $|X_u|<k$. So $d_C(X_u)\le|X_u|+d_C(0)\le k+d_C(0)$. By the It\^o isometry,
\[
\E\big|M^n_{t\wedge\tau_k}-M_{t\wedge\tau_k}\big|^2=\E\int_0^{t\wedge\tau_k}\big|\nabla V_n(X_u)-(X_u-P_CX_u)\big|^2\,du .
\]
The function under the integral sign is at most $(2d_C(X_u)+1)^2\le(2k+2d_C(0)+1)^2$. It tends to zero for almost every $u$, almost surely. By dominated convergence the right side tends to zero as $n\to\infty$. So the stochastic integrals, stopped at $\tau_k$, converge in $L^2$. By the diagonal procedure there is a subsequence along which, almost surely, $M^n_{t\wedge\tau_k}\to M_{t\wedge\tau_k}$ for all rational $t\ge0$ and all $k$. Since the paths of $X$ are continuous, $\tau_k\to\infty$ as $k\to\infty$. Hence $M^n_t\to M_t$ along this subsequence for all rational $t\ge0$, almost surely.

Now we pass to the limit in \eqref{eq:Vnito} along this subsequence. We get, almost surely for all rational $s\le t$,
\[
V(X_t)-V(X_s)\le\int_s^t\big(Td-2V(X_u)\big)\,du+\sqrt{2T}\int_s^t\ip{X_u-P_CX_u}{dW_u} .
\]
Both sides are continuous in $s$ and $t$, so the inequality holds for all $s\le t$. Define $A$ by \eqref{eq:Vito}, that is,
\[
A_t=V(x)-V(X_t)+\int_0^t\big(Td-2V(X_u)\big)\,du+\sqrt{2T}\int_0^t\ip{X_u-P_CX_u}{dW_u} .
\]
Then the last inequality says that $A_t-A_s\ge0$ for $s\le t$, that is, $A$ is increasing. It is continuous and adapted, and $A_0=0$.

(ii) \emph{Step 1: an equation in dimension one.} Put $\mathcal V_t=V(X_t)$. Let $B'$ be a real Brownian motion independent of $W$ and of $X_0$, defined on an extension of the probability space. We work with the filtration generated by $W$ and $B'$. For this filtration both $W$ and $B'$ are Brownian motions. The processes $X$ and $A$ are adapted to it. Put
\[
\beta_t=\int_0^t\one_{\{\mathcal V_u>0\}}\,\frac{\ip{X_u-P_CX_u}{dW_u}}{d_C(X_u)}+\int_0^t\one_{\{\mathcal V_u=0\}}\,dB'_u .
\]
Then $\beta$ is a continuous local martingale. We compute its quadratic variation. The first integral is a stochastic integral with respect to $W$ of the vector $\one_{\{\mathcal V_u>0\}}(X_u-P_CX_u)/d_C(X_u)$. The norm of this vector is $\one_{\{\mathcal V_u>0\}}$, because $|X_u-P_CX_u|=d_C(X_u)$. The two integrals have zero covariation because $W$ and $B'$ are independent. Hence the quadratic variation of $\beta$ at time $t$ is
\[
\int_0^t\one_{\{\mathcal V_u>0\}}\,du+\int_0^t\one_{\{\mathcal V_u=0\}}\,du=t .
\]
A continuous local martingale which starts at zero and whose quadratic variation at every time $t$ is equal to $t$ is a Brownian motion \cite[Chapter IV, Section 3]{RevuzYor1999}. Hence $\beta$ is a real Brownian motion. On the set $\{\mathcal V_u=0\}$ we have $X_u-P_CX_u=0$ and $d_C(X_u)=0$. Therefore
\[
d_C(X_u)\,d\beta_u=\one_{\{\mathcal V_u>0\}}\ip{X_u-P_CX_u}{dW_u}=\ip{X_u-P_CX_u}{dW_u}.
\]
Since $d_C(X_u)=\sqrt{2\mathcal V_u}$, this gives
\[
\sqrt{2T}\,\ip{X_u-P_CX_u}{dW_u}=\sqrt{2T}\,\sqrt{2\mathcal V_u}\,d\beta_u=2\sqrt{T\mathcal V_u}\,d\beta_u ,
\]
and \eqref{eq:Vito} becomes
\begin{equation}\label{eq:cirV}
\mathcal V_t=V(x)+\int_0^t\big(Td-2\mathcal V_u\big)\,du+2\sqrt T\int_0^t\sqrt{\mathcal V_u}\,d\beta_u-A_t .
\end{equation}

\emph{Step 2: the equation without $A$.} Consider the equation
\begin{equation}\label{eq:cirY}
Y_t=V(x)+\int_0^t\big(Td-2Y_u\big)\,du+2\sqrt T\int_0^t\sqrt{Y_u}\,d\beta_u
\end{equation}
where $\sqrt{Y_u}$ is read as $\sqrt{(Y_u)_+}$. It has a unique strong solution. Indeed, a weak solution exists because the coefficients are continuous with linear growth \cite[Chapter IV]{IkedaWatanabe1989}. Pathwise uniqueness holds because the drift is Lipschitz and the diffusion coefficient is H\"older of order $\frac12$ \cite[Chapter IX, Section 3]{RevuzYor1999}. The H\"older property is the inequality
\[
\big|\sqrt{a_+}-\sqrt{b_+}\big|^2\le\big|\sqrt{a_+}-\sqrt{b_+}\big|\,\big(\sqrt{a_+}+\sqrt{b_+}\big)=|a_+-b_+|\le|a-b|\qquad(a,b\in\R).
\]
Let $Y$ be the solution of \eqref{eq:cirY} driven by the Brownian motion $\beta$ of Step 1.

Next we show that the solution is nonnegative. Suppose that $Y_{t_1}<0$ for some $t_1>0$. Since $Y_0=V(x)\ge0$, there is a last time $t_2<t_1$ with $Y_{t_2}=0$, and $Y<0$ on $(t_2,t_1]$. On this interval $\sqrt{(Y_u)_+}=0$. So the quadratic variation of the stochastic integral in \eqref{eq:cirY} is constant on this interval. A continuous local martingale is constant on every interval on which its quadratic variation is constant \cite[Chapter IV, Section 1]{RevuzYor1999}. Hence the stochastic integral in \eqref{eq:cirY} does not change on this interval. The equation reduces to $dY_t=(Td-2Y_t)\,dt$ with a positive right side, and
\[
Y_{t_1}=Y_{t_2}+\int_{t_2}^{t_1}\big(Td-2Y_u\big)\,du>0 .
\]
This is impossible.

The law of $Y$ is the law of $\frac12|\xi|^2$, where $\xi_t=e^{-t}v+\zeta_t$ is the OU process started at $v$. Indeed, $d\xi_t=-e^{-t}v\,dt-\zeta_t\,dt+\sqrt{2T}\,dW_t=-\xi_t\,dt+\sqrt{2T}\,dW_t$. Each of the $d$ coordinates of $\xi$ has quadratic variation $2Tt$. So the It\^o formula for the function $\frac12|\cdot|^2$ gives
\begin{align*}
d\big(\tfrac12|\xi_t|^2\big)&=\ip{\xi_t}{d\xi_t}+\tfrac12\cdot2T\,d\,dt=\ip{\xi_t}{-\xi_t\,dt+\sqrt{2T}\,dW_t}+Td\,dt\\
&=\big(Td-|\xi_t|^2\big)\,dt+\sqrt{2T}\,\ip{\xi_t}{dW_t}.
\end{align*}
Put $\hat\beta_t=\int_0^t\one_{\{\xi_u\ne0\}}\ip{\xi_u}{dW_u}/|\xi_u|$. For $u>0$ the vector $\xi_u$ is Gaussian with the covariance matrix $T(1-e^{-2u})$ times the identity, by \eqref{eq:oucov}. So $\P(\xi_u=0)=0$. As above, it follows that the set of times with $\xi_u=0$ has Lebesgue measure zero almost surely. Hence the quadratic variation of $\hat\beta$ at time $t$ is $\int_0^t\one_{\{\xi_u\ne0\}}du=t$. So $\hat\beta$ is a real Brownian motion. Also $\ip{\xi_t}{dW_t}=|\xi_t|\,d\hat\beta_t$, and we get
\[
d\big(\tfrac12|\xi_t|^2\big)=\big(Td-|\xi_t|^2\big)\,dt+\sqrt{2T}\,|\xi_t|\,d\hat\beta_t .
\]
Write $Z_t=\frac12|\xi_t|^2$. Then $|\xi_t|^2=2Z_t$ and $\sqrt{2T}\,|\xi_t|=\sqrt{2T}\sqrt{2Z_t}=2\sqrt T\sqrt{Z_t}$. So
\[
Z_t=\tfrac12|v|^2+\int_0^t\big(Td-2Z_u\big)\,du+2\sqrt T\int_0^t\sqrt{Z_u}\,d\hat\beta_u .
\]
Thus $\frac12|\xi|^2$ is a solution of \eqref{eq:cirY} driven by $\hat\beta$ with the initial value $\frac12|v|^2=\frac12d_C(x)^2=V(x)$. Pathwise uniqueness implies that solutions of \eqref{eq:cirY} are unique in law. Hence $Y$ and $\frac12|\xi|^2$ have the same law.

\emph{Step 3: comparison.} We show that $\mathcal V_t\le Y_t$ for all $t$, almost surely. This is the comparison argument of \cite[Chapter VI]{IkedaWatanabe1989}. We give the details because of the term $A$.

First we construct some auxiliary functions. There are numbers $1=a_0>a_1>\dots\to0$ and continuous functions $\kappa_n\ge0$ which vanish outside $(a_n,a_{n-1})$ with
\[
\kappa_n(z)\le\frac{2}{nz},\qquad\int_{a_n}^{a_{n-1}}\kappa_n(z)\,dz=1 .
\]
For example, put $a_n=e^{-n(n+1)/2}$, so that $a_{n-1}/a_n=e^{n}$ and
\[
\int_{a_n}^{a_{n-1}}\frac{dz}{z}=\log\frac{a_{n-1}}{a_n}=n .
\]
Let $\chi_n$ be a continuous function on $\R$ with $0\le\chi_n\le1$ which vanishes outside $(a_n,a_{n-1})$ and is equal to $1$ on the interval $[a_ne^{n/4},a_{n-1}e^{-n/4}]$. Then
\[
J_n:=\int_{a_n}^{a_{n-1}}\chi_n(z)\,\frac{2}{nz}\,dz\ge\frac2n\int_{a_ne^{n/4}}^{a_{n-1}e^{-n/4}}\frac{dz}{z}=\frac2n\Big(n-\frac n2\Big)=1 ,
\]
and the function $\kappa_n(z)=\chi_n(z)\,2/(nzJ_n)$ for $z>0$, $\kappa_n(z)=0$ for $z\le0$, has the required properties.

Put $\phi_n(z)=\int_0^{z}\int_0^{y}\kappa_n(u)\,du\,dy$ for $z\ge0$ and $\phi_n(z)=0$ for $z<0$. Then $\phi_n$ is of class $C^2$, $0\le\phi_n'\le1$, $\phi_n'=0$ on $(-\infty,a_n]$, $\phi_n''=\kappa_n$ and $\phi_n(z)\to z_+$ as $n\to\infty$. We verify these properties. For $z\ge0$ we have $\phi_n'(z)=\int_0^z\kappa_n(u)\,du$ and $\phi_n''(z)=\kappa_n(z)$. Since $\kappa_n=0$ on $[0,a_n]$, the function $\phi_n$ vanishes on $(-\infty,a_n]$. So it is of class $C^2$ on $\R$ with $\phi_n''=\kappa_n$. From $\kappa_n\ge0$ and $\int\kappa_n=1$ we get $0\le\phi_n'\le1$. Also $\phi_n'(z)=1$ for $z\ge a_{n-1}$. Hence $\phi_n(z)\le z_+$ for all $z$, and $\phi_n(z)\ge\int_{a_{n-1}}^z1\,dy=z-a_{n-1}$ for $z\ge a_{n-1}$. In all cases
\begin{equation}\label{eq:xs2c}
z_+-a_{n-1}\le\phi_n(z)\le z_+\qquad(z\in\R),
\end{equation}
and $a_{n-1}\to0$.

Let $\Gamma=\mathcal V-Y$. We subtract \eqref{eq:cirY} from \eqref{eq:cirV}. The terms $V(x)$ and $Td$ cancel and
\[
\Gamma_t=-2\int_0^t\Gamma_u\,du+2\sqrt T\int_0^t\big(\sqrt{\mathcal V_u}-\sqrt{Y_u}\big)\,d\beta_u-A_t .
\]
So $\Gamma$ is a continuous semimartingale with $\Gamma_0=0$. Its quadratic variation is $4T\int_0^t(\sqrt{\mathcal V_u}-\sqrt{Y_u})^2du$. The It\^o formula for the function $\phi_n$ of class $C^2$ reads
\[
\phi_n(\Gamma_t)=\phi_n(0)+\int_0^t\phi_n'(\Gamma_u)\,d\Gamma_u+\frac12\int_0^t\phi_n''(\Gamma_u)\,4T\big(\sqrt{\mathcal V_u}-\sqrt{Y_u}\big)^2\,du .
\]
Since $\phi_n(0)=0$, this is
\begin{align*}
\phi_n(\Gamma_t)&=-2\int_0^t\phi_n'(\Gamma_u)\,\Gamma_u\,du-\int_0^t\phi_n'(\Gamma_u)\,dA_u+2\sqrt T\int_0^t\phi_n'(\Gamma_u)\big(\sqrt{\mathcal V_u}-\sqrt{Y_u}\big)\,d\beta_u\\
&\quad+2T\int_0^t\phi_n''(\Gamma_u)\big(\sqrt{\mathcal V_u}-\sqrt{Y_u}\big)^2\,du .
\end{align*}
We consider the four terms on the right.

The first term is nonpositive because $\phi_n'(z)z\ge0$ for all $z$. Indeed, $\phi_n'(z)=0$ for $z\le0$ and $\phi_n'(z)\ge0$ for $z>0$.

Since $\phi_n'\ge0$ and $A$ is increasing, the second term is nonpositive.

For the third term, let $\sigma_k$ be the first time at which $\mathcal V+Y$ exceeds $k$. For $u<\sigma_k$ we have $|\phi_n'(\Gamma_u)|\le1$ and $(\sqrt{\mathcal V_u}-\sqrt{Y_u})^2\le\mathcal V_u+Y_u\le k$. So the stochastic integral stopped at $\sigma_k$ is a martingale which starts at zero. Its expectation at the time $t\wedge\sigma_k$ is therefore zero.

In the last term, the inequality of Step 2 gives $(\sqrt{\mathcal V_u}-\sqrt{Y_u})^2\le|\mathcal V_u-Y_u|=|\Gamma_u|$. Also $\phi_n''(z)|z|=\kappa_n(z)\,z\le2/n$ for $z>0$ and $\phi_n''(z)=0$ for $z\le0$. Hence
\[
2T\int_0^t\phi_n''(\Gamma_u)\big(\sqrt{\mathcal V_u}-\sqrt{Y_u}\big)^2\,du\le2T\int_0^t\phi_n''(\Gamma_u)\,|\Gamma_u|\,du\le2T\cdot\frac2n\cdot t=\frac{4Tt}{n} ,
\]
so this term is at most $4Tt/n$.

We take expectations at the time $t\wedge\sigma_k$. The stochastic integral disappears and
\[
\E\,\phi_n(\Gamma_{t\wedge\sigma_k})\le\frac{4Tt}{n} .
\]
By \eqref{eq:xs2c},
\[
\E\,(\Gamma_{t\wedge\sigma_k})_+\le\E\,\phi_n(\Gamma_{t\wedge\sigma_k})+a_{n-1}\le\frac{4Tt}{n}+a_{n-1} .
\]
Letting $n\to\infty$ gives $\E(\Gamma_{t\wedge\sigma_k})_+=0$, that is, $\Gamma_{t\wedge\sigma_k}\le0$ almost surely. The process $\mathcal V+Y$ is continuous, so $\sigma_k\to\infty$ as $k\to\infty$. Letting $k\to\infty$ we get $\Gamma_t\le0$ almost surely. Hence $\Gamma_t\le0$ for all rational $t$, almost surely. By continuity this holds for all $t$ simultaneously.

\emph{Step 4.} Put $\varrho_t=\sqrt{2Y_t}$. Then $d_C(X_t)=\sqrt{2\mathcal V_t}\le\varrho_t$ by Step 3. By Step 2 the process $\varrho$ has the law of $|\xi|=(|e^{-t}v+\zeta_t|)_{t\ge0}$.

(iii) By (ii), $\E\,d_C(X_t)^2\le\E\,\varrho_t^2=\E|e^{-t}v+\zeta_t|^2$. Since $\zeta_t$ is centred and $|v|=d_C(x)$,
\[
\E|e^{-t}v+\zeta_t|^2=e^{-2t}|v|^2+2e^{-t}\ip{v}{\E\,\zeta_t}+\E|\zeta_t|^2=e^{-2t}d_C(x)^2+\E|\zeta_t|^2 .
\]
By \eqref{eq:oucov} the covariance matrix of $\zeta_t$ is $T(1-e^{-2t})$ times the identity matrix of order $d$. The quantity $\E|\zeta_t|^2$ is its trace, so $\E|\zeta_t|^2=dT(1-e^{-2t})$. This proves the first inequality of \eqref{eq:msq}. For the two other inequalities we note that $e^{-t}\le1$ gives
\[
|e^{-t}v+\zeta_t|\le e^{-t}|v|+|\zeta_t|\le d_C(x)+|\zeta_t| .
\]
Since $d_C(X_t)\le\varrho_t$ and $\varrho$ has the law of $(|e^{-t}v+\zeta_t|)_{t\ge0}$,
\[
\P\big(d_C(X_t)\ge r\big)\le\P\big(\varrho_t\ge r\big)=\P\big(|e^{-t}v+\zeta_t|\ge r\big)\le\P\big(d_C(x)+|\zeta_t|\ge r\big),
\]
which is the second inequality. In the same way
\begin{align*}
\P\Big(\sup_{0\le t\le N}d_C(X_t)\ge r\Big)&\le\P\Big(\sup_{0\le t\le N}\varrho_t\ge r\Big)=\P\Big(\sup_{0\le t\le N}|e^{-t}v+\zeta_t|\ge r\Big)\\
&\le\P\Big(d_C(x)+\sup_{0\le t\le N}|\zeta_t|\ge r\Big),
\end{align*}
which is the third inequality. The theorem is proved.
\end{proof}

\begin{remark}
The first inequality in \eqref{eq:msq} is an equality when $C$ is a point. Theorems \ref{thm:converse} and \ref{thm:compare} do not contradict each other. In Theorem \ref{thm:compare} the dominating process is driven by the real Brownian motion $\beta$. While $X$ is outside $C$, the increments of $\beta$ are the projections of the increments of $W$ on the direction $X_u-P_CX_u$. While $X$ is in $C$, they are the increments of an independent Brownian motion. So the dominating process is not of the form $e^{-t}d_C(x)+|\zeta_t|$ with the envelope process $\zeta$ of the state.
\end{remark}

\subsection{A maximal inequality for the envelope process}
To use \eqref{eq:env3} we need a bound for the supremum of $|\zeta_t|$ over a long time interval, with the correct exponential rate. The following estimate is elementary and has explicit constants. We write $a_+=\max\{a,0\}$ and $\log_+a=(\log a)_+$.

\begin{proposition}\label{prop:oumax}
Let $W$ be a Wiener process in $H$ with covariance $Q\neq0$, let $T>0$ and let $\zeta$ be the envelope process of \eqref{eq:SDE}. Then for all $N>0$, $r>0$ and $\eta\in(0,1)$,
\begin{equation}\label{eq:oumax}
\begin{aligned}
\P\Big(\sup_{0\le t\le N}|\zeta_t|\ge r\Big)\le\Big(\frac{16N}{\eta^2}+1\Big)\bigg[&\exp\Big(-\frac{\big((1-\eta)r-\sqrt{T\tr Q}\big)_+^2}{2T\|Q\|}\Big)\\
&+\exp\Big(\frac{\tr Q}{2\|Q\|}-\frac{r^2}{2T\|Q\|}\Big)\bigg].
\end{aligned}
\end{equation}
\end{proposition}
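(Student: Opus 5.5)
The plan is a chaining argument with one level of discretisation. The grid handles most of the deviation $(1-\eta)r$ by Gaussian concentration, and the oscillation between grid points handles the remaining $\eta r$ by a martingale maximal inequality.

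\emph{Grid.} Put $\delta=\eta^2/16$ and $t_i=i\delta$ for $i=0,1,\dots,n$, where $n=\lceil N/\delta\rceil$. There are at most $16N/\eta^2+1$ intervals $[t_i,t_{i+1}]$, which explains the prefactor. For $t\in[t_i,t_{i+1}]$ the formula $\zeta_t=\sqrt{2T}\int_0^te^{-(t-u)}dW_u$ from the computation before \eqref{eq:oucov} gives
\[
\zeta_t=e^{-(t-t_i)}\zeta_{t_i}+\sqrt{2T}\int_{t_i}^te^{-(t-u)}\,dW_u=:e^{-(t-t_i)}\zeta_{t_i}+\Xi^i_t .
\]
Hence $\sup_{[t_i,t_{i+1}]}|\zeta_t|\ge r$ forces either $|\zeta_{t_i}|\ge(1-\eta)r$ or $\sup_{[t_i,t_{i+1}]}|\Xi^i_t|\ge\eta r$. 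A union bound over $i$ reduces the claim to one bound for each of the two events, uniform in $i$.

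\emph{Grid points.} By \eqref{eq:oucov}, $\zeta_{t_i}$ is centred Gaussian with covariance $T(1-e^{-2t_i})Q\le TQ$. Therefore $\E|\zeta_{t_i}|\le(\E|\zeta_{t_i}|^2)^{1/2}\le\sqrt{T\tr Q}$. The norm is $1$-Lipschitz, and the largest variance of a linear functional is at most $T\|Q\|$. The Gaussian concentration inequality (Borell--TIS, or Tsirelson--Ibragimov--Sudakov, which I take as standard) then gives
\[
\P\big(|\zeta_{t_i}|\ge(1-\eta)r\big)\le\exp\Big(-\frac{((1-\eta)r-\sqrt{T\tr Q})_+^2}{2T\|Q\|}\Big),
\]
which is the first term.

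\emph{Oscillation.} Write $\Xi^i_t=e^{-t}M_t$ with $M_t=\sqrt{2T}\int_{t_i}^te^{u}dW_u$. This is an $H$-valued martingale, so $|\Xi^i_t|\le e^{-t_i}|M_t|$ on the interval. For $\theta>0$ the process $\exp(\theta|M_t|^2)$ is a nonnegative submartingale. Doob's inequality therefore gives
\[
\P\Big(\sup_{[t_i,t_{i+1}]}e^{-t_i}|M_t|\ge\eta r\Big)\le e^{-\theta e^{2t_i}\eta^2r^2}\,\E\exp\big(\theta|M_{t_{i+1}}|^2\big).
\]
The variable $e^{-t_i}M_{t_{i+1}}$ is Gaussian with covariance $S=T(e^{2\delta}-1)Q\le cT\delta Q$ (for $\delta\le1/16$ one may take $c=3$). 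Diagonalising, one has $\E e^{\lambda|G|^2}=\prod_k(1-2\lambda s_k)^{-1/2}$. Choose $\lambda=1/(4\|S\|)$, so that $2\lambda s_k\le\frac12$. Together with $-\log(1-x)\le 2x$ for $x\le\frac12$, this gives $\E e^{\lambda|G|^2}\le\exp(2\lambda\tr S)=\exp\big(\tr Q/(2\|Q\|)\big)$. The exponent in front becomes $\eta^2r^2/(4cT\delta\|Q\|)=4r^2/(cT\|Q\|)$, which is at least $r^2/(2T\|Q\|)$. This yields the second term.

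The main obstacle is the oscillation step. One has to pick $\delta$ and $\theta$ so that the trace factor comes out exactly as $\exp(\tr Q/2\|Q\|)$ while the deviation factor stays at least $r^2/(2T\|Q\|)$. The stated form leaves slack, so if the constant $c$ turns out tight I would simply take $\lambda$ a little smaller. Summing both bounds over the at most $16N/\eta^2+1$ intervals gives \eqref{eq:oumax}.
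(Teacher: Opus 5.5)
Your proof is correct and follows essentially the paper's argument: the same grid of mesh $\eta^2/16$ with at most $16N/\eta^2+1$ intervals, the same split of $r$ into $(1-\eta)r$ and $\eta r$, Gaussian concentration at the grid points, and Doob's inequality for the exponential of a squared norm with the same trace bound $\exp(\tr Q/(2\|Q\|))$. The only difference is in the oscillation term. You use the exact martingale $e^{t}\zeta_t-e^{t_i}\zeta_{t_i}=\sqrt{2T}\int_{t_i}^{t}e^{u}\,dW_u$, whereas the paper bounds the oscillation pathwise by $2\sqrt{2T}\sup_{t_i\le u\le t_i+\delta}|W_u-W_{t_i}|$ and applies the maximal inequality to $W$ itself; this avoids Hilbert-space stochastic integrals but uses up all the slack, landing exactly on $r^2/(2T\|Q\|)$ where your version gives $4r^2/(3T\|Q\|)$.
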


\begin{proof}
\emph{Step 1: increments.} Let $0\le a\le t$. We claim that
\begin{equation}\label{eq:ouincr}
\zeta_t=e^{-(t-a)}\zeta_a+\sqrt{2T}\Big[(W_t-W_a)-\int_a^te^{-(t-u)}(W_u-W_a)\,du\Big].
\end{equation}
Indeed, denote the right side by $\xi_t$ and put
\[
\tilde w(t)=\sqrt{2T}(W_t-W_a),\qquad y(t)=\int_a^te^{-(t-u)}\tilde w(u)\,du ,
\]
so that $\xi_t=e^{-(t-a)}\zeta_a+\tilde w(t)-y(t)$. Differentiating the integral we get
\[
y'(t)=\tilde w(t)-\int_a^te^{-(t-u)}\tilde w(u)\,du=\tilde w(t)-y(t),\qquad y(a)=0 .
\]
Since $\int_a^te^{-(u-a)}du=1-e^{-(t-a)}$ and $\tilde w-y=y'$, we have
\[
\int_a^t\xi_u\,du=(1-e^{-(t-a)})\zeta_a+\int_a^ty'(u)\,du=(1-e^{-(t-a)})\zeta_a+y(t),
\]
and so
\[
\zeta_a-\int_a^t\xi_u\,du+\tilde w(t)=e^{-(t-a)}\zeta_a-y(t)+\tilde w(t)=\xi_t .
\]
On the other hand, by \eqref{eq:ou} with $w=\sqrt{2T}\,W$, taken at the times $t$ and $a$,
\[
\zeta_t=\zeta_a-\int_a^t\zeta_u\,du+\tilde w(t)\qquad(t\ge a).
\]
Thus $\xi$ solves on $[a,\infty)$ the same linear equation as $\zeta$ with the same value at $a$. The difference $D=\xi-\zeta$ satisfies $D_t=-\int_a^tD_u\,du$ for $t\ge a$. So $D$ is differentiable with $D'=-D$ and $D_a=0$. Therefore $D=0$. Hence $\xi=\zeta$, which is \eqref{eq:ouincr}.

Now let $h>0$ and $a\le t\le a+h$. From \eqref{eq:ouincr}, since $e^{-(t-a)}\le1$ and $|W_u-W_a|\le\sup_{a\le u\le a+h}|W_u-W_a|$ for $a\le u\le t$,
\begin{equation}\label{eq:ouincr2}
\begin{aligned}
|\zeta_t|&\le|\zeta_a|+\sqrt{2T}\Big(1+\int_a^te^{-(t-u)}du\Big)\sup_{a\le u\le a+h}|W_u-W_a|\\
&\le|\zeta_a|+2\sqrt{2T}\sup_{a\le u\le a+h}|W_u-W_a| .
\end{aligned}
\end{equation}
In the second line we used $\int_a^te^{-(t-u)}du=1-e^{-(t-a)}\le1$.

\emph{Step 2: the value at a fixed time.} By \eqref{eq:oucov}, $\zeta_a$ is a centred Gaussian vector with the covariance operator $s_a^2Q$, where $s_a^2=T(1-e^{-2a})\le T$. So its covariance operator is at most $TQ$. Hence, by the Schwarz inequality,
\[
\E|\zeta_a|\le\big(\E|\zeta_a|^2\big)^{1/2}=\big(s_a^2\tr Q\big)^{1/2}\le\sqrt{T\tr Q},
\]
and $\E\ip{\zeta_a}{e}^2=s_a^2\ip{Qe}{e}\le T\|Q\|$ for every unit vector $e$. Thus $|\zeta_a|$ is a Lipschitz function, with constant at most $(T\|Q\|)^{1/2}$, of a sequence of independent standard Gaussian variables. We write this out. For $a=0$ we have $\zeta_0=0$, so let $a>0$. Let $q_1\ge q_2\ge\dots$ be the positive eigenvalues of $Q$, repeated according to multiplicity, and let $e_1,e_2,\dots$ be orthonormal eigenvectors, $Qe_i=q_ie_i$. Then $\|Q\|=q_1$. The variables $g_i=\ip{\zeta_a}{e_i}/(s_a\sqrt{q_i})$ are jointly Gaussian and centred with $\E\,g_ig_j=\ip{Qe_i}{e_j}/\sqrt{q_iq_j}$, which is $1$ for $i=j$ and $0$ for $i\ne j$. So they are independent standard Gaussian variables. Moreover $|\zeta_a|^2=s_a^2\sum_iq_ig_i^2$ almost surely. For $m\ge1$ put
\[
F_m(g_1,\dots,g_m)=\Big(s_a^2\sum_{i\le m}q_ig_i^2\Big)^{1/2},
\]
which is the norm of the projection of $\zeta_a$ on the span of $e_1,\dots,e_m$. By the triangle inequality in $\R^m$, for $g,g'\in\R^m$,
\[
|F_m(g)-F_m(g')|\le\Big(s_a^2\sum_{i\le m}q_i(g_i-g_i')^2\Big)^{1/2}\le\big(T\|Q\|\big)^{1/2}\,|g-g'| .
\]
Also $\E\,F_m\le\E|\zeta_a|\le\sqrt{T\tr Q}$. We apply the Gaussian concentration inequality \cite[Theorem 5.6]{BLM2013} to these projections of $\zeta_a$ on the span of finitely many eigenvectors of $Q$. For every $\ell'\ge0$ it gives
\[
\P\big(F_m\ge\sqrt{T\tr Q}+\ell'\big)\le\P\big(F_m\ge\E\,F_m+\ell'\big)\le\exp\Big(-\frac{\ell'^2}{2T\|Q\|}\Big).
\]
Now we pass to the limit. The sequence $F_m$ increases to $|\zeta_a|$ as $m\to\infty$. Let $0\le\ell'<\ell$. If $|\zeta_a|\ge\sqrt{T\tr Q}+\ell$, then $F_m\ge\sqrt{T\tr Q}+\ell'$ for all large $m$. Hence
\[
\P\big(|\zeta_a|\ge\sqrt{T\tr Q}+\ell\big)\le\lim_{m\to\infty}\P\big(F_m\ge\sqrt{T\tr Q}+\ell'\big)\le\exp\Big(-\frac{\ell'^2}{2T\|Q\|}\Big).
\]
Letting $\ell'\to\ell$ we get
\begin{equation}\label{eq:gaussconc}
\P\big(|\zeta_a|\ge\sqrt{T\tr Q}+\ell\big)\le\exp\Big(-\frac{\ell^2}{2T\|Q\|}\Big)\qquad(\ell\ge0).
\end{equation}
For $\ell=0$ the bound is trivial.

\emph{Step 3: the supremum of the Wiener process on a short interval.} Let $h>0$, $\lambda>0$ and $\alpha=1/(4h\|Q\|)$. Let $q_1\ge q_2\ge\dots\ge0$ be the eigenvalues of $Q$, so that $|W_h|^2$ has the law of $h\sum_iq_ig_i^2$ with independent standard Gaussian variables $g_i$. For a standard Gaussian variable $g$ and a number $\theta<\frac12$ we have
\begin{equation}\label{eq:xs2d}
\E\,e^{\theta g^2}=\frac1{\sqrt{2\pi}}\int_{-\infty}^\infty e^{\theta z^2-z^2/2}\,dz=\frac1{\sqrt{2\pi}}\int_{-\infty}^\infty e^{-(1-2\theta)z^2/2}\,dz=(1-2\theta)^{-1/2}.
\end{equation}
We use this with $\theta=\alpha hq_i$. Here $2\alpha hq_i=q_i/(2\|Q\|)\le\frac12$. We also use the inequality $-\log(1-z)\le2z$ for $0\le z\le\frac12$. It holds because the function $2z+\log(1-z)$ vanishes at $z=0$ and has the derivative $2-1/(1-z)\ge0$ on $[0,\frac12]$. By independence and monotone convergence,
\begin{align*}
\E\,e^{\alpha|W_h|^2}&=\prod_i\big(1-2\alpha hq_i\big)^{-1/2}=\exp\Big(-\frac12\sum_i\log\Big(1-\frac{q_i}{2\|Q\|}\Big)\Big)\\
&\le\exp\Big(\sum_i\frac{q_i}{2\|Q\|}\Big)=\exp\Big(\frac{\tr Q}{2\|Q\|}\Big).
\end{align*}
The same computation with $u\le h$ in place of $h$ shows that $\E\,e^{\alpha|W_u|^2}$ is finite for $0\le u\le h$.

The process $|W_u|^2$ is a submartingale. Indeed, for $s\le u$ we have $|W_u|^2=|W_s|^2+2\ip{W_s}{W_u-W_s}+|W_u-W_s|^2$. The increment $W_u-W_s$ is centred and independent of $(W_r)_{r\le s}$. So the conditional expectation of $|W_u|^2$ given $(W_r)_{r\le s}$ is $|W_s|^2+(u-s)\tr Q\ge|W_s|^2$. Since the function $z\mapsto e^{\alpha z}$ is convex and increasing, the Jensen inequality for conditional expectations shows that $e^{\alpha|W_u|^2}$ is a positive submartingale on $[0,h]$. The Doob maximal inequality for a positive submartingale $(M_u)_{u\le h}$ with continuous paths says that $\P(\sup_{u\le h}M_u\ge b)\le\E\,M_h/b$ for $b>0$. With $M_u=e^{\alpha|W_u|^2}$ and $b=e^{\alpha\lambda^2}$ we get
\begin{equation}\label{eq:wsup}
\begin{aligned}
\P\Big(\sup_{0\le u\le h}|W_u|\ge\lambda\Big)&=\P\Big(\sup_{0\le u\le h}e^{\alpha|W_u|^2}\ge e^{\alpha\lambda^2}\Big)\le e^{-\alpha\lambda^2}\,\E\,e^{\alpha|W_h|^2}\\
&\le\exp\Big(\frac{\tr Q}{2\|Q\|}-\frac{\lambda^2}{4h\|Q\|}\Big).
\end{aligned}
\end{equation}

\emph{Step 4: union bound.} Let $h=\eta^2/16$ and cover $[0,N]$ by the intervals $[a_n,a_n+h]$, $a_n=nh$, $0\le n<N/h$. Their number is at most $N/h+1=16N/\eta^2+1$. Fix $n$ and suppose that $\sup_{a_n\le t\le a_n+h}|\zeta_t|\ge r$. Then by \eqref{eq:ouincr2} with $a=a_n$,
\[
|\zeta_{a_n}|+2\sqrt{2T}\sup_{a_n\le u\le a_n+h}|W_u-W_{a_n}|\ge r=(1-\eta)r+\eta r ,
\]
and so at least one of the inequalities
\[
|\zeta_{a_n}|\ge(1-\eta)r,\qquad2\sqrt{2T}\sup_{a_n\le u\le a_n+h}|W_u-W_{a_n}|\ge\eta r
\]
holds. The probability of the first one is bounded by \eqref{eq:gaussconc} with $\ell=((1-\eta)r-\sqrt{T\tr Q})_+$. Indeed, if $\ell>0$ then $(1-\eta)r=\sqrt{T\tr Q}+\ell$. When $\ell=0$ the bound is trivial. For the second one we use \eqref{eq:wsup} with $\lambda=\eta r/(2\sqrt{2T})$. This is possible because $(W_{a_n+u}-W_{a_n})_{u\ge0}$ is again a Wiener process with covariance $Q$. Here $\lambda^2=\eta^2r^2/(8T)$ and $4h=\eta^2/4$, so that
\[
\frac{\lambda^2}{4h\|Q\|}=\frac{\eta^2r^2}{8T}\cdot\frac{16}{4\eta^2\|Q\|}=\frac{r^2}{2T\|Q\|} .
\]
Thus, for each $n$,
\[
\P\Big(\sup_{a_n\le t\le a_n+h}|\zeta_t|\ge r\Big)\le\exp\Big(-\frac{\big((1-\eta)r-\sqrt{T\tr Q}\big)_+^2}{2T\|Q\|}\Big)+\exp\Big(\frac{\tr Q}{2\|Q\|}-\frac{r^2}{2T\|Q\|}\Big).
\]
If $\sup_{0\le t\le N}|\zeta_t|\ge r$, then the supremum over at least one of the intervals $[a_n,a_n+h]$ is at least $r$. Summing over $n$ gives \eqref{eq:oumax}.
\end{proof}

In $\R^d$ with $Q$ the identity we have $\tr Q=d$ and $\|Q\|=1$. We shall also need a bound for the second moment of the supremum.

\begin{corollary}\label{cor:oumoment}
Let $H=\R^d$ and let $Q$ be the identity. Then for all $T>0$ and $N>0$
\[
\E\sup_{0\le t\le N}|\zeta_t|^2\le120\,T\,\big(d+\log(1+N)\big).
\]
\end{corollary}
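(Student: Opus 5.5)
We integrate the tail bound of Proposition \ref{prop:oumax}. With $\tr Q=d$ and $\|Q\|=1$, put $S=\sup_{0\le t\le N}|\zeta_t|$. We write
\[
\E S^2=\int_0^\infty 2r\,\P(S\ge r)\,dr\le r_0^2+\int_{r_0}^\infty 2r\,\P(S\ge r)\,dr ,
\]
and we choose the threshold $r_0^2=c\,T\,(d+\log(1+N))$ with an absolute constant $c$. We fix $\eta=\tfrac12$ in \eqref{eq:oumax}. The prefactor is then $64N+1\le 64(1+N)$, and for $r\ge r_0$ the tail is at most
\[
64(1+N)\Big[\exp\Big(-\frac{(r/2-\sqrt{Td})_+^2}{2T}\Big)+\exp\Big(\frac d2-\frac{r^2}{2T}\Big)\Big].
\]

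Next we make both exponents comparable to $-r^2/cT$ for $r\ge r_0$. If $c\ge 16$, then $r_0\ge 4\sqrt{Td}$, hence $\sqrt{Td}\le r/4$ and $(r/2-\sqrt{Td})_+\ge r/4$. So the first exponential is at most $e^{-r^2/32T}$. For the second we use $d/2\le r^2/(2c T)$, so it is at most $e^{-r^2/4T}\le e^{-r^2/32T}$. Thus for $r\ge r_0$ we have
\[
\P(S\ge r)\le 128(1+N)\,e^{-r^2/32T}.
\]
Integrating gives
\[
\int_{r_0}^\infty 2r\cdot128(1+N)e^{-r^2/32T}\,dr=128\cdot32\,T\,(1+N)\,e^{-r_0^2/32T}.
\]
This term is $O(T)$ once $r_0^2/32T\ge\log(1+N)$, that is, once $c\ge 32$.

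It remains to check the constants against the claimed $120$. The choice $c=32$ gives $\E S^2\le 32T(d+\log(1+N))+4096\,T$. The constant $4096$ is too large to absorb into $120T d$ directly when $d=1$. This bookkeeping of constants is the main obstacle. First we would take $c$ larger, say $c=40$. Then $e^{-r_0^2/32T}\le e^{-5d/4}(1+N)^{-5/4}$, so the remainder becomes $4096\,T\,e^{-5d/4}(1+N)^{-1/4}$, which still fails for $d=1$, $N$ small.

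So the plan is to optimise. We would take $\eta$ smaller, which shrinks the $\sqrt{Td}$ loss but enlarges the prefactor. Alternatively we would choose $r_0^2=c\,T(d+\log(1+N))+c'T\log(\text{prefactor const})$ so that the exponential kills the prefactor $128$. The remainder is then $O(T)$ with a small constant. For example, $r_0^2=64T(d+\log(1+N))$ makes the prefactor times $e^{-r_0^2/64T}$ at most $128e^{-d}\le 48$. Using the decay rate $e^{-r^2/32T}$ split as half to beat the prefactor, the tail integral is $\le 2\cdot32T\cdot 48$, which is still large.

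Since the stated constant is $120$, we expect the intended route to use $\eta$ near $\tfrac12$ and the cruder fact $d+\log(1+N)\ge\log 2+1$ to absorb the remainder. We would tune $c\approx 100$ so that $c+(\text{remainder})/(1+\log 2)\le120$. The exponential $e^{-c/32}$ makes the remainder about $4096\,e^{-3}\approx 200$, giving roughly $200/1.7\approx 120$ extra. This is the delicate arithmetic we would need to redo carefully, possibly with a sharper Gaussian tail integral.
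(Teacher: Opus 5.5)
Your proposal does not prove the corollary. It stops before reaching the constant $120$, and the intermediate bound it relies on cannot reach it.

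After the step $(r/2-\sqrt{Td})_+\ge r/4$, the only information you keep is $\P(S\ge r)\le\min\{1,128(1+N)e^{-r^2/32T}\}$. The best any threshold $r_0$ can extract from this is $\E S^2\le 32T\big(\log(128(1+N))+1\big)$, obtained by splitting where $128(1+N)e^{-r_0^2/32T}=1$. For $d=1$ and $N\downarrow0$ the right side of the corollary tends to $120T$, while this bound is about $187T$. Equivalently, $\min_c\{c+4096e^{-c/32}\}\approx187$. So no tuning of $c$ rescues the argument. Your own closing estimate, $c\approx100$ plus roughly $120$, is over $200$.

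The ``cruder fact'' $d+\log(1+N)\ge1+\log2$ is also false for $N<1$. Only $d+\log(1+N)>1$ is available, and $d=1$ with small $N$ is exactly the binding case.

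The loss comes from giving up a factor $4$ in the Gaussian exponent, which leaves a large prefactor to beat. If you keep the shift, the tail route does work. Take $\eta=\frac12$, $A=64N+1$, $\sigma^2=2T(\log A+1)$ and $r_0=2(\sqrt{Td}+\sigma)$.
\begin{itemize}
\item For the first term of \eqref{eq:oumax}, substitute $s=r/2-\sqrt{Td}$ and use $\int_\sigma^\infty e^{-s^2/2T}ds\le (T/\sigma)e^{-\sigma^2/2T}$. Since $Ae^{-\sigma^2/2T}=e^{-1}$ and $\sigma\ge\sqrt{2T}$, this tail contributes at most $8e^{-1}T(1+\sqrt{d/2})$.
\item For the second term, $r_0^2\ge4Td+4\sigma^2$ shows it contributes at most $2e^{-11/2}T$.
\item Together with $r_0^2\le 8Td+16T\big(\log64+1+\log(1+N)\big)$, this gives $\E S^2\le T\big(8d+16\log(1+N)+86+2.1\sqrt d\big)\le120\,T\big(d+\log(1+N)\big)$, using $d\ge1$.
\end{itemize}

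The paper avoids tail integration altogether. It covers $[0,N]$ by unit intervals and uses \eqref{eq:ouincr2} with $h=1$. This gives $\sup_t|\zeta_t|^2/T\le2\max_n|\zeta_n|^2/T+16\max_n\sigma_n^2$, where $\sigma_n=\sup_{n\le u\le n+1}|W_u-W_n|$. Each maximum is then bounded by $\E\max_n\xi_n\le4\log\sum_n\E e^{\xi_n/4}$, using two explicit exponential moments:
\begin{itemize}
\item $\E e^{|\zeta_n|^2/4T}\le2^{d/2}$;
\item $\E e^{\sigma_n^2/4}\le4\cdot2^{d/2}$, via Doob's $L^2$ inequality for $e^{|W_u|^2/8}$.
\end{itemize}
This yields $72\log(1+N)+25d+89$ directly.
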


\begin{proof}
Let $n_0$ be the smallest integer with $n_0\ge N$. Since $N>0$, we have $1\le n_0\le N+1$. For $0\le n<n_0$ we put $S_n=\sup_{n\le u\le n+1}|W_u-W_n|$. Every $t\in[0,N]$ lies in an interval $[n,n+1]$ with $0\le n<n_0$. For such $t$ the inequality \eqref{eq:ouincr2} with $a=n$ and $h=1$, divided by $\sqrt T$, gives $|\zeta_t|/\sqrt T\le|\zeta_n|/\sqrt T+2\sqrt2\,S_n$. By this and by the inequality $(z_1+z_2)^2\le2z_1^2+2z_2^2$, where $2\,(2\sqrt2)^2=16$,
\begin{equation}\label{eq:supsplit}
\sup_{0\le t\le N}\frac{|\zeta_t|^2}{T}\le\max_{n<n_0}\Big(\frac{|\zeta_n|}{\sqrt T}+2\sqrt2\,S_n\Big)^2\le2\max_{n<n_0}\frac{|\zeta_n|^2}{T}+16\max_{n<n_0}S_n^2 .
\end{equation}
For nonnegative random variables $\xi_n$ the Jensen inequality for the concave function $\log$ gives
\[
\E\max_n\xi_n=4\,\E\log\max_ne^{\xi_n/4}\le4\log\E\max_ne^{\xi_n/4}\le4\log\sum_n\E\,e^{\xi_n/4}.
\]
In the last step we used that a maximum of positive numbers is at most their sum.

We apply this first to $\xi_n=|\zeta_n|^2/T$. By \eqref{eq:oucov}, $\zeta_n/\sqrt T$ is a centred Gaussian vector in $\R^d$ with covariance $\sigma^2$ times the identity, $\sigma^2=1-e^{-2n}\le1$. So $|\zeta_n|^2/(4T)$ has the law of $(\sigma^2/4)\sum_{i\le d}g_i^2$ with independent standard Gaussian variables $g_i$. By \eqref{eq:xs2d} with $\theta=\sigma^2/4$,
\[
\E\exp\Big(\frac{|\zeta_n|^2}{4T}\Big)=\Big(1-\frac{\sigma^2}2\Big)^{-d/2}\le2^{d/2}.
\]
There are $n_0$ indices $n$. Hence
\[
\E\max_{n<n_0}\frac{|\zeta_n|^2}{T}\le4\log\big(n_0\,2^{d/2}\big)=4\log n_0+2d\log2 .
\]
Next we take $\xi_n=S_n^2$. Each $S_n$ has the law of $\sup_{0\le u\le1}|W_u|$. As in Step 3 of the proof of Proposition \ref{prop:oumax}, $e^{|W_u|^2/8}$ is a positive submartingale on $[0,1]$. The Doob inequality in $L^2$ says that $\E\sup_{u\le1}M_u^2\le4\,\E\,M_1^2$ for such a submartingale $M$. With $M_u=e^{|W_u|^2/8}$, and by \eqref{eq:xs2d} with $\theta=\frac14$, this gives
\[
\E\,e^{S_n^2/4}=\E\sup_{0\le u\le1}\big(e^{|W_u|^2/8}\big)^2\le4\,\E\,e^{|W_1|^2/4}=4\,\big(1-\tfrac12\big)^{-d/2}=4\cdot2^{d/2}.
\]
Hence $\E\max_{n<n_0}S_n^2\le4\log(4n_0\,2^{d/2})=4\log n_0+8\log2+2d\log2$. Taking expectations in \eqref{eq:supsplit},
\begin{align*}
\E\sup_{0\le t\le N}\frac{|\zeta_t|^2}{T}&\le2\,\big(4\log n_0+2d\log2\big)+16\,\big(4\log n_0+8\log2+2d\log2\big)\\
&=72\log n_0+36\,d\log2+128\log2\le72\log(1+N)+25\,d+89 .
\end{align*}
Here we used $n_0\le1+N$, $36\log2\le25$ and $128\log2\le89$. Since $d\ge1$ we have $25d+89\le114\,d$. So the right side is at most $72\log(1+N)+114\,d\le120\,(d+\log(1+N))$. This proves the claim.
\end{proof}

\subsection{Viability over exponentially long times}
Combining Theorem \ref{thm:envelope} and Proposition \ref{prop:oumax} we obtain the first consequence for the state process.

\begin{corollary}\label{cor:viability}
Let $\{C(t)\}$ be a moving convex set of speed $v$ in $H$, let $X$ be the state process of \eqref{eq:SDE} and let $r>0$, $N>0$, $\eta\in(0,1)$. Then, with probability at least one minus the right side of \eqref{eq:oumax},
\[
d_{C(t)}(X_t)<e^{-t}d_{C(0)}(X_0)+v\,(1-e^{-t})+r\qquad\text{for all }t\in[0,N].
\]
In particular, if $X_0\in C(0)$ and $0<c<r^2/(2\|Q\|)$, then
\[
\P\Big(d_{C(t)}(X_t)<v+r\ \text{ for all }t\le e^{c/T}\Big)\longrightarrow1\qquad(T\to0).
\]
\end{corollary}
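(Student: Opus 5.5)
The first statement follows by intersecting the pathwise envelope with a single event. By Theorem \ref{thm:envelope}, almost surely and for all $t\ge0$,
\[
d_{C(t)}(X_t)\le e^{-t}d_{C(0)}(X_0)+v(1-e^{-t})+|\zeta_t| .
\]
On the event $\{\sup_{0\le t\le N}|\zeta_t|<r\}$ this gives the strict inequality of the corollary for every $t\in[0,N]$. The complement $\{\sup_{0\le t\le N}|\zeta_t|\ge r\}$ has probability at most the right side of \eqref{eq:oumax} by Proposition \ref{prop:oumax}. This holds whenever $Q\ne0$; if $Q=0$ then $\zeta\equiv0$ and the claim is trivial. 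No independence between $X_0$ and $\zeta$ is needed, because \eqref{eq:env3} holds path by path.

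For the second statement, take $X_0\in C(0)$, so $d_{C(0)}(X_0)=0$. Since $v(1-e^{-t})\le v$, the first part with $N=e^{c/T}$ shows that the probability of the complementary event is at most the right side of \eqref{eq:oumax} with this $N$. It remains to show that this bound tends to zero as $T\to0$. Because $c<r^2/(2\|Q\|)$, we can fix $\eta\in(0,1)$ so small that $c<(1-\eta)^2r^2/(2\|Q\|)$. The prefactor is $16e^{c/T}/\eta^2+1$.

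\begin{itemize}
\item The second exponential is $e^{\tr Q/(2\|Q\|)}e^{-r^2/(2T\|Q\|)}$. Multiplied by the prefactor it is of order $e^{(c-r^2/(2\|Q\|))/T}\to0$.
\item In the first exponential, $\sqrt{T\tr Q}\to0$, so $((1-\eta)r-\sqrt{T\tr Q})_+^2\to(1-\eta)^2r^2$. For $T$ small the exponent is therefore at most $-c'/T$ with $c<c'<(1-\eta)^2r^2/(2\|Q\|)$. Multiplied by the prefactor this is $O(e^{-(c'-c)/T})\to0$.
\end{itemize}

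The only point needing care is this choice of $\eta$ and the absorption of the $\sqrt{T\tr Q}$ shift into a slightly smaller constant $c'$. The rest is direct substitution.
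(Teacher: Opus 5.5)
Your proof is correct and follows the paper's argument. The first statement comes from intersecting the pathwise envelope \eqref{eq:env3} with the event $\{\sup_{0\le t\le N}|\zeta_t|<r\}$ bounded by Proposition \ref{prop:oumax}; the second comes from choosing $\eta$ with $(1-\eta)^2r^2/(2\|Q\|)>c$, taking $N=e^{c/T}$, and checking that both terms of \eqref{eq:oumax}, multiplied by the prefactor, decay exponentially in $1/T$. Your remark on the degenerate case $Q=0$ is a small addition that the paper leaves implicit.
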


\begin{proof}
The first statement follows from \eqref{eq:env3} and \eqref{eq:oumax}. Indeed, by \eqref{eq:oumax} the event $\{\sup_{0\le t\le N}|\zeta_t|<r\}$ has probability at least one minus the right side of \eqref{eq:oumax}. On this event \eqref{eq:env3} gives the stated inequality for all $t\in[0,N]$.

For the second statement let $X_0\in C(0)$, so that $d_{C(0)}(X_0)=0$. Since $v\,(1-e^{-t})\le v$, the first statement gives $d_{C(t)}(X_t)<v+r$ for all $t\le N$ with probability at least one minus the right side of \eqref{eq:oumax}. Choose $\eta\in(0,1)$ with $(1-\eta)^2r^2/(2\|Q\|)>c$ and take $N=e^{c/T}$. We have to show that, as $T\to0$,
\begin{align*}
\Big(\frac{16e^{c/T}}{\eta^2}+1\Big)\exp\Big(-\frac{((1-\eta)r-\sqrt{T\tr Q})_+^2}{2T\|Q\|}\Big)&\longrightarrow0,\\
\Big(\frac{16e^{c/T}}{\eta^2}+1\Big)\exp\Big(\frac{\tr Q}{2\|Q\|}-\frac{r^2}{2T\|Q\|}\Big)&\longrightarrow0 .
\end{align*}
Put $\kappa_0=(1-\eta)^2r^2/(2\|Q\|)-c>0$. Since $e^{c/T}\ge1$, the first factor in both products is at most $(16/\eta^2+1)\,e^{c/T}$. For the first product put
\[
\kappa(T)=\frac{((1-\eta)r-\sqrt{T\tr Q})_+^2}{2\|Q\|}-c .
\]
Then $\kappa(T)\to\kappa_0$ as $T\to0$, so $\kappa(T)\ge\kappa_0/2$ for all small $T$, and
\begin{align*}
\Big(\frac{16e^{c/T}}{\eta^2}+1\Big)\exp\Big(-\frac{((1-\eta)r-\sqrt{T\tr Q})_+^2}{2T\|Q\|}\Big)&\le\Big(\frac{16}{\eta^2}+1\Big)e^{-\kappa(T)/T}\\
&\le\Big(\frac{16}{\eta^2}+1\Big)e^{-\kappa_0/(2T)}\longrightarrow0 .
\end{align*}
For the second product we note that $r^2/(2\|Q\|)-c\ge\kappa_0$. Hence
\[
\Big(\frac{16e^{c/T}}{\eta^2}+1\Big)\exp\Big(\frac{\tr Q}{2\|Q\|}-\frac{r^2}{2T\|Q\|}\Big)\le\Big(\frac{16}{\eta^2}+1\Big)\exp\Big(\frac{\tr Q}{2\|Q\|}\Big)\,e^{-\kappa_0/T}\longrightarrow0 .
\]
So both products tend to zero. This completes the proof.
\end{proof}

\begin{corollary}\label{cor:viabilityclosed}
Let $C\subset\R^d$ be nonempty and closed, let $X$ be the solution of \eqref{eq:SDEgen} with $X_0=x\in C$, let $r>0$, and let $0<c<r^2/2$. Then
\[
\P\Big(d_C(X_t)<r\ \text{ for all }t\le e^{c/T}\Big)\longrightarrow1\qquad(T\to0).
\]
\end{corollary}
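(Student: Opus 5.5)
The plan is to combine the third inequality of \eqref{eq:msq} in Theorem \ref{thm:compare}(iii) with the maximal inequality of Proposition \ref{prop:oumax}, exactly as in the proof of Corollary \ref{cor:viability}. The only difference is that the pathwise envelope \eqref{eq:env3} is replaced by the comparison in law.

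\emph{Step 1: reduction to the envelope process.} Since $x\in C$, we have $d_C(x)=0$. Take $N=e^{c/T}$ and apply the third inequality of \eqref{eq:msq} with this $N$ and the given $r$. It gives
\[
\P\Big(\sup_{0\le t\le N}d_C(X_t)\ge r\Big)\le\P\Big(\sup_{0\le t\le N}|\zeta_t|\ge r\Big),
\]
where $\zeta$ is the OU process in $\R^d$. The event in the statement contains the complement of the event on the left. So it suffices to show that the right side tends to zero as $T\to0$.

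\emph{Step 2: the maximal inequality.} In $\R^d$ with $Q$ the identity we have $\tr Q=d$ and $\|Q\|=1$. Since $c<r^2/2$, choose $\eta\in(0,1)$ with $(1-\eta)^2r^2/2>c$, and put $\kappa_0=(1-\eta)^2r^2/2-c>0$. Proposition \ref{prop:oumax} bounds the right side by
\[
\Big(\frac{16e^{c/T}}{\eta^2}+1\Big)\Big[\exp\Big(-\frac{((1-\eta)r-\sqrt{Td})_+^2}{2T}\Big)+\exp\Big(\frac d2-\frac{r^2}{2T}\Big)\Big].
\]
As in Corollary \ref{cor:viability}, the prefactor is at most $(16/\eta^2+1)e^{c/T}$. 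For the first term, the exponent $((1-\eta)r-\sqrt{Td})_+^2/2-c$ tends to $\kappa_0$, so for small $T$ the product is at most a constant times $e^{-\kappa_0/(2T)}$. For the second term, $r^2/2-c\ge\kappa_0$, so the product is at most $(16/\eta^2+1)e^{d/2}e^{-\kappa_0/T}$. Both bounds tend to zero.

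There is no real obstacle here. The only point to check is that the third inequality of \eqref{eq:msq} holds for an arbitrary closed set with a supremum over the deterministic horizon $N=e^{c/T}$. This is exactly what Theorem \ref{thm:compare}(ii) provides, because the domination $d_C(X_t)\le\varrho_t$ holds simultaneously for all $t$. Convexity and the pathwise envelope are not needed.
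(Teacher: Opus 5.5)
Your proposal is correct and follows the paper's proof: the third inequality of \eqref{eq:msq} with $d_C(x)=0$ and $N=e^{c/T}$ reduces the claim to $\P(\sup_{t\le N}|\zeta_t|\ge r)\to0$, which then follows from \eqref{eq:oumax} with $\tr Q=d$, $\|Q\|=1$ exactly as in the proof of Corollary \ref{cor:viability}. You have written out the final estimates in a little more detail than the paper does.
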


\begin{proof}
Put $N=e^{c/T}$. The complementary event is $\{\sup_{t\le N}d_C(X_t)\ge r\}$. By the third inequality of \eqref{eq:msq} with $d_C(x)=0$, its probability is at most $\P(\sup_{t\le N}|\zeta_t|\ge r)$. In $\R^d$ we have $\|Q\|=1$, so $c<r^2/(2\|Q\|)$. Hence this probability tends to zero by \eqref{eq:oumax}, as in the proof of Corollary \ref{cor:viability}.
\end{proof}

\begin{remark}
The constant $r^2/(2\|Q\|)$ is the best possible in general. Let $C=\{0\}$ and $X_0=0$, so that $X=\zeta$, and let $e$ be a unit vector with $Qe=\|Q\|e$. Then $\ip{\zeta}{e}$ is a real OU process, $d\ip\zeta e=-\ip\zeta e\,dt+\sqrt{2T\|Q\|}\,dB$ with a real Brownian motion $B$. Its exit time from $(-r,r)$ is $\exp((1+o(1))r^2/(2\|Q\|T))$ in probability \cite{FreidlinWentzell2012}. Also $|\zeta|\ge|\ip\zeta e|$. The bound of Corollary \ref{cor:viability} depends on the dimension only through $\tr Q$ and on the feasible sets only through their speed. This is useful when the state space is a space of high dimension and nothing is known about the shape of $C(t)$ except convexity.
\end{remark}

\section{The Gibbs state and the intrinsic volumes}\label{sec:gibbs}

In this section $C\subset\R^d$ is a nonempty compact convex set. We denote by $k=\dim C\in\{0,1,\dots,d\}$ the dimension of its affine hull, and by $\cH^k$ the Hausdorff measure of dimension $k$. We write
\[
u_C=\frac{\cH^k(C\cap\,\cdot\,)}{\cH^k(C)}
\]
for the uniform probability measure on $C$. When $k=0$ this is a Dirac mass. We begin with an example which shows the structure of the general result.

\begin{example}\label{ex:square}
Let $C=[0,1]^2\subset\R^2$. The plane is the union of $C$, of four half strips over the sides and of four quadrants at the vertices. On a half strip, $d_C$ is the distance to the side. On a quadrant it is the distance to the vertex. The common part of two of these nine pieces is contained in a line, which has area zero. So we can compute $\cZ_T(C)=\int_{\R^2}e^{-d_C(x)^2/2T}\,dx$ piece by piece.

On $C$ we have $d_C=0$. The integrand is equal to $1$ there. Hence the integral over $C$ is the area of $C$, which is $1$.

Next take the half strip $S=\{0\le x_1\le1,\ x_2<0\}$ over the lower side. For $x\in S$ we have $P_Cx=(x_1,0)$ and $d_C(x)=-x_2$. We put $r=-x_2$ and get
\[
\int_Se^{-d_C(x)^2/2T}\,dx=\int_0^1dx_1\int_0^\infty e^{-r^2/2T}\,dr=\int_0^\infty e^{-r^2/2T}\,dr ,
\]
and
\[
\int_0^\infty e^{-r^2/2T}\,dr=\frac12\int_{-\infty}^{\infty}e^{-r^2/2T}\,dr=\frac12\sqrt{2\pi T}.
\]
Here we used that the function $e^{-r^2/2T}$ is even and that the Gaussian density with mean zero and variance $T$ has integral one, that is, $\int_{-\infty}^{\infty}e^{-r^2/2T}\,dr=\sqrt{2\pi T}$. The other three half strips give the same value.

Now take the quadrant $Q=\{x_1<0,\ x_2<0\}$ at the vertex $(0,0)$. For $x\in Q$ we have $P_Cx=(0,0)$ and $d_C(x)=|x|$. The function $e^{-|x|^2/2T}$ does not change when the sign of a coordinate is changed, so it has the same integral over each of the four quadrants of the plane. Hence
\[
\int_Qe^{-d_C(x)^2/2T}\,dx=\frac14\int_{\R^2}e^{-|x|^2/2T}\,dx=\frac14\Big(\int_{-\infty}^{\infty}e^{-r^2/2T}\,dr\Big)^2=\frac14\cdot2\pi T .
\]
After a translation which moves the vertex to the origin, the same calculation applies at the other three vertices. Adding the nine integrals we obtain
\[
\cZ_T(C)=1+4\int_0^\infty e^{-r^2/2T}\,dr+4\cdot\frac14\int_{\R^2}e^{-|x|^2/2T}\,dx=1+2\sqrt{2\pi T}+2\pi T .
\]
The three coefficients $1$, $2$, $1$ are the area, half of the perimeter and the Euler characteristic of the square.

The same decomposition gives the law of the projection. Let $x$ be distributed according to the Gibbs state. Then $x$ lies in $C$ with probability $1/\cZ_T$. In this case $P_Cx=x$. It is uniform on the square because the density of the Gibbs state is constant on $C$. The point $x$ lies in one of the four half strips with probability $2\sqrt{2\pi T}/\cZ_T$. On $S$ the density depends on $x_2$ only, so $P_Cx=(x_1,0)$ is uniform on the lower side. The four sides have the same probability. So in this case $P_Cx$ is uniform on the boundary. Finally $x$ lies in one of the four quadrants with probability $2\pi T/\cZ_T$. Then $P_Cx$ is a vertex, each vertex having the same probability. Thus under the Gibbs state the projection $P_Cx$ is uniform on the square with probability $1/\cZ_T$, uniform on the boundary with probability $2\sqrt{2\pi T}/\cZ_T$ and equal to a uniformly chosen vertex with probability $2\pi T/\cZ_T$.
\end{example}

\subsection{The structure theorem}
We recall the notions from convex geometry which we need. We refer to \cite[Section 4.2]{Schneider2014} for these notions. Let $\omega_n=\pi^{n/2}/\Gamma(\frac n2+1)$ be the volume of the unit ball of $\R^n$. For every Borel set $E\subset\R^d$ and every $r>0$ the \emph{local Steiner formula} holds:
\begin{equation}\label{eq:steiner}
\Leb\big\{x\in\R^d:\ 0<d_C(x)\le r,\ P_Cx\in E\big\}=\sum_{j=0}^{d-1}r^{d-j}\,\omega_{d-j}\,\Phi_j(C,E).
\end{equation}
The left side is the volume of the set of points outside $C$ which are at distance at most $r$ from $C$ and whose nearest point lies in $E$. Here $\Phi_0(C,\cdot),\dots,\Phi_{d-1}(C,\cdot)$ are finite Borel measures concentrated on the boundary of $C$, called the \emph{curvature measures} of $C$. One puts $\Phi_d(C,E)=\Leb(C\cap E)$. The total masses $V_j(C)=\Phi_j(C,\R^d)$ are the \emph{intrinsic volumes} of $C$. We shall use that $V_0(C)=1$, that $V_j(C)>0$ for $0\le j\le k$, that $V_j(C)=0$ and $\Phi_j(C,\cdot)=0$ for $j>k$ and that
\begin{equation}\label{eq:Phik}
\Phi_k(C,E)=\cH^k(C\cap E),\qquad V_k(C)=\cH^k(C)>0 .
\end{equation}
For a set with nonempty interior, $2V_{d-1}(C)$ is the surface area of $C$. We denote by $\chi_n$ a random variable distributed as the Euclidean norm of a standard Gaussian vector in $\R^n$, with $\chi_0=0$. For $n\ge1$ its density is
\begin{equation}\label{eq:chi}
\frac{2}{2^{n/2}\,\Gamma(n/2)}\,r^{n-1}e^{-r^2/2}\qquad(r>0).
\end{equation}

The statements on $\Phi_j$ for $j\ge k$ follow from \eqref{eq:steiner} by a direct calculation. We include it for the convenience of the reader. If $k=d$, then \eqref{eq:Phik} is the definition of $\Phi_d$. Now let $k<d$. Let $L$ be the affine hull of $C$, and let $L^\perp$ be the space of vectors which are orthogonal to all the differences $c-c'$, $c,c'\in C$. The dimension of $L^\perp$ is $d-k$. Every $x\in\R^d$ can be written in a unique way as $x=y+z$ with $y\in L$ and $z\in L^\perp$. For $c\in C$ the vector $y-c$ is orthogonal to $z$, so $|x-c|^2=|y-c|^2+|z|^2$. Taking the minimum over $c\in C$ we get
\[
P_Cx=P_Cy,\qquad d_C(x)^2=d_C(y)^2+|z|^2 .
\]
Hence the set on the left side of \eqref{eq:steiner} is the set of all $y+z$ with $P_Cy\in E$ and $0<d_C(y)^2+|z|^2\le r^2$. We compute its volume by the Fubini theorem, integrating first in $z$. If $y\in C\cap E$, then $d_C(y)=0$ and $P_Cy=y$. In this case the admissible $z$ form the ball of radius $r$ in $L^\perp$ without its centre. The volume of this set is $\omega_{d-k}r^{d-k}$. If $y\in L\setminus C$ with $P_Cy\in E$ and $d_C(y)\le r$, then the admissible $z$ form the ball of radius $(r^2-d_C(y)^2)^{1/2}$ in $L^\perp$, of volume $\omega_{d-k}(r^2-d_C(y)^2)^{(d-k)/2}$. For all other $y$ there is no admissible $z$. So the left side of \eqref{eq:steiner} is equal to
\begin{equation}\label{eq:xs3c}
\omega_{d-k}\,r^{d-k}\,\cH^k(C\cap E)+\omega_{d-k}\int_{\{y\in L:\ 0<d_C(y)\le r,\ P_Cy\in E\}}\big(r^2-d_C(y)^2\big)^{(d-k)/2}\,\cH^k(dy).
\end{equation}
When $k=0$ the set $L$ is a single point. Then the integral in \eqref{eq:xs3c} is zero and the sums over $j<k$ below are empty. Now $L$ is a Euclidean space of dimension $k$. We apply \eqref{eq:steiner} in $L$ and denote by $\Phi'_0(C,\cdot),\dots,\Phi'_{k-1}(C,\cdot)$ the curvature measures of $C$ as a subset of $L$. For $\varrho>0$ this gives
\[
G(\varrho):=\cH^k\big\{y\in L:\ 0<d_C(y)\le\varrho,\ P_Cy\in E\big\}=\sum_{j=0}^{k-1}\varrho^{\,k-j}\,\omega_{k-j}\,\Phi'_j(C,E).
\]
The function $G$ is the distribution function of the image, under the map $y\mapsto d_C(y)$, of the measure $\cH^k$ restricted to $\{y\in L\setminus C:P_Cy\in E\}$. Its derivative is $G'(\varrho)=\sum_{j<k}(k-j)\,\varrho^{\,k-j-1}\omega_{k-j}\Phi'_j(C,E)$. Since $G(\varrho)=\int_0^\varrho G'(s)\,ds$ for all $\varrho>0$, this image measure has the density $G'$ on $(0,\infty)$. Therefore the integral in \eqref{eq:xs3c} is equal to
\[
\int_0^r\big(r^2-\varrho^2\big)^{(d-k)/2}\,G'(\varrho)\,d\varrho=\sum_{j=0}^{k-1}(k-j)\,\omega_{k-j}\,\Phi'_j(C,E)\int_0^r\big(r^2-\varrho^2\big)^{(d-k)/2}\varrho^{\,k-j-1}\,d\varrho .
\]
We compute the last integral with the change of variable $\varrho=r\sqrt u$, $d\varrho=\frac r2u^{-1/2}du$:
\begin{align*}
\int_0^r\big(r^2-\varrho^2\big)^{(d-k)/2}\varrho^{\,k-j-1}\,d\varrho&=\frac{r^{d-j}}2\int_0^1(1-u)^{(d-k)/2}\,u^{(k-j)/2-1}\,du\\
&=\frac{r^{d-j}}2\cdot\frac{\Gamma\big(\frac{k-j}2\big)\,\Gamma\big(\frac{d-k}2+1\big)}{\Gamma\big(\frac{d-j}2+1\big)} .
\end{align*}
Here the power of $r$ is $(d-k)+(k-j-1)+1=d-j$. In the second line we used the value of the beta integral. Since $\frac{k-j}2\,\Gamma(\frac{k-j}2)=\Gamma(\frac{k-j}2+1)$ and $\omega_n=\pi^{n/2}/\Gamma(\frac n2+1)$, we get
\[
\omega_{d-k}\,(k-j)\,\omega_{k-j}\cdot\frac12\cdot\frac{\Gamma\big(\frac{k-j}2\big)\,\Gamma\big(\frac{d-k}2+1\big)}{\Gamma\big(\frac{d-j}2+1\big)}=\frac{\pi^{(d-k)/2}\,\pi^{(k-j)/2}}{\Gamma\big(\frac{d-j}2+1\big)}=\omega_{d-j} .
\]
Thus \eqref{eq:xs3c} is equal to
\[
\omega_{d-k}\,r^{d-k}\,\cH^k(C\cap E)+\sum_{j=0}^{k-1}r^{d-j}\,\omega_{d-j}\,\Phi'_j(C,E).
\]
This is a polynomial in $r$ which is equal to the right side of \eqref{eq:steiner} for every $r>0$. Two polynomials which agree for all $r>0$ have the same coefficients. Comparing the coefficients of $r^{d-j}$ we find that $\Phi_j(C,E)=0$ for $k<j\le d-1$, that $\Phi_k(C,E)=\cH^k(C\cap E)$ and that $\Phi_j(C,E)=\Phi'_j(C,E)$ for $j<k$. Also $\Phi_d(C,E)=\Leb(C\cap E)=0$, because $C$ lies in $L$. Finally $\cH^k(C)>0$, since $C$ has interior points relative to $L$.

\begin{theorem}\label{thm:gibbs}
Let $C\subset\R^d$ be a nonempty compact convex set of dimension $k$ and let $T>0$. Then
\begin{equation}\label{eq:wills}
\cZ_T(C)=\sum_{j=0}^{k}(2\pi T)^{(d-j)/2}\,V_j(C).
\end{equation}
Moreover, if $x$ is distributed according to $\gamma_T^C$, then the law of the pair $(P_Cx,d_C(x))$ is
\begin{equation}\label{eq:mixture}
\sum_{j=0}^{k}w_j(T)\ \frac{\Phi_j(C,\cdot)}{V_j(C)}\otimes\Law\big(\sqrt T\,\chi_{d-j}\big),\qquad w_j(T)=\frac{(2\pi T)^{(d-j)/2}\,V_j(C)}{\cZ_T(C)}.
\end{equation}
\end{theorem}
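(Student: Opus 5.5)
The plan is to compute, for every Borel set $E\subset\R^d$ and every $r>0$, the unnormalised mass
\[
I(E,r)=\int_{\R^d}\one\{P_Cx\in E,\ d_C(x)\le r\}\,e^{-d_C(x)^2/2T}\,dx .
\]
Both \eqref{eq:wills} and \eqref{eq:mixture} follow once $I(E,r)$ is identified. The sets $E\times[0,r]$ form a $\pi$-system generating the Borel sets of $\R^d\times[0,\infty)$, so it is enough to identify the law of $(P_Cx,d_C(x))$ on these rectangles.

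First I split $\R^d$ into $C$ and its complement. On $C$ we have $d_C=0$ and $P_Cx=x$. This part contributes $\Leb(C\cap E)=\Phi_d(C,E)$. By \eqref{eq:Phik} this equals $\cH^d(C\cap E)$ when $k=d$, and $0$ when $k<d$.

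Outside $C$, I look at the image of Lebesgue measure restricted to $\{x\notin C:\ P_Cx\in E\}$ under the map $x\mapsto d_C(x)$. By the local Steiner formula \eqref{eq:steiner}, its distribution function on $(0,\infty)$ is the polynomial $F_E(\varrho)=\sum_{j<d}\varrho^{d-j}\omega_{d-j}\Phi_j(C,E)$. This is the same device used above for $G$. Hence the image measure has density $F_E'(\varrho)=\sum_{j<d}(d-j)\omega_{d-j}\Phi_j(C,E)\varrho^{d-j-1}$. It follows that
\[
I(E,r)=\Phi_d(C,E)+\sum_{j=0}^{d-1}(d-j)\,\omega_{d-j}\,\Phi_j(C,E)\int_0^r\varrho^{d-j-1}e^{-\varrho^2/2T}\,d\varrho ,
\]
and the terms with $j>k$ vanish because $\Phi_j(C,\cdot)=0$ there.

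The remaining step is arithmetic. Set $n=d-j\ge1$ and substitute $\varrho=\sqrt T s$. The identity $n\omega_n=2\pi^{n/2}/\Gamma(n/2)$ together with the chi density \eqref{eq:chi} gives
\[
n\,\omega_n\int_0^r\varrho^{n-1}e^{-\varrho^2/2T}\,d\varrho=(2\pi T)^{n/2}\,\P\big(\sqrt T\chi_n\le r\big).
\]
This is the one place where the constants must be tracked carefully. I check it by letting $r\to\infty$: the left side becomes $n\omega_n\cdot\tfrac12\Gamma(n/2)(2T)^{n/2}=(2\pi T)^{n/2}$, as it should. The term $j=d$, which occurs only when $k=d$, matches the same formula, since $\chi_0=0$ and $(2\pi T)^0=1$. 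Therefore
\[
I(E,r)=\sum_{j=0}^k(2\pi T)^{(d-j)/2}\,\Phi_j(C,E)\,\P\big(\sqrt T\chi_{d-j}\le r\big).
\]

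To conclude, I take $E=\R^d$ and let $r\to\infty$. Since $V_j(C)=\Phi_j(C,\R^d)$, this gives $\cZ_T(C)=I(\R^d,\infty)$, which is \eqref{eq:wills}. Dividing $I(E,r)$ by $\cZ_T(C)$ and writing $\Phi_j=V_j\cdot\Phi_j/V_j$, which is legitimate because $V_j(C)>0$ for $j\le k$, gives exactly the mixture \eqref{eq:mixture} evaluated on $E\times[0,r]$. The $\pi$-system argument then extends the equality to all Borel sets.

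I expect no real obstacle. The only delicate points are the passage from the distribution function to the density, which uses absolute continuity of a polynomial with zero constant term, and the bookkeeping of the $\Gamma$-constants in the chi identity.
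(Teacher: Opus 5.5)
Your proposal is correct and follows essentially the paper's route. The local Steiner formula gives the polynomial distribution function of $d_C$ on $\{x\notin C:\ P_Cx\in E\}$, differentiating it gives the density, and the identity $n\,\omega_n\,\varrho^{n-1}e^{-\varrho^2/2T}\,d\varrho=(2\pi T)^{n/2}\Law(\sqrt T\chi_n)(d\varrho)$ turns each term into a scaled chi law. The only cosmetic difference is the order of steps: the paper first identifies the joint image measure of Lebesgue measure under $x\mapsto(P_Cx,d_C(x))$ by a $\pi$-system argument and then integrates bounded Borel functions, whereas you fix $E$, integrate the one-dimensional density, and apply the $\pi$-system argument to the normalised law at the end.
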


\begin{proof}
We denote by $M$ the image of the Lebesgue measure on $\R^d\setminus C$ under the map $x\mapsto(P_Cx,d_C(x))$. It is a measure on $\R^d\times(0,\infty)$, because $d_C(x)>0$ for $x\notin C$. Let $E\subset\R^d$ be a Borel set and let $r>0$. By the definition of $M$ and by \eqref{eq:steiner},
\[
M\big(E\times(0,r]\big)=\Leb\big\{x\in\R^d:\ 0<d_C(x)\le r,\ P_Cx\in E\big\}=\sum_{j=0}^{d-1}r^{d-j}\omega_{d-j}\Phi_j(C,E).
\]
For every integer $n\ge1$ we have $r^n=\int_0^rn\,\varrho^{\,n-1}\,d\varrho$. We use this with $n=d-j$ in each term and get
\[
M\big(E\times(0,r]\big)=\sum_{j=0}^{d-1}r^{d-j}\omega_{d-j}\Phi_j(C,E)=\sum_{j=0}^{d-1}\Phi_j(C,E)\int_0^r(d-j)\,\omega_{d-j}\,\varrho^{\,d-j-1}\,d\varrho .
\]
The right side is the value at $E\times(0,r]$ of the measure which appears in \eqref{eq:M} below. So this measure and $M$ agree on all sets of the form $E\times(0,r]$. These sets are closed under finite intersections, since
\[
\big(E\times(0,r]\big)\cap\big(E'\times(0,r']\big)=(E\cap E')\times\big(0,\min\{r,r'\}\big].
\]
They generate the Borel sets of $\R^d\times(0,\infty)$. Both sides are finite on them. Also $\R^d\times(0,\infty)$ is the union of the increasing sequence of sets $\R^d\times(0,m]$, $m=1,2,\dots$. Two measures which agree on such a family of sets are equal. Therefore
\begin{equation}\label{eq:M}
M(dp,d\varrho)=\sum_{j=0}^{d-1}\Phi_j(C,dp)\,(d-j)\,\omega_{d-j}\,\varrho^{\,d-j-1}\,d\varrho .
\end{equation}

Next we compute the integrals in the variable $\varrho$. Let $n=d-j\ge1$. Since $\Gamma(\frac n2+1)=\frac n2\,\Gamma(\frac n2)$, we have
\[
n\,\omega_n=\frac{n\,\pi^{n/2}}{\Gamma(\frac n2+1)}=\frac{n\,\pi^{n/2}}{\frac n2\,\Gamma(\frac n2)}=\frac{2\pi^{n/2}}{\Gamma(\frac n2)} .
\]
We also write $\pi^{n/2}=(2\pi T)^{n/2}/(2T)^{n/2}$. Then, for $\varrho>0$,
\[
n\,\omega_n\,\varrho^{\,n-1}e^{-\varrho^2/2T}=\frac{2\pi^{n/2}}{\Gamma(n/2)}\,\varrho^{\,n-1}e^{-\varrho^2/2T}=(2\pi T)^{n/2}\cdot\frac{2}{(2T)^{n/2}\Gamma(n/2)}\,\varrho^{\,n-1}e^{-\varrho^2/2T}.
\]
We show that the last factor is the density of $\sqrt T\,\chi_n$. Let $f_n$ be the density \eqref{eq:chi} of $\chi_n$. For $a>0$, the change of variable $\varrho=\sqrt T\,r$ gives
\[
\P\big(\sqrt T\,\chi_n\le a\big)=\P\big(\chi_n\le a/\sqrt T\big)=\int_0^{a/\sqrt T}f_n(r)\,dr=\int_0^{a}f_n\Big(\frac{\varrho}{\sqrt T}\Big)\,\frac{d\varrho}{\sqrt T} .
\]
So the density of $\sqrt T\,\chi_n$ at the point $\varrho>0$ is
\[
\frac1{\sqrt T}\,f_n\Big(\frac{\varrho}{\sqrt T}\Big)=\frac1{\sqrt T}\cdot\frac{2}{2^{n/2}\,\Gamma(n/2)}\cdot\frac{\varrho^{\,n-1}}{T^{(n-1)/2}}\,e^{-\varrho^2/2T}=\frac{2}{(2T)^{n/2}\Gamma(n/2)}\,\varrho^{\,n-1}e^{-\varrho^2/2T},
\]
because $\sqrt T\cdot T^{(n-1)/2}\cdot2^{n/2}=T^{n/2}\,2^{n/2}=(2T)^{n/2}$. This is the last factor above. It has integral one, being the density of a probability measure. Let $\nu_n$ denote the law of $\sqrt T\,\chi_n$, so that $\nu_0$ is the Dirac mass at $0$. We have shown that
\begin{equation}\label{eq:xs3a}
n\,\omega_n\,\varrho^{\,n-1}e^{-\varrho^2/2T}\,d\varrho=(2\pi T)^{n/2}\,\nu_n(d\varrho)\qquad(n\ge1).
\end{equation}

We now put \eqref{eq:M} and \eqref{eq:xs3a} together. Let $F$ be a bounded Borel function on $\R^d\times[0,\infty)$. We split the integral of $F(P_Cx,d_C(x))\,e^{-d_C(x)^2/2T}$ over $\R^d$ into the integral over $C$ and the integral over its complement. If $x\in C$, then $P_Cx=x$ and $d_C(x)=0$, so the integrand is $F(x,0)$. On the complement we use the definition of the image measure, which says that
\[
\int_{\R^d\setminus C}G(P_Cx,d_C(x))\,dx=\int G(p,\varrho)\,M(dp,d\varrho)
\]
for every Borel function $G$ on $\R^d\times(0,\infty)$ such that $x\mapsto G(P_Cx,d_C(x))$ is integrable on $\R^d\setminus C$. The function $G(p,\varrho)=F(p,\varrho)\,e^{-\varrho^2/2T}$ has this property, because $F$ is bounded and $\cZ_T(C)$ is finite. We obtain
\[
\int_{\R^d}F(P_Cx,d_C(x))\,e^{-d_C(x)^2/2T}dx=\int_CF(x,0)\,dx+\int F(p,\varrho)\,e^{-\varrho^2/2T}M(dp,d\varrho).
\]
The measure $\Phi_d(C,\cdot)$ is the Lebesgue measure restricted to $C$, so the first term on the right is $\int F(p,0)\,\Phi_d(C,dp)$. In the second term we insert \eqref{eq:M} and then \eqref{eq:xs3a} with $n=d-j$:
\begin{align*}
\int F(p,\varrho)\,e^{-\varrho^2/2T}M(dp,d\varrho)&=\sum_{j=0}^{d-1}\int\!\!\int F(p,\varrho)\,(d-j)\,\omega_{d-j}\,\varrho^{\,d-j-1}e^{-\varrho^2/2T}\,d\varrho\ \Phi_j(C,dp)\\
&=\sum_{j=0}^{d-1}(2\pi T)^{(d-j)/2}\int\!\!\int F(p,\varrho)\,\Phi_j(C,dp)\,\nu_{d-j}(d\varrho).
\end{align*}
All these integrals are finite, because $F$ is bounded, the measures $\Phi_j(C,\cdot)$ are finite and the $\nu_n$ are probability measures. Putting the two parts together we have
\begin{align*}
\int_{\R^d}F(P_Cx,d_C(x))\,e^{-d_C(x)^2/2T}dx&=\int F(p,0)\,\Phi_d(C,dp)\\
&\quad+\sum_{j=0}^{d-1}(2\pi T)^{(d-j)/2}\int\!\!\int F(p,\varrho)\,\Phi_j(C,dp)\,\nu_{d-j}(d\varrho),
\end{align*}
where $\nu_n$ is the law of $\sqrt T\chi_n$. Since $\nu_0$ is the Dirac mass at $0$ and $(2\pi T)^0=1$, the term with $\Phi_d$ is the term $j=d$ of the same sum, namely
\[
\int F(p,0)\,\Phi_d(C,dp)=(2\pi T)^{0}\int\!\!\int F(p,\varrho)\,\Phi_d(C,dp)\,\nu_0(d\varrho).
\]
Also $\Phi_j(C,\cdot)=0$ for $j>k$, so only the terms with $j\le k$ remain. Thus
\begin{equation}\label{eq:xs3b}
\int_{\R^d}F(P_Cx,d_C(x))\,e^{-d_C(x)^2/2T}dx=\sum_{j=0}^{k}(2\pi T)^{(d-j)/2}\int\!\!\int F(p,\varrho)\,\Phi_j(C,dp)\,\nu_{d-j}(d\varrho).
\end{equation}
Take $F=1$ in \eqref{eq:xs3b}. The left side is $\cZ_T(C)$. On the right side each double integral is equal to $\Phi_j(C,\R^d)\,\nu_{d-j}([0,\infty))=V_j(C)$. This gives \eqref{eq:wills}. Now divide \eqref{eq:xs3b} by $\cZ_T(C)$. For $j\le k$ we have $V_j(C)>0$, so we can write $\Phi_j(C,dp)=V_j(C)\cdot\Phi_j(C,dp)/V_j(C)$. We obtain
\[
\int_{\R^d}F(P_Cx,d_C(x))\,\gamma_T^C(dx)=\sum_{j=0}^{k}w_j(T)\int\!\!\int F(p,\varrho)\,\frac{\Phi_j(C,dp)}{V_j(C)}\,\nu_{d-j}(d\varrho).
\]
Since this holds for every bounded Borel function $F$, it is the statement \eqref{eq:mixture}. This completes the proof.
\end{proof}

\begin{remark}
With the change of variable $x=\sqrt{2\pi T}\,z$ formula \eqref{eq:wills} becomes
\[
\int_{\R^d}e^{-\pi d_K(z)^2}\,dz=\sum_{j=0}^dV_j(K)\qquad\text{for }K=C/\sqrt{2\pi T},
\]
which is the integral representation of the Wills functional \cite{Hadwiger1975}. Indeed, $dx=(2\pi T)^{d/2}dz$ and $d_C(x)=\sqrt{2\pi T}\,d_K(z)$, so that $\cZ_T(C)=(2\pi T)^{d/2}\int_{\R^d}e^{-\pi d_K(z)^2}dz$. On the other hand $V_j$ is homogeneous of degree $j$, so that $V_j(C)=(2\pi T)^{j/2}V_j(K)$ and the right side of \eqref{eq:wills} is $(2\pi T)^{d/2}\sum_jV_j(K)$. For a polytope, $\Phi_j(C,\cdot)$ is the sum over the faces $F$ of dimension $j$ of $\cH^j$ restricted to $F$ multiplied by the external angle of $C$ at $F$ \cite[Section 4.2]{Schneider2014}. So for a polytope \eqref{eq:mixture} has the same form as in Example \ref{ex:square}.
\end{remark}

\subsection{Consequences}
The first corollary gives the mean square distance from the feasible set at equilibrium, which measures how much the constraint is violated at temperature $T$.

\begin{corollary}\label{cor:violation}
In the setting of Theorem \ref{thm:gibbs},
\begin{equation}\label{eq:violation}
\int d_C^2\,d\gamma_T^C=T\sum_{j=0}^{k}(d-j)\,w_j(T),\qquad\text{hence}\qquad(d-k)\,T\le\int d_C^2\,d\gamma_T^C\le d\,T .
\end{equation}
If $C$ has nonempty interior, then $\gamma_T^C(C)=V_d(C)/\cZ_T(C)$ and, as $T\to0$,
\[
\int d_C^2\,d\gamma_T^C=\sqrt{2\pi}\,\frac{V_{d-1}(C)}{V_d(C)}\,T^{3/2}\,\big(1+O(\sqrt T)\big).
\]
\end{corollary}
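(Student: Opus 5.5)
The plan is to read everything off the mixture formula \eqref{eq:mixture} of Theorem \ref{thm:gibbs}. Take the bounded Borel function $F(p,\varrho)=\min(\varrho^2,m)$, let $m\to\infty$ and use monotone convergence; this gives
\[
\int d_C^2\,d\gamma_T^C=\sum_{j=0}^k w_j(T)\,\E\big(T\chi_{d-j}^2\big).
\]
Since $\chi_n^2$ is the squared norm of a standard Gaussian vector in $\R^n$, we have $\E\chi_n^2=n$, with $n=0$ included because $\chi_0=0$. This yields the identity in \eqref{eq:violation}. The weights $w_j(T)$ are nonnegative and sum to one by \eqref{eq:wills}, and $d-k\le d-j\le d$ for $0\le j\le k$. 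So the sum is a convex combination of the numbers $(d-j)T$, and this gives the two-sided bound.

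For nonempty interior we have $k=d$. Take $F(p,\varrho)=\one_{\{\varrho=0\}}$ in \eqref{eq:mixture}. For $j<d$ the law of $\sqrt T\chi_{d-j}$ has a density and puts no mass at $0$, while for $j=d$ it is the Dirac mass at $0$. Hence $\gamma_T^C(\{d_C=0\})=\gamma_T^C(C)=w_d(T)=V_d(C)/\cZ_T(C)$. (Alternatively, the density of $\gamma_T^C$ is constant equal to $\cZ_T(C)^{-1}$ on $C$.)

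For the asymptotics, write
\[
\int d_C^2\,d\gamma_T^C=T\sum_{j<d}(d-j)(2\pi T)^{(d-j)/2}V_j(C)/\cZ_T(C).
\]
The $j=d-1$ term is $T\sqrt{2\pi T}\,V_{d-1}(C)/\cZ_T(C)$. The terms with $j\le d-2$ carry at least $T\cdot T^{1}$, which is $O(T^{2})$, i.e.\ a relative $O(\sqrt T)$ compared with $T^{3/2}$. By \eqref{eq:wills}, $\cZ_T(C)=V_d(C)\big(1+O(\sqrt T)\big)$ since $V_d(C)>0$. Dividing gives $\sqrt{2\pi}\,V_{d-1}(C)V_d(C)^{-1}T^{3/2}(1+O(\sqrt T))$. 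Note that $V_{d-1}(C)>0$ by the stated positivity of $V_j$ for $j\le k$, so the relative error statement makes sense.

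I expect no real obstacle; the only point needing care is justifying the use of the unbounded function $\varrho^2$ in \eqref{eq:mixture}, which the truncation and monotone convergence handle, with finiteness guaranteed by \eqref{eq:xs1a}.
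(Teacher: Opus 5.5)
Your proposal is correct and follows the paper's proof essentially step for step: the identity is read off the mixture formula \eqref{eq:mixture} using $\E\chi_n^2=n$, the two-sided bound comes from the weights forming a convex combination, $\gamma_T^C(C)$ is computed from the constant density on $C$ (your alternative via the atom of $\nu_0$ at zero also works), and the asymptotics come from isolating the $j=d-1$ term and using $\cZ_T(C)=V_d(C)(1+O(\sqrt T))$. The only addition is your truncation-and-monotone-convergence step justifying the unbounded test function $\varrho^2$, which the paper passes over by speaking directly of the law of $d_C(x)$.
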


\begin{proof}
Let $x$ be distributed according to $\gamma_T^C$. By \eqref{eq:mixture} the law of $d_C(x)$ is the mixture of the laws of $\sqrt T\,\chi_{d-j}$, $0\le j\le k$, with the weights $w_j(T)$. Hence
\[
\int d_C^2\,d\gamma_T^C=\sum_{j=0}^kw_j(T)\,\E\big(\sqrt T\,\chi_{d-j}\big)^2=\sum_{j=0}^kw_j(T)\,T\,\E\chi_{d-j}^2 .
\]
We now check that $\E\chi_n^2=n$. For $n=0$ this is clear, since $\chi_0=0$. Let $n\ge1$. By definition $\chi_n^2$ has the law of $g_1^2+\dots+g_n^2$, where $g_1,\dots,g_n$ are independent standard Gaussian variables. Since $\E g_i^2=1$, we get $\E\chi_n^2=n$. We can also find the same value from the density \eqref{eq:chi}. To see this, let $m>0$. The substitution $s=r^2/2$, that is, $r=\sqrt{2s}$ and $dr=ds/\sqrt{2s}$, gives
\begin{align*}
\int_0^\infty r^{m-1}e^{-r^2/2}\,dr&=\int_0^\infty(2s)^{(m-1)/2}e^{-s}\,\frac{ds}{\sqrt{2s}}\\
&=2^{m/2-1}\int_0^\infty s^{m/2-1}e^{-s}\,ds=2^{m/2-1}\,\Gamma\Big(\frac m2\Big).
\end{align*}
For $m=n$ this shows that \eqref{eq:chi} has integral one. For $m=n+2$ it gives
\[
\E\chi_n^2=\frac{2}{2^{n/2}\,\Gamma(\frac n2)}\int_0^\infty r^{n+1}e^{-r^2/2}\,dr=\frac{2\cdot2^{n/2}\,\Gamma(\frac n2+1)}{2^{n/2}\,\Gamma(\frac n2)}=2\cdot\frac n2=n .
\]
Inserting $\E\chi_{d-j}^2=d-j$ in the sum above we obtain the equality in \eqref{eq:violation}.

For the inequalities we note that $\sum_{j=0}^kw_j(T)=1$ by \eqref{eq:wills} and the definition of the weights and that $d-k\le d-j\le d$ for $0\le j\le k$. So
\[
(d-k)\,T=T\sum_{j=0}^k(d-k)\,w_j(T)\le T\sum_{j=0}^k(d-j)\,w_j(T)\le T\sum_{j=0}^kd\,w_j(T)=d\,T .
\]

Suppose now that $C$ has nonempty interior, so that $k=d$. Since $d_C=0$ on $C$ and $V_d(C)=\Leb(C)$,
\[
\gamma_T^C(C)=\frac1{\cZ_T(C)}\int_Ce^{-d_C(x)^2/2T}\,dx=\frac{\Leb(C)}{\cZ_T(C)}=\frac{V_d(C)}{\cZ_T(C)} .
\]
This number is $w_d(T)$, the mass of $\{d_C=0\}=C$. In the sum in \eqref{eq:violation} the term $j=d$ vanishes, because of the factor $d-j$. With the definition of the weights we get
\[
\int d_C^2\,d\gamma_T^C=T\sum_{j=0}^{d-1}(d-j)\,w_j(T)=\frac{T}{\cZ_T(C)}\sum_{j=0}^{d-1}(d-j)\,(2\pi T)^{(d-j)/2}\,V_j(C).
\]
The leading term of the sum is the term $j=d-1$, which is $\sqrt{2\pi T}\,V_{d-1}(C)$. It corresponds to $w_{d-1}(T)=\sqrt{2\pi T}\,V_{d-1}(C)/\cZ_T(C)$. For $j\le d-2$ the exponent $(d-j)/2$ is at least $1$, so for $T\le1$ each of the other terms is at most a constant times $T$. Since $V_{d-1}(C)>0$, this gives
\[
\sum_{j=0}^{d-1}(d-j)\,(2\pi T)^{(d-j)/2}\,V_j(C)=\sqrt{2\pi T}\,V_{d-1}(C)\,\big(1+O(\sqrt T)\big).
\]
In the same way, by \eqref{eq:wills} with $k=d$,
\[
\cZ_T(C)=V_d(C)+\sum_{j=0}^{d-1}(2\pi T)^{(d-j)/2}\,V_j(C)=V_d(C)\,\big(1+O(\sqrt T)\big),
\]
because $V_d(C)>0$ and every term of the last sum is $O(\sqrt T)$. Dividing, we find
\[
\int d_C^2\,d\gamma_T^C=T\cdot\frac{\sqrt{2\pi T}\,V_{d-1}(C)\,\big(1+O(\sqrt T)\big)}{V_d(C)\,\big(1+O(\sqrt T)\big)}=\sqrt{2\pi}\,\frac{V_{d-1}(C)}{V_d(C)}\,T^{3/2}\,\big(1+O(\sqrt T)\big).
\]
This completes the proof.
\end{proof}

Our second corollary gives the zero temperature limit of the Gibbs state, with a rate.

\begin{corollary}\label{cor:zerotemp}
In the setting of Theorem \ref{thm:gibbs}, let $D=\diam C$. Then
\begin{equation}\label{eq:zt1}
\TV\big((P_C)_\#\gamma_T^C,\ u_C\big)\le1-w_k(T)\le\sum_{j=0}^{k-1}(2\pi T)^{(k-j)/2}\,\frac{V_j(C)}{V_k(C)},
\end{equation}
\begin{equation}\label{eq:zt2}
\Wt\big(\gamma_T^C,\ u_C\big)\le\sqrt{dT}+D\,\big(1-w_k(T)\big)^{1/2}.
\end{equation}
In particular $\gamma_T^C\to u_C$ as $T\to0$ and $\Wt(\gamma_T^C,u_C)=O(T^{1/4})$.
\end{corollary}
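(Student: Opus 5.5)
The plan is to read everything off the mixture \eqref{eq:mixture} of Theorem \ref{thm:gibbs}. By that theorem, $(P_C)_\#\gamma_T^C=\sum_{j=0}^kw_j(T)\,\Phi_j(C,\cdot)/V_j(C)$. By \eqref{eq:Phik}, the term $j=k$ is $w_k(T)\,u_C$. Write $\mu=(P_C)_\#\gamma_T^C$, so that
\[
\mu=w_k\,u_C+(1-w_k)\,m,
\]
where $m$ is a probability measure. Then $\mu-u_C=(1-w_k)(m-u_C)$, and hence $\TV(\mu,u_C)\le(1-w_k)\TV(m,u_C)\le1-w_k$. This is the first inequality of \eqref{eq:zt1}. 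For the second, note that $1-w_k=\sum_{j<k}(2\pi T)^{(d-j)/2}V_j/\cZ_T$. By \eqref{eq:wills}, $\cZ_T\ge(2\pi T)^{(d-k)/2}V_k(C)$. Dividing gives the stated sum with exponents $(k-j)/2$.

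For \eqref{eq:zt2}, I would use the triangle inequality for $\Wt$ through the intermediate measure $\mu$:
\[
\Wt(\gamma_T^C,u_C)\le\Wt(\gamma_T^C,\mu)+\Wt(\mu,u_C).
\]
The first term is bounded by the coupling $(x,P_Cx)$ with $x\sim\gamma_T^C$. This gives $\Wt(\gamma_T^C,\mu)^2\le\int d_C^2\,d\gamma_T^C\le dT$ by Corollary \ref{cor:violation}. For the second term, both $\mu$ and $u_C$ are carried by $C$, which has diameter $D$. So \eqref{eq:WTV} gives $\Wt(\mu,u_C)^2\le D^2\,\TV(\mu,u_C)\le D^2(1-w_k)$. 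Taking square roots yields \eqref{eq:zt2}.

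For the final assertion, the sum in \eqref{eq:zt1} is $O(\sqrt T)$, because every exponent $(k-j)/2$ is at least $\frac12$. It follows that $(1-w_k)^{1/2}=O(T^{1/4})$, while $\sqrt{dT}=O(T^{1/2})$. Hence $\Wt(\gamma_T^C,u_C)=O(T^{1/4})$. Convergence in $\Wt$ implies weak convergence, which gives $\gamma_T^C\to u_C$.

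There is no serious obstacle. The only point needing care is the case $k=0$: then $1-w_k=0$, the sum in \eqref{eq:zt1} is empty, $u_C$ is a Dirac mass and all the bounds hold trivially.
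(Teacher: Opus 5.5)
Your proposal is correct and follows the paper's proof essentially step for step: the same decomposition $(P_C)_\#\gamma_T^C=w_k(T)\,u_C+(1-w_k(T))\,m$ read off from \eqref{eq:mixture}, the same lower bound $\cZ_T(C)\ge(2\pi T)^{(d-k)/2}V_k(C)$ from \eqref{eq:wills}, and the same triangle inequality through $(P_C)_\#\gamma_T^C$ using the coupling $(x,P_Cx)$ together with \eqref{eq:WTV}. Your separate treatment of $k=0$ plays the role of the paper's separate case $w_k(T)=1$, since for $k\ge1$ one always has $w_k(T)<1$.
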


\begin{proof}
By \eqref{eq:mixture} the first marginal of the law of $(P_Cx,d_C(x))$ is the sum of the measures $w_j(T)\,\Phi_j(C,\cdot)/V_j(C)$, $0\le j\le k$. Here $\Phi_k(C,\cdot)/V_k(C)=\cH^k(C\cap\,\cdot\,)/\cH^k(C)=u_C$ by \eqref{eq:Phik}, so the term $j=k$ is $w_k(T)\,u_C$. Thus
\[
(P_C)_\#\gamma_T^C=w_k(T)\,u_C+\sum_{j<k}w_j(T)\,\frac{\Phi_j(C,\cdot)}{V_j(C)} .
\]
Two probability measures of the form $w\,u+(1-w)\nu$ and $u$ have total variation distance at most $1-w$. To see this here, put $w=w_k(T)$. If $w=1$, then $(P_C)_\#\gamma_T^C=u_C$ and there is nothing to prove. Let $w<1$. Since the weights have sum one, $\sum_{j<k}w_j(T)=1-w$. Hence
\[
\nu=\frac1{1-w}\sum_{j<k}w_j(T)\,\frac{\Phi_j(C,\cdot)}{V_j(C)}
\]
is a probability measure with $(P_C)_\#\gamma_T^C=w\,u_C+(1-w)\,\nu$. For every Borel set $A$,
\begin{align*}
\big|(P_C)_\#\gamma_T^C(A)-u_C(A)\big|&=\big|w\,u_C(A)+(1-w)\,\nu(A)-u_C(A)\big|\\
&=(1-w)\,\big|\nu(A)-u_C(A)\big|\le1-w .
\end{align*}
Taking the supremum over $A$ we get the first inequality in \eqref{eq:zt1}. For the second one, we use \eqref{eq:wills} and the definition of $w_k(T)$, then divide the numerator and the denominator by $(2\pi T)^{(d-k)/2}$:
\begin{align*}
1-w_k(T)&=\frac{\sum_{j<k}(2\pi T)^{(d-j)/2}V_j(C)}{\sum_{j\le k}(2\pi T)^{(d-j)/2}V_j(C)}\\
&=\frac{\sum_{j<k}(2\pi T)^{(k-j)/2}V_j(C)}{\sum_{j\le k}(2\pi T)^{(k-j)/2}V_j(C)}\le\frac{\sum_{j<k}(2\pi T)^{(k-j)/2}V_j(C)}{V_k(C)} .
\end{align*}
Here we used $\frac{d-j}2-\frac{d-k}2=\frac{k-j}2$. The last step holds because all terms of the denominator are nonnegative and the term $j=k$ is $V_k(C)$.

For \eqref{eq:zt2} we use the triangle inequality
\[
\Wt\big(\gamma_T^C,u_C\big)\le\Wt\big(\gamma_T^C,(P_C)_\#\gamma_T^C\big)+\Wt\big((P_C)_\#\gamma_T^C,u_C\big).
\]
We claim that
\[
\Wt\big(\gamma_T^C,(P_C)_\#\gamma_T^C\big)^2\le\int d_C^2\,d\gamma_T^C\le dT,\qquad\Wt\big((P_C)_\#\gamma_T^C,u_C\big)^2\le D^2\big(1-w_k(T)\big).
\]
For the first of these inequalities, let $x$ be distributed according to $\gamma_T^C$. The pair $(x,P_Cx)$ is a coupling of $\gamma_T^C$ and $(P_C)_\#\gamma_T^C$. For this coupling $\E|x-P_Cx|^2=\int d_C^2\,d\gamma_T^C$, which is at most $dT$ by \eqref{eq:violation}. For the second, both measures are carried by $C$, which has diameter $D$. So \eqref{eq:WTV} applies. Together with \eqref{eq:zt1} it gives
\[
\Wt\big((P_C)_\#\gamma_T^C,u_C\big)^2\le D^2\,\TV\big((P_C)_\#\gamma_T^C,u_C\big)\le D^2\big(1-w_k(T)\big).
\]
Taking square roots and adding, we obtain \eqref{eq:zt2}.

It remains to prove the last statement. Let $T\le1/(2\pi)$. For $j<k$ we have $(k-j)/2\ge\frac12$ and $2\pi T\le1$, hence $(2\pi T)^{(k-j)/2}\le(2\pi T)^{1/2}$. By \eqref{eq:zt1},
\[
1-w_k(T)\le\sqrt{2\pi T}\,\sum_{j=0}^{k-1}\frac{V_j(C)}{V_k(C)} .
\]
We insert this in \eqref{eq:zt2} and use $\sqrt T\le T^{1/4}$:
\[
\Wt\big(\gamma_T^C,u_C\big)\le\sqrt d\,T^{1/4}+D\,(2\pi)^{1/4}\Big(\sum_{j=0}^{k-1}\frac{V_j(C)}{V_k(C)}\Big)^{1/2}T^{1/4} .
\]
So $\Wt(\gamma_T^C,u_C)=O(T^{1/4})$. Hence $\Wt(\gamma_T^C,u_C)\to0$ as $T\to0$. Finally, convergence in $\Wt$ implies weak convergence.
\end{proof}

The estimate $\Wt(\gamma_T^C,u_C)=O(T^{1/4})$ of Corollary \ref{cor:zerotemp} is an upper bound only. We do not claim that the exponent $1/4$ is sharp.

\subsection{Dependence on the set}
At a fixed positive temperature the Gibbs state depends continuously on the feasible set. At zero temperature this is false. This gives a second pair of limits which do not commute.

\begin{proposition}\label{prop:context}
\begin{enumerate}[label=\textup{(\alph*)}]
\item Let $C_n$ and $C$ be nonempty compact convex subsets of $\R^d$ such that $\haus(C_n,C)\to0$, and let $T>0$. Then the Gibbs states $\gamma_T^{C_n}$ converge to $\gamma_T^C$ in total variation.
\item Let $C_q\subset\R^2$ be the triangle with vertices $(0,0)$, $(1,0)$ and $(1,q)$ for $q>0$, and let $C_0=[0,1]\times\{0\}$. Then $\haus(C_q,C_0)=q$, and
\[
\lim_{q\to0}\ \lim_{T\to0}\gamma_T^{C_q}=2x_1\one_{[0,1]}(x_1)\,dx_1\otimes\delta_0\ \ne\ u_{C_0}=\lim_{T\to0}\ \lim_{q\to0}\gamma_T^{C_q}.
\]
\end{enumerate}
\end{proposition}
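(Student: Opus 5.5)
For part (a) I would work directly with the densities. Put $h_n=\haus(C_n,C)$. By \eqref{eq:dhaus}, $|d_{C_n}(x)-d_C(x)|\le h_n$ for every $x$, so $e^{-d_{C_n}(x)^2/2T}\to e^{-d_C(x)^2/2T}$ at every point. For domination, observe that $\|C_n\|\le\|C\|+1$ once $h_n\le1$. The estimate $d_{C_n}(x)\ge\frac12|x|$ for $|x|\ge2\|C_n\|$ used in \eqref{eq:xs1a} then gives the bound $e^{-d_{C_n}(x)^2/2T}\le\one_{\{|x|<2\|C\|+2\}}+e^{-|x|^2/8T}$, which does not depend on $n$ and is integrable. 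Dominated convergence now gives $L^1$ convergence of the unnormalised densities and $\cZ_T(C_n)\to\cZ_T(C)>0$. I would then repeat word for word the normalisation argument from Step 1 of Lemma \ref{lem:invariant} to obtain $\TV(\gamma_T^{C_n},\gamma_T^C)\to0$. Part (a) contains no real difficulty.

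For part (b) I would first compute $\haus(C_q,C_0)$. Every point of $C_0$ lies in $C_q$, so only $\sup_{c\in C_q}d_{C_0}(c)$ matters. This supremum is $\sup_{c\in C_q}c_2=q$, attained at the vertex $(1,q)$. Next I would take the inner limits. For fixed $q>0$ the set $C_q$ has nonempty interior, so Corollary \ref{cor:zerotemp} gives $\lim_{T\to0}\gamma_T^{C_q}=u_{C_q}$. For fixed $T>0$, part (a) gives $\lim_{q\to0}\gamma_T^{C_q}=\gamma_T^{C_0}$ in total variation, hence weakly. Applying Corollary \ref{cor:zerotemp} to the segment $C_0$, which has $k=1$, gives $\gamma_T^{C_0}\to u_{C_0}$ as $T\to0$. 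This settles the right-hand side.

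It remains to compute $\lim_{q\to0}u_{C_q}$. The triangle is $\{0\le x_1\le1,\ 0\le x_2\le qx_1\}$ and has area $q/2$. Under $u_{C_q}$ the first coordinate therefore has density $qx_1/(q/2)=2x_1$ on $[0,1]$, and $0\le x_2\le q$. For a bounded continuous $f$ I would write
\[
\int f\,du_{C_q}=\int_0^1 2x_1\,\frac1{qx_1}\int_0^{qx_1}f(x_1,x_2)\,dx_2\,dx_1 ,
\]
and let $q\to0$. Uniform continuity of $f$ on $[0,1]^2$ shows that the inner average converges to $f(x_1,0)$ uniformly in $x_1$. So the limit is $2x_1\one_{[0,1]}(x_1)\,dx_1\otimes\delta_0$. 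This measure differs from $u_{C_0}=\one_{[0,1]}(x_1)\,dx_1\otimes\delta_0$, for example on the set $[0,\frac12]\times\R$, which receives mass $\frac14$ under the first measure and $\frac12$ under the second.

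The only step that needs care is the domination in (a), since the bound has to be uniform in $n$. The Lipschitz estimate \eqref{eq:dhaus} together with the uniform bound on $\|C_n\|$ provides it.
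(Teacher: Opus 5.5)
Your proposal is correct and follows the paper's proof in both parts. For (a) it uses pointwise convergence of $e^{-d_{C_n}^2/2T}$ from \eqref{eq:dhaus}, an integrable dominator independent of $n$, dominated convergence and the normalisation argument; for (b) it uses $C_0\subset C_q$, Corollary~\ref{cor:zerotemp}, part (a) and a direct computation of $\lim_{q\to0}u_{C_q}$. The only differences are cosmetic: the paper dominates by $e^{-(d_C-1)_+^2/2T}$ using $d_{C_n}\ge d_C-h_n$ instead of a uniform bound on $\|C_n\|$, and it passes to the limit in $\int f\,du_{C_q}$ by the substitution $x_2=qx_1s$ and bounded convergence instead of uniform continuity.
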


\begin{proof}
(a) Put $h_n=\haus(C_n,C)$. We write $g_n=e^{-d_{C_n}^2/2T}$ and $g=e^{-d_C^2/2T}$. By \eqref{eq:dhaus}, $|d_{C_n}-d_C|\le h_n$, with $h_n\to0$. So $d_{C_n}(x)\to d_C(x)$ for every $x$. Hence $e^{-d_{C_n}^2/2T}\to e^{-d_C^2/2T}$ pointwise. Let $n$ be so large that $h_n\le1$. Then $d_{C_n}\ge d_C-1$ and $d_{C_n}\ge0$. Thus $d_{C_n}\ge(d_C-1)_+$. It follows that
\[
0\le g_n\le G,\qquad\text{where }G=e^{-(d_C-1)_+^2/2T}.
\]
The function $G$ is integrable. Indeed, put $\|C\|=\max_{c\in C}|c|$. Then $d_C(x)\ge|x|-\|C\|$. Hence $G\le1$ everywhere and $G(x)\le e^{-(|x|-\|C\|-1)^2/2T}$ for $|x|\ge\|C\|+1$. We also have $g\le G$, so $|g_n-g|\le2G$. By dominated convergence,
\[
\cZ_T(C_n)=\int_{\R^d}g_n\,dx\longrightarrow\int_{\R^d}g\,dx=\cZ_T(C),\qquad\int_{\R^d}|g_n-g|\,dx\longrightarrow0 .
\]
It remains to see that the densities converge in $L^1$. The densities of $\gamma_T^{C_n}$ and of $\gamma_T^C$ are $g_n/\cZ_T(C_n)$ and $g/\cZ_T(C)$. For every Borel set $A$ the number $|\gamma_T^{C_n}(A)-\gamma_T^C(A)|$ is at most the integral of the absolute value of the difference of the densities. Therefore
\begin{align*}
\TV\big(\gamma_T^{C_n},\gamma_T^C\big)&\le\int_{\R^d}\Big|\frac{g_n}{\cZ_T(C_n)}-\frac{g}{\cZ_T(C)}\Big|\,dx\\
&\le\frac1{\cZ_T(C_n)}\int_{\R^d}|g_n-g|\,dx+\Big|\frac1{\cZ_T(C_n)}-\frac1{\cZ_T(C)}\Big|\int_{\R^d}g\,dx\\
&=\frac1{\cZ_T(C_n)}\int_{\R^d}|g_n-g|\,dx+\frac{|\cZ_T(C)-\cZ_T(C_n)|}{\cZ_T(C_n)} ,
\end{align*}
where we used $\int g\,dx=\cZ_T(C)$. Both terms tend to zero, because $\cZ_T(C_n)\to\cZ_T(C)>0$.

(b) We first compute the Hausdorff distance. The triangle $C_q$ is the set
\[
C_q=\{(x_1,x_2):\ 0\le x_1\le1,\ 0\le x_2\le qx_1\}.
\]
Hence $C_0\subset C_q$, so $d_{C_q}$ vanishes on $C_0$. For a point $c=(x_1,x_2)$ of $C_q$ the nearest point of $C_0$ is $(x_1,0)$, so $d_{C_0}(c)=x_2\le qx_1\le q$, with equality for $c=(1,q)$. Thus the point of $C_q$ farthest from $C_0$ is $(1,q)$, at distance $q$. Hence $\haus(C_q,C_0)=q$.

Next we compute the limit on the left. Let $q>0$. By Corollary \ref{cor:zerotemp}, $\lim_{T\to0}\gamma_T^{C_q}=u_{C_q}$. The area of $C_q$ is $\int_0^1qx_1\,dx_1=q/2$, so $u_{C_q}$ has the density $2/q$ on $C_q$. The first coordinate under $u_{C_q}$ has the density
\[
\int_0^{qx_1}\frac2q\,dx_2=\frac{qx_1}{q/2}=2x_1\qquad(0\le x_1\le1),
\]
and the second coordinate lies in $[0,q]$. Now let $f$ be a bounded continuous function on $\R^2$. With the substitution $x_2=qx_1s$ in the inner integral,
\[
\int f\,du_{C_q}=\int_0^1\int_0^{qx_1}f(x_1,x_2)\,\frac2q\,dx_2\,dx_1=\int_0^1\int_0^1f(x_1,qx_1s)\,2x_1\,ds\,dx_1 .
\]
As $q\to0$ we have $f(x_1,qx_1s)\to f(x_1,0)$. Since the integrand is bounded by $2\sup|f|$, bounded convergence gives
\[
\lim_{q\to0}\int f\,du_{C_q}=\int_0^1f(x_1,0)\,2x_1\,dx_1 .
\]
Hence $u_{C_q}\to2x_1\one_{[0,1]}(x_1)dx_1\otimes\delta_0$ weakly as $q\to0$. This is the first iterated limit.

For the other order, fix $T>0$. Since $\haus(C_q,C_0)=q\to0$, part (a) gives $\lim_{q\to0}\gamma_T^{C_q}=\gamma_T^{C_0}$. By Corollary \ref{cor:zerotemp}, $\lim_{T\to0}\gamma_T^{C_0}=u_{C_0}$. The set $C_0$ is a segment of length one, so $u_{C_0}=\one_{[0,1]}(x_1)dx_1\otimes\delta_0$. Thus the first coordinate under $u_{C_0}$ has the density $1$ on $[0,1]$. The two limits are different. For example, the set $\{x_1\le\frac12\}$ has mass $\int_0^{1/2}2x_1\,dx_1=\frac14$ under the first limit and mass $\frac12$ under the second.
\end{proof}

\begin{remark}
Part (b) shows that the zero temperature limit of the Gibbs state is not continuous in the set $C$ for the Hausdorff distance. The limit can change when the set loses a dimension. At a fixed positive temperature the Gibbs state depends continuously on the set.
\end{remark}

\section{Convex sets: the limit diagram and the mixing time}\label{sec:diagram}

We come to the question \eqref{eq:question} for a fixed compact convex set. Three time scales are present in the state equation \eqref{eq:SDEgen}. In a time of order one the drift brings the state to its projection on $C$. Inside $C$ the drift vanishes and the state moves like a Brownian motion with the small diffusion constant $T$, so it needs a time of order $1/T$ to explore $C$. After a time much longer than $1/T$ it has forgotten where it entered. The theorem below makes this precise and gives the limit in each of the three ranges of times.

\subsection{Notation and statement}
Throughout this section, $C\subset\R^d$ is a nonempty compact convex set, $k=\dim C$, $L=\aff C$ is its affine hull, $L_0=L-L$ is the linear subspace parallel to $L$ and
\[
D=\diam C,\quad r_C=\sup\{r>0:\ \text{some ball of radius $r$ of $L$ is contained in }C\}\ \ (k\ge1).
\]
The initial law $\rho_0$ is a probability measure on $\R^d$ with finite second moment. We put
\[
m_0=\Big(\int d_C^2\,d\rho_0\Big)^{1/2}.
\]
The state process $X=X^T$ solves \eqref{eq:SDEgen} with $\Law(X_0)=\rho_0$. We write $\rho_{T,t}=\Law(X^T_t)$. The noiseless flow is defined by
\begin{equation}\label{eq:flow}
\varphi_t(x)=P_Cx+e^{-t}(x-P_Cx)\qquad(t\ge0,\ x\in\R^d).
\end{equation}
By \ref{P5} with $\lambda=e^{-t}$ we have $P_C\varphi_t(x)=P_Cx$, hence $\varphi_t(x)-P_C\varphi_t(x)=e^{-t}(x-P_Cx)$ and
\begin{equation}\label{eq:flowode}
\frac{d}{dt}\varphi_t(x)=-e^{-t}(x-P_Cx)=-\big(\varphi_t(x)-P_C\varphi_t(x)\big).
\end{equation}
Since $\varphi_0(x)=x$, integration of \eqref{eq:flowode} gives
\[
\varphi_t(x)=x-\int_0^t\big(\varphi_u(x)-P_C\varphi_u(x)\big)\,du ,
\]
so $\varphi_t(x)$ is the solution of \eqref{eq:state} with $w=0$, for the constant set $C$ and the initial point $x$.

\emph{Reflected Brownian motion.} Let $k\ge1$. For $x\in C$ let $N_C(x)=\{n\in L_0:\ \ip{n}{c-x}\le0\text{ for all }c\in C\}$ be the normal cone of $C$ at $x$ relative to $L$. Let $\beta$ be a standard Brownian motion in $L_0$ and let $\xi$ be a random point of $C$ independent of $\beta$. By \cite{Tanaka1979} there is a unique pair $(R,K)$ of continuous adapted processes such that $R_s\in C$ for all $s\ge0$, $K$ has locally bounded variation with total variation process $|K|$ and
\begin{equation}\label{eq:rbm}
\begin{aligned}
R_s&=\xi+\sqrt2\,\beta_s-K_s,\qquad K_s=\int_0^sn_u\,d|K|_u,\\
n_u&\in N_C(R_u),\ |n_u|=1\quad\text{for }d|K|\text{ almost every }u.
\end{aligned}
\end{equation}
Here we identify $L$ with $\R^k$. In $\R^k$ the set $C$ is bounded and contains a ball, which is the situation of \cite{Tanaka1979}. If $x$ is an interior point of $C$ relative to $L$, then $N_C(x)=\{0\}$. So the last condition in \eqref{eq:rbm} says that $K$ acts only at the times when $R$ is on the boundary of $C$ relative to $L$, where it keeps $R$ in $C$. The pair $(R,K)$ is the image of $(\xi,\beta)$ under a measurable map which is defined path by path. We call $R$ the \emph{reflected Brownian motion} in $C$ with generator $\Delta$. For a probability measure $\nu$ on $C$ we denote by $\mu_s[\nu]$ the law of $R_s$ when $\Law(\xi)=\nu$, so that $\mu_s[\nu]=\int\mu_s[\delta_x]\,\nu(dx)$. If $k=0$, so that $C=\{p\}$, we put $\mu_s[\nu]=\delta_p$. Finally we put
\begin{equation}\label{eq:Lambda}
\Lambda_C(S)=\frac{\sqrt3}{2\,r_C}\,\big(D^4+8D^2S+4k^2S^2\big)^{1/2}\quad(k\ge1),\qquad\Lambda_C(S)=0\quad(k=0).
\end{equation}

\begin{theorem}\label{thm:diagram}
Let $C$, $\rho_0$ and $\rho_{T,t}$ be as above and let $\mu_s=\mu_s[(P_C)_\#\rho_0]$.
\begin{enumerate}[label=\textup{(\alph*)}]
\item \textup{(Short times.)} For all $T>0$ and $t\ge0$,
\[
\Wt\big(\rho_{T,t},(\varphi_t)_\#\rho_0\big)\le\sqrt{2dTt},\qquad\Wt\big(\rho_{T,t},(P_C)_\#\rho_0\big)\le\sqrt{2dTt}+e^{-t}m_0 .
\]
\item \textup{(Times of order $1/T$.)} Let $0<T\le1/e$ and $S>0$. For every $s$ with $T\log(1/T)\le s\le S$,
\[
\begin{aligned}
\Wt\big(\rho_{T,s/T},\mu_s\big)^2\le{}&4T^2m_0^2+12\,d\,T\log\frac1T\\
&+2\Big[Tm_0+11\Big(T\Big(k+\log\Big(1+\frac ST\Big)\Big)\Big)^{1/2}\Big]\Lambda_C(S).
\end{aligned}
\]
\item \textup{(Long times at fixed temperature.)} For every $T>0$, $\Wt(\rho_{T,t},\gamma_T^C)\to0$ as $t\to\infty$, and $\Wt(\gamma_T^C,u_C)\to0$ as $T\to0$.
\item \textup{(All joint limits.)} Let $T_n\to0$ and $t_n\to\infty$ with $T_nt_n\to s\in[0,\infty]$. Then
\[
\Wt\big(\rho_{T_n,t_n},\mu_s\big)\to0,\qquad\text{where }\mu_0=(P_C)_\#\rho_0\ \text{ and }\ \mu_\infty=u_C .
\]
\end{enumerate}
\end{theorem}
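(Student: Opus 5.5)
I would prove the four parts in order; (d) assembles (a)–(c).

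\emph{Part (a).} I couple the state with the noiseless flow driven by the same initial point. Let $x$ solve \eqref{eq:state} with noise $w=\sqrt{2T}W$, and let $\varphi_t(X_0)$ solve it with $w=0$. The difference $\Delta_t=X_t-\varphi_t(X_0)$ satisfies
\[
d\Delta=-\big[(X-P_CX)-(\varphi-P_C\varphi)\big]dt+\sqrt{2T}\,dW.
\]
By It\^o and the monotonicity \ref{P2}, the drift term in $d|\Delta|^2$ is nonpositive, so $d\,\E|\Delta_t|^2\le 2dT\,dt$. This gives $\E|\Delta_t|^2\le 2dTt$, which is the first bound. For the second, note that $|\varphi_t(x)-P_Cx|=e^{-t}d_C(x)$; coupling $(\varphi_t)_\#\rho_0$ with $(P_C)_\#\rho_0$ through $x$ adds $e^{-t}m_0$.

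\emph{Part (b).} This is a penalisation estimate in the time $s=tT$. Put $\tilde X_s=X_{s/T}$. Then
\[
d\tilde X_s=-T^{-1}(\tilde X_s-P_C\tilde X_s)\,ds+\sqrt2\,d\tilde W_s.
\]
The plan has five steps.
\begin{enumerate}[label=(\roman*)]
\item Run (a) up to $t_0=\log(1/T)$, i.e. $s_0=T\log(1/T)$. This puts $\tilde X_{s_0}$ within $\Wt$-distance about $\sqrt{2dT\log(1/T)}+Tm_0$ of $(P_C)_\#\rho_0$; these are the terms $4T^2m_0^2+12dT\log(1/T)$.
\item From time $s_0$, start the reflected Brownian motion $R$ at $\xi=P_C\tilde X_{s_0}$ (or, more cleanly, at $P_CX_0$), driven by the projection onto $L_0$ of the same Brownian motion. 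The $L_0^\perp$ component is an OU process and is harmless.
\item Compare $\tilde X$ with $R$ by It\^o on $|\tilde X_s-R_s|^2$. The penalisation drift paired with $\tilde X-R$ is
\[
-T^{-1}\ip{\tilde X-P_C\tilde X}{\tilde X-R}\le -T^{-1}\ip{\tilde X-P_C\tilde X}{P_C\tilde X-R}\le0
\]
by \ref{P1}, since $R\in C$. The reflection term $\ip{\tilde X-R}{dK}$ is bounded by $d_C(\tilde X)\,d|K|$ plus a sign-definite part, again by \ref{P1} for the normal cone.
\item Control the two factors of this error term. The envelope Theorem \ref{thm:envelope} together with Corollary \ref{cor:oumoment} give $\E\sup_{t\le S/T}d_C(X_t)^2\le$ a constant times $T\big(k+\log(1+S/T)\big)+T^2m_0^2$; this is where the $11(\cdot)^{1/2}$ comes from. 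The total variation of $K$ is bounded by $\Lambda_C(S)$ via the standard inner-ball trick. Let $a$ be the centre of an inscribed ball of radius $r_C$; then $\ip{a-R}{n}\le -r_C$ whenever $n\in N_C(R)$ is a unit vector. Apply It\^o to $|R-a|^2$ and take second moments, so that
\[
r_C\,\E|K|_S\le\big(\E(D^2+\cdots)^2\big)^{1/2},
\]
which produces the polynomial $D^4+8D^2S+4k^2S^2$.
\item Use Cauchy--Schwarz on $\E\sup d_C\cdot|K|_S$ to conclude.
\end{enumerate}

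\emph{Part (c).} Lemma \ref{lem:invariant} gives convergence setwise. To upgrade this to $\Wt$ I need uniform integrability of second moments, which I would get from \eqref{eq:msq}: $\E d_C(X_t)^2$ stays bounded, and a slightly higher moment can be controlled the same way through the comparison with $|e^{-t}v+\zeta_t|$, while $C$ is compact. The second claim is Corollary \ref{cor:zerotemp}.

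\emph{Part (d).} The case $s=0$ follows from (a) together with the estimate $T_nt_n\to0$, $t_n\to\infty$. The case $0<s<\infty$ follows from (b) and the continuity of $s\mapsto\mu_s$ in $\Wt$ ($R$ has continuous paths and values in a compact set). For $s=\infty$ I need mixing of $R$: $\mu_\sigma\to u_C$ as $\sigma\to\infty$, which holds because reflected Brownian motion on a convex body is uniformly ergodic with invariant law $u_C$. I then have to combine this with (b) at a large fixed $S$, using the Markov property of $X$ at time $(t_n-S/T_n)$ and applying (b) from the restarted law, whose second moment is controlled by \eqref{eq:msq}.

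The main obstacle is step (iv) of part (b): the bound on $\E|K|_S^2$ with the explicit $\Lambda_C$, and arranging the coupling so that only the product $d_C(\tilde X)\,d|K|$ survives. I would try the inner-ball inequality first. The uniformity in the restart for $s=\infty$ is a second, smaller difficulty.
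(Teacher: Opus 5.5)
Your plan follows the paper's proof closely in all four parts. In (a) you use the synchronous coupling with the noiseless flow together with \ref{P2}. In (b) you use the splitting into the $L_0$ and $L_0^\perp$ components, the slow time, an initial layer of length $\log(1/T)$, and a comparison with $R$ driven by the same $\beta$ in which \ref{P1} and the normal cone leave only $2\int d_C\,d|K|$. You then apply the envelope with Corollary~\ref{cor:oumoment}, the inner-ball bound on $|K|$, and Cauchy--Schwarz. In (d) you restart at time $t_n-S/T_n$.

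There are three small corrections in (b).
\begin{enumerate}
\item The supremum of $d_C$ must be taken over $[s_0,S]$ in the slow time, that is over $t\in[\log(1/T),S/T]$. Over $[0,S/T]$ it contains $d_C(X_0)$, whose second moment is $m_0^2$, not $T^2m_0^2$, so your stated bound is false as written.
\item If you start $R$ at $P_CX_0$ at time $0$, the initial-layer estimate must also include $\E|R_{s_0}-R_0|^2\le 2ks_0$ from Lemma~\ref{lem:penal}(ii). Part of the $12\,d\,T\log(1/T)$ comes from this term, not only from (a).
\item For Cauchy--Schwarz you need $(\E|K|_S^2)^{1/2}\le\Lambda_C(S)$, not a bound on $\E|K|_S$. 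Squaring $2r_C|K|_S\le D^2+2kS+2\sqrt2\,M_S$ gives exactly this.
\end{enumerate}

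The step that fails as written is the uniform integrability in (c). The hypothesis gives only $\int d_C^2\,d\rho_0<\infty$, and no higher moment of $d_C(X_t)$ need be finite. Already for $C=\{0\}$ you have $X_t=e^{-t}X_0+\zeta_t$ with $\zeta_t$ centred and independent of $X_0$. Jensen's inequality conditionally on $X_0$ then gives $\E|X_t|^{2+\delta}\ge e^{-(2+\delta)t}\,\E|X_0|^{2+\delta}$, which is infinite for every $t$ when $\rho_0$ has no moment beyond the second. A bounded second moment alone does not give uniform integrability.

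The repair is to use the pathwise envelope \eqref{eq:env3} instead of a higher moment. It gives $|X_t|^2\le3(|c|+D)^2+3\,d_C(X_0)^2+3|\zeta_t|^2$. Here $d_C(X_0)^2$ is a single integrable variable not depending on $t$, and $\{|\zeta_t|^2\}$ is bounded in $L^2$ because $\E|\zeta_t|^4\le d(d+2)T^2$. This is the paper's argument. Stochastic domination of $d_C(X_t)$ by $d_C(X_0)+|\zeta_t|$, via the second inequality of \eqref{eq:msq} applied conditionally on $X_0$, would also work.

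In (d) for $s=\infty$ you invoke uniform ergodicity of reflected Brownian motion in $C$ as a known fact, where the paper proves it. Citing it is legitimate: the paper itself notes that for $k=d$ it follows from the heat kernel bounds of \cite{BassHsu1991}, and for $k<d$ you work in $L$. The uniformity over all initial laws on $C$ is essential, since $(P_C)_\#\rho_n'$ changes with $n$; your word ``uniformly'' covers this, but you should say so explicitly.

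The paper obtains the fact as Lemma~\ref{lem:ergodic}, with the explicit bound $2D\exp(-s/((k+4)c_*D^2))$. It proves this by approximating $R$ with the penalised process, applying the Poincar\'e inequality for the log-concave Gibbs states, and transferring to point masses with the contraction of Lemma~\ref{lem:penal}(iv). That route keeps the proof self-contained and yields a rate, which (d) itself does not need.
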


We prove the theorem in the next three subsections. Part (b) uses the envelope of Section \ref{sec:envelope}.

\subsection{Short times}

\begin{lemma}\label{lem:ballistic}
Let $C\subset\R^d$ be a nonempty closed convex set, let $X$ solve \eqref{eq:SDEgen} and let $\varphi$ be the flow \eqref{eq:flow}. Then for all $t\ge0$
\[
\E|X_t-\varphi_t(X_0)|^2\le2dTt,\qquad\E\sup_{0\le u\le t}|X_u-\varphi_u(X_0)|^2\le8dTt .
\]
\end{lemma}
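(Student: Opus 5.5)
Put $Y_t=\varphi_t(X_0)$ and $\Delta_t=X_t-Y_t$. By \eqref{eq:flowode}, $Y$ solves the noiseless equation $dY_t=-(Y_t-P_CY_t)\,dt$. Subtracting this from \eqref{eq:SDEgen} gives
\[
d\Delta_t=-\big[(X_t-P_CX_t)-(Y_t-P_CY_t)\big]\,dt+\sqrt{2T}\,dW_t,\qquad \Delta_0=0 .
\]
The plan is to apply the It\^o formula to $|\Delta_t|^2$ and use that the drift difference is monotone by \ref{P2}. Writing $G(x)=x-P_Cx$, we get
\[
d|\Delta_t|^2=-2\ip{\Delta_t}{G(X_t)-G(Y_t)}\,dt+2dT\,dt+2\sqrt{2T}\ip{\Delta_t}{dW_t}.
\]
By \ref{P2} the first term is at most $-2|G(X_t)-G(Y_t)|^2\le0$. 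Hence
\[
|\Delta_t|^2\le2dTt+M_t,\qquad M_t=2\sqrt{2T}\int_0^t\ip{\Delta_u}{dW_u}.
\]

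For the first bound we take expectations. To make $M$ a true martingale, stop at $\tau_n=\inf\{u:|\Delta_u|\ge n\}$. This gives $\E|\Delta_{t\wedge\tau_n}|^2\le2dT\,\E(t\wedge\tau_n)\le2dTt$. Then let $n\to\infty$ and apply the Fatou lemma, which is allowed since $\Delta$ has continuous paths and so $\tau_n\to\infty$. This yields $\E|X_t-\varphi_t(X_0)|^2\le2dTt$. Note that $G$ is Lipschitz, so the estimate is pathwise and needs no integrability of $X_0$.

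For the supremum bound I would avoid the Burkholder inequality applied to $M$, since that is where the constant could be lost. Instead I compare $\Delta$ pathwise with Brownian motion. Write $\Delta_t=\sqrt{2T}W_t-\int_0^t(G(X_u)-G(Y_u))\,du$ and consider $f(t)=|\Delta_t-\sqrt{2T}W_t|$. The cleanest route is to apply the It\^o formula to $|\Delta_t|^2$ with the monotone drift and then use the Doob $L^2$ inequality. The estimate $\E\sup_{u\le t}M_u\le 3\,\E\langle M\rangle_t^{1/2}$ from the Burkholder–Davis–Gundy inequality with $\langle M\rangle_t=8T\int_0^t|\Delta_u|^2du$ gives a bound of the form $2dTt+c\sqrt{Tt\cdot dTt}$, which is again of order $dTt$ but with a worse constant than $8$.

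To reach the constant $8$ I would instead use the submartingale structure. The equation for $\Delta$ is $d\Delta=-(G(X)-G(Y))dt+\sqrt{2T}dW$ with a monotone drift. Compare it with $\sqrt{2T}W$ through the barycentre trick of Theorem \ref{thm:envelope}, or more simply argue as follows. The process $N_t=|\Delta_t|^2-2dTt-M_t$ is nonincreasing and starts at $0$, so $\sup_{u\le t}|\Delta_u|^2\le2dTt+\sup_{u\le t}M_u$. The Doob $L^2$ inequality applied to $M$ gives $\E\sup_{u\le t}|M_u|\le(\E\sup|M|^2)^{1/2}\le2(\E M_t^2)^{1/2}=2\big(8T\int_0^t\E|\Delta_u|^2du\big)^{1/2}\le2(8T\cdot dTt^2)^{1/2}=4\sqrt2\,dTt$. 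The middle inequality uses the first bound $\E|\Delta_u|^2\le2dTu$, integrated as $\int_0^t2dTu\,du=dTt^2$. This gives $\E\sup_{u\le t}|\Delta_u|^2\le(2+4\sqrt2)dTt\le8dTt$, as claimed. The localisation is the same as before, and monotone convergence handles the supremum. The main obstacle is this bookkeeping of constants, which the Doob $L^2$ route handles.
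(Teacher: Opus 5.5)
Your proposal is correct and follows the paper's proof essentially step by step: the It\^o formula for $|X_t-\varphi_t(X_0)|^2$, dropping the drift term by \ref{P2}, localisation plus the Fatou lemma for the first bound, and then the Doob $L^2$ inequality, fed with the first bound, for the supremum. There is one small arithmetic slip: $2(8T\cdot dTt^2)^{1/2}$ equals $4\sqrt{2d}\,Tt$, not $4\sqrt2\,dTt$. Since $\sqrt d\le d$, your constant $2+4\sqrt2\le8$ remains valid, and the detours through Burkholder--Davis--Gundy and the barycentre trick are not needed.
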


\begin{proof}
Put $\Xi_u=X_u-\varphi_u(X_0)$. By \eqref{eq:SDEgen} and \eqref{eq:flowode},
\[
d\Xi_u=-\big[(X_u-P_CX_u)-(\varphi_u(X_0)-P_C\varphi_u(X_0))\big]\,du+\sqrt{2T}\,dW_u,\qquad\Xi_0=0 .
\]
We write $b_u$ for the coefficient of $du$, so that $d\Xi_u=b_u\,du+\sqrt{2T}\,dW_u$. Then we apply the It\^o formula to the function $x\mapsto|x|^2$. Let $\Xi^i$, $b^i$ and $W^i$, $1\le i\le d$, be the coordinates. The quadratic variation of $\sqrt{2T}\,W^i$ on $[0,t]$ is $2Tt$. So the It\^o formula for the square of a real semimartingale gives
\[
(\Xi^i_t)^2=2\int_0^t\Xi^i_u\,b^i_u\,du+2\sqrt{2T}\int_0^t\Xi^i_u\,dW^i_u+2Tt\qquad(1\le i\le d).
\]
We add these $d$ equations. With $M_t=\int_0^t\ip{\Xi_u}{dW_u}$ we get
\[
|\Xi_t|^2=-2\int_0^t\ip{\Xi_u}{(X_u-P_CX_u)-(\varphi_u(X_0)-P_C\varphi_u(X_0))}\,du+2\sqrt{2T}\,M_t+2dTt .
\]
The first integrand is nonnegative by \ref{P2}. Indeed, \ref{P2} with $x=X_u$ and $y=\varphi_u(X_0)$, so that $x-y=\Xi_u$, reads
\[
-\ip{\Xi_u}{b_u}=\ip{\Xi_u}{(X_u-P_CX_u)-(\varphi_u(X_0)-P_C\varphi_u(X_0))}\ge|b_u|^2\ge0 .
\]
Hence, for all $t\ge0$,
\begin{equation}\label{eq:delta}
|\Xi_t|^2\le2\sqrt{2T}\,M_t+2dTt .
\end{equation}
The process $M$ is a local martingale. At this point we do not know that its expectation vanishes, so we stop it. Let $\tau_n=\inf\{u:|\Xi_u|\ge n\}$. For $u\le\tau_n$ we have $|\Xi_u|\le n$, so the stopped process $M_{t\wedge\tau_n}$ is a martingale and $\E M_{t\wedge\tau_n}=0$. Taking expectations in \eqref{eq:delta} at the time $t\wedge\tau_n$ gives
\[
\E|\Xi_{t\wedge\tau_n}|^2\le2\sqrt{2T}\,\E M_{t\wedge\tau_n}+2dT\,\E(t\wedge\tau_n)\le2dTt .
\]
The paths of $\Xi$ are continuous, so $\tau_n\to\infty$ and $\Xi_{t\wedge\tau_n}\to\Xi_t$ almost surely as $n\to\infty$. By the Fatou lemma,
\[
\E|\Xi_t|^2\le\liminf_{n\to\infty}\E|\Xi_{t\wedge\tau_n}|^2\le2dTt .
\]
This is the first inequality. It shows that $\E\int_0^t|\Xi_u|^2\,du$ is finite, so $M$ is a square integrable martingale. By the It\^o isometry
\[
\E M_t^2=\int_0^t\E|\Xi_u|^2\,du\le\int_0^t2dTu\,du=dTt^2 .
\]
The Doob inequality for $M$ is $\E\sup_{u\le t}M_u^2\le4\,\E M_t^2$. Together with the Schwarz inequality it gives
\[
\E\sup_{u\le t}M_u\le\Big(\E\sup_{u\le t}M_u^2\Big)^{1/2}\le2\big(\E M_t^2\big)^{1/2}\le2\sqrt{dT}\,t .
\]
Now let $u\le t$. By \eqref{eq:delta} at the time $u$,
\[
|\Xi_u|^2\le2\sqrt{2T}\,M_u+2dTu\le2\sqrt{2T}\,\sup_{v\le t}M_v+2dTt .
\]
The right side does not depend on $u$. We take the supremum over $u\le t$ on the left and then expectations. Using $\sqrt{2T}\sqrt{dT}=\sqrt{2d}\,T$ we get
\[
\E\sup_{u\le t}|\Xi_u|^2\le2\sqrt{2T}\cdot2\sqrt{dT}\,t+2dTt=\big(4\sqrt{2d}+2d\big)Tt\le8dTt,
\]
since $4\sqrt{2d}\le4\sqrt2\,d<6d$. This completes the proof.
\end{proof}

\begin{proof}[Proof of Theorem \ref{thm:diagram}(a)]
The pair $(X_t,\varphi_t(X_0))$ is a coupling of $\rho_{T,t}$ and $(\varphi_t)_\#\rho_0$, that is, a pair of random variables with these two laws. By the definition of $\Wt$ and by Lemma \ref{lem:ballistic},
\[
\Wt\big(\rho_{T,t},(\varphi_t)_\#\rho_0\big)^2\le\E|X_t-\varphi_t(X_0)|^2\le2dTt .
\]
So the first inequality holds. In the same way the pair $(\varphi_t(X_0),P_CX_0)$ is a coupling of $(\varphi_t)_\#\rho_0$ and $(P_C)_\#\rho_0$. By \eqref{eq:flow} we have $\varphi_t(x)-P_Cx=e^{-t}(x-P_Cx)$. Since $|x-P_Cx|=d_C(x)$, it follows that
\[
\E|\varphi_t(X_0)-P_CX_0|^2=e^{-2t}\,\E\,d_C(X_0)^2=e^{-2t}m_0^2 .
\]
This gives $\Wt\big((\varphi_t)_\#\rho_0,(P_C)_\#\rho_0\big)\le e^{-t}m_0$. The second inequality follows from the triangle inequality for $\Wt$:
\[
\Wt\big(\rho_{T,t},(P_C)_\#\rho_0\big)\le\Wt\big(\rho_{T,t},(\varphi_t)_\#\rho_0\big)+\Wt\big((\varphi_t)_\#\rho_0,(P_C)_\#\rho_0\big)\le\sqrt{2dTt}+e^{-t}m_0 .
\]
\end{proof}

\subsection{Times of order $1/T$}
We first separate the directions normal to $L$ from the directions in $L$. Let $\pi_L$ be the orthogonal projection of $\R^d$ onto the affine space $L$. We write $\pi_0$ for the orthogonal projection of $\R^d$ onto the linear subspace $L_0$, so that $\pi_Lx-\pi_Ly=\pi_0(x-y)$ for all $x,y\in\R^d$. Let $x\in\R^d$ and $c\in C$. Since $C\subset L$, the vector $\pi_Lx-c$ lies in $L_0$. The vector $x-\pi_Lx$ is orthogonal to $L_0$. So the Pythagoras theorem gives $|x-c|^2=|x-\pi_Lx|^2+|\pi_Lx-c|^2$. The first term on the right does not depend on $c$. Thus the point of $C$ which is nearest to $x$ is also the point of $C$ which is nearest to $\pi_Lx$. Taking the minimum over $c\in C$ we get
\begin{equation}\label{eq:xs4apyth}
d_C(x)^2=|x-\pi_Lx|^2+d_C(\pi_Lx)^2 .
\end{equation}
Hence
\begin{equation}\label{eq:split}
P_Cx=P_C(\pi_Lx),\qquad x-P_Cx=(x-\pi_Lx)+\big(\pi_Lx-P_C(\pi_Lx)\big),\qquad d_C(\pi_Lx)\le d_C(x).
\end{equation}
In the second relation the vector $x-\pi_Lx$ belongs to $L_0^\perp$ and the vector $\pi_Lx-P_C(\pi_Lx)$ belongs to $L_0$.

Let $W=W^\parallel+W^\perp$, with $W^\parallel=\pi_0W$, be the decomposition of the Brownian motion into its components in $L_0$ and in $L_0^\perp$. These are independent standard Brownian motions in the two subspaces. Put
\[
X^\parallel_t=\pi_LX_t\in L,\qquad X^\perp_t=X_t-\pi_LX_t\in L_0^\perp .
\]
We project \eqref{eq:SDEgen}, which in integral form reads
\[
X_t=X_0-\int_0^t(X_u-P_CX_u)\,du+\sqrt{2T}\,W_t .
\]
By \eqref{eq:split} with $x=X_u$,
\[
X_u-P_CX_u=X^\perp_u+\big(X^\parallel_u-P_CX^\parallel_u\big),\qquad X^\perp_u\in L_0^\perp,\quad X^\parallel_u-P_CX^\parallel_u\in L_0 .
\]
So $\pi_0$ maps $X_u-P_CX_u$ to $X^\parallel_u-P_CX^\parallel_u$, while the projection onto $L_0^\perp$ maps it to $X^\perp_u$. In the same way $X^\parallel_t-X^\parallel_0=\pi_0(X_t-X_0)$ and $X^\perp_t-X^\perp_0$ is the projection of $X_t-X_0$ onto $L_0^\perp$. Applying the two projections to the integral equation we obtain two equations which are not coupled:
\begin{equation}\label{eq:AN}
dX^\parallel_t=-(X^\parallel_t-P_CX^\parallel_t)\,dt+\sqrt{2T}\,dW^\parallel_t,\qquad dX^\perp_t=-X^\perp_t\,dt+\sqrt{2T}\,dW^\perp_t .
\end{equation}
The process $X^\perp$ is an OU process in the space $L_0^\perp$ of dimension $d-k$. We compute its second moment. By the second equation in \eqref{eq:AN},
\[
d\big(e^tX^\perp_t\big)=e^tX^\perp_t\,dt+e^t\,dX^\perp_t=\sqrt{2T}\,e^t\,dW^\perp_t ,
\]
and so
\begin{equation}\label{eq:xs4aou}
X^\perp_t=e^{-t}X^\perp_0+\sqrt{2T}\int_0^te^{-(t-u)}\,dW^\perp_u .
\end{equation}
The stochastic integral has mean zero and is independent of $X_0$. By the It\^o isometry in the space $L_0^\perp$ of dimension $d-k$,
\[
\E\Big|\sqrt{2T}\int_0^te^{-(t-u)}\,dW^\perp_u\Big|^2=2T\,(d-k)\int_0^te^{-2(t-u)}\,du=(d-k)\,T\,(1-e^{-2t}).
\]
Also $|X^\perp_0|^2\le d_C(X_0)^2$ by \eqref{eq:xs4apyth}, so that $\E|X^\perp_0|^2\le m_0^2$. We take the expectation of the square of the norm in \eqref{eq:xs4aou}. The expectation of the inner product of the two terms on the right vanishes, which gives
\begin{equation}\label{eq:Nmoment}
\E|X^\perp_t|^2=e^{-2t}\,\E|X^\perp_0|^2+(d-k)\,T\,(1-e^{-2t})\le e^{-2t}m_0^2+(d-k)\,T .
\end{equation}
The process $X^\parallel$ is the state process in the space $L$ of dimension $k$, in which $C$ has nonempty interior. So the main work is in dimension $k$ with a set having interior points.

The next lemma contains what we need about reflected Brownian motion and its comparison with the state process in the slow time $s=Tt$.

\begin{lemma}\label{lem:penal}
Let $k\ge1$, let $\beta$ be a standard Brownian motion in $L_0$, let $(R,K)$ be as in \eqref{eq:rbm}, let $\eps>0$ and let $Y$ be a continuous process in $L$ with
\begin{equation}\label{eq:Y}
Y_s=Y_0-\frac1\eps\int_0^s(Y_u-P_CY_u)\,du+\sqrt2\,\beta_s .
\end{equation}
\begin{enumerate}[label=\textup{(\roman*)}]
\item For all $0\le s_0\le s$,
\begin{equation}\label{eq:penal}
|Y_s-R_s|^2\le|Y_{s_0}-R_{s_0}|^2+2\sup_{s_0\le u\le s}d_C(Y_u)\,\big(|K|_s-|K|_{s_0}\big).
\end{equation}
\item $\E|R_s-R_0|^2\le2ks$ for all $s\ge0$.
\item $\E|K|_S^2\le\Lambda_C(S)^2$ for all $S\ge0$, with $\Lambda_C$ as in \eqref{eq:Lambda}.
\item If $R'$ is the solution of \eqref{eq:rbm} with the same $\beta$ and another initial point $\xi'\in C$, then $|R_s-R'_s|\le|\xi-\xi'|$ for all $s\ge0$.
\end{enumerate}
\end{lemma}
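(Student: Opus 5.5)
The four statements all come from one device. Two processes driven by the same $\sqrt2\,\beta$ have a difference of locally bounded variation. Each term of the resulting identity then has a sign fixed by \ref{P1} or by the definition of the normal cone $N_C$. I would prove (i), (ii), (iii), (iv) in this order, working in $L$ identified with $\R^k$.

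For (i), put $\Delta=Y-R$. By \eqref{eq:Y} and \eqref{eq:rbm} the noise cancels, and $d\Delta_u=-\eps^{-1}(Y_u-P_CY_u)\,du+dK_u$. This is a continuous function of locally bounded variation, so the ordinary chain rule gives
\[
|\Delta_s|^2-|\Delta_{s_0}|^2=-\frac2\eps\int_{s_0}^s\ip{Y_u-R_u}{Y_u-P_CY_u}\,du+2\int_{s_0}^s\ip{Y_u-R_u}{n_u}\,d|K|_u .
\]
In the first integrand I would write $Y_u-R_u=(Y_u-P_CY_u)+(P_CY_u-R_u)$. The first part gives $d_C(Y_u)^2\ge0$. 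The second part gives $\ip{P_CY_u-R_u}{Y_u-P_CY_u}\ge0$ by \ref{P1}, since $R_u\in C$. So the $du$ term is $\le0$. In the second integrand, $\ip{Y_u-R_u}{n_u}=\ip{Y_u-P_CY_u}{n_u}+\ip{P_CY_u-R_u}{n_u}$. The last part is $\le0$ because $n_u\in N_C(R_u)$ and $P_CY_u\in C$, and the first part is at most $d_C(Y_u)$ since $|n_u|=1$. Bounding $d_C(Y_u)$ by its supremum over $[s_0,s]$ gives \eqref{eq:penal}.

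For (ii), I would apply the It\^o formula to $|R_s-R_0|^2$ and obtain
\[
|R_s-R_0|^2=2\sqrt2\int_0^s\ip{R_u-R_0}{d\beta_u}+2ks-2\int_0^s\ip{R_u-R_0}{n_u}\,d|K|_u .
\]
The last integrand is $\ge0$, because $R_0\in C$ and $n_u\in N_C(R_u)$. The stochastic integral is a true martingale, since $|R_u-R_0|\le D$. Taking expectations gives $\E|R_s-R_0|^2\le2ks$.

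For (iii), fix $r<r_C$ and a ball $B(a,r)\cap L\subset C$. For $x\in C$ and a unit vector $n\in N_C(x)$, the point $a+rn$ lies in $C$. Hence $\ip{n}{a+rn-x}\le0$, that is, $\ip{n}{x-a}\ge r$. The It\^o formula for $|R_S-a|^2$ then gives
\[
2r\,|K|_S\le2\int_0^S\ip{R_u-a}{n_u}\,d|K|_u\le|R_0-a|^2+2\sqrt2\,M_S+2kS,\qquad M_S=\int_0^S\ip{R_u-a}{d\beta_u} .
\]
Here we drop $|R_S-a|^2\ge0$ and use $|R_0-a|\le D$. Squaring with $(x+y+z)^2\le3(x^2+y^2+z^2)$ and using $\E M_S^2\le D^2S$ gives $4r^2\,\E|K|_S^2\le3\big(D^4+8D^2S+4k^2S^2\big)$. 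Letting $r\uparrow r_C$ yields $\E|K|_S^2\le\Lambda_C(S)^2$.

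For (iv), $R-R'=\xi-\xi'-K+K'$ has bounded variation, so
\[
d|R-R'|^2=-2\ip{R-R'}{n}\,d|K|+2\ip{R-R'}{n'}\,d|K'| .
\]
Both terms are $\le0$ by the normal-cone inequalities at $R$ and at $R'$, so $|R_s-R'_s|$ is nonincreasing.

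The main obstacle I expect is technical rather than conceptual: justifying the chain rule and the It\^o formula for semimartingales whose finite-variation part $K$ is given only through \eqref{eq:rbm}, and checking that the stochastic integrals are true martingales. Boundedness of $C$ handles (ii) to (iv) directly. In (i) no martingale appears at all, because the noise cancels.
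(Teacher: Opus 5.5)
Your proof is correct and follows the paper's argument in all four parts. For (i) and (iv) you cancel the noise and apply the chain rule together with \ref{P1} and the normal-cone inequality, and for (ii) and (iii) you use the It\^o formula for $|R-c|^2$ with a bounded-integrand martingale. The only deviation is in (iii): you take an inscribed ball of radius $r<r_C$ and let $r\uparrow r_C$, whereas the paper uses compactness to find a closed ball of radius exactly $r_C$ inside $C$. Your version is equivalent and slightly shorter, provided you note that $r_C>0$ when $k\ge1$ and that $a+rn\in C$ by closedness of $C$.
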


\begin{proof}
(i) In the difference of \eqref{eq:Y} and \eqref{eq:rbm} the Brownian terms cancel. This gives
\[
Y_s-R_s=(Y_0-\xi)-\frac1\eps\int_0^s(Y_u-P_CY_u)\,du+\int_0^sn_u\,d|K|_u .
\]
We subtract from this the same relation at the time $s_0$ and get
\[
Y_s-R_s=(Y_{s_0}-R_{s_0})-\frac1\eps\int_{s_0}^s(Y_u-P_CY_u)\,du+\int_{s_0}^sn_u\,d|K|_u .
\]
So $Y-R$ is continuous and of bounded variation. The ordinary chain rule for the function $x\mapsto|x|^2$ therefore gives
\begin{equation}\label{eq:chain}
|Y_s-R_s|^2=|Y_{s_0}-R_{s_0}|^2-\frac2\eps\int_{s_0}^s\ip{Y_u-R_u}{Y_u-P_CY_u}\,du+2\int_{s_0}^s\ip{Y_u-R_u}{n_u}\,d|K|_u .
\end{equation}
In the first integral we write $Y_u-R_u=(Y_u-P_CY_u)+(P_CY_u-R_u)$. Since $R_u\in C$, property \ref{P1} gives $\ip{Y_u-P_CY_u}{R_u-P_CY_u}\le0$. Therefore
\[
\ip{Y_u-R_u}{Y_u-P_CY_u}=|Y_u-P_CY_u|^2+\ip{P_CY_u-R_u}{Y_u-P_CY_u}\ge0 .
\]
Thus the first integral in \eqref{eq:chain}, which enters with the factor $-2/\eps$, is nonnegative. In the second integral we use the same decomposition. Since $P_CY_u\in C$ and $n_u\in N_C(R_u)$, we have $\ip{P_CY_u-R_u}{n_u}\le0$. As $|n_u|=1$, this gives
\[
\ip{Y_u-R_u}{n_u}=\ip{Y_u-P_CY_u}{n_u}+\ip{P_CY_u-R_u}{n_u}\le|Y_u-P_CY_u|=d_C(Y_u).
\]
The measure $d|K|$ is nonnegative, so
\[
2\int_{s_0}^s\ip{Y_u-R_u}{n_u}\,d|K|_u\le2\int_{s_0}^sd_C(Y_u)\,d|K|_u\le2\sup_{s_0\le u\le s}d_C(Y_u)\,\big(|K|_s-|K|_{s_0}\big).
\]
Inserting the two estimates in \eqref{eq:chain} gives \eqref{eq:penal}.

(ii) We first write the It\^o formula that we use here and in (iii). Let $Z$ be a continuous process in $L_0$ of the form
\[
Z_s=Z_0+\sqrt2\,\beta_s-K_s ,
\]
where $Z_0$ is a function of $\xi$. We claim that
\begin{equation}\label{eq:xs4aito}
|Z_s|^2=|Z_0|^2+2\sqrt2\int_0^s\ip{Z_u}{d\beta_u}+2ks-2\int_0^s\ip{Z_u}{n_u}\,d|K|_u .
\end{equation}
Indeed, choose an orthonormal basis of $L_0$ and let $Z^i$, $\beta^i$ and $K^i$, $1\le i\le k$, be the coordinates. Then $\beta^1,\dots,\beta^k$ are independent real Brownian motions and $Z^i=Z^i_0+\sqrt2\,\beta^i-K^i$. The process $K^i$ is continuous and of bounded variation, so it does not contribute to the quadratic variation. Hence the quadratic variation of $Z^i$ on $[0,s]$ is that of $\sqrt2\,\beta^i$, which is $2s$. The It\^o formula for the square of a real semimartingale gives
\[
(Z^i_s)^2=(Z^i_0)^2+2\sqrt2\int_0^sZ^i_u\,d\beta^i_u-2\int_0^sZ^i_u\,dK^i_u+2s\qquad(1\le i\le k).
\]
We add these $k$ equations and use $dK_u=n_u\,d|K|_u$ from \eqref{eq:rbm}. This gives \eqref{eq:xs4aito}.

Now take $Z_s=R_s-R_0$. By \eqref{eq:rbm} we have $R_0=\xi$ and $Z_s=\sqrt2\,\beta_s-K_s$, so $Z_0=0$ and \eqref{eq:xs4aito} reads
\[
|R_s-R_0|^2=2\sqrt2\int_0^s\ip{R_u-R_0}{d\beta_u}+2ks-2\int_0^s\ip{R_u-R_0}{n_u}\,d|K|_u .
\]
Since $R_0\in C$ and $n_u\in N_C(R_u)$ we have $\ip{n_u}{R_0-R_u}\le0$, that is, $\ip{R_u-R_0}{n_u}\ge0$. So the last term is nonpositive. The stochastic integral is a martingale with mean zero, because $|R_u-R_0|\le D$. Taking expectations gives $\E|R_s-R_0|^2\le2ks$, which is (ii).

(iii) Since $k=\dim C\ge1$, the convex set $C$ has interior points relative to $L$, so $r_C>0$. We choose a point $c_0\in C$ such that the ball of $L$ with centre $c_0$ and radius $r_C$ is contained in $C$. Such a point exists because $C$ is compact. Indeed, take radii $r_j<r_C$ increasing to $r_C$ and points $c_j\in C$ such that the ball of $L$ with centre $c_j$ and radius $r_j$ is contained in $C$. A subsequence of $(c_j)$ converges to a point $c_0\in C$. If $y\in L$ and $|y-c_0|<r_C$, then $|y-c_j|\le|y-c_0|+|c_0-c_j|<r_j$ for some large $j$ of the subsequence, so $y\in C$. Since $C$ is closed, the closed ball of $L$ with centre $c_0$ and radius $r_C$ is contained in $C$.

Let $x\in C$ and $n\in N_C(x)$ with $|n|=1$. Then $c_0+r_Cn\in C$, hence $\ip{n}{c_0+r_Cn-x}\le0$ by the definition of the normal cone. Since $\ip{n}{r_Cn}=r_C$, this means $\ip{n}{c_0-x}+r_C\le0$, that is,
\begin{equation}\label{eq:inward}
\ip{x-c_0}{n}\ge r_C .
\end{equation}
We apply \eqref{eq:xs4aito} to $Z_s=R_s-c_0=(\xi-c_0)+\sqrt2\,\beta_s-K_s$. With $M_S=\int_0^S\ip{R_u-c_0}{d\beta_u}$ we get
\[
|R_S-c_0|^2=|R_0-c_0|^2+2\sqrt2\,M_S+2kS-2\int_0^S\ip{R_u-c_0}{n_u}\,d|K|_u .
\]
By \eqref{eq:inward} with $x=R_u$ and $n=n_u$,
\[
\int_0^S\ip{R_u-c_0}{n_u}\,d|K|_u\ge r_C\int_0^Sd|K|_u=r_C\,|K|_S .
\]
Hence
\[
2r_C\,|K|_S\le|R_0-c_0|^2-|R_S-c_0|^2+2kS+2\sqrt2\,M_S .
\]
Using $|R_0-c_0|\le D$ and $|R_S-c_0|^2\ge0$,
\[
2r_C\,|K|_S\le D^2+2kS+2\sqrt2\,M_S .
\]
The left side is nonnegative, so we may square both sides. By the inequality $(\alpha_1+\alpha_2+\alpha_3)^2\le3(\alpha_1^2+\alpha_2^2+\alpha_3^2)$,
\[
4r_C^2\,|K|_S^2\le\big(D^2+2kS+2\sqrt2\,M_S\big)^2\le3\big(D^4+4k^2S^2+8M_S^2\big).
\]
Since $|R_u-c_0|\le D$, the process $M$ is a square integrable martingale. The It\^o isometry gives $\E M_S^2=\E\int_0^S|R_u-c_0|^2du\le D^2S$. Taking expectations, we get
\[
4r_C^2\,\E|K|_S^2\le3\big(D^4+4k^2S^2+8D^2S\big).
\]
We divide by $4r_C^2$ and compare with \eqref{eq:Lambda}:
\[
\E|K|_S^2\le\frac3{4r_C^2}\big(D^4+8D^2S+4k^2S^2\big)=\Lambda_C(S)^2 ,
\]
which is (iii).

(iv) We write \eqref{eq:rbm} for $R$ and for $R'$, with the same $\beta$, and take the difference. The Brownian terms cancel:
\[
R_s-R'_s=(\xi-\xi')-\int_0^sn_u\,d|K|_u+\int_0^sn'_u\,d|K'|_u .
\]
So the difference $R-R'$ is of bounded variation. The chain rule for $x\mapsto|x|^2$ now gives
\[
|R_s-R'_s|^2=|\xi-\xi'|^2-2\int_0^s\ip{R_u-R'_u}{n_u}\,d|K|_u-2\int_0^s\ip{R'_u-R_u}{n'_u}\,d|K'|_u .
\]
Both integrands are nonnegative. Indeed, $R'_u\in C$ and $n_u\in N_C(R_u)$ give $\ip{n_u}{R'_u-R_u}\le0$. Similarly, $R_u\in C$ and $n'_u\in N_C(R'_u)$ give $\ip{n'_u}{R_u-R'_u}\le0$. Hence $|R_s-R'_s|^2\le|\xi-\xi'|^2$. This completes the proof.
\end{proof}

\begin{proof}[Proof of Theorem \ref{thm:diagram}(b)]
Assume first $k\ge1$. Fix $T\in(0,1/e]$ and $S>0$. Let $s$ be such that $T\log(1/T)\le s\le S$.

\emph{Step 1: the slow time.} Define
\[
Y_u=X^\parallel_{u/T},\qquad\beta_u=\sqrt T\,W^\parallel_{u/T},\qquad u\ge0 .
\]
The process $\beta$ is continuous and has independent centred Gaussian increments. For $s'\le u$ the covariance of $\beta_u-\beta_{s'}$ is $T\,(u-s')/T=u-s'$ times the identity of $L_0$. So $\beta$ is a standard Brownian motion in $L_0$. We verify that $Y$ solves \eqref{eq:Y} with $\eps=T$. In integral form the first equation in \eqref{eq:AN} is
\[
X^\parallel_t=X^\parallel_0-\int_0^t\big(X^\parallel_r-P_CX^\parallel_r\big)\,dr+\sqrt{2T}\,W^\parallel_t\qquad(t\ge0).
\]
Let $s'\ge0$. We put $t=s'/T$ and substitute $r=u/T$, $dr=du/T$, in the integral:
\[
\int_0^{s'/T}\big(X^\parallel_r-P_CX^\parallel_r\big)\,dr=\frac1T\int_0^{s'}\big(X^\parallel_{u/T}-P_CX^\parallel_{u/T}\big)\,du=\frac1T\int_0^{s'}(Y_u-P_CY_u)\,du .
\]
Also $\sqrt{2T}\,W^\parallel_{s'/T}=\sqrt2\,\sqrt T\,W^\parallel_{s'/T}=\sqrt2\,\beta_{s'}$. Hence
\[
Y_{s'}=Y_0-\frac1T\int_0^{s'}(Y_u-P_CY_u)\,du+\sqrt2\,\beta_{s'}\qquad(s'\ge0),
\]
which is \eqref{eq:Y} with $\eps=T$ and $Y_0=\pi_LX_0$.

Let $(R,K)$ be the solution of \eqref{eq:rbm} driven by this $\beta$ with $\xi=P_CX_0$. The point $\xi$ is a function of $X_0$, so it is independent of $\beta$. By \eqref{eq:split}, $\xi=P_CY_0$. The law of $\xi$ is $(P_C)_\#\rho_0$. Then $\Law(R_s)=\mu_s$ and $\Law(X_{s/T})=\rho_{T,s/T}$, so $(X_{s/T},R_s)$ is a coupling of the two laws. Now $X_{s/T}-R_s=X^\perp_{s/T}+(Y_s-R_s)$ with $X^\perp_{s/T}\in L_0^\perp$ and $Y_s-R_s\in L_0$. By the Pythagoras theorem, $|X_{s/T}-R_s|^2=|X^\perp_{s/T}|^2+|Y_s-R_s|^2$. Therefore
\begin{equation}\label{eq:pyth}
\Wt\big(\rho_{T,s/T},\mu_s\big)^2\le\E|X_{s/T}-R_s|^2=\E|X^\perp_{s/T}|^2+\E|Y_s-R_s|^2 .
\end{equation}
We estimate the two terms on the right in turn.

\emph{Step 2: the normal part.} Since $s/T\ge\log(1/T)$ we have $e^{-2s/T}\le e^{-2\log(1/T)}=T^2$. So \eqref{eq:Nmoment} at the time $t=s/T$ gives
\begin{equation}\label{eq:normalpart}
\E|X^\perp_{s/T}|^2\le T^2m_0^2+(d-k)\,T .
\end{equation}

\emph{Step 3: the initial layer.} Put $\ell=\log(1/T)$ and $s_0=T\ell$. Then $\ell\ge1$ because $T\le1/e$. Also $s_0\le s$. In the time scale of the state equation, $s_0$ corresponds to the time $\ell$, that is, $Y_{s_0}=X^\parallel_\ell$. We also have $Y_0=X^\parallel_0$ and $R_0=\xi=P_CX^\parallel_0$. We compare $Y_{s_0}$ and $R_{s_0}$ through the points $\varphi_\ell(Y_0)$ and $P_CY_0=R_0$:
\[
|Y_{s_0}-R_{s_0}|\le|X^\parallel_\ell-\varphi_\ell(X^\parallel_0)|+|\varphi_\ell(X^\parallel_0)-P_CX^\parallel_0|+|R_0-R_{s_0}| .
\]
We bound the second moment of each of the three terms. For the first term, recall that by \eqref{eq:AN} the process $X^\parallel$ solves the state equation in the space $L$ of dimension $k$ with the Brownian motion $W^\parallel$, and that the noiseless flow of this equation is again \eqref{eq:flow}. So Lemma \ref{lem:ballistic} in $L$, with $k$ in place of $d$ and with $t=\ell$, gives
\[
\E|X^\parallel_\ell-\varphi_\ell(X^\parallel_0)|^2\le2kT\ell .
\]
For the second term we use \eqref{eq:flow}, the equality $e^{-\ell}=T$ and the last relation in \eqref{eq:split}:
\[
|\varphi_\ell(X^\parallel_0)-P_CX^\parallel_0|=e^{-\ell}\,d_C(X^\parallel_0)=T\,d_C(\pi_LX_0)\le T\,d_C(X_0).
\]
Squaring and taking expectations,
\[
\E|\varphi_\ell(X^\parallel_0)-P_CX^\parallel_0|^2\le T^2\,\E\,d_C(X_0)^2=T^2m_0^2 .
\]
For the third term, Lemma \ref{lem:penal}(ii) at the time $s_0=T\ell$ gives
\[
\E|R_0-R_{s_0}|^2\le2ks_0=2kT\ell .
\]
By the inequality $(\alpha_1+\alpha_2+\alpha_3)^2\le3(\alpha_1^2+\alpha_2^2+\alpha_3^2)$ we conclude that
\begin{equation}\label{eq:layer}
\E|Y_{s_0}-R_{s_0}|^2\le3\big(2kT\ell+T^2m_0^2+2kT\ell\big)=12\,k\,T\log\frac1T+3T^2m_0^2 .
\end{equation}

\emph{Step 4: the distance of $Y$ from $C$ after the initial layer.} Let $\zeta^\parallel$ be the envelope process driven by $W^\parallel$, that is, $d\zeta^\parallel_t=-\zeta^\parallel_t\,dt+\sqrt{2T}\,dW^\parallel_t$ and $\zeta^\parallel_0=0$. Theorem \ref{thm:envelope} applied to $X^\parallel$ (with $v=0$) and \eqref{eq:split} give $d_C(X^\parallel_t)\le e^{-t}d_C(X_0)+|\zeta^\parallel_t|$. We take $t=u/T$ with $s_0\le u\le S$. Then $u/T\ge\ell$, so $e^{-u/T}\le e^{-\ell}=T$. Thus
\[
d_C(Y_u)=d_C(X^\parallel_{u/T})\le T\,d_C(X_0)+|\zeta^\parallel_{u/T}| .
\]
Since $0\le u/T\le S/T$, it follows that
\[
\sup_{s_0\le u\le S}d_C(Y_u)\le T\,d_C(X_0)+\sup_{0\le t\le S/T}|\zeta^\parallel_t| .
\]
By Corollary \ref{cor:oumoment} in dimension $k$ with $N=S/T$,
\[
\E\sup_{0\le t\le S/T}|\zeta^\parallel_t|^2\le120\,T\Big(k+\log\Big(1+\frac ST\Big)\Big).
\]
We also have $\E\,d_C(X_0)^2=m_0^2$. So the triangle inequality for the norm of $L^2$ of the probability space gives
\begin{equation}\label{eq:supY}
\Big(\E\sup_{s_0\le u\le S}d_C(Y_u)^2\Big)^{1/2}\le Tm_0+\Big(120\,T\Big(k+\log\Big(1+\frac ST\Big)\Big)\Big)^{1/2} .
\end{equation}

It remains to combine the estimates of Steps 3 and 4. We use Lemma \ref{lem:penal}(i) with $\eps=T$ between $s_0$ and $s$. The process $|K|$ is nonnegative and nondecreasing. Since $s\le S$, we get $|K|_s-|K|_{s_0}\le|K|_S$. Also the supremum over $[s_0,s]$ is at most the supremum over $[s_0,S]$. Thus
\[
|Y_s-R_s|^2\le|Y_{s_0}-R_{s_0}|^2+2\sup_{s_0\le u\le S}d_C(Y_u)\,|K|_S .
\]
We take expectations. By the Schwarz inequality, by \eqref{eq:supY} and by Lemma \ref{lem:penal}(iii),
\[
\begin{aligned}
\E\Big[\sup_{s_0\le u\le S}d_C(Y_u)\,|K|_S\Big]&\le\Big(\E\sup_{s_0\le u\le S}d_C(Y_u)^2\Big)^{1/2}\big(\E|K|_S^2\big)^{1/2}\\
&\le\Big[Tm_0+\Big(120\,T\Big(k+\log\Big(1+\frac ST\Big)\Big)\Big)^{1/2}\Big]\Lambda_C(S).
\end{aligned}
\]
Here $\sqrt{120}<11$, because $120<121=11^2$. Together with \eqref{eq:layer} we obtain
\[
\E|Y_s-R_s|^2\le12\,k\,T\log\frac1T+3T^2m_0^2+2\Big[Tm_0+11\Big(T\Big(k+\log\Big(1+\frac ST\Big)\Big)\Big)^{1/2}\Big]\Lambda_C(S).
\]
We add the bounds. By \eqref{eq:pyth}, \eqref{eq:normalpart} and the last display,
\[
\begin{aligned}
\Wt\big(\rho_{T,s/T},\mu_s\big)^2\le{}&T^2m_0^2+(d-k)\,T+12\,k\,T\log\frac1T+3T^2m_0^2\\
&+2\Big[Tm_0+11\Big(T\Big(k+\log\Big(1+\frac ST\Big)\Big)\Big)^{1/2}\Big]\Lambda_C(S).
\end{aligned}
\]
The two terms with $m_0$ add up to $4T^2m_0^2$. Since $\log(1/T)\ge1$ we have $(d-k)T\le12(d-k)T\log(1/T)$. Therefore
\[
(d-k)\,T+12\,k\,T\log\frac1T\le12\,(d-k)\,T\log\frac1T+12\,k\,T\log\frac1T=12\,d\,T\log\frac1T .
\]
This is the bound in (b).

Now let $k=0$. Then $C=\{p\}$ and $L=\{p\}$, so that $\pi_Lx=p$, $X^\perp_t=X_t-p$ and $d_C(x)=|x-p|$. The process $X_t-p$ is an OU process and $\mu_s=\delta_p$. In every coupling of $\rho_{T,s/T}$ and $\delta_p$ the second variable is equal to $p$ almost surely. By \eqref{eq:Nmoment} with $k=0$ and by $e^{-2s/T}\le T^2$,
\[
\Wt(\rho_{T,s/T},\delta_p)^2=\E|X_{s/T}-p|^2\le T^2m_0^2+dT .
\]
Since $\log(1/T)\ge1$ and $\Lambda_C(S)=0$, the right side is at most the bound in (b). This completes the proof.
\end{proof}

\subsection{Long times}
For the limit $s=\infty$ we need the following fact. Reflected Brownian motion in $C$ converges to the uniform law at a rate which does not depend on the starting point. When $C$ has nonempty interior, this follows from the heat kernel bounds of \cite{BassHsu1991}. We give a proof which uses only the tools of this paper. These are the comparison of Lemma \ref{lem:penal} and the fact that the Gibbs states are logarithmically concave. The same proof prepares the estimate in total variation of Subsection \ref{subsec:tvmix}. We recall that there is a universal constant $c_*$ such that every logarithmically concave probability measure $\gamma$ on a Euclidean space satisfies the Poincar\'e inequality
\begin{equation}\label{eq:KLS}
\int\Big(f-\int f\,d\gamma\Big)^2d\gamma\le c_P(\gamma)\int|\nabla f|^2\,d\gamma,\qquad c_P(\gamma)\le c_*\int\Big|y-\int z\,\gamma(dz)\Big|^2\gamma(dy),
\end{equation}
for all smooth bounded $f$ with bounded gradient \cite[Theorem 1.2 and (1.8)]{Bobkov1999}. For the uniform law on a convex body the corresponding isoperimetric inequality is \cite[Theorem 5.1]{KLS1995}. Here $c_P(\gamma)$ is the smallest constant for which the first inequality holds.

The first lemma turns this inequality into an estimate for the state process. We use it again in Subsection \ref{subsec:tvmix} and in \cite[Section~3]{DarapaneniMeta}.

\begin{lemma}\label{lem:L2}
Let $C\subset\R^d$ be a nonempty compact convex set with diameter $D$, let $T>0$ and let $\nu$ be a probability measure on $\R^d$ which has a bounded density with compact support. Let $h_0$ be the density of $\nu$ with respect to $\gamma_T^C$, and let $\nu P_t$ be the law at time $t$ of the solution of \eqref{eq:SDEgen} started from $\nu$. Then for all $t\ge0$
\begin{equation}\label{eq:L2}
\TV\big(\nu P_t,\gamma_T^C\big)\le\frac12\,\exp\Big(-\frac{Tt}{c_*M_T}\Big)\,\|h_0-1\|_{L^2(\gamma_T^C)},
\end{equation}
where, for any fixed point $c$ of $C$,
\[
M_T:=\int|y-c|^2\,\gamma_T^C(dy)\le2D^2+2dT .
\]
\end{lemma}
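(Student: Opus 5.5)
The plan is to run the standard $L^2(\gamma)$ spectral-gap argument for the reversible semigroup of \eqref{eq:SDEgen}. The Poincar\'e constant will be controlled by \eqref{eq:KLS}, using that $\gamma=\gamma_T^C$ is log-concave. Log-concavity holds because $d_C^2$ is convex by \ref{P3} (square of a nonnegative convex function). Hence $\gamma$ satisfies \eqref{eq:KLS} with $c_P(\gamma)\le c_*\int|y-m|^2\,d\gamma$, where $m$ is the barycentre. Since the barycentre minimises $a\mapsto\int|y-a|^2\,d\gamma$, this is at most $c_*M_T$ for any $c\in C$. For the bound on $M_T$, write $|y-c|\le d_C(y)+|P_Cy-c|\le d_C(y)+D$. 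Then $|y-c|^2\le 2d_C(y)^2+2D^2$, and Corollary \ref{cor:violation} gives $\int d_C^2\,d\gamma\le dT$, so $M_T\le 2D^2+2dT$.

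\emph{Reduction to a variance.} Let $h_t$ be the density of $\nu P_t$ with respect to $\gamma$. It exists by \ref{S2} and invariance (Lemma \ref{lem:invariant}). Cauchy--Schwarz gives $\TV(\nu P_t,\gamma)=\frac12\int|h_t-1|\,d\gamma\le\frac12\|h_t-1\|_{L^2(\gamma)}$. It therefore suffices to prove $\|h_t-1\|_{L^2(\gamma)}\le e^{-Tt/c_P}\|h_0-1\|_{L^2(\gamma)}$. The dynamics has generator $\mathcal L f=T\Delta f-\ip{\nabla V_C}{\nabla f}$, with Dirichlet form $\cE(f,f)=T\int|\nabla f|^2\,d\gamma$. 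So the rate $T/c_P\ge T/(c_*M_T)$ is the Poincar\'e gap times $T$, and the exponent in \eqref{eq:L2} follows once we have $\frac{d}{dt}\|h_t-1\|^2=-2\cE(h_t,h_t)$ and the Poincar\'e inequality, since $\int h_t\,d\gamma=1$.

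\emph{Rigour, the main obstacle.} The drift $\nabla V_C=x-P_Cx$ is only Lipschitz, not smooth. I also need reversibility of the true semigroup and the energy identity for $h_t$, and the Poincar\'e inequality of \eqref{eq:KLS} is stated only for smooth bounded $f$. I would handle this by the smoothing of Step 1 of Lemma \ref{lem:invariant}, but using a convex regularisation so that log-concavity survives. Take $V_n=V_C*\theta_n$, which is convex since $V_C$ is convex, with globally Lipschitz gradient, and let $\gamma_n\propto e^{-V_n/T}$. For the smooth reversible diffusion with drift $-\nabla V_n$ and invariant law $\gamma_n$, the classical theory (e.g.\ \cite{BGL2014}) gives the $L^2$ decay $\|P^n_tf-\gamma_n f\|_{L^2(\gamma_n)}\le e^{-Tt/c_P(\gamma_n)}\|f-\gamma_nf\|_{L^2(\gamma_n)}$. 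Here $c_P(\gamma_n)\le c_*M^{(n)}_T$ by \eqref{eq:KLS}, and $M^{(n)}_T\to M_T$ by dominated convergence using \eqref{eq:xs1c}. In dual form this reads $\TV(\nu P^n_t,\gamma_n)\le\frac12e^{-Tt/(c_*M^{(n)}_T)}\|h^{(n)}_0-1\|_{L^2(\gamma_n)}$, where $h^{(n)}_0=d\nu/d\gamma_n$ converges to $h_0$ in $L^2$ because $\nu$ has a bounded density with compact support and $g_n\to g$ uniformly on compacts.

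To pass to the limit, I would show $\TV(\nu P^n_t,\nu P_t)\to0$ by the Girsanov/Pinsker argument of Step 2 of Lemma \ref{lem:invariant}. Starting from $\nu$ instead of $\gamma_n$, I need $\E\int_0^t|\nabla V_n-\nabla V_C|^2(X^n_u)\,du\to0$. Because $h^{(n)}_0$ is bounded uniformly in $n$, the law of $X^n_u$ is at most $\sup_n\|h^{(n)}_0\|_\infty\,\gamma_n$, and the same dominated convergence as in \eqref{eq:xs1d} applies. Finally $\gamma_n\to\gamma$ in total variation by Step 1 of that proof. Letting $n\to\infty$ in the triangle inequality yields \eqref{eq:L2}.
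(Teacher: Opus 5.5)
Your proposal is correct, and most of it follows the paper's proof. The shared core is the convexity-preserving smoothing $V_n=V_C*\theta_n$, the Poincar\'e inequality \eqref{eq:KLS} for the log-concave $\gamma_n$ with the barycentre-minimisation step, $L^2(\gamma_n)$ decay of the symmetric smoothed semigroup followed by Cauchy--Schwarz, and the bound $M_T\le2D^2+2dT$ via Corollary \ref{cor:violation}.

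The genuine difference is in how you remove the smoothing.

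\textbf{The paper's route.} It drives $X^n$ and $X$ with the same Brownian motion from the same initial point. Since $\nabla V_C$ is nonexpansive and $|\nabla V_n-\nabla V_C|\le 1/n$, Gronwall gives the pathwise bound $|X^n_t-X_t|\le te^t/n$. This yields only weak convergence $\Law(X^n_t)\to\nu P_t$. The total variation bound is then transferred using the lower semicontinuity \eqref{eq:TVlsc}, together with $\gamma_n\to\gamma$ weakly.

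\textbf{Your route.} You prove the stronger statement $\TV(\nu P^n_t,\nu P_t)\to0$ by the Girsanov--Pinsker argument of Step 2 of Lemma \ref{lem:invariant}. You then close with the triangle inequality and $\TV(\gamma_n,\gamma)\to0$. This works, and it is simpler than you make it. In the convex case $|\nabla V_n-\nabla V_C|\le1/n$ everywhere, so the relative entropy of the path laws is at most $t/(4Tn^2)$ and the Novikov condition is trivial. Your domination $\nu P^n_u\le\sup_n\|h^{(n)}_0\|_\infty\,\gamma_n$ is correct by invariance, but it is not needed.

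\textbf{A small imprecision.} What you actually need is $\|h^{(n)}_0-1\|_{L^2(\gamma_n)}\to\|h_0-1\|_{L^2(\gamma)}$, not ``$h^{(n)}_0\to h_0$ in $L^2$''. This follows from the uniform convergence of $d\gamma/d\gamma_n$ on the compact support of $\nu$.

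\textbf{Comparison.} The paper's limit step is more elementary: a deterministic Gronwall estimate plus lower semicontinuity, with no change of measure and no appeal to uniqueness in law. It does, however, use the Lipschitz property of the drift. Yours only needs $L^2$ convergence of the drifts along the smoothed process, as in Lemma \ref{lem:invariant}, so it would survive a discontinuous drift. For nonconvex sets, though, the log-concavity input to the Poincar\'e step would fail anyway, so this extra generality buys little here.
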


\begin{proof}
We write $V=V_C$ and $\gamma=\gamma_T^C$. Since $C$ is convex, $\nabla V(x)=x-P_Cx$ by \ref{P4}. This map is nonexpansive by \ref{P2}.

First we smooth the potential. Let $\theta_n$ be the densities of Step 1 of the proof of Lemma \ref{lem:invariant} and let $V_n=V*\theta_n$. The function $V_n$ is smooth. It is convex because it is an average of translates of the convex function $V$. We have $\nabla V_n=(\nabla V)*\theta_n$, where $\theta_n$ is a probability density supported in the ball of radius $1/n$. Hence, for all $x,x'\in\R^d$,
\begin{align*}
|\nabla V_n(x)-\nabla V_n(x')|&\le\int|\nabla V(x-z)-\nabla V(x'-z)|\,\theta_n(z)\,dz\le|x-x'|,\\
|\nabla V_n(x)-\nabla V(x)|&\le\int|\nabla V(x-z)-\nabla V(x)|\,\theta_n(z)\,dz\le\int|z|\,\theta_n(z)\,dz\le\frac1n .
\end{align*}
So the map $\nabla V_n$ is nonexpansive, the second derivatives of $V_n$ are bounded and
\begin{equation}\label{eq:Vnconv}
|\nabla V_n-\nabla V|\le\frac1n ,\qquad|V_n-V|\le\frac{d_C+1}{n},
\end{equation}
the second inequality being \eqref{eq:Vn}.

Let $X^n$ be the solution of $dX^n=-\nabla V_n(X^n)\,dt+\sqrt{2T}\,dW$ with $X^n_0=X_0$ and $\Law(X_0)=\nu$. Let $\gamma_n=\cZ_n^{-1}e^{-V_n/T}dy$, where $\cZ_n=\int e^{-V_n/T}dy$. This process is a diffusion with a smooth potential whose second derivatives are bounded. Its generator is $\cL f=T\Delta f-\ip{\nabla V_n}{\nabla f}$. For a smooth function $g$ we compute
\[
\nabla\cdot\big(e^{-V_n/T}\nabla g\big)=e^{-V_n/T}\Big(\Delta g-\frac1T\ip{\nabla V_n}{\nabla g}\Big)=\frac1T\,e^{-V_n/T}\,\cL g .
\]
Let $f$ and $g$ be smooth with compact support. An integration by parts gives
\begin{equation}\label{eq:xs4bibp}
\begin{aligned}
\int f\,\cL g\,d\gamma_n&=\frac{T}{\cZ_n}\int f\,\nabla\cdot\big(e^{-V_n/T}\nabla g\big)\,dy=-\frac{T}{\cZ_n}\int\ip{\nabla f}{\nabla g}\,e^{-V_n/T}\,dy\\
&=-T\int\ip{\nabla f}{\nabla g}\,d\gamma_n .
\end{aligned}
\end{equation}
The right side does not change when $f$ and $g$ are exchanged. Hence the transition semigroup $(P^n_t)$ is symmetric in $L^2(\gamma_n)$. We recall from Step 2 of the proof of Lemma \ref{lem:invariant} that $\gamma_n$ is invariant. Now let $c_P$ be a constant for which $\gamma_n$ satisfies the Poincar\'e inequality. We claim that, for every $f\in L^2(\gamma_n)$ and $t\ge0$,
\[
\Big\|P^n_tf-\int f\,d\gamma_n\Big\|_{L^2(\gamma_n)}\le e^{-Tt/c_P}\Big\|f-\int f\,d\gamma_n\Big\|_{L^2(\gamma_n)} .
\]
This is the argument of \cite[Section 4.2]{BGL2014}. We write out the computation. Let $f$ be in the domain of the generator in $L^2(\gamma_n)$. Put $f_t=P^n_tf$ and $\bar f=\int f\,d\gamma_n$. Since $\gamma_n$ is invariant, $\int f_t\,d\gamma_n=\bar f$ for all $t$. Let
\[
v(t)=\int\big(f_t-\bar f\big)^2\,d\gamma_n
\]
be the variance of $f_t$. We have $\partial_tf_t=\cL f_t$ and $\int\cL f_t\,d\gamma_n=0$. We differentiate $v$ and use \eqref{eq:xs4bibp} and then the Poincar\'e inequality for $f_t$:
\[
v'(t)=2\int\big(f_t-\bar f\big)\,\cL f_t\,d\gamma_n=2\int f_t\,\cL f_t\,d\gamma_n=-2T\int|\nabla f_t|^2\,d\gamma_n\le-\frac{2T}{c_P}\,v(t).
\]
Here we use that the smooth functions with compact support form a core of the Dirichlet form, because the second derivatives of $V_n$ are bounded. So \eqref{eq:xs4bibp} and the Poincar\'e inequality extend from the functions in \eqref{eq:KLS} to the domain of the form. From the differential inequality we get
\[
\frac{d}{dt}\Big(e^{2Tt/c_P}\,v(t)\Big)=e^{2Tt/c_P}\Big(v'(t)+\frac{2T}{c_P}\,v(t)\Big)\le0,\qquad\text{hence}\qquad v(t)\le e^{-2Tt/c_P}\,v(0).
\]
Taking square roots proves the claim for $f$ in the domain of the generator. Since this domain is dense in $L^2(\gamma_n)$ and both sides of the claim are continuous in $f$ for the norm of $L^2(\gamma_n)$, the claim holds for all $f\in L^2(\gamma_n)$.

Since $V_n$ is convex, $\gamma_n$ is logarithmically concave. Let $\bar y_n=\int z\,\gamma_n(dz)$ be its barycentre. For the fixed point $c$ of $C$,
\[
\int|y-c|^2\,\gamma_n(dy)=\int|y-\bar y_n|^2\,\gamma_n(dy)+|\bar y_n-c|^2 ,
\]
because the cross term $2\int\ip{y-\bar y_n}{\bar y_n-c}\,\gamma_n(dy)$ is zero. So the barycentre minimises the second moment. Hence \eqref{eq:KLS} gives
\[
c_P(\gamma_n)\le c_*M_n,\qquad M_n:=\int|y-c|^2\,\gamma_n(dy).
\]
The law $\nu$ has the density $h_0$ with respect to $\gamma$. Hence it has the density $h_n=h_0\,(d\gamma/d\gamma_n)$ with respect to $\gamma_n$. The function $h_n$ is bounded with compact support and satisfies $\int h_n\,d\gamma_n=1$. Let $g$ be a bounded Borel function. Since $X^n_0$ has the law $h_n\,d\gamma_n$ and $P^n_t$ is symmetric,
\[
\E\,g(X^n_t)=\int\big(P^n_tg\big)\,h_n\,d\gamma_n=\int g\,\big(P^n_th_n\big)\,d\gamma_n .
\]
Thus $\Law(X^n_t)$ has the density $P^n_th_n$ with respect to $\gamma_n$. The total variation distance of two probability measures is half of the $L^1$ distance of their densities. By the Schwarz inequality,
\[
\frac12\int\big|P^n_th_n-1\big|\,d\gamma_n\le\frac12\Big(\int\big|P^n_th_n-1\big|^2\,d\gamma_n\Big)^{1/2}.
\]
We apply the claim to $f=h_n$, for which $\int f\,d\gamma_n=1$, with $c_P=c_P(\gamma_n)\le c_*M_n$. This gives
\begin{equation}\label{eq:tvpenn}
\TV\big(\Law(X^n_t),\gamma_n\big)\le\frac12\,\|P^n_th_n-1\|_{L^2(\gamma_n)}\le\frac12\,\exp\Big(-\frac{Tt}{c_*M_n}\Big)\,\|h_n-1\|_{L^2(\gamma_n)} .
\end{equation}

We now remove the smoothing. The processes $X^n$ and $X$ are driven by the same Brownian motion and start at the same point. Subtracting the two equations, we get
\[
X^n_t-X_t=-\int_0^t\big(\nabla V_n(X^n_u)-\nabla V(X_u)\big)\,du .
\]
By \eqref{eq:Vnconv} and since $\nabla V$ is nonexpansive,
\begin{align*}
|\nabla V_n(X^n_u)-\nabla V(X_u)|&\le|\nabla V_n(X^n_u)-\nabla V(X^n_u)|+|\nabla V(X^n_u)-\nabla V(X_u)|\\
&\le\frac1n+|X^n_u-X_u| .
\end{align*}
Therefore
\[
|X^n_t-X_t|\le\int_0^t\Big(\frac1n+|X^n_u-X_u|\Big)\,du,\qquad\text{hence}\qquad|X^n_t-X_t|\le\frac tn\,e^{t}
\]
by the Gronwall lemma. Indeed, put $g(t)=|X^n_t-X_t|$ and $G(t)=\int_0^tg(u)\,du$. The integral inequality says that $G'(t)=g(t)\le t/n+G(t)$. Hence
\[
\frac{d}{dt}\big(e^{-t}G(t)\big)=e^{-t}\big(G'(t)-G(t)\big)\le\frac tn\,e^{-t},\qquad G(t)\le\frac{e^{t}}n\int_0^tu\,e^{-u}\,du=\frac{e^t-1-t}{n},
\]
and so
\[
g(t)\le\frac tn+G(t)\le\frac{e^t-1}{n}\le\frac tn\,e^t .
\]
In the last step we used $e^t-1=\int_0^te^u\,du\le t\,e^t$. So $X^n_t\to X_t$ on every path. Hence $\Law(X^n_t)\to\nu P_t$ weakly as $n\to\infty$.

By Step 1 of the proof of Lemma \ref{lem:invariant} we have $e^{-V_n/T}\le e^{2/T}e^{-d_C^2/4T}$ and $\cZ_n\to\cZ_T(C)$. So the densities of $\gamma_n$ are bounded by a constant multiple of $e^{-d_C^2/4T}$. By \eqref{eq:Vnconv} they converge pointwise to the density of $\gamma$. The function $|y-c|^2e^{-d_C(y)^2/4T}$ is integrable. By dominated convergence, $\gamma_n\to\gamma$ in total variation and $M_n\to M_T$. The ratio
\[
\frac{d\gamma}{d\gamma_n}=\frac{\cZ_n}{\cZ_T(C)}\,e^{(V_n-V)/T}
\]
tends to one uniformly on compact sets by \eqref{eq:Vnconv}, because $d_C$ is bounded on compact sets. Since $\int h_n\,d\gamma_n=1$,
\[
\|h_n-1\|_{L^2(\gamma_n)}^2=\int h_n^2\,d\gamma_n-2\int h_n\,d\gamma_n+1=\int h_n^2\,d\gamma_n-1 ,
\]
and $h_n^2\,d\gamma_n=h_0^2\,(d\gamma/d\gamma_n)^2\,d\gamma_n=h_0^2\,(d\gamma/d\gamma_n)\,d\gamma$. The function $h_0$ is bounded with compact support, so
\[
\|h_n-1\|_{L^2(\gamma_n)}^2=\int h_0^2\,\frac{d\gamma}{d\gamma_n}\,d\gamma-1\longrightarrow\int h_0^2\,d\gamma-1=\|h_0-1\|_{L^2(\gamma)}^2 .
\]
The last equality holds for the same reason as the first, since $\int h_0\,d\gamma=1$. We now let $n\to\infty$ in \eqref{eq:tvpenn}. On the left we use \eqref{eq:TVlsc} for the weakly convergent sequences $\Law(X^n_t)\to\nu P_t$ and $\gamma_n\to\gamma$. On the right $M_n\to M_T$ and the norms converge. We get \eqref{eq:L2}.

It remains to bound $M_T$. For every $y$ we have $|y-c|\le|y-P_Cy|+|P_Cy-c|\le d_C(y)+D$. Hence $|y-c|^2\le2D^2+2d_C(y)^2$. Using Corollary \ref{cor:violation},
\[
M_T\le2D^2+2\int d_C^2\,d\gamma\le2D^2+2dT .
\]
This completes the proof.
\end{proof}

\begin{lemma}\label{lem:ergodic}
Let $k\ge1$. For every probability measure $\nu$ on $C$ and every $s\ge0$,
\[
\Wt\big(\mu_s[\nu],u_C\big)\le2D\exp\Big(-\frac{s}{(k+4)\,c_*D^2}\Big).
\]
\end{lemma}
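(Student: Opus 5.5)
We will prove the bound first for Dirac initial laws and then extend it to general $\nu$. Throughout we work inside the affine hull $L$, identified with $\R^k$, where $C$ has nonempty interior. The state process in $L$ is the process $X^\parallel$ of \eqref{eq:AN}, and all earlier results (Lemma \ref{lem:L2}, Theorem \ref{thm:diagram}(b)) apply in $L$ with $k$ in place of $d$. The idea is to get a total variation rate from Lemma \ref{lem:L2} at positive temperature, transfer it to reflected Brownian motion by letting $T\to0$ along the slow time $t=s/T$, and handle the singular Dirac start with the contraction of Lemma \ref{lem:penal}(iv).

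\emph{Step 1: a smoothed start.} Fix $x\in C$ and $\eps\in(0,D]$. Let $\nu_\eps$ be the uniform law on $B_\eps(x)\cap C$, where $B_\eps(x)$ is the ball of $L$. The homothety of ratio $\eps/D$ with centre $x$ maps $C$ into $B_\eps(x)\cap C$, by convexity. Hence
\[
\cH^k(B_\eps(x)\cap C)\ge(\eps/D)^k\cH^k(C).
\]
Let $\gamma_T$ be the Gibbs state in $L$ and $h_0=d\nu_\eps/d\gamma_T$. On $C$ the Gibbs density is $\cZ_T^{-1}$, so
\[
\|h_0\|_{L^2(\gamma_T)}^2\le\frac{\cZ_T}{\cH^k(B_\eps(x)\cap C)}\longrightarrow\frac{V_k(C)}{\cH^k(B_\eps(x)\cap C)}\le\Big(\frac D\eps\Big)^k
\]
as $T\to0$, using \eqref{eq:wills} in dimension $k$. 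Also $M_T\to\int|y-c|^2\,u_C(dy)\le D^2$ as $T\to0$, because $\gamma_T\to u_C$ in $\Wt$ by Corollary \ref{cor:zerotemp}. Lemma \ref{lem:L2} at time $t=s/T$ then gives
\[
\limsup_{T\to0}\TV(\nu_\eps P_{s/T},\gamma_T)\le\tfrac12 e^{-s/(c_*D^2)}(D/\eps)^{k/2}.
\]

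\emph{Step 2: passage to $T\to0$.} Since $\nu_\eps$ is carried by $C$, Theorem \ref{thm:diagram}(b) (or (d) with $s$ fixed), applied in $L$, gives $\nu_\eps P_{s/T}\to\mu_s[\nu_\eps]$ weakly. Corollary \ref{cor:zerotemp} gives $\gamma_T\to u_C$ weakly. By the lower semicontinuity \eqref{eq:TVlsc},
\[
\TV(\mu_s[\nu_\eps],u_C)\le\tfrac12 e^{-s/(c_*D^2)}(D/\eps)^{k/2}.
\]
Both measures are carried by $C$, so \eqref{eq:WTV} yields
\[
\Wt(\mu_s[\nu_\eps],u_C)\le\frac{D}{\sqrt2}\,e^{-s/(2c_*D^2)}(D/\eps)^{k/4}.
\]

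\emph{Step 3: removing the smoothing.} Drive two reflected Brownian motions by the same $\beta$, one started at $x$ and one started from $\nu_\eps$. Lemma \ref{lem:penal}(iv) keeps them within distance $\eps$, so $\Wt(\mu_s[\delta_x],\mu_s[\nu_\eps])\le\eps$. By the triangle inequality,
\[
\Wt(\mu_s[\delta_x],u_C)\le\eps+D\,e^{-s/(2c_*D^2)}(D/\eps)^{k/4}.
\]
Choose $\eps=D e^{-as}$ and balance the exponents: $a\big(1+\tfrac k4\big)=\tfrac1{2c_*D^2}$, that is, $a=2/((k+4)c_*D^2)$. Both terms are then at most $D e^{-2s/((k+4)c_*D^2)}$, which is at most the claimed bound $2De^{-s/((k+4)c_*D^2)}$. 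If the balancing forces $\eps>D$, the trivial bound $\Wt\le D$ already suffices.

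\emph{Step 4: general $\nu$.} We have $\mu_s[\nu]=\int\mu_s[\delta_x]\,\nu(dx)$, and $\Wt^2$ is convex in its first argument. Hence $\Wt(\mu_s[\nu],u_C)^2\le\int\Wt(\mu_s[\delta_x],u_C)^2\,\nu(dx)$, and the bound extends to every $\nu$.

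The main obstacle is Step 2. We must check that Theorem \ref{thm:diagram}(b) can be applied inside $L$ to an initial law carried by $C$, and that the limits $M_T\to\int|y-c|^2\,du_C$ and $\cZ_T\to V_k(C)$ hold in dimension $k$. Both look routine given Corollary \ref{cor:zerotemp} and \eqref{eq:wills}.
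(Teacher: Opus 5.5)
Your proof is correct and follows the paper's argument step by step. The steps are: a uniform start on $C\cap\bar B(x,\eps)$ with the homothety volume bound; Lemma~\ref{lem:L2} in $L$ at temperature $T$ and time $s/T$; the passage $T\to0$ through \eqref{eq:TVlsc}; the contraction of Lemma~\ref{lem:penal}(iv) with a balanced choice of $\eps$; and convexity of $\Wt^2$.

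There are two small differences. First, for the convergence to $\mu_s[\nu_\eps]$ you quote Theorem~\ref{thm:diagram}(b) in $L$, whereas the paper redoes that coupling estimate directly with $s_0=0$. Quoting (b) is legitimate, since its proof does not use this lemma (only part (d) with $s=\infty$ does). Second, you use the limit $M_T\to\int|y-c|^2\,u_C(dy)\le D^2$ instead of the cruder $M_T\le2D^2+2kT$, which gains a factor $2$ in the exponent.
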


\begin{proof}
We identify $L$ with $\R^k$, so that $C$ is a compact convex subset of $\R^k$ with nonempty interior. In this proof $\Leb$ is the Lebesgue measure of $\R^k$. Since $k\ge1$, we have $D>0$ and $\Leb(C)>0$. For $s=0$ the right side is $2D$ and the two measures are carried by $C$, so there is nothing to prove. Let $s>0$.

\emph{Step 1: a regular initial law.} Fix $x\in C$ and $r\in(0,D]$, and let $A=\{y\in C:|y-x|\le r\}$. Consider the set
\[
x+\frac rD\,(C-x)=\Big\{\Big(1-\frac rD\Big)x+\frac rD\,c\ :\ c\in C\Big\}.
\]
Its points are convex combinations of two points of $C$, because $0<r/D\le1$. So it is contained in $C$. It is also contained in the ball of radius $r$ around $x$, because
\[
\Big|\Big(1-\frac rD\Big)x+\frac rD\,c-x\Big|=\frac rD\,|c-x|\le r\qquad(c\in C).
\]
Thus the set is contained in $A$. It is the image of $C$ under a homothety of ratio $r/D$, so its Lebesgue measure is $(r/D)^k\,\Leb(C)$. Hence
\begin{equation}\label{eq:volA}
\Leb(A)\ge\Big(\frac rD\Big)^k\Leb(C) .
\end{equation}
Let $\nu_r$ be the uniform probability measure on $A$.

\emph{Step 2: the penalised process.} For $\eps>0$ let $Y^\eps$ be the solution of \eqref{eq:Y} with $\Law(Y^\eps_0)=\nu_r$. We change the time by $t=s/\eps$. Put $\widetilde X_t=Y^\eps_{\eps t}$ and $\widetilde W_t=\eps^{-1/2}\beta_{\eps t}$, which is a standard Brownian motion. With the substitution $u=\eps v$ in \eqref{eq:Y},
\[
\widetilde X_t=Y^\eps_0-\frac1\eps\int_0^{\eps t}\big(Y^\eps_u-P_CY^\eps_u\big)\,du+\sqrt2\,\beta_{\eps t}=\widetilde X_0-\int_0^t\big(\widetilde X_v-P_C\widetilde X_v\big)\,dv+\sqrt{2\eps}\,\widetilde W_t .
\]
So $Y^\eps_s=\widetilde X_{s/\eps}$ is the state process of \eqref{eq:SDEgen} in $\R^k$ at temperature $\eps$ and time $s/\eps$. Let $\gamma_\eps=\gamma_\eps^C$ be the Gibbs state in $\R^k$. Its density equals $1/\cZ_\eps(C)$ on $C$. The density of $\nu_r$ is $\one_A/\Leb(A)$ with $A\subset C$. Thus $\nu_r$ has the density $h_0=\cZ_\eps(C)\one_A/\Leb(A)$ with respect to $\gamma_\eps$. We compute its norm. First
\[
\int h_0^2\,d\gamma_\eps=\int_A\frac{\cZ_\eps(C)^2}{\Leb(A)^2}\cdot\frac{1}{\cZ_\eps(C)}\,dy=\frac{\cZ_\eps(C)}{\Leb(A)} ,
\]
and then, since $\int h_0\,d\gamma_\eps=1$,
\[
\|h_0-1\|_{L^2(\gamma_\eps)}^2=\int h_0^2\,d\gamma_\eps-1=\frac{\cZ_\eps(C)}{\Leb(A)}-1\le\frac{\cZ_\eps(C)}{\Leb(A)} .
\]
We apply Lemma \ref{lem:L2} in dimension $k$, at temperature $\eps$ and time $s/\eps$. The exponent in \eqref{eq:L2} is then $\eps\cdot(s/\eps)/(c_*M_\eps)=s/(c_*M_\eps)$. Hence
\begin{equation}\label{eq:tvpen}
\TV\big(\Law(Y^\eps_s),\gamma_\eps\big)\le\frac12\,\exp\Big(-\frac{s}{c_*M_\eps}\Big)\Big(\frac{\cZ_\eps(C)}{\Leb(A)}\Big)^{1/2},\qquad M_\eps\le2\big(D^2+k\eps\big).
\end{equation}

\emph{Step 3: the limit $\eps\to0$.} Let $(R,K)$ solve \eqref{eq:rbm} with the same $\beta$ and with $\xi=Y^\eps_0\in C$. Lemma \ref{lem:penal}(i) with $s_0=0$ and $Y_0^\eps=R_0$ gives
\[
|Y^\eps_s-R_s|^2\le2\sup_{0\le u\le s}d_C(Y^\eps_u)\,|K|_s .
\]
The process $Y^\eps_s$ is the state process at temperature $\eps$ and time $s/\eps$, and $d_C(Y^\eps_0)=0$. Let $\zeta$ be the envelope process at temperature $\eps$. Theorem \ref{thm:envelope} gives $d_C(Y^\eps_u)\le|\zeta_{u/\eps}|$. So $\sup_{u\le s}d_C(Y^\eps_u)\le\sup_{t\le s/\eps}|\zeta_t|$. We take expectations and use the Schwarz inequality:
\[
\E|Y^\eps_s-R_s|^2\le2\Big(\E\sup_{0\le t\le s/\eps}|\zeta_t|^2\Big)^{1/2}\big(\E|K|_s^2\big)^{1/2} .
\]
We bound the first factor by Corollary \ref{cor:oumoment} in dimension $k$ with $T=\eps$ and $N=s/\eps$, and the second by Lemma \ref{lem:penal}(iii). Since $\eps\log(1+s/\eps)\to0$ as $\eps\to0$, we obtain
\[
\E|Y^\eps_s-R_s|^2\le2\Big(120\,\eps\Big(k+\log\Big(1+\frac s\eps\Big)\Big)\Big)^{1/2}\Lambda_C(s)\longrightarrow0\qquad(\eps\to0).
\]
The law of $R_s$ is $\mu_s[\nu_r]$ for every $\eps$. Hence $\Law(Y^\eps_s)\to\mu_s[\nu_r]$ weakly. Also $\gamma_\eps\to u_C$ weakly and $\cZ_\eps(C)\to\Leb(C)$ by Theorem \ref{thm:gibbs} and Corollary \ref{cor:zerotemp}. Finally $M_\eps\le2(D^2+k\eps)$, so that
\[
\exp\Big(-\frac{s}{c_*M_\eps}\Big)\le\exp\Big(-\frac{s}{2c_*(D^2+k\eps)}\Big)\longrightarrow e^{-s/(2c_*D^2)} .
\]
Letting $\eps\to0$ in \eqref{eq:tvpen} and using \eqref{eq:TVlsc} and \eqref{eq:volA},
\begin{equation}\label{eq:tvrbm}
\TV\big(\mu_s[\nu_r],u_C\big)\le\frac12\,e^{-s/(2c_*D^2)}\Big(\frac{\Leb(C)}{\Leb(A)}\Big)^{1/2}\le\frac12\,e^{-s/(2c_*D^2)}\Big(\frac Dr\Big)^{k/2}.
\end{equation}

\emph{Step 4: from $\nu_r$ to a point.} Let $R^x$ and $R'$ be the solutions of \eqref{eq:rbm} with the same $\beta$, started from $x$ and from a random point with law $\nu_r$. This random point lies in $A$, so its distance from $x$ is at most $r$. By Lemma \ref{lem:penal}(iv), $|R^x_s-R'_s|\le r$. Hence $\Wt(\mu_s[\delta_x],\mu_s[\nu_r])\le r$. The measures $\mu_s[\nu_r]$ and $u_C$ are carried by $C$, which has diameter $D$. By the triangle inequality, by \eqref{eq:WTV} and by \eqref{eq:tvrbm},
\[
\Wt\big(\mu_s[\delta_x],u_C\big)\le r+D\Big(\frac12\,e^{-s/(2c_*D^2)}\Big(\frac Dr\Big)^{k/2}\Big)^{1/2}\le r+D\,e^{-s/(4c_*D^2)}\Big(\frac Dr\Big)^{k/4}.
\]
We choose $r$ so that the two terms on the right are equal. The equation
\[
r=D\,e^{-s/(4c_*D^2)}\Big(\frac Dr\Big)^{k/4}
\]
is the same as
\[
\Big(\frac rD\Big)^{(k+4)/4}=e^{-s/(4c_*D^2)},\qquad\text{that is,}\qquad\frac rD=\exp\Big(-\frac{s}{4c_*D^2}\cdot\frac{4}{k+4}\Big).
\]
So we choose $r=D\exp(-s/((k+4)c_*D^2))$, which belongs to $(0,D]$. Then the second term is
\begin{align*}
D\exp\Big(-\frac{s}{4c_*D^2}+\frac k4\cdot\frac{s}{(k+4)c_*D^2}\Big)&=D\exp\Big(-\frac{s}{c_*D^2}\cdot\frac{(k+4)-k}{4(k+4)}\Big)\\
&=D\exp\Big(-\frac{s}{(k+4)c_*D^2}\Big),
\end{align*}
which is equal to $r$. Thus the sum of the two terms is $2D\exp(-s/((k+4)c_*D^2))$. This proves the lemma for $\nu=\delta_x$. For a general $\nu$ we have $\mu_s[\nu]=\int\mu_s[\delta_x]\,\nu(dx)$. The convexity of $\Wt^2$ then gives
\[
\Wt\big(\mu_s[\nu],u_C\big)^2\le\int\Wt\big(\mu_s[\delta_x],u_C\big)^2\,\nu(dx)\le\Big(2D\exp\Big(-\frac{s}{(k+4)c_*D^2}\Big)\Big)^2 .
\]
The claim follows.
\end{proof}

\begin{proof}[Proof of Theorem \ref{thm:diagram}(c)]
By Lemma \ref{lem:invariant}, $\rho_{T,t}(A)\to\gamma_T^C(A)$ for every Borel set $A$ as $t\to\infty$. We call this setwise convergence. It implies weak convergence. Convergence in $\Wt$ is the same as weak convergence together with convergence of the second moments. So it is enough to show that the variables $|X_t|^2$, $t\ge0$, are uniformly integrable. Fix $c\in C$. By \eqref{eq:env3},
\[
|X_t-c|\le d_C(X_t)+D\le e^{-t}d_C(X_0)+|\zeta_t|+D .
\]
Since $|X_t|\le|c|+|X_t-c|$ and $(z_1+z_2+z_3)^2\le3z_1^2+3z_2^2+3z_3^2$, we get
\[
|X_t|^2\le3\big(|c|+D\big)^2+3\,d_C(X_0)^2+3\,|\zeta_t|^2 .
\]
The first term is a constant. The second term is an integrable variable which does not depend on $t$, because $\E\,d_C(X_0)^2=m_0^2<\infty$. In the third term, $|\zeta_t|$ is the norm of a Gaussian vector whose covariance is $T(1-e^{-2t})$ times the identity by \eqref{eq:oucov}. This covariance is bounded by $T$ times the identity. Hence
\[
\E|\zeta_t|^4=d(d+2)\,T^2\big(1-e^{-2t}\big)^2\le d(d+2)\,T^2 ,
\]
so the variables $|\zeta_t|^2$ are bounded in $L^2$ and therefore uniformly integrable. It follows that $\{|X_t|^2\}_{t\ge0}$ is uniformly integrable. Hence the second moments of $\rho_{T,t}$ converge to the second moment of $\gamma_T^C$. The statement about $\gamma_T^C$ is Corollary \ref{cor:zerotemp}.
\end{proof}

\begin{proof}[Proof of Theorem \ref{thm:diagram}(d)]
We consider the three cases.

\emph{Case $s=0$.} Here $T_nt_n\to0$ and $t_n\to\infty$. By (a),
\[
\Wt\big(\rho_{T_n,t_n},(P_C)_\#\rho_0\big)\le\sqrt{2dT_nt_n}+e^{-t_n}m_0\longrightarrow0 .
\]

\emph{Case $0<s<\infty$.} Put $s_n=T_nt_n$ and $S=s+1$. Then $s_n\to s>0$. Also $T_n\log(1/T_n)\to0$, because $T_n\to0$. So for large $n$ we have $T_n\le1/e$ and
\[
T_n\log(1/T_n)\le s_n\le s+1 .
\]
For such $n$ we may apply (b) with $T=T_n$ and with $s_n$ in place of $s$. Since $s_n/T_n=t_n$, it gives
\[
\begin{aligned}
\Wt\big(\rho_{T_n,t_n},\mu_{s_n}\big)^2\le{}&4T_n^2m_0^2+12\,d\,T_n\log\frac1{T_n}\\
&+2\Big[T_nm_0+11\Big(T_n\Big(k+\log\Big(1+\frac S{T_n}\Big)\Big)\Big)^{1/2}\Big]\Lambda_C(S).
\end{aligned}
\]
The numbers $S$, $m_0$ and $\Lambda_C(S)$ are fixed, while $T_n\to0$, $T_n\log(1/T_n)\to0$, $T_n\log(1+S/T_n)\to0$. Thus the right side of (b) tends to zero as $T\to0$ for fixed $S$ and $m_0$, so that $\Wt(\rho_{T_n,t_n},\mu_{s_n})\to0$. It remains to compare $\mu_{s_n}$ and $\mu_s$. If $k=0$ they are equal. Let $k\ge1$ and let $R$ be the reflected Brownian motion with $\Law(R_0)=(P_C)_\#\rho_0$. The pair $(R_{s_n},R_s)$ is a coupling of $\mu_{s_n}$ and $\mu_s$, so
\[
\Wt(\mu_{s_n},\mu_s)^2\le\E|R_{s_n}-R_s|^2\longrightarrow0 .
\]
The limit holds by dominated convergence, because the paths of $R$ are continuous and $|R_{s_n}-R_s|\le D$. By the triangle inequality for $\Wt$ we get $\Wt(\rho_{T_n,t_n},\mu_s)\to0$.

\emph{Case $s=\infty$.} First let $k=0$, so that $C=\{p\}$, $u_C=\delta_p$ and $X^\perp_t=X_t-p$. Then the claim follows from \eqref{eq:Nmoment}:
\[
\Wt\big(\rho_{T_n,t_n},u_C\big)^2=\E|X_{t_n}-p|^2\le e^{-2t_n}m_0^2+d\,T_n\longrightarrow0 .
\]
Now let $k\ge1$. Fix $S\ge1$. For large $n$ we have $T_n\le1/e$, $T_n\log(1/T_n)\le S$ and $t_n\ge S/T_n$, because $T_n\to0$ and $T_nt_n\to\infty$. Put $t_n'=t_n-S/T_n\ge0$. By the Markov property, $\rho_{T_n,t_n}$ is the law at time $S/T_n$ of the state process started from the law $\rho_n'=\rho_{T_n,t_n'}$. We apply (b) to this process with $s=S$. Its constant $m_0$ is replaced by
\[
m_n'=\Big(\int d_C^2\,d\rho_n'\Big)^{1/2}=\big(\E\,d_C(X_{t_n'})^2\big)^{1/2}\le\big(2m_0^2+2dT_n\big)^{1/2},
\]
where we used \eqref{eq:env3} and $\E|\zeta_t|^2\le dT$. In detail, for every $t\ge0$,
\[
\E\,d_C(X_t)^2\le\E\big(e^{-t}d_C(X_0)+|\zeta_t|\big)^2\le2e^{-2t}\,\E\,d_C(X_0)^2+2\,\E|\zeta_t|^2\le2m_0^2+2dT .
\]
This also shows that $\rho_n'$ has a finite second moment. The image of $\rho_n'$ under $P_C$ is the initial law of the reflected Brownian motion in (b). So (b) reads
\[
\begin{aligned}
\Wt\big(\rho_{T_n,t_n},\ \mu_S[(P_C)_\#\rho_n']\big)^2\le{}&4T_n^2(m_n')^2+12\,d\,T_n\log\frac1{T_n}\\
&+2\Big[T_nm_n'+11\Big(T_n\Big(k+\log\Big(1+\frac S{T_n}\Big)\Big)\Big)^{1/2}\Big]\Lambda_C(S).
\end{aligned}
\]
Since $m_n'$ is bounded, the right side tends to zero as in the second case. Hence
\[
\Wt\big(\rho_{T_n,t_n},\ \mu_S[(P_C)_\#\rho_n']\big)\longrightarrow0\qquad(n\to\infty).
\]
By Lemma \ref{lem:ergodic}, $\Wt(\mu_S[(P_C)_\#\rho_n'],u_C)\le2D\exp(-S/((k+4)c_*D^2))$. Hence, by the triangle inequality,
\[
\limsup_{n\to\infty}\Wt\big(\rho_{T_n,t_n},u_C\big)\le2D\exp\Big(-\frac{S}{(k+4)c_*D^2}\Big).
\]
The left side does not depend on $S$. Letting $S\to\infty$ finishes the proof.
\end{proof}

\subsection{The commutation criterion}

\begin{corollary}\label{cor:commute}
Let $C$ and $\rho_0$ be as in Theorem \ref{thm:diagram}.
\begin{enumerate}[label=\textup{(\roman*)}]
\item The iterated limits exist in $\Wt$ and
\[
\lim_{T\to0}\lim_{t\to\infty}\rho_{T,t}=u_C,\qquad\lim_{t\to\infty}\lim_{T\to0}\rho_{T,t}=(P_C)_\#\rho_0 .
\]
\item The set of all limit points of $\rho_{T,t}$ as $T\to0$ and $t\to\infty$ is the curve $\{\mu_s:0\le s\le\infty\}$.
\item The following three statements are equivalent. First, the two iterated limits in \textup{(i)} coincide. Second, the joint limit of $\rho_{T,t}$ as $T\to0$, $t\to\infty$ exists. Third, $(P_C)_\#\rho_0=u_C$.
\item If $C$ has nonempty interior, then $(P_C)_\#\rho_0=u_C$ holds only for $\rho_0=u_C$.
\end{enumerate}
\end{corollary}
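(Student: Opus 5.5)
Each part follows from Theorem \ref{thm:diagram} together with the continuity of the curve $s\mapsto\mu_s$ on $[0,\infty]$, so the plan is to establish that continuity and then read off (i)--(iv).

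For (i), Theorem \ref{thm:diagram}(c) gives $\lim_{t\to\infty}\rho_{T,t}=\gamma_T^C$ in $\Wt$ for each fixed $T$. It also gives $\gamma_T^C\to u_C$ in $\Wt$, which is the first iterated limit. For the second, fix $t$. By Theorem \ref{thm:diagram}(a), $\Wt(\rho_{T,t},(\varphi_t)_\#\rho_0)\le\sqrt{2dTt}\to0$ as $T\to0$, so the inner limit is $(\varphi_t)_\#\rho_0$. Then $\Wt((\varphi_t)_\#\rho_0,(P_C)_\#\rho_0)\le e^{-t}m_0\to0$ by the coupling used in the proof of (a).

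For (ii), a limit point is a $\Wt$-limit along sequences $T_n\to0$, $t_n\to\infty$. I read limit points in $\Wt$; weak limit points give the same set because the second moments are bounded, by \eqref{eq:env3}. Pass to a subsequence along which $T_nt_n\to s\in[0,\infty]$, using compactness of $[0,\infty]$. Theorem \ref{thm:diagram}(d) identifies the limit as $\mu_s$. Conversely, every $\mu_s$ is attained: take $T_n=1/n$ and $t_n=sn$ for $0<s<\infty$, $t_n=\sqrt n$ for $s=0$, and $t_n=n^2$ for $s=\infty$.

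For (iii), the first condition says $u_C=(P_C)_\#\rho_0$, which is the third. If the third holds, the curve is constant: $\mu_0=u_C$, and $\mu_s=\mu_s[u_C]=u_C$ for $s>0$ because the uniform law is invariant for reflected Brownian motion. Invariance follows from Lemma \ref{lem:ergodic}: by the Markov property, $\mu_{s+r}[u_C]=\mu_s[\mu_r[u_C]]$, and letting $r\to\infty$ with the uniform-in-starting-law rate and continuity in the initial law (Lemma \ref{lem:penal}(iv)) yields $\mu_s[u_C]=u_C$. Alternatively, use the case $k=0$ directly. So by (ii) all limit points coincide and the joint limit exists. If the joint limit exists, all limit points agree, in particular $\mu_0=\mu_\infty$, which is the third condition. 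The step needing care is the invariance of $u_C$ under $\mu_s$. I would get it from Lemma \ref{lem:ergodic} plus the semigroup property: $\mu_s[\cdot]$ is $\Wt$-continuous in the initial law by Lemma \ref{lem:penal}(iv), and $\mu_r[u_C]\to u_C$.

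For (iv), $(P_C)_\#\rho_0=u_C$ gives $\rho_0(C)\ge\rho_0(P_C^{-1}(\operatorname{int}C))$, and $P_C^{-1}(\operatorname{int}C)=\operatorname{int}C$ because a point outside $C$ projects to the boundary. Hence $\rho_0(\operatorname{int}C)=u_C(\operatorname{int}C)=1$, the boundary being Lebesgue-null for a convex body. On $C$, $P_C$ is the identity, so $\rho_0=(P_C)_\#\rho_0=u_C$.
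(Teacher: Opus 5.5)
Your proposal is correct and follows the paper's proof essentially step by step. Part (i) comes from Theorem~\ref{thm:diagram}(a),(c), and part (ii) from the compactness of $[0,\infty]$ together with Theorem~\ref{thm:diagram}(d). Part (iii) rests on the invariance of $u_C$, which you obtain from the semigroup property, the contraction of Lemma~\ref{lem:penal}(iv) and Lemma~\ref{lem:ergodic}. Part (iv) uses $P_C^{-1}(\mathrm{int}\,C)=\mathrm{int}\,C$ and the fact that $\partial C$ is Lebesgue-null.

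One side remark in (ii) needs a different justification. Bounded second moments only give tightness; they do not by themselves make weak and $\Wt$ limit points coincide. The correct reason is that, by Theorem~\ref{thm:diagram}(d), every sequence has a subsequence converging in $\Wt$ to some $\mu_s$.
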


\begin{proof}
(i) The first limit is given by Theorem \ref{thm:diagram}(c). The inner limit is $\gamma_T^C$. As $T\to0$ we have $\gamma_T^C\to u_C$. For the second, Theorem \ref{thm:diagram}(a) gives $\Wt(\rho_{T,t},(\varphi_t)_\#\rho_0)\le\sqrt{2dTt}$. Its proof gives $\Wt((\varphi_t)_\#\rho_0,(P_C)_\#\rho_0)^2\le e^{-2t}m_0^2$. So, for each $t$,
\[
\lim_{T\to0}\rho_{T,t}=(\varphi_t)_\#\rho_0\qquad\text{and}\qquad\Wt\big((\varphi_t)_\#\rho_0,(P_C)_\#\rho_0\big)\le e^{-t}m_0 .
\]
The last bound tends to zero as $t\to\infty$.

(ii) Let $(T_n,t_n)$ be a sequence with $T_n\to0$, $t_n\to\infty$. Since $[0,\infty]$ is compact, there is a subsequence along which $T_nt_n$ converges to some $s\in[0,\infty]$. By Theorem \ref{thm:diagram}(d) the limit of $\rho_{T_n,t_n}$ along this subsequence is $\mu_s$. So every limit point is on the curve. Conversely, every $\mu_s$ is obtained. Take $t_n=n$ and $T_n=s/n$ for $0<s<\infty$, $T_n=n^{-2}$ for $s=0$ and $T_n=n^{-1/2}$ for $s=\infty$. Then $T_nt_n$ is equal to $s$, to $1/n$ and to $\sqrt n$ respectively, so Theorem \ref{thm:diagram}(d) applies.

(iii) By (i), the two iterated limits coincide exactly when $(P_C)_\#\rho_0=u_C$. Assume that the joint limit exists. Then by (ii) all $\mu_s$ are equal. So $\mu_0=\mu_\infty$, that is, $(P_C)_\#\rho_0=u_C$. Conversely, assume that $(P_C)_\#\rho_0=u_C$. We show below that then $\mu_s=u_C$ for all $s$. This implies that the joint limit exists and is equal to $u_C$. Indeed, if $\rho_{T,t}$ did not converge to $u_C$, there would be a number $\eta>0$ and a sequence with $T_n\to0$, $t_n\to\infty$ and $\Wt(\rho_{T_n,t_n},u_C)\ge\eta$ for all $n$. By the proof of (ii) a subsequence converges to some $\mu_s=u_C$, which is a contradiction.

We now prove that $\mu_s=u_C$ for all $s$. The assumption $(P_C)_\#\rho_0=u_C$ means that $\mu_s=\mu_s[u_C]$. For $k=0$ the claim is clear. Let $k\ge1$. We need two properties of the family $\mu_s[\nu]$. The first is the semigroup property
\begin{equation}\label{eq:xs4bsemi}
\mu_{s+s'}[\nu]=\mu_{s'}\big[\mu_s[\nu]\big]\qquad(s,s'\ge0).
\end{equation}
To see this, let $(R,K)$ solve \eqref{eq:rbm} with $\Law(\xi)=\nu$. Then the pair $(R_{s+u},K_{s+u}-K_s)_{u\ge0}$ solves \eqref{eq:rbm} with the initial point $R_s$ and the Brownian motion $(\beta_{s+u}-\beta_s)_{u\ge0}$, which is independent of $R_s$. By the uniqueness in \eqref{eq:rbm}, the law of $R_{s+s'}$ is $\mu_{s'}[\Law(R_s)]$. This is \eqref{eq:xs4bsemi}. The second property is
\begin{equation}\label{eq:xs4bcontr}
\Wt\big(\mu_{s'}[\nu],\mu_{s'}[\nu']\big)\le\Wt(\nu,\nu') .
\end{equation}
For the proof, let $(\xi,\xi')$ be any coupling of $\nu$ and $\nu'$ which is independent of $\beta$, and let $R$ and $R'$ be the solutions of \eqref{eq:rbm} started from $\xi$ and $\xi'$. Lemma \ref{lem:penal}(iv) gives $|R_{s'}-R'_{s'}|\le|\xi-\xi'|$. Hence $\Wt(\mu_{s'}[\nu],\mu_{s'}[\nu'])^2\le\E|\xi-\xi'|^2$. Taking the infimum over the couplings gives \eqref{eq:xs4bcontr}.

Now fix $s'\ge0$ and let $s\to\infty$ in $\mu_{s+s'}[u_C]=\mu_{s'}[\mu_s[u_C]]$. By Lemma \ref{lem:ergodic} the left side tends to $u_C$. The right side tends to $\mu_{s'}[u_C]$ by \eqref{eq:xs4bcontr} and Lemma \ref{lem:ergodic}. In formulas,
\begin{align*}
\Wt\big(\mu_{s'}[u_C],u_C\big)&\le\Wt\big(\mu_{s'}[u_C],\mu_{s'}[\mu_s[u_C]]\big)+\Wt\big(\mu_{s+s'}[u_C],u_C\big)\\
&\le\Wt\big(u_C,\mu_s[u_C]\big)+\Wt\big(\mu_{s+s'}[u_C],u_C\big)\le4D\exp\Big(-\frac{s}{(k+4)c_*D^2}\Big),
\end{align*}
and the right side tends to zero as $s\to\infty$. Hence $\mu_{s'}[u_C]=u_C$ for every $s'$.

(iv) Let $k=d$ and $(P_C)_\#\rho_0=u_C$. The boundary $\partial C$ of the convex set $C$ has Lebesgue measure zero, so $u_C(\partial C)=0$. A point $x$ with $P_Cx$ in the interior of $C$ belongs to the interior of $C$, since the projection maps a point outside $C$ to the boundary and a point of $C$ to itself. So the set of all $x$ which are not in the interior of $C$ is $P_C^{-1}(\partial C)$. Hence
\[
\rho_0\big(\R^d\setminus\mathrm{int}\,C\big)=\rho_0\big(P_C^{-1}(\partial C)\big)=u_C(\partial C)=0 .
\]
Thus $\rho_0$ is carried by the interior of $C$, where $P_C$ is the identity, and $\rho_0=(P_C)_\#\rho_0=u_C$.
\end{proof}

\begin{example}\label{ex:interval}
Let $d=1$, $C=[0,1]$ and $\rho_0=\delta_{x_0}$ with $x_0>1$. Then $(P_C)_\#\rho_0=\delta_1$. The measure $\mu_s$ is the law at time $s$ of the reflected Brownian motion on $[0,1]$ with generator $d^2/dy^2$ started at $1$. For $s>0$ it has the density
\[
1+2\sum_{n=1}^\infty(-1)^n\cos(n\pi y)\,e^{-n^2\pi^2s}\qquad(0\le y\le1).
\]
This formula comes from the eigenfunctions of the Neumann problem for $d^2/dy^2$ on $[0,1]$. The functions $e_0=1$ and $e_n(y)=\sqrt2\cos(n\pi y)$, $n\ge1$, satisfy
\[
e_n''=-n^2\pi^2\,e_n,\qquad e_n'(0)=e_n'(1)=0 ,
\]
and they form an orthonormal basis of $L^2[0,1]$. The transition density of the reflected Brownian motion is the kernel of the semigroup generated by $d^2/dy^2$ with the Neumann boundary condition, that is,
\[
p_s(x,y)=\sum_{n=0}^\infty e^{-n^2\pi^2s}\,e_n(x)\,e_n(y)=1+2\sum_{n=1}^\infty e^{-n^2\pi^2s}\cos(n\pi x)\cos(n\pi y).
\]
For $x=1$ we have $\cos(n\pi)=(-1)^n$, which gives the density above. As $s\to\infty$ the sum over $n\ge1$ tends to zero uniformly in $y$, so the density tends to $1$.

The curve $\mu_s$ joins $\delta_1$ at $s=0$ to the uniform law at $s=\infty$. So the two iterated limits are different. By Corollary \ref{cor:commute}(iv) the joint limit exists only when $\rho_0$ is the uniform law on $[0,1]$. When $k<d$ there are more initial laws for which the limits commute. These are all laws on $\R^d$ whose image under $P_C$ is uniform on $C$, for example the product of $u_C$ with any law on $L_0^\perp$ having a second moment.
\end{example}

\begin{remark}\label{rem:rate}
Few initial laws satisfy the condition $(P_C)_\#\rho_0=u_C$. So the answer to the question \eqref{eq:question} is negative in most cases, even when the set is convex. The limit depends on the way in which $(T,t)$ tends to $(0,\infty)$ only through the number $s=\lim tT$. Figure \ref{fig:diagram} summarises the ranges of times, together with those of the companion paper \cite{DarapaneniMeta}.

The bound in part (b) is meant for $T\to0$. We did not try to optimise its constants. It is of order $(T\log(1/T))^{1/2}$ for the square of the distance, that is, the rate is $T^{1/4}$ up to a logarithm. When $k<d$ the rate cannot be better than $T^{1/2}$. The reason is that $\mu_s$ is carried by $L$. So in every coupling the distance of the two variables is at least the distance of the first one from $L$. Therefore, by \eqref{eq:Nmoment},
\[
\Wt\big(\rho_{T,s/T},\mu_s\big)^2\ge\E|X^\perp_{s/T}|^2\ge(d-k)\,T\,\big(1-e^{-2s/T}\big).
\]
For polyhedra the rate $T^{1/2}$, up to a logarithm, is known for the penalised equation \cite{Slominski2013}. We do not know the optimal rate for general convex sets.
\end{remark}

\begin{figure}[ht]
\centering
\begin{tikzpicture}[scale=0.95]
\draw[->] (0,0) -- (9.3,0) node[below] {$\log t$};
\draw[->] (0,0) -- (0,5.6) node[above] {$\log(1/T)$};
\draw[thick] (0,0) -- (5.3,5.3) node[above right] {$tT=s$};
\draw[thick,dashed,domain=1.0:9,samples=60] plot (\x,{1.75*ln(\x)});
\node[right] at (7.7,4.75) {$t=e^{c/T}$};
\draw[->,thin] (8.2,4.5) -- (8.0,3.75);
\node at (1.5,4.2) {$(P_C)_\#\rho_0$};
\node at (1.5,3.6) {\footnotesize short times};
\node at (6.0,4.35) {$\sum_ia_iu_{C_i}$};
\node at (6.0,3.8) {\footnotesize metastable};
\node at (6.9,1.4) {$\sum_iw_iu_{C_i}$};
\node at (6.9,0.8) {\footnotesize equilibrium};
\end{tikzpicture}
\caption{The limits of $\rho_{T,t}$ in the plane of $\log t$ and $\log(1/T)$. Above the line $tT=s$ the limit is the projection of the initial law. On the line it is the law $\mu_s$ of reflected Brownian motion. Below the line it is uniform on $C$ when $C$ is convex (Theorem \ref{thm:diagram}). The dashed curve concerns a disjoint union of convex sets $C_i$, which is treated in the companion paper \cite{DarapaneniMeta}. Here we use the notation of \cite[Section~3]{DarapaneniMeta}. For such a set the region below the line is divided. The basin weights $a_i=\rho_0(B_i)$ persist for $t\le e^{c/T}$, $c<\min_i\delta_i^2/8$ \cite[Theorem~3.6]{DarapaneniMeta}. The equilibrium weights $w_i$ are reached for $t\ge e^{c/T}$, $c>\lfloor m/2\rfloor\Delta_*^2/8$ \cite[Theorem~3.14]{DarapaneniMeta}. For two components the two thresholds coincide.}
\label{fig:diagram}
\end{figure}

\subsection{Mixing in total variation}\label{subsec:tvmix}
Theorem \ref{thm:diagram}(d) gives convergence to equilibrium in the Wasserstein distance when $tT\to\infty$. We now prove convergence in total variation, with an explicit bound, when $t$ is a large multiple of $\log(1/T)/T$. The proof needs a lower bound for the transition density at the natural scale $\sqrt T$. We state it in a form which also covers the disconnected sets of \cite[Section~3]{DarapaneniMeta}. Recall that $\Pi_C(x)$ is the set of nearest points of $x$ in $C$, and that $\omega_d$ is the volume of the unit ball. We write $\bar B(p,r)$ for the closed ball with centre $p$ and radius $r$.

\begin{lemma}\label{lem:density}
Let $C\subset\R^d$ be nonempty and closed, let $C'\subset C$ be nonempty, closed and convex, and let $\vartheta\in(0,\infty]$ be such that $\Pi_C(x)=\{P_{C'}x\}$ for all $x$ with $d_{C'}(x)<\vartheta$. Let $\lambda\ge0$ and let $T>0$ satisfy $(\lambda+1+8\sqrt d)\sqrt T<\vartheta$. Let $y\in\R^d$ with $d_{C'}(y)\le\lambda\sqrt T$, let $p=P_{C'}y$, and let $X^y$ be the solution of \eqref{eq:SDEgen} with $X^y_0=y$. Then for every Borel set $A$ contained in the ball $\bar B(p,\sqrt T)$,
\[
\P\big(X^y_1\in A\big)\ge c_{d,\lambda}\,T^{-d/2}\,\Leb(A),\qquad c_{d,\lambda}:=\frac12\,(4\pi)^{-d/2}\exp\Big(-\frac14-\frac34(\lambda+1)^2-\frac d3\Big).
\]
\end{lemma}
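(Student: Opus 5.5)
The plan is to compare the law of the path $(X^y_u)_{u\le1}$ with the law of the free path $Y_u=y+\sqrt{2T}\,W_u$ by the Girsanov theorem. I would localise to paths that stay within distance $\vartheta$ of $C'$. On such paths the drift $b(x)=-(x-P_Cx)$ coincides with $-\nabla V_{C'}(x)=-(x-P_{C'}x)$, because $\Pi_C(x)=\{P_{C'}x\}$ there. The resulting lower bound on the density is then evaluated by conditioning on the endpoint $Y_1=z$, so that the remaining averages are over Brownian bridges.

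\emph{Step 1: Girsanov.} By \ref{S2} and the Girsanov theorem, for every Borel set $A$ and every event $G$ of paths,
\[
\P(X^y_1\in A)\ge\E\big[\one_{\{Y_1\in A\}}\one_G\,Z\big],\qquad \log Z=\frac1{\sqrt{2T}}\int_0^1\ip{b(Y_u)}{dW_u}-\frac1{4T}\int_0^1|b(Y_u)|^2\,du .
\]
On $G=\{\sup_{u\le1}d_{C'}(Y_u)<\vartheta\}$ we may replace $b$ by $-\nabla V_{C'}$. I would make this replacement rigorous by stopping at the exit time of $\{d_{C'}<\vartheta\}$.

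\emph{Step 2: remove the stochastic integral.} The function $V_{C'}$ is convex and $C^{1,1}$. The It\^o formula, applied after smoothing as in Theorem \ref{thm:compare}(i), gives
\[
V_{C'}(Y_1)-V_{C'}(y)=\sqrt{2T}\int_0^1\ip{\nabla V_{C'}(Y_u)}{dW_u}+T\int_0^1\Delta V_{C'}(Y_u)\,du .
\]
Here $\Delta V_{C'}\ge0$. So on $G$ the stochastic integral in $\log Z$ equals $-\frac1{2T}\big(V_{C'}(Y_1)-V_{C'}(y)-T\int\Delta V_{C'}\big)$, and therefore
\[
\log Z\ge-\frac{V_{C'}(Y_1)}{2T}-\frac1{4T}\int_0^1d_{C'}(Y_u)^2\,du .
\]
If $Y_1\in A\subset\bar B(p,\sqrt T)$, then $d_{C'}(Y_1)\le\sqrt T$. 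Hence the first term is at least $-\frac14$, which is the source of the $-\frac14$ in $c_{d,\lambda}$.

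\emph{Step 3: bridge estimate.} Condition on $Y_1=z\in A$. We have $|z-y|\le|z-p|+|p-y|\le(\lambda+1)\sqrt T$. The Gaussian density of $Y_1$ at $z$ is $(4\pi T)^{-d/2}e^{-|z-y|^2/4T}\ge(4\pi T)^{-d/2}e^{-(\lambda+1)^2/4}$. Given $Y_1=z$, write $Y_u=y+u(z-y)+\sqrt{2T}\,\mathcal B_u$, where $\mathcal B$ is a standard Brownian bridge. Since $d_{C'}$ is nonexpansive and $d_{C'}(y)\le\lambda\sqrt T$, we have $d_{C'}(Y_u)\le(\lambda+1)\sqrt T+\sqrt{2T}|\mathcal B_u|$. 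By $(a+b)^2\le2a^2+2b^2$,
\[
\frac1{4T}\int_0^1d_{C'}(Y_u)^2du\le\frac{(\lambda+1)^2}2+\int_0^1|\mathcal B_u|^2du .
\]
Together with the Gaussian factor this produces the exponent $\frac34(\lambda+1)^2$. For the bridge term I would use the Jensen inequality, $\E[e^{-F}\one_G]\ge\P(G)\exp(-\E[F\mid G])$. We have $\E\int_0^1|\mathcal B_u|^2du=d/6$. Conditioning on $G$ at most doubles this when $\P(G)\ge\frac12$, giving $e^{-d/3}$ and the factor $\frac12$.

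\emph{Step 4: the event $G$.} It remains to show that $\P(\sup_u\sqrt{2T}|\mathcal B_u|\le8\sqrt{dT})\ge\frac12$. On this event $d_{C'}(Y_u)\le(\lambda+1+8\sqrt d)\sqrt T<\vartheta$, which is exactly the hypothesis. The probability bound follows from a Doob or Chebyshev bound on the supremum of the bridge, using $\E\sup_u|\mathcal B_u|^2\le Cd$ with a crude constant. Integrating over $z\in A$ then gives $\P(X^y_1\in A)\ge c_{d,\lambda}T^{-d/2}\Leb(A)$.

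The main obstacle is Step 1 together with Step 2: justifying the Girsanov density when the drift of $C$ is discontinuous away from $C'$, and applying It\^o to $V_{C'}$ only on the localised event. I would handle both by stopping at the exit time from $\{d_{C'}<\vartheta\}$ and using uniqueness in law from \ref{S1}. The constant bookkeeping in Step 3, in particular checking that conditioning on $G$ still yields $d/3$, is secondary.
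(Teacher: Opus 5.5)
Your proposal is correct and follows the paper's proof essentially step by step. The steps match: Girsanov against $y+\sqrt{2T}\,W$; localisation to $\{d_{C'}<\vartheta\}$, where $b=-\nabla V_{C'}$; the convex It\^o inequality (via smoothing), which removes the stochastic integral at cost $-d_{C'}(Y_1)^2/4T\ge-\frac14$; the bridge $W_u-uW_1$, independent of $W_1$; a Doob/Chebyshev bound (using $\E\sup_u|W_u-uW_1|^2\le16d$) for the event of probability at least $\frac12$; and Jensen, which gives $\frac12e^{-d/3}$.

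There is one slip in Step 3. Nonexpansiveness and $d_{C'}(y)\le\lambda\sqrt T$ alone only give $(2\lambda+1)\sqrt T$ for the interpolation $y+u(z-y)$. You need convexity of $d_{C'}$ together with $d_{C'}(z)\le\sqrt T$, as the paper uses. Alternatively, use $|y+u(z-y)-p|\le(1-u)|y-p|+u|z-p|\le(\lambda+1)\sqrt T$ with $p\in C'$.
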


For a closed convex set $C$ we take $C'=C$ and $\vartheta=\infty$. Then there is no condition on $T$.

\begin{proof}
\emph{Step 1: the Girsanov formula.} Let $Z_u=y+\sqrt{2T}\,W_u$, $0\le u\le1$, and let $b(x)=-(x-P_Cx)$. By \ref{S2}, for every bounded Borel function $F$ on $C([0,1],\R^d)$,
\begin{equation}\label{eq:girs}
\E\,F(X^y)=\E\big[F(Z)\,\cE\big],\qquad\cE=\exp\Big(\frac1{2T}\int_0^1\ip{b(Z_u)}{dZ_u}-\frac1{4T}\int_0^1|b(Z_u)|^2\,du\Big).
\end{equation}
We recall where the density $\cE$ comes from. Put $\eta_u=b(Z_u)/\sqrt{2T}$. Since $dZ_u=\sqrt{2T}\,dW_u$,
\[
\frac1{2T}\int_0^1\ip{b(Z_u)}{dZ_u}=\int_0^1\ip{\eta_u}{dW_u},\qquad\frac1{4T}\int_0^1|b(Z_u)|^2\,du=\frac12\int_0^1|\eta_u|^2\,du .
\]
So $\cE=\exp\big(\int_0^1\ip{\eta_u}{dW_u}-\frac12\int_0^1|\eta_u|^2\,du\big)$ is the exponential density of the Girsanov theorem. We have $\E\,\cE=1$ because $b$ has linear growth, as recalled in \ref{S2}. Under the probability measure $\cE\,d\P$ the process $\widetilde W_u=W_u-\int_0^u\eta_v\,dv$, $0\le u\le1$, is a Brownian motion. Hence
\[
Z_u=y+\sqrt{2T}\,W_u=y+\int_0^ub(Z_v)\,dv+\sqrt{2T}\,\widetilde W_u .
\]
Thus, under this measure, $Z$ is a solution of \eqref{eq:SDEgen} started at $y$. The solution is unique in law by \ref{S1}. Hence the law of $Z$ under $\cE\,d\P$ is the law of $X^y$. This is \eqref{eq:girs}.

\emph{Step 2: a lower bound for $\cE$.} Let $G_0$ be the event that $d_{C'}(Z_u)<\vartheta$ for all $u\le1$. Let $\tau_0=\inf\{u:\ d_{C'}(Z_u)\ge\vartheta\}$, so that $G_0=\{\tau_0>1\}$. For $u<\tau_0$ the point $Z_u$ has the unique nearest point $P_{C'}Z_u$ in $C$, so we have $b(Z_u)=-(Z_u-P_{C'}Z_u)=-\nabla V_{C'}(Z_u)$ and $|b(Z_u)|=d_{C'}(Z_u)$. Hence the stochastic integrals of $b(Z)$ and of $-\nabla V_{C'}(Z)$ with respect to $Z$ coincide up to the time $\tau_0\wedge1$, so they coincide on $G_0$. The function $V_{C'}$ is convex with a Lipschitz gradient. For such a function $f$ and the Brownian path $Z$,
\begin{equation}\label{eq:convexito}
f(Z_1)-f(Z_0)\ge\int_0^1\ip{\nabla f(Z_u)}{dZ_u}\qquad\text{almost surely}.
\end{equation}
We prove \eqref{eq:convexito}. First let $f$ be smooth and convex. The quadratic variation of each coordinate of $Z$ is $2Tu$, so the It\^o formula reads
\[
f(Z_1)-f(Z_0)=\int_0^1\ip{\nabla f(Z_u)}{dZ_u}+T\int_0^1\Delta f(Z_u)\,du .
\]
The term $T\int_0^1\Delta f(Z_u)\,du$ is nonnegative because $\Delta f\ge0$ for a convex function. This is \eqref{eq:convexito} for smooth convex $f$. In the general case let $\ell$ be the Lipschitz constant of $\nabla f$ and apply the smooth case to $f_n=f*\theta_n$, which is smooth and convex. As $n\to\infty$ we have $f_n\to f$ pointwise. Also $|\nabla f_n-\nabla f|\le\ell/n$ as in the proof of Lemma \ref{lem:L2}. By the It\^o isometry,
\[
\E\Big|\int_0^1\ip{\nabla f_n(Z_u)-\nabla f(Z_u)}{dZ_u}\Big|^2=2T\,\E\int_0^1|\nabla f_n(Z_u)-\nabla f(Z_u)|^2\,du\le\frac{2T\ell^2}{n^2}\longrightarrow0 .
\]
So, along a subsequence, the stochastic integrals converge almost surely. Letting $n\to\infty$ gives \eqref{eq:convexito}.

By \eqref{eq:convexito} for $f=V_{C'}$ and since $V_{C'}(Z_0)\ge0$, on $G_0$
\[
\frac1{2T}\int_0^1\ip{b(Z_u)}{dZ_u}=-\frac1{2T}\int_0^1\ip{\nabla V_{C'}(Z_u)}{dZ_u}\ge-\frac{V_{C'}(Z_1)}{2T}=-\frac{d_{C'}(Z_1)^2}{4T} .
\]
Also $|b(Z_u)|^2=d_{C'}(Z_u)^2$ for all $u\le1$ on $G_0$. Therefore
\begin{equation}\label{eq:Elower}
\cE\ge\exp\Big(-\frac{d_{C'}(Z_1)^2}{4T}-\frac1{4T}\int_0^1d_{C'}(Z_u)^2\,du\Big)\qquad\text{on }G_0 .
\end{equation}

\emph{Step 3: the bridge.} Write $W_u=uW_1+\beta^\circ_u$, where $\beta^\circ_u=W_u-uW_1$ is a Brownian bridge which is independent of $W_1$. The independence holds because the two are jointly Gaussian and the covariance of $\beta^\circ_u$ and $W_1$ is $(u-u)$ times the identity, which is zero. Since $Z_1-y=\sqrt{2T}\,W_1$,
\[
Z_u=y+\sqrt{2T}\,\big(uW_1+\beta^\circ_u\big)=y+u\,(Z_1-y)+\sqrt{2T}\,\beta^\circ_u ,
\]
and $\beta^\circ$ is independent of $Z_1$. Let $G_1$ be the event that $\sup_{u\le1}|\beta^\circ_u|\le4\sqrt{2d}$. We have $|\beta^\circ_u|\le|W_u|+u|W_1|\le2\sup_{u\le1}|W_u|$. Also $\E\sup_{u\le1}|W_u|^2\le4\E|W_1|^2=4d$ by the Doob inequality for the submartingale $|W_u|$. By the Chebyshev inequality,
\[
\P(G_1^c)\le\P\Big(\sup_{u\le1}|W_u|>2\sqrt{2d}\Big)\le\frac{\E\sup_{u\le1}|W_u|^2}{(2\sqrt{2d})^2}\le\frac{4d}{8d}=\frac12 .
\]
Assume now that $Z_1\in A$ and that $G_1$ holds. We write $Z_u=(1-u)\,y+u\,Z_1+\sqrt{2T}\,\beta^\circ_u$. The function $d_{C'}$ is nonexpansive, which gives the term $\sqrt{2T}\,|\beta^\circ_u|$ below. It is also convex, which gives the two other terms. We also use $d_{C'}(y)\le\lambda\sqrt T$ and $d_{C'}(Z_1)\le|Z_1-p|\le\sqrt T$, which holds because $p\in C'$ and $A\subset\bar B(p,\sqrt T)$. On $G_1$ we have $\sqrt{2T}\,|\beta^\circ_u|\le\sqrt{2T}\cdot4\sqrt{2d}=8\sqrt d\,\sqrt T$. Thus
\begin{equation}\label{eq:dKbridge}
\begin{aligned}
d_{C'}(Z_u)&\le(1-u)\,d_{C'}(y)+u\,d_{C'}(Z_1)+\sqrt{2T}\,|\beta^\circ_u|\le(\lambda+1)\sqrt T+\sqrt{2T}\,|\beta^\circ_u|\\
&\le(\lambda+1+8\sqrt d)\sqrt T<\vartheta .
\end{aligned}
\end{equation}
So $G_0$ holds. By \eqref{eq:dKbridge} and $(z_1+z_2)^2\le2z_1^2+2z_2^2$ we have $d_{C'}(Z_u)^2\le2(\lambda+1)^2T+4T|\beta^\circ_u|^2$. We divide by $4T$, integrate and use $d_{C'}(Z_1)\le\sqrt T$ once more:
\[
\frac{d_{C'}(Z_1)^2}{4T}\le\frac14,\qquad\frac1{4T}\int_0^1d_{C'}(Z_u)^2\,du\le\frac{(\lambda+1)^2}2+\int_0^1|\beta^\circ_u|^2\,du .
\]

Now we put the estimates together. We use \eqref{eq:girs} with $F=\one_{\{Z_1\in A\}}$. On the event $\{Z_1\in A\}\cap G_1$ the event $G_0$ holds. So \eqref{eq:Elower} and Step 3 give
\[
\cE\ge\exp\Big(-\frac14-\frac{(\lambda+1)^2}2-\int_0^1|\beta^\circ_u|^2\,du\Big).
\]
Hence
\begin{align*}
\P\big(X^y_1\in A\big)&=\E\big[\one_{\{Z_1\in A\}}\,\cE\big]\ge\E\big[\one_{\{Z_1\in A\}}\one_{G_1}\,\cE\big]\\
&\ge\exp\Big(-\frac14-\frac{(\lambda+1)^2}2\Big)\,\E\Big[\one_{\{Z_1\in A\}}\one_{G_1}\exp\Big(-\int_0^1|\beta^\circ_u|^2\,du\Big)\Big].
\end{align*}
The event $G_1$ and the integral depend only on $\beta^\circ$. By the independence of $\beta^\circ$ and $Z_1$,
\[
\P\big(X^y_1\in A\big)\ge\P(Z_1\in A)\;\exp\Big(-\frac14-\frac{(\lambda+1)^2}2\Big)\;\E\Big[\one_{G_1}\exp\Big(-\int_0^1|\beta^\circ_u|^2\,du\Big)\Big].
\]
We bound the last factor. Put $I=\int_0^1|\beta^\circ_u|^2\,du$. Each coordinate of $\beta^\circ_u$ has the variance $\E(W^1_u-uW^1_1)^2=u-2u^2+u^2=u(1-u)$, where $W^1$ is the first coordinate of $W$. So
\[
\E\,I=\E\int_0^1|\beta^\circ_u|^2\,du=d\int_0^1u(1-u)\,du=d\,\Big(\frac12-\frac13\Big)=\frac d6 .
\]
The Jensen inequality for the convex function $z\mapsto e^{-z}$ and the conditional law given $G_1$ gives
\[
\E\big[\one_{G_1}e^{-I}\big]=\P(G_1)\,\E\big[e^{-I}\,\big|\,G_1\big]\ge\P(G_1)\exp\big(-\E[I\,|\,G_1]\big),
\]
and since $I\ge0$,
\[
\E[I\,|\,G_1]=\frac{\E[I\one_{G_1}]}{\P(G_1)}\le\frac{\E\,I}{\P(G_1)} .
\]
The function $z\mapsto z\,e^{-a/z}$ is increasing on $(0,1]$ for $a\ge0$, because its derivative is $(1+a/z)\,e^{-a/z}>0$. Since $\P(G_1)\ge\frac12$, we get with $a=d/6$
\[
\E\Big[\one_{G_1}\exp\Big(-\int_0^1|\beta^\circ_u|^2\,du\Big)\Big]\ge\P(G_1)\exp\Big(-\frac{1}{\P(G_1)}\,\E\int_0^1|\beta^\circ_u|^2\,du\Big)\ge\frac12\,e^{-d/3} .
\]
Finally $Z_1$ is Gaussian with mean $y$ and covariance $2T$ times the identity. For $x\in A$ we have $|x-y|\le|x-p|+|p-y|\le(1+\lambda)\sqrt T$. Hence
\[
\P(Z_1\in A)=\int_A(4\pi T)^{-d/2}e^{-|x-y|^2/4T}\,dx\ge(4\pi T)^{-d/2}e^{-(\lambda+1)^2/4}\,\Leb(A).
\]
We multiply the three bounds:
\[
\P\big(X^y_1\in A\big)\ge(4\pi T)^{-d/2}e^{-(\lambda+1)^2/4}\,\Leb(A)\cdot\exp\Big(-\frac14-\frac{(\lambda+1)^2}2\Big)\cdot\frac12\,e^{-d/3} .
\]
Since $\frac14(\lambda+1)^2+\frac12(\lambda+1)^2=\frac34(\lambda+1)^2$, the right side is $c_{d,\lambda}\,T^{-d/2}\,\Leb(A)$. This gives the claim.
\end{proof}

We can now state the result. Let
\[
\lambda_d=\tfrac12+\sqrt{2d},\qquad\alpha_d=\tfrac14\,c_{d,\lambda_d}\,\omega_d\in(0,1),
\]
which depend only on the dimension. Lemma \ref{lem:density}, applied to a closed convex set with $A=\bar B(p,\sqrt T)$, gives $1\ge c_{d,\lambda}\,T^{-d/2}\,\omega_dT^{d/2}=c_{d,\lambda}\,\omega_d$ for every $\lambda\ge0$. So $\alpha_d\le\frac14$.

\begin{theorem}\label{thm:tvmix}
Let $C\subset\R^d$ be a nonempty compact convex set with diameter $D$, let $\ell_0>0$ and $0<T\le1$, and put
\[
r_T=\log_+\frac{4\ell_0}{\sqrt T}+1+\frac{c_*(2D^2+2d)}{T}\,\log\Big(2\Big(\frac{\sqrt e\,\cZ_1(C)}{\omega_d}\Big)^{1/2}T^{-d/4}\Big).
\]
Then for every $x$ with $d_C(x)\le\ell_0$, every integer $n\ge1$ and every $t\ge n\,r_T$,
\[
\TV\big(P_t(x,\cdot),\gamma_T^C\big)\le(1-\alpha_d)^n+2\exp\Big(-\frac{(\ell_0-\sqrt{dT})_+^2}{2T}\Big).
\]
\end{theorem}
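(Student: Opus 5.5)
The plan is a Doeblin-type argument. Let $\Pi=P_{r_T}$ be the block kernel, let $M=\{y:d_C(y)\le\ell_0\}$, and compare $\mu=P_{t-nr_T}(x,\cdot)$ with $\mu'=\gamma_T^C$. Since $\gamma_T^C$ is invariant (Lemma \ref{lem:invariant}), $\gamma_T^C\Pi^n=\gamma_T^C$. By the Markov property it then suffices to do two things. First, show that $\TV(\Pi(y,\cdot),\Pi(y',\cdot))\le1-\alpha_d$ for all $y,y'\in M$. Second, bound $\nu(M^c)$ for $\nu=P_s(x,\cdot)$ and for $\nu=\gamma_T^C$. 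Iterating Lemma \ref{lem:doeblin}(ii) $n$ times gives $e_{j+1}\le(1-\alpha_d)e_j+\alpha_d\varepsilon$, where $\varepsilon$ is a uniform bound on the two masses of $M^c$. Since $\alpha_d\sum_i(1-\alpha_d)^i\le1$, this yields $\TV\le(1-\alpha_d)^n+\varepsilon$. The proof therefore reduces to showing $\varepsilon\le2\exp(-(\ell_0-\sqrt{dT})_+^2/2T)$.

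For the one-block contraction, I split $r_T=t_1+1+t_2$ with $t_1=\log_+(4\ell_0/\sqrt T)$ and $t_2$ the last term of $r_T$. Let $y\in M$.

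\emph{Approach.} By \eqref{eq:env3}, at time $t_1$ we have $d_C(X_{t_1})\le e^{-t_1}\ell_0+|\zeta_{t_1}|\le\sqrt T/4+|\zeta_{t_1}|$. Since $\E|\zeta_{t_1}|^2\le dT$, the Chebyshev inequality gives $|\zeta_{t_1}|\le\sqrt{2dT}$ with probability at least $\frac12$. Hence $d_C(X_{t_1})\le\lambda_d\sqrt T$ with probability at least $\frac12$, which is exactly why $\lambda_d=\frac12+\sqrt{2d}$ was chosen.

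\emph{Minorisation.} From any such point $z$, Lemma \ref{lem:density} (with $C'=C$, $\vartheta=\infty$, $A\subset\bar B(P_Cz,\sqrt T)$) gives $P_1(z,\cdot)\ge c_{d,\lambda_d}\omega_d\,m_z$, where $m_z$ is the uniform law on that ball.

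\emph{Smoothing.} Apply Lemma \ref{lem:L2} to $\nu=m_z$. Its density with respect to $\gamma_T^C$ is $h_0=\cZ_T(C)e^{d_C^2/2T}\one_B/\Leb(B)$, and $d_C\le\sqrt T$ on $B$. Also $\cZ_T(C)\le\cZ_1(C)$ for $T\le1$, by monotonicity in $T$. Hence $\|h_0-1\|_{L^2}\le(\sqrt e\,\cZ_1(C)/\omega_d)^{1/2}T^{-d/4}$, and $M_T\le2D^2+2d$. With the given $t_2$, Lemma \ref{lem:L2} yields $\TV(m_zP_{t_2},\gamma_T^C)\le\frac14$.

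\emph{Contraction.} Averaging over the good event at time $t_1$ gives $\Pi(y,\cdot)\ge a\,\bar m_y$, with $a=\frac12c_{d,\lambda_d}\omega_d$ and $\bar m_y$ a probability measure satisfying $\TV(\bar m_y,\gamma_T^C)\le\frac14$ (by convexity of $\TV$). Lemma \ref{lem:doeblin}(i) then gives
\[
\TV(\Pi(y,\cdot),\Pi(y',\cdot))\le1-a\big(1-\tfrac12\big)=1-\tfrac a2=1-\alpha_d .
\]

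For the masses outside $M$, the case $\nu=\gamma_T^C$ is handled by Theorem \ref{thm:gibbs}. Under $\gamma_T^C$, $d_C$ is a mixture of $\sqrt T\chi_{d-j}$ with $d-j\le d$, and $\E\chi_{d-j}\le\sqrt d$. Gaussian concentration of $\chi_{d-j}$, which is $1$-Lipschitz in a standard Gaussian vector (as in \eqref{eq:gaussconc}), then gives $\gamma_T^C(M^c)\le\exp(-(\ell_0-\sqrt{dT})_+^2/2T)$. For the state, I would use \eqref{eq:env3} together with \eqref{eq:gaussconc} for $\zeta_s$, whose covariance is at most $T$ times the identity, so that the tail of $d_C(X_s)$ is bounded by the same Gaussian tail.

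The main obstacle is the starting point $x$ with $d_C(x)\le\ell_0$ itself. Here \eqref{eq:env3} only gives $d_C(X_s)\le e^{-s}\ell_0+|\zeta_s|$, so I must make sure the intermediate block starts land in $M$ with the stated error, not merely with an error of order $e^{-s}\ell_0$. I would first try to absorb the $e^{-s}\ell_0$ term using the burn-in part of each block, since it is at most $\sqrt T/4$ after time $t_1$. If that leaves the thresholds slightly mismatched, I would instead enlarge $M$ to $\{d_C\le\ell_0+\sqrt T/4\}$, keeping $t_1$ adequate so that the one-block contraction still holds, and adjust the tail computation accordingly.
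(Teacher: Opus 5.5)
\textbf{Gap: bounding the mass of the state outside $M$ at the block starts.}

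Your overall scheme matches the paper's. It uses blocks of length $t_1+1+t_2$, the envelope plus Chebyshev, Lemma \ref{lem:density}, Lemma \ref{lem:L2} and Lemma \ref{lem:doeblin}. Your one-block contraction is correct, and so is your bound on $\gamma_T^C(M^c)$ via Theorem \ref{thm:gibbs}.

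The gap is the step you flagged yourself, and neither of your fixes closes it as written. The clearest failure is the first block. In your decomposition $P_t(x,\cdot)=P_{t-nr_T}(x,\cdot)\Pi^n$ it starts at $s_0=t-nr_T$. This can be any number in $(0,t_1)$, so no burn-in has acted yet. Take $C=[0,1]$, $x=1+\ell_0$ and a fixed $s_0>0$. Then the probability that $d_C(X_{s_0})>\ell_0+\sqrt T/4$ is $\exp\big(-(1+o(1))\tanh(s_0/2)\,\ell_0^2/2T\big)$ as $T\to0$. This is far larger than $q=\exp\big(-(\ell_0-\sqrt{dT})_+^2/2T\big)$. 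So even your enlarged set $\{d_C\le\ell_0+\sqrt T/4\}$ leaves a term $\alpha_d(1-\alpha_d)^{n-1}\mu(M^c)$ in the recursion, which $(1-\alpha_d)^n+2q$ does not cover. With $M=\{d_C\le\ell_0\}$ this mass even tends to $\tfrac12$ as $s_0\downarrow0$ when $d_C(x)=\ell_0$.

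For the later blocks, your first fix gives via \eqref{eq:env3} and \eqref{eq:gaussconc} only the bound $\exp\big(-(\ell_0(1-e^{-s})-\sqrt{dT})_+^2/2T\big)$. This is strictly larger than $q$, and not within a factor of $q$ that is bounded uniformly in $\ell_0$. So the constant $2$ is not reached.

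\textbf{Repair: take $M=\{d_C\le 2\ell_0\}$, as the paper does.}

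By \eqref{eq:env3}, $d_C(X^x_s)\le e^{-s}d_C(x)+|\zeta_s|\le\ell_0+|\zeta_s|$ for \emph{every} $s\ge0$. Hence $P_s(x,M^c)\le\P(|\zeta_s|>\ell_0)\le q$ by \eqref{eq:gaussconc}, wherever the blocks start.

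The approach step still works. For $t_1=\log_+(4\ell_0/\sqrt T)$ you have $e^{-t_1}\cdot2\ell_0\le\sqrt T/2$, and this is exactly the $\tfrac12$ in $\lambda_d=\tfrac12+\sqrt{2d}$. Your choice of $M$ only needed $\tfrac14$ there, which should have been the hint.

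With this $M$, your contraction argument and $\gamma_T^C(M^c)\le\gamma_T^C(\{d_C>\ell_0\})\le q$ give $(1-\alpha_d)^n+2q$ as claimed.

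The paper differs in two small ways. It obtains $\gamma_T^C(M^c)\le q$ as $\lim_tP_t(x,M^c)$ via Lemma \ref{lem:invariant}. It also proves the estimate at $t=nr_T$ and then uses that $t\mapsto\TV(P_t(x,\cdot),\gamma_T^C)$ is nonincreasing. With that placement of the remainder all block starts are multiples of $r_T$, and your second fix $\{d_C\le\ell_0+\sqrt T/4\}$ would also have worked.
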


\begin{proof}
Let $M=\{y:d_C(y)\le2\ell_0\}$. We write $r_T=t_a+1+t_c$, where
\[
t_a=\log_+\frac{4\ell_0}{\sqrt T}
\]
and $t_c$ is the last term in the definition of $r_T$. The number $t_c$ is positive, because $\cZ_1(C)\ge(2\pi)^{d/2}V_0(C)=(2\pi)^{d/2}$ by \eqref{eq:wills}. Indeed, $(2\pi)^{d/2}=\int e^{-|x|^2/2}dx\ge e^{-1/2}\omega_d$, where we integrated only over the unit ball. So $\sqrt e\,\cZ_1(C)/\omega_d\ge1$. Since $T^{-d/4}\ge1$, the logarithm in $t_c$ is at least $\log2$.

The process spends the time $t_a$ to come close to $C$, the time $1$ to spread at the scale $\sqrt T$ and the time $t_c$ to mix inside $C$.

\emph{Step 1: approach to $C$.} Let $y\in M$. By \eqref{eq:env3},
\[
d_C(X^y_{t_a})\le e^{-t_a}\,2\ell_0+|\zeta_{t_a}|\le\frac{\sqrt T}2+|\zeta_{t_a}| .
\]
For the second inequality there are two cases. If $4\ell_0\ge\sqrt T$, then $e^{-t_a}=\sqrt T/(4\ell_0)$ and $e^{-t_a}\,2\ell_0=\sqrt T/2$. If $4\ell_0<\sqrt T$, then $t_a=0$ and $2\ell_0<\sqrt T/2$. Let $G_a=\{|\zeta_{t_a}|\le\sqrt{2dT}\}$. Since $\E|\zeta_{t_a}|^2\le dT$ by \eqref{eq:oucov}, the Chebyshev inequality gives
\[
\P(G_a^c)=\P\big(|\zeta_{t_a}|>\sqrt{2dT}\big)\le\frac{dT}{2dT}=\frac12 .
\]
So the event $G_a$ has probability at least $\frac12$. On $G_a$ we have
\[
d_C(X^y_{t_a})\le\Big(\frac12+\sqrt{2d}\Big)\sqrt T=\lambda_d\sqrt T .
\]

\emph{Step 2: a lower bound for the transition probability.} For $y'$ with $d_C(y')\le\lambda_d\sqrt T$ let $\nu_{y'}$ be the uniform probability measure on the ball $\bar B(P_Cy',\sqrt T)$, whose volume is $\omega_dT^{d/2}$. We apply Lemma \ref{lem:density} with $C'=C$, $\vartheta=\infty$ and $\lambda=\lambda_d$. For every Borel set $A$,
\begin{align*}
P_1(y',A)\ge P_1\big(y',A\cap\bar B(P_Cy',\sqrt T)\big)&\ge c_{d,\lambda_d}\,T^{-d/2}\,\Leb\big(A\cap\bar B(P_Cy',\sqrt T)\big)\\
&=c_{d,\lambda_d}\,\omega_d\,\nu_{y'}(A).
\end{align*}
So the lemma says that
\[
P_1(y',\cdot)\ge c_{d,\lambda_d}\,\omega_d\;\nu_{y'} .
\]

\emph{Step 3: mixing from a ball.} On $\bar B(P_Cy',\sqrt T)$ we have $d_C\le\sqrt T$. Hence $V_C\le T/2$. Also $\cZ_T(C)\le\cZ_1(C)$, because $T\le1$ and so $e^{-d_C^2/2T}\le e^{-d_C^2/2}$. The measure $\nu_{y'}$ has the Lebesgue density $\one_{\bar B(P_Cy',\sqrt T)}/(\omega_dT^{d/2})$ and $\gamma_T^C$ has the Lebesgue density $e^{-V_C/T}/\cZ_T(C)$. So the density $h_0=\cZ_T(C)\,e^{V_C/T}\one_{\bar B(P_Cy',\sqrt T)}/(\omega_dT^{d/2})$ of $\nu_{y'}$ with respect to $\gamma_T^C$ satisfies
\[
\|h_0-1\|_{L^2(\gamma_T^C)}^2\le\int h_0^2\,d\gamma_T^C=\frac{\cZ_T(C)}{(\omega_dT^{d/2})^2}\int_{\bar B(P_Cy',\sqrt T)}e^{V_C/T}\,dx\le\frac{\sqrt e\,\cZ_1(C)}{\omega_d\,T^{d/2}} .
\]
In the last step we used $e^{V_C/T}\le\sqrt e$ on the ball, the volume $\omega_dT^{d/2}$ of the ball and $\cZ_T(C)\le\cZ_1(C)$. By Lemma \ref{lem:L2} and the definition of $t_c$, using $M_T\le2D^2+2dT\le2D^2+2d$,
\[
\TV\big(\nu_{y'}P_{t_c},\gamma_T^C\big)\le\frac12\,\exp\Big(-\frac{T\,t_c}{c_*(2D^2+2d)}\Big)\Big(\frac{\sqrt e\,\cZ_1(C)}{\omega_d\,T^{d/2}}\Big)^{1/2}\le\frac14 .
\]
The value $\frac14$ comes out exactly. By the definition of $t_c$,
\[
\frac{T\,t_c}{c_*(2D^2+2d)}=\log\Big(2\Big(\frac{\sqrt e\,\cZ_1(C)}{\omega_d}\Big)^{1/2}T^{-d/4}\Big)=\log\Big(2\Big(\frac{\sqrt e\,\cZ_1(C)}{\omega_d\,T^{d/2}}\Big)^{1/2}\Big),
\]
so that
\[
\exp\Big(-\frac{T\,t_c}{c_*(2D^2+2d)}\Big)=\frac12\Big(\frac{\sqrt e\,\cZ_1(C)}{\omega_d\,T^{d/2}}\Big)^{-1/2},
\]
and the product of the three factors is $\frac12\cdot\frac12=\frac14$.

\emph{Step 4: one round.} Let $y\in M$ and let $A$ be a Borel set. The event $G_a$ is determined by the Brownian motion up to the time $t_a$. By the Markov property at the time $t_a$,
\[
P_{r_T}(y,A)=\E\big[P_{1+t_c}(X^y_{t_a},A)\big]\ge\E\big[\one_{G_a}\,P_{1+t_c}(X^y_{t_a},A)\big].
\]
On $G_a$ the point $y'=X^y_{t_a}$ satisfies $d_C(y')\le\lambda_d\sqrt T$ by Step 1. For such a point the Markov property at the time $1$ and Step 2 give
\begin{align*}
P_{1+t_c}(y',A)=\int P_1(y',dz)\,P_{t_c}(z,A)&\ge c_{d,\lambda_d}\,\omega_d\int\nu_{y'}(dz)\,P_{t_c}(z,A)\\
&=c_{d,\lambda_d}\,\omega_d\,\big(\nu_{y'}P_{t_c}\big)(A).
\end{align*}
Thus, by the Markov property at the times $t_a$ and $t_a+1$ and by Steps 1 and 2,
\[
P_{r_T}(y,\cdot)\ge\E\big[\one_{G_a}\,c_{d,\lambda_d}\omega_d\;\nu_{X^y_{t_a}}P_{t_c}\big]=c_{d,\lambda_d}\omega_d\,\P(G_a)\;m_y,\qquad m_y:=\E\big[\nu_{X^y_{t_a}}P_{t_c}\,\big|\,G_a\big].
\]
The probability measure $m_y$ is a mixture of measures which are at distance at most $\frac14$ from $\gamma_T^C$ by Step 3. The total variation distance is convex, so $\TV(m_y,\gamma_T^C)\le\frac14$. By the triangle inequality, $\TV(m_y,m_{\tilde y})\le\frac12$ for $y,\tilde y\in M$. Since $\P(G_a)\ge\frac12$, we have $P_{r_T}(y,\cdot)\ge\frac12c_{d,\lambda_d}\omega_d\,m_y$ for every $y\in M$. By Lemma \ref{lem:doeblin}(i) with $a=\frac12c_{d,\lambda_d}\omega_d$,
\[
\TV\big(P_{r_T}(y,\cdot),P_{r_T}(\tilde y,\cdot)\big)\le1-\tfrac12c_{d,\lambda_d}\omega_d\cdot\tfrac12=1-\alpha_d\qquad(y,\tilde y\in M).
\]

\emph{Step 5: iteration.} Let $d_C(x)\le\ell_0$ and put $e_n=\TV(P_{nr_T}(x,\cdot),\gamma_T^C)$. We apply Lemma \ref{lem:doeblin}(ii) with $\Pi=P_{r_T}$, $\alpha=\alpha_d$, $\mu=P_{nr_T}(x,\cdot)$ and $\mu'=\gamma_T^C$. Here $\mu\Pi=P_{(n+1)r_T}(x,\cdot)$ by the Markov property and $\mu'\Pi=\gamma_T^C$ since $\gamma_T^C$ is invariant. This gives
\[
e_{n+1}\le(1-\alpha_d)\,e_n+\alpha_d\,\big(P_{nr_T}(x,M^c)+\gamma_T^C(M^c)\big).
\]
By \eqref{eq:env3} we have $d_C(X^x_t)\le e^{-t}d_C(x)+|\zeta_t|\le\ell_0+|\zeta_t|$. So $d_C(X^x_t)>2\ell_0$ implies $|\zeta_t|>\ell_0$. By \eqref{eq:gaussconc} with $\tr Q=d$, $\|Q\|=1$ and $\ell=(\ell_0-\sqrt{dT})_+$, for every $t$,
\[
P_t(x,M^c)=\P\big(d_C(X^x_t)>2\ell_0\big)\le\P\big(|\zeta_t|>\ell_0\big)\le q:=\exp\Big(-\frac{(\ell_0-\sqrt{dT})_+^2}{2T}\Big).
\]
When $\ell_0\le\sqrt{dT}$ this bound is trivial, since then $q=1$. By Lemma \ref{lem:invariant}, $\gamma_T^C(M^c)=\lim_{t\to\infty}P_t(x,M^c)\le q$. Hence $e_{n+1}\le(1-\alpha_d)e_n+2\alpha_dq$. We show by induction that $e_n\le(1-\alpha_d)^n+2q$. For $n=0$ this holds because $e_0\le1$. If it holds for $n$, then
\begin{align*}
e_{n+1}&\le(1-\alpha_d)\big((1-\alpha_d)^n+2q\big)+2\alpha_dq\\
&=(1-\alpha_d)^{n+1}+2q(1-\alpha_d)+2\alpha_dq=(1-\alpha_d)^{n+1}+2q .
\end{align*}
Finally, $t\mapsto\TV(P_t(x,\cdot),\gamma_T^C)$ is nonincreasing because $\gamma_T^C$ is invariant. Indeed, for $t\ge t'$,
\[
\TV\big(P_t(x,\cdot),\gamma_T^C\big)=\TV\big(P_{t'}(x,\cdot)P_{t-t'},\ \gamma_T^CP_{t-t'}\big)\le\TV\big(P_{t'}(x,\cdot),\gamma_T^C\big),
\]
since a Markov kernel does not increase the total variation distance. So the bound holds for all $t\ge nr_T$.
\end{proof}

\begin{remark}\label{rem:tvmix}
The last term in Theorem \ref{thm:tvmix} does not depend on $n$. So for a fixed temperature the theorem is not a statement of convergence as $t\to\infty$ (this convergence is Lemma \ref{lem:invariant}). The term is the probability that the state is found far from $C$. It is exponentially small in $1/T$. The constant $\alpha_d$ is explicit but very small. It decreases fast with the dimension (for example $\alpha_1\approx7\cdot10^{-5}$ and $\alpha_3\approx3\cdot10^{-8}$). We did not try to optimise it.
\end{remark}

\begin{corollary}\label{cor:mixtime}
Let $C\subset\R^d$ be a nonempty compact convex set and $\ell_0>0$.
\begin{enumerate}[label=\textup{(\roman*)}]
\item If $T\to0$ and $t=t(T)$ is such that $tT/\log(1/T)\to\infty$, then
\[
\sup_{x:\,d_C(x)\le\ell_0}\TV\big(P_t(x,\cdot),\gamma_T^C\big)\longrightarrow0 .
\]
\item If $\dim C\ge1$, there is $s_0>0$ such that
\[
\liminf_{T\to0}\ \inf_{x\in C}\TV\big(P_{s_0/T}(x,\cdot),\gamma_T^C\big)\ge\frac12 .
\]
\item Let $t_{\mathrm{mix}}(T)=\inf\{t\ge0:\ \sup_{x\in C}\TV(P_t(x,\cdot),\gamma_T^C)\le\frac14\}$. If $\dim C\ge1$, there is a constant $A$ such that, for all small $T$,
\[
\frac{s_0}{T}\le t_{\mathrm{mix}}(T)\le\frac{A}{T}\,\log\frac1T .
\]
\end{enumerate}
\end{corollary}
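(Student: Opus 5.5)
Part (i) follows directly from Theorem \ref{thm:tvmix}. Fix $\eps>0$ and choose $n$ with $(1-\alpha_d)^n<\eps/2$. The second term $2\exp(-(\ell_0-\sqrt{dT})_+^2/2T)$ does not depend on $x$ and tends to $0$ as $T\to0$. For the first term we look at $r_T$ as $T\to0$. Its summand $\log_+(4\ell_0/\sqrt T)=O(\log(1/T))$. Its last summand is $\frac{c_*(2D^2+2d)}{T}\big(\log 2+\frac12\log(\sqrt e\,\cZ_1(C)/\omega_d)+\frac d4\log\frac1T\big)$, which is at most $A_0\log(1/T)/T$ for small $T$. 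Hence $r_T\le A\log(1/T)/T$ with a constant $A$. If $tT/\log(1/T)\to\infty$, then $t\ge nr_T$ for small $T$, and the bound of Theorem \ref{thm:tvmix} holds uniformly over $d_C(x)\le\ell_0$. This gives (i). It also gives the upper bound in (iii): take $n$ with $(1-\alpha_d)^n\le\frac18$ and $T$ small enough that the exponential term is at most $\frac18$. Since $C\subset\{d_C\le\ell_0\}$, the distance is at most $\frac14$ for all $x\in C$ once $t\ge nA\log(1/T)/T$.

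For (ii) we use the limit diagram. Both TV and the Wasserstein statements give only weak limits, so the lower semicontinuity \eqref{eq:TVlsc} is the tool. Argue by contradiction. Suppose there are $T_n\to0$ and $x_n\in C$ with $\TV(P_{s_0/T_n}(x_n,\cdot),\gamma_{T_n}^C)<\frac12-\eta$. By compactness of $C$, pass to a subsequence with $x_n\to x\in C$. By the contraction \eqref{eq:env1}-type coupling, or more simply by Lemma \ref{lem:penal}(iv) combined with Theorem \ref{thm:diagram}(b), the law $P_{s_0/T_n}(x_n,\cdot)$ converges weakly to $\mu_{s_0}[\delta_x]$. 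To get this we apply (b) with $\rho_0=\delta_{x_n}$, $m_0=0$, and note that the bound is uniform in the starting point, while $\Wt(\mu_{s_0}[\delta_{x_n}],\mu_{s_0}[\delta_x])\le|x_n-x|$. Also $\gamma_{T_n}^C\to u_C$ by Corollary \ref{cor:zerotemp}. So \eqref{eq:TVlsc} gives $\TV(\mu_{s_0}[\delta_x],u_C)\le\frac12-\eta$.

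It therefore suffices to choose $s_0$ with $\TV(\mu_{s_0}[\delta_x],u_C)\ge\frac12$ for every $x\in C$. This is the main point. Take $r=D/8$, which is positive since $\dim C\ge1$. By Lemma \ref{lem:penal}(ii), $\E|R_{s_0}-x|^2\le2ks_0$, so by Chebyshev $\mu_{s_0}[\delta_x](\bar B(x,r))\ge1-2ks_0/r^2$. The uniform law satisfies $u_C(\bar B(x,r))\le\theta<1$ uniformly in $x\in C$. We expect this bound to be the delicate part. To prove it, pick two points $c,c'$ of $C$ at distance $D$. At least one of them, say $c'$, has $|x-c'|\ge D/2$. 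The homothety argument of \eqref{eq:volA}, with centre $c'$ and ratio $\frac14$, yields a set of $u_C$-mass $4^{-k}$ inside $\bar B(c',D/4)$, which is disjoint from $\bar B(x,D/8)$. Hence $\theta=1-4^{-k}$. This gives $\TV\ge1-2ks_0/r^2-\theta=4^{-k}-128ks_0/D^2$, which is not $\frac12$ in general. So we instead take $r$ small relative to the inradius: $u_C(\bar B(x,r))\le\omega_kr^k/\cH^k(C)\le\frac14$ for $r$ small, and then $s_0=r^2/(8k)$ gives $\mu_{s_0}[\delta_x](\bar B(x,r))\ge\frac34$ and $\TV\ge\frac12$. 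With $s_0$ chosen this way, the contradiction proves (ii). The lower bound in (iii) follows immediately, since for $t<s_0/T$ one uses monotonicity of $t\mapsto\TV$ (shown at the end of Theorem \ref{thm:tvmix}) to get a distance of at least $\frac12-o(1)>\frac14$.
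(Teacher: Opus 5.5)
Your proof is correct. Part (i) and both halves of (iii) follow the paper essentially verbatim: you bound $r_T\le A\log(1/T)/T$, apply Theorem~\ref{thm:tvmix}, and for the lower bound use monotonicity in $t$ of the distance to equilibrium.

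The genuine difference is in (ii).

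\emph{The paper's route.} It argues directly and quantitatively at positive temperature.
\begin{enumerate}
\item It uses the coupling $(X_{s/T},R_s)$ from the proof of Theorem~\ref{thm:diagram}(b). The error bound there is uniform in $x\in C$ because $m_0=0$.
\item It applies Chebyshev twice to get $P_{s/T}(x,\bar B_x)\ge1-8ks/\eta^2-o(1)$.
\item It controls the Gibbs side by
\[
\gamma_T^C(\bar B_x)\le(P_C)_\#\gamma_T^C(\bar B_x)\le u_C(\bar B_x)+\big(1-w_k(T)\big),
\]
using that $P_C$ is nonexpansive and \eqref{eq:zt1}.
\end{enumerate}

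\emph{Your route.} You argue by contradiction.
\begin{enumerate}
\item You extract a subsequence $x_n\to x$.
\item You use Theorem~\ref{thm:diagram}(b) together with Lemma~\ref{lem:penal}(iv) to get weak convergence to $\mu_{s_0}[\delta_x]$.
\item You transfer everything to the limit objects through \eqref{eq:TVlsc}. What remains is the zero-temperature statement $\inf_{x\in C}\TV(\mu_{s_0}[\delta_x],u_C)\ge\frac12$.
\end{enumerate}

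\emph{What each buys.} Your route never estimates $\gamma_T^C$ on a ball, since $u_C$ already lives on $C$. It therefore needs neither the projection trick nor the weight $w_k(T)$, and it isolates the geometric content cleanly. The price is that it says nothing about how small $T$ must be. The paper's estimate is explicit and needs no compactness extraction.

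Your first attempt with $r=D/8$ is a dead end and can be deleted. The choice of $r$ with $\omega_kr^k/\cH^k(C)\le\frac14$ and $s_0=r^2/(8k)$ is what works. It also gives a slightly larger $s_0$ than the paper's $\eta^2/(32k)$, because you do not split the radius into two halves.
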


\begin{proof}
(i) We have $r_T\le A_1\log(1/T)/T$ for small $T$, with a constant $A_1$ depending on $C$, $d$ and $\ell_0$. To see this, let $T\le1/e$, so that $\log(1/T)\ge1$ and $1/T\ge1$. The first two terms of $r_T$ satisfy
\[
\log_+\frac{4\ell_0}{\sqrt T}\le\log_+(4\ell_0)+\frac12\log\frac1T,\qquad1\le\log\frac1T .
\]
Put $a_0=\log\big(2(\sqrt e\,\cZ_1(C)/\omega_d)^{1/2}\big)$, which is positive by the beginning of the proof of Theorem \ref{thm:tvmix}. The third term of $r_T$ is
\[
\frac{c_*(2D^2+2d)}{T}\Big[a_0+\frac d4\log\frac1T\Big]\le\frac{c_*(2D^2+2d)}{T}\Big(a_0+\frac d4\Big)\log\frac1T .
\]
So we may take
\[
A_1=\log_+(4\ell_0)+\frac32+c_*(2D^2+2d)\Big(a_0+\frac d4\Big).
\]
Let $n=\lfloor t/r_T\rfloor$. Then
\[
n\ge\frac t{r_T}-1\ge\frac{tT}{A_1\log(1/T)}-1\longrightarrow\infty ,
\]
and $t\ge n\,r_T$. Both terms of the bound of Theorem \ref{thm:tvmix} tend to zero, the first because $n\to\infty$ and the second because $T\to0$ and $\ell_0>0$. The bound does not depend on $x$.

(ii) Let $k=\dim C\ge1$, $x\in C$, $\eta>0$ and $s>0$, and let $\bar B_x$ be the closed ball of radius $\eta$ around $x$. We use the coupling of the proof of Theorem \ref{thm:diagram}(b) with $\rho_0=\delta_x$. In this coupling $R_0=x$ and $\E|X_{s/T}-R_s|^2\to0$ as $T\to0$, uniformly in $x\in C$ because $m_0=0$. Indeed, that proof bounds $\E|X_{s/T}-R_s|^2$ by the right side of (b). With $m_0=0$ and $S=s$ this reads, for $T\le1/e$ and $T\log(1/T)\le s$,
\[
\E|X_{s/T}-R_s|^2\le12\,d\,T\log\frac1T+22\Big(T\Big(k+\log\Big(1+\frac sT\Big)\Big)\Big)^{1/2}\Lambda_C(s),
\]
and the right side does not depend on $x$. By Lemma \ref{lem:penal}(ii) and the Chebyshev inequality,
\[
\P\big(|R_s-x|\ge\eta/2\big)\le\frac{\E|R_s-x|^2}{(\eta/2)^2}\le\frac{2ks}{\eta^2/4}=\frac{8ks}{\eta^2} .
\]
If $|R_s-x|<\eta/2$ and $|X_{s/T}-R_s|\le\eta/2$, then $X_{s/T}\in\bar B_x$. So, using the Chebyshev inequality again,
\begin{align*}
P_{s/T}(x,\bar B_x)&\ge1-\P\big(|R_s-x|\ge\eta/2\big)-\P\big(|X_{s/T}-R_s|>\eta/2\big)\\
&\ge1-\frac{8ks}{\eta^2}-\frac4{\eta^2}\,\E|X_{s/T}-R_s|^2 .
\end{align*}
Hence
\[
\liminf_{T\to0}\ \inf_{x\in C}P_{s/T}(x,\bar B_x)\ge1-\frac{8ks}{\eta^2} .
\]
On the other hand, if $|y-x|\le\eta$ then $|P_Cy-x|=|P_Cy-P_Cx|\le\eta$, because $P_C$ is nonexpansive. Thus $\bar B_x\subset P_C^{-1}(\bar B_x)$. The set $C\cap\bar B_x$ lies in a ball of radius $\eta$ of the affine space $L$ of dimension $k$, so $u_C(\bar B_x)\le\omega_k\eta^k/\cH^k(C)$. Hence, by \eqref{eq:zt1},
\[
\gamma_T^C(\bar B_x)\le(P_C)_\#\gamma_T^C(\bar B_x)\le u_C(\bar B_x)+\big(1-w_k(T)\big)\le\frac{\omega_k\eta^k}{\cH^k(C)}+\big(1-w_k(T)\big),
\]
and $1-w_k(T)\to0$ as $T\to0$, uniformly in $x$.
Choose $\eta$ with $\omega_k\eta^k/\cH^k(C)\le\frac14$ and then $s_0=\eta^2/(32k)$, so that $8ks_0/\eta^2=\frac14$. Then
\[
\liminf_{T\to0}\ \inf_{x\in C}\TV\big(P_{s_0/T}(x,\cdot),\gamma_T^C\big)\ge\liminf_{T\to0}\ \inf_{x\in C}\big(P_{s_0/T}(x,\bar B_x)-\gamma_T^C(\bar B_x)\big)\ge1-\frac14-\frac14=\frac12 .
\]

(iii) The lower bound follows from (ii), since the total variation distance to equilibrium is nonincreasing in time. Indeed, by (ii), for all small $T$ and every $x\in C$ we have $\TV(P_{s_0/T}(x,\cdot),\gamma_T^C)>\frac14$. By the monotonicity in time shown at the end of the proof of Theorem \ref{thm:tvmix}, the same holds for all $t\le s_0/T$. Hence $t_{\mathrm{mix}}(T)\ge s_0/T$. For the upper bound apply Theorem \ref{thm:tvmix} with $\ell_0=1$ and with $n$ the smallest integer such that $(1-\alpha_d)^n\le\frac18$. This $n$ depends only on $d$. Every $x\in C$ satisfies $d_C(x)=0\le\ell_0$. The last term in the theorem is $2\exp(-(1-\sqrt{dT})_+^2/2T)$, which is smaller than $\frac18$ for small $T$. So for $t=n\,r_T$ the distance is at most $\frac18+\frac18=\frac14$ for all $x\in C$. By the bound for $r_T$ in the proof of (i) with $\ell_0=1$, we get
\[
t_{\mathrm{mix}}(T)\le n\,r_T\le\frac{nA_1}{T}\,\log\frac1T .
\]
Thus $n\,r_T\le A\log(1/T)/T$ with $A=nA_1$.
\end{proof}

\section{The associated control problem}\label{sec:hjb}

\subsection{The control problem and the logarithmic transformation}
Let $C\subset\R^d$ be compact, let $T>0$ and let $g:\R^d\to\R$ be bounded and Lipschitz. We consider an agent who can add a drift $a_u$ to the state equation. The agent pays $\frac12|a_u|^2$ per unit time for this drift and pays $g$ at the final time $t$. The value function of this problem is
\begin{equation}\label{eq:value}
v_T(t,x)=\inf_a\ \E\Big[\frac12\int_0^t|a_u|^2\,du+g(X^a_t)\Big],
\end{equation}
where $X^a$ is the controlled state,
\[
dX^a_u=\big(-(X^a_u-P_CX^a_u)+a_u\big)\,du+\sqrt{2T}\,dW_u,\qquad X^a_0=x,
\]
The infimum is taken over the progressively measurable processes $a$ with $\E\int_0^t|a_u|^2du<\infty$. Here $t$ is the time which remains, so that $v_T$ solves an initial value problem.

We write the dynamic programming equation of this problem. For all $a,\xi\in\R^d$ we have
\[
\ip{a}{\xi}+\tfrac12|a|^2=\tfrac12|a+\xi|^2-\tfrac12|\xi|^2 ,
\]
and the right side is smallest for $a=-\xi$. Since $\inf_a\{\ip{a}{\xi}+\frac12|a|^2\}=-\frac12|\xi|^2$, the dynamic programming equation is
\begin{equation}\label{eq:HJB2}
\begin{aligned}
\partial_tv&=\inf_{a}\Big\{T\Delta v+\ip{-\nabla V_C+a}{\nabla v}+\tfrac12|a|^2\Big\}\\
&=T\Delta v-\ip{\nabla V_C}{\nabla v}-\tfrac12|\nabla v|^2,\qquad v(0,\cdot)=g .
\end{aligned}
\end{equation}
In the second line we used the infimum above with $\xi=\nabla v$.

Put $v=-2T\log\psi$ with $\psi>0$. This substitution is called the logarithmic transformation. It transforms \eqref{eq:HJB2} into the linear equation $\partial_t\psi=T\Delta\psi-\ip{\nabla V_C}{\nabla\psi}$, $\psi(0,\cdot)=e^{-g/2T}$. This is the backward equation of the state process without control. It is the adjoint of the forward Kolmogorov equation \eqref{eq:FP}. We give the computation. Differentiating $v=-2T\log\psi$ in $t$ and in $x$ we get
\[
\partial_tv=-\frac{2T}{\psi}\,\partial_t\psi,\qquad\nabla v=-\frac{2T}{\psi}\,\nabla\psi .
\]
Taking the divergence of the second formula we get
\[
\Delta v=-\frac{2T}{\psi}\,\Delta\psi+\frac{2T}{\psi^2}\,|\nabla\psi|^2 .
\]
Now $|\nabla v|^2=4T^2|\nabla\psi|^2/\psi^2$, so the last term is equal to $|\nabla v|^2/2T$. Thus we have $\nabla v=-2T\nabla\psi/\psi$ and $\Delta v=-2T\Delta\psi/\psi+|\nabla v|^2/2T$. It follows that
\[
T\Delta v-\tfrac12|\nabla v|^2=-\frac{2T}{\psi}\,T\Delta\psi,\qquad-\ip{\nabla V_C}{\nabla v}=\frac{2T}{\psi}\,\ip{\nabla V_C}{\nabla\psi} .
\]
We add these two identities and use that $\psi$ solves the linear equation. This gives
\[
T\Delta v-\ip{\nabla V_C}{\nabla v}-\tfrac12|\nabla v|^2=-\frac{2T}{\psi}\big(T\Delta\psi-\ip{\nabla V_C}{\nabla\psi}\big)=-\frac{2T}{\psi}\,\partial_t\psi=\partial_tv .
\]
This is \eqref{eq:HJB2}. The initial condition $v(0,\cdot)=g$ is the same as $\psi(0,\cdot)=e^{-g/2T}$. The solution of the backward equation with this initial value is $\psi(t,x)=\E\exp(-g(X^x_t)/2T)$. Hence
\begin{equation}\label{eq:log}
v_T(t,x)=-2T\log\E\exp\Big(-\frac{g(X^x_t)}{2T}\Big),
\end{equation}
where $X^x$ is the solution of \eqref{eq:SDEgen} started at $x$. When the coefficients are bounded and smooth, the identity between \eqref{eq:value} and \eqref{eq:log} is proved in \cite[Chapter VI]{FlemingSoner2006}. The same chapter shows that \eqref{eq:log} is then a classical solution of \eqref{eq:HJB2}. The method goes back to \cite{Fleming1978}. For general functionals of the Brownian path the identity between \eqref{eq:value} and \eqref{eq:log} is the variational representation of \cite{BoueDupuis1998}, applied to the functional $W\mapsto g(X^x_t)/2T$. Our drift is not bounded. It is also discontinuous when $C$ is not convex. We do not enter into the question in which sense \eqref{eq:log} solves \eqref{eq:HJB2} in this generality. In the sequel we take \eqref{eq:log} as the \emph{definition} of $v_T$. All the results below are statements about the right side of \eqref{eq:log}.

\subsection{The cost of reaching a point}
We first study the noiseless control problem for a convex set $C$. For an absolutely continuous curve $\phi:[0,t]\to\R^d$ with square integrable derivative we call
\[
a(u)=\phi'(u)+\phi(u)-P_C\phi(u),\qquad J_t(\phi)=J^C_t(\phi)=\frac12\int_0^t|a(u)|^2\,du
\]
the control and the energy of $\phi$. Thus $\phi'=-(\phi-P_C\phi)+a$. The control $a$ is square integrable on $[0,t]$, because $\phi'$ is square integrable and $\phi-P_C\phi$ is continuous.

\begin{proposition}\label{prop:reach}
Let $C\subset\R^d$ be a nonempty closed convex set.
\begin{enumerate}[label=\textup{(\roman*)}]
\item For every curve $\phi$ as above,
\begin{equation}\label{eq:energybound}
d_C(\phi(t))\le e^{-t}d_C(\phi(0))+\big((1-e^{-2t})\,J_t(\phi)\big)^{1/2}.
\end{equation}
\item For every $y\in\R^d$,
\[
\inf\big\{J_t(\phi):\ t>0,\ \phi(0)\in C,\ \phi(t)=y\big\}=d_C(y)^2 .
\]
\end{enumerate}
\end{proposition}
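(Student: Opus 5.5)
For (i) I will use Theorem \ref{thm:envelope} as a deterministic statement. Put $w(u)=\int_0^ua(r)\,dr$. This function is continuous and $w(0)=0$. The curve $\phi$ then solves \eqref{eq:state} for the constant set $C$ with initial point $\phi(0)$ and this $w$. By the uniqueness in Lemma \ref{lem:wellposed}, $\phi$ is the solution $x$ of \eqref{eq:state}, and \eqref{eq:env2} gives
\[
d_C(\phi(t))\le e^{-t}d_C(\phi(0))+|\zeta(t)|.
\]
Here $\zeta$ solves $\zeta'=-\zeta+a$ with $\zeta(0)=0$, since $w$ is absolutely continuous. Hence $\zeta(t)=\int_0^te^{-(t-u)}a(u)\,du$, and by the Schwarz inequality
\[
|\zeta(t)|^2\le\int_0^te^{-2(t-u)}du\int_0^t|a|^2\,du=\tfrac12(1-e^{-2t})\cdot2J_t(\phi).
\]
This is \eqref{eq:energybound}.

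For the lower bound in (ii), take $\phi(0)\in C$. Then (i) gives $d_C(y)^2\le(1-e^{-2t})J_t(\phi)\le J_t(\phi)$. So the infimum is at least $d_C(y)^2$.

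For the upper bound I would exhibit curves whose energy tends to $d_C(y)^2$ as $t\to\infty$. Let $p=P_Cy$, $n=y-p$ and $\delta=d_C(y)=|n|$; if $\delta=0$, take the constant curve $\phi\equiv y$, which has zero energy. Otherwise take the time-reversed relaxation $\phi(u)=p+\frac{\sinh u}{\sinh t}\,n$ for $u\in[0,t]$. By \ref{P5}, the points $p+\lambda n$ with $\lambda\in[0,1]$ all project to $p$. Hence $\phi-P_C\phi=\frac{\sinh u}{\sinh t}\,n$, and the control is
\[
a(u)=\frac{\cosh u+\sinh u}{\sinh t}\,n=\frac{e^u}{\sinh t}\,n .
\]
Its energy is $J_t=\frac{\delta^2}{2\sinh^2t}\cdot\frac{e^{2t}-1}{2}=\delta^2\,\frac{e^{2t}-1}{4\sinh^2t}=\frac{\delta^2}{1-e^{-2t}}$, which tends to $\delta^2$ as $t\to\infty$. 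This matches the bound from (i) exactly, with $\zeta$ optimal in the Schwarz step since $a\propto e^{u}$. So the infimum is at most, and hence equal to, $d_C(y)^2$; it is attained only in the limit $t\to\infty$.

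The only delicate point is the identification of $\phi$ with the solution of \eqref{eq:state}, together with the formula for $\zeta$ when $w$ is absolutely continuous. Both are routine: integrate the equation for $\phi'$, then differentiate \eqref{eq:ouformula} or solve the linear ODE $\zeta'=-\zeta+a$ directly.
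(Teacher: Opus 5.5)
Your proof is correct. Part (i) and the lower bound in (ii) follow the paper's argument: identify $\phi$ with the solution of \eqref{eq:state} for $w=\int_0^\cdot a$, compute $\zeta(t)=\int_0^te^{-(t-u)}a(u)\,du$, then apply \eqref{eq:env2} and the Schwarz inequality.

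The upper bound in (ii) uses a different curve.

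\emph{The paper's curve.} It has two pieces: a linear ramp on $[0,1]$ from $p$ to $p+e^{-(t-1)}(y-p)$, then on $[1,t]$ the time-reversed relaxation $p+e^{-(t-u)}(y-p)$, whose control is $2(\phi-p)$. The ramp is needed because the pure exponential piece never starts at $p$. This construction gives $J_t(\phi)\le d_C(y)^2\big(1+e^{-2(t-1)}\big)$.

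\emph{Your curve.} The curve $p+\frac{\sinh u}{\sinh t}(y-p)$ solves the Euler--Lagrange equation $\lambda''=\lambda$, $\lambda(0)=0$, $\lambda(t)=1$ for the energy along the ray from $p$ to $y$. I checked the computation: $4\sinh^2t=e^{-2t}(e^{2t}-1)^2$, so $J_t=d_C(y)^2/(1-e^{-2t})$. This meets the lower bound from (i) with equality, and indeed $\zeta(t)=y-p$ exactly.

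\emph{What each route buys.} Your route gives more. For each fixed $t$, the infimum of $J_t$ over curves from $C$ to $y$ is exactly $d_C(y)^2/(1-e^{-2t})$, so \eqref{eq:energybound} is sharp. The paper's construction is cruder, but it uses only the elementary exponential piece. That piece is reused as the last segment of the three-piece curve in the upper bound of Theorem \ref{thm:hjb}(b).
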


\begin{proof}
(i) Integrating the identity $\phi'=-(\phi-P_C\phi)+a$ from $0$ to $t$ we get
\[
\phi(t)=\phi(0)-\int_0^t\big(\phi(u)-P_C\phi(u)\big)\,du+\int_0^ta(u)\,du .
\]
So the curve $\phi$ solves \eqref{eq:state} with the constant family $C(t)=C$ and $w(t)=\int_0^ta(u)du$. This $w$ is continuous and $w(0)=0$. For this $w$ the solution of \eqref{eq:ou} is $\zeta(t)=\int_0^te^{-(t-u)}a(u)du$. We check this by differentiation. Write $\zeta(t)=e^{-t}\int_0^te^{u}a(u)du$. Then $\zeta(0)=0$, the function $\zeta$ is absolutely continuous and for almost every $t$
\[
\zeta'(t)=-e^{-t}\int_0^te^{u}a(u)\,du+e^{-t}e^{t}a(t)=-\zeta(t)+a(t) .
\]
Integrating from $0$ to $t$ we get
\[
\zeta(t)=-\int_0^t\zeta(u)\,du+\int_0^ta(u)\,du=-\int_0^t\zeta(u)\,du+w(t) ,
\]
which is \eqref{eq:ou}. By Lemma \ref{lem:wellposed} this equation has only one continuous solution. Now we apply \eqref{eq:env2} with the speed $v=0$ and $x_0=\phi(0)$. It gives $d_C(\phi(t))\le e^{-t}d_C(\phi(0))+|\zeta(t)|$. We have $|\zeta(t)|\le\int_0^te^{-(t-u)}|a(u)|du$. We estimate this integral by the Schwarz inequality, applied to the functions $u\mapsto e^{-(t-u)}$ and $u\mapsto|a(u)|$. Thus
\begin{align*}
d_C(\phi(t))&\le e^{-t}d_C(\phi(0))+\int_0^te^{-(t-u)}|a(u)|\,du\\
&\le e^{-t}d_C(\phi(0))+\Big(\int_0^te^{-2(t-u)}du\Big)^{1/2}\Big(\int_0^t|a(u)|^2du\Big)^{1/2},
\end{align*}
and $\int_0^te^{-2(t-u)}du=\frac12(1-e^{-2t})$. Since $\int_0^t|a(u)|^2du=2J_t(\phi)$, the product of the two factors is
\[
\Big(\frac12\,(1-e^{-2t})\Big)^{1/2}\big(2J_t(\phi)\big)^{1/2}=\big((1-e^{-2t})\,J_t(\phi)\big)^{1/2}.
\]
This proves \eqref{eq:energybound}.

(ii) If $\phi(0)\in C$ and $\phi(t)=y$, then $d_C(\phi(0))=0$. Squaring \eqref{eq:energybound} we get $d_C(y)^2\le(1-e^{-2t})J_t(\phi)\le J_t(\phi)$. Hence the infimum is at least $d_C(y)^2$.

We now prove the opposite inequality. If $y\in C$, the constant curve $\phi(u)=y$ has $\phi'=0$ and $\phi-P_C\phi=0$, so its energy is $0=d_C(y)^2$. So we may assume $y\notin C$. Put $p=P_Cy$ and $r=d_C(y)>0$. For $t>1$ we define
\[
\phi(u)=\begin{cases}p+u\,e^{-(t-1)}(y-p),&0\le u\le1,\\[2pt] p+e^{-(t-u)}(y-p),&1\le u\le t.\end{cases}
\]
The two formulas agree at $u=1$, so $\phi$ is continuous. We have $\phi(0)=p\in C$ and $\phi(t)=y$. Every point $\phi(u)$ is of the form $p+\lambda(y-p)$ with $0\le\lambda\le1$. By \ref{P5}, $P_C\phi(u)=p$ for all $u$. Hence $a(u)=\phi'(u)+\phi(u)-p$. We now compute the energy of the two pieces of the path.

On $[0,1]$ we have $\phi'(u)=e^{-(t-1)}(y-p)$ and $\phi(u)-p=u\,e^{-(t-1)}(y-p)$. Since $|y-p|=r$ and $u\le1$, we have $|\phi'(u)|=re^{-(t-1)}$ and $|\phi(u)-p|\le re^{-(t-1)}$, so $|a(u)|\le2re^{-(t-1)}$. Hence the energy on $[0,1]$ satisfies
\[
\frac12\int_0^1|a(u)|^2\,du\le\frac12\,\big(2re^{-(t-1)}\big)^2=2r^2e^{-2(t-1)} .
\]
On $[1,t]$ we have $\phi'(u)=e^{-(t-u)}(y-p)$, which is the same as $\phi'(u)=\phi(u)-p$, so $a(u)=2(\phi(u)-p)$. Thus $|a(u)|^2=4r^2e^{-2(t-u)}$ and
\[
\frac12\int_1^t|a(u)|^2\,du=2r^2\int_1^te^{-2(t-u)}\,du=r^2\big(1-e^{-2(t-1)}\big).
\]
Here we used $\int_1^te^{-2(t-u)}du=\frac12(1-e^{-2(t-1)})$. Adding the energies of the two pieces we get
\[
J_t(\phi)\le2r^2e^{-2(t-1)}+r^2\big(1-e^{-2(t-1)}\big)=r^2+r^2e^{-2(t-1)} .
\]
The right side tends to $r^2$ as $t\to\infty$. So the infimum is at most $r^2=d_C(y)^2$. This completes the proof.
\end{proof}

\begin{remark}
The function $\Theta=d_C^2=2V_C$ of part (ii) is the quasipotential of the set $C$ in the sense of \cite{FreidlinWentzell2012}, that is, the least energy with which a point can be reached from $C$. It solves the stationary equation $\frac12|\nabla\Theta|^2-\ip{\nabla V_C}{\nabla\Theta}=0$, since $\nabla\Theta=2\nabla V_C$. The Gibbs state is then $\gamma_T^C=\cZ_T(C)^{-1}e^{-\Theta/2T}dx$. Part (i) is the deterministic form of the envelope. It says that a control of energy $J$ cannot move the state farther than $\sqrt J$ from a convex feasible set.
\end{remark}

\subsection{The two limits for the value function}
For a bounded function $g$ we write $\osc g=\sup g-\inf g$. For a compact set $C$ we put
\[
g_C=\min_{y\in\R^d}\big\{g(y)+d_C(y)^2\big\}.
\]
The minimum exists because $g$ is bounded and continuous and $d_C(y)\to\infty$ as $|y|\to\infty$. Indeed, fix $c\in C$ and consider the set of all $y$ with $g(y)+d_C(y)^2\le g(c)$. This set contains $c$. It is closed because $g+d_C^2$ is continuous. It is bounded because $d_C(y)^2\le g(c)-g(y)\le g(c)-\inf g$ on it and $C$ is bounded. So this set is compact and the continuous function $g+d_C^2$ attains its minimum on it. This minimum is the minimum over $\R^d$.

\begin{theorem}\label{thm:hjb}
Let $g:\R^d\to\R$ be bounded and Lipschitz and let $v_T$ be defined by \eqref{eq:log}.
\begin{enumerate}[label=\textup{(\alph*)}]
\item Let $C\subset\R^d$ be compact. Then for every $x\in\R^d$
\[
\lim_{T\to0}\ \lim_{t\to\infty}v_T(t,x)=g_C .
\]
\item Let $C\subset\R^d$ be compact and convex. Then for all $t>0$ and $x\in\R^d$ the limit
\begin{equation}\label{eq:v0}
v_0(t,x):=\lim_{T\to0}v_T(t,x)=\inf\big\{g(\phi(t))+J_t(\phi):\ \phi(0)=x\big\}
\end{equation}
exists, and
\begin{equation}\label{eq:v0bounds}
\begin{aligned}
v_0(t,x)&\ge g_C-2e^{-t}d_C(x)\big(e^{-t}d_C(x)+\sqrt{\osc g}\big)&&(t>0),\\
v_0(t,x)&\le g_C+\frac{3\,(D+e^{-t/3}\varrho)^2}{t}+\frac t3\,e^{-2t/3}\varrho^2&&(t\ge3),
\end{aligned}
\end{equation}
where $D=\diam C$ and $\varrho=d_C(x)+\sqrt{\osc g}$. In particular
\[
\lim_{t\to\infty}\ \lim_{T\to0}v_T(t,x)=g_C=\lim_{T\to0}\ \lim_{t\to\infty}v_T(t,x).
\]
\end{enumerate}
\end{theorem}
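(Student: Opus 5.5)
For (a) I will first take $t\to\infty$ at fixed $T$. The function $e^{-g/2T}$ is bounded and Borel, and Lemma \ref{lem:invariant} gives $\rho_{T,t}(A)\to\gamma_T^C(A)$ for every Borel set $A$. Approximating by simple functions, $\E\exp(-g(X^x_t)/2T)\to\int e^{-g/2T}\,d\gamma_T^C$. So
\[
\lim_{t\to\infty}v_T(t,x)=-2T\log\int e^{-(g+d_C^2)/2T}\,dx+2T\log\cZ_T(C).
\]
Next I show $2T\log\cZ_T(C)\to0$. For the lower bound, pick $c\in C$; then $d_C\le|\cdot-c|$ gives $\cZ_T(C)\ge(2\pi T)^{d/2}$. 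For the upper bound, \eqref{eq:xs1a}-type estimates give $\cZ_T(C)\le K(1+T^{d/2})$. The first term is then a Laplace asymptotic for $h=g+d_C^2$, which is continuous, locally Lipschitz and grows quadratically at infinity.
\begin{itemize}
\item Upper bound on $-2T\log$: integrate only over the ball of radius $T$ around a minimiser $y^*$, where $h\le g_C+O(T)$. This gives $-2T\log\int\le g_C+O(T)+O(T\log(1/T))$.
\item Lower bound: write $e^{-h/2T}=e^{-g_C/2T}e^{-(h-g_C)/2T}$. The second factor is dominated by $e^{-(h-g_C)/2}$ for $T\le1$, which is integrable. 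Hence $-2T\log\int\ge g_C-2T\log K'$.
\end{itemize}
This proves (a) for every compact $C$.

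For the existence of $v_0$ in (b), I use that for convex $C$ the drift $-(x-P_Cx)$ is globally Lipschitz by \ref{P2}. By Lemma \ref{lem:wellposed} and the Gronwall lemma, the solution map $w\mapsto x$ of \eqref{eq:state} is continuous (indeed Lipschitz) from $C([0,t])$ with the uniform norm into itself. Apply the Schilder theorem to $\sqrt{2T}\,W$ (rate $\frac14\int|w'|^2$ at speed $1/T$), then the contraction principle, then the Varadhan lemma for the bounded continuous functional $\phi\mapsto g(\phi(t))/2$. This yields
\[
-2T\log\E e^{-g(X^x_t)/2T}\to\inf_\phi\Big\{g(\phi(t))+\tfrac12\int_0^t|\phi'+\phi-P_C\phi|^2\,du\Big\},
\]
which is \eqref{eq:v0}. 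This large deviation step with unbounded drift is where I expect the main technical care. Continuity of the solution map on the whole path space is the key point; it avoids any boundedness assumption on the coefficients.

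For the bounds \eqref{eq:v0bounds}, start with the lower one. The uncontrolled path $\varphi_u(x)$ costs at most $\sup g$, so only curves with $J_t(\phi)\le\osc g$ matter. For such $\phi$, Proposition \ref{prop:reach}(i) gives $\sqrt J\ge(d_C(y)-e^{-t}d_C(x))_+$ with $y=\phi(t)$. Hence $J\ge d_C(y)^2-2e^{-t}d_C(x)\,d_C(y)$. Since $d_C(y)\le e^{-t}d_C(x)+\sqrt{\osc g}$, we get $g(y)+J\ge g_C-2e^{-t}d_C(x)(e^{-t}d_C(x)+\sqrt{\osc g})$.

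For the upper bound, take a minimiser $y^*$ of $g+d_C^2$. From $d_C(y^*)^2\le\osc g$ we get $d_C(y^*)\le\sqrt{\osc g}\le\varrho$. I build a three-piece curve on $[0,t/3]$, $[t/3,2t/3]$, $[2t/3,t]$.
\begin{itemize}
\item First piece: the free flow $\varphi$, of zero energy, which ends within $e^{-t/3}d_C(x)$ of $C$.
\item Second piece: a straight segment to the point $p$ constructed in the proof of Proposition \ref{prop:reach}(ii), at the start of its path to $y^*$. Its length is at most $D+e^{-t/3}\varrho$, and its energy is bounded by the Schwarz inequality using $|a|\le|\phi'|+d_C(\phi)$. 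This produces the term $3(D+e^{-t/3}\varrho)^2/t$.
\item Third piece: the explicit curve of Proposition \ref{prop:reach}(ii) over time $t/3$. Its energy is at most $d_C(y^*)^2(1+e^{-2(t/3-1)})$, which gives the last term. I would adjust the matching constants there.
\end{itemize}
Letting $t\to\infty$ in both bounds gives $v_0(t,x)\to g_C$. Combined with (a), this proves the commutation of the two limits.
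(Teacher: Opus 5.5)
Your proofs of (a), of the existence of $v_0$, and of the lower bound in \eqref{eq:v0bounds} follow the paper. In (a) you bound the Laplace integral from below by dominating $e^{-(h-g_C)/2T}$ with $e^{-(h-g_C)/2}$ for $T\le1$. This is a slightly shorter substitute for the paper's splitting $g+d_C^2\ge(1-\lambda)g_C+\lambda d_C^2$, and both work. Your Schilder--contraction--Varadhan chain is exactly what the Freidlin--Wentzell result cited from \cite{DemboZeitouni1998} packages. The weak point is the upper bound in \eqref{eq:v0bounds}: your accounting does not deliver the stated constants.

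Write $t'=t/3$, $r=d_C(y^*)$ and $p=P_Cy^*$. There are two problems.

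First, the segment from $x_1$ to $p$ costs up to $|p-x_1|^2/t'+t'e^{-2t'}d_C(x)^2$. The second term comes from $d_C(\phi)\le e^{-t'}d_C(x)$ along the segment, and it is missing from your count.

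Second, the bound $r^2(1+e^{-2(t'-1)})$ for the third piece leaves an excess $e^2r^2e^{-2t'}$ over $r^2$. The term $\frac t3e^{-2t/3}\varrho^2$ cannot absorb this for small $t$. Take $C$ a point, $x\in C$ and $t=3$: your bound reads $g_C+r^2$, the claim is $g_C+2e^{-2}\osc g$, and $r^2$ can be close to $\osc g$. So ``which gives the last term'' is false as written, although the limit $v_0(t,x)\to g_C$ is unaffected.

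The paper avoids the linear ramp altogether.
\begin{itemize}
\item The segment ends at $y_1=p+e^{-t'}(y^*-p)$ instead of at $p$.
\item The last piece is only the arc $p+e^{-(t-u)}(y^*-p)$ on $[2t',t]$, of energy $r^2(1-e^{-2t'})\le r^2$.
\item By convexity of $d_C$, one has $d_C\le e^{-t'}\max\{d_C(x),r\}\le e^{-t'}\varrho$ along the segment, and also $|y_1-x_1|\le D+e^{-t'}\varrho$.
\end{itemize}
The middle piece alone then produces both error terms. This is also why $\varrho$, rather than $d_C(x)$, appears in the last term.

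Alternatively, you can keep your curve and compute the ramp's energy exactly. It equals $\tfrac76 r^2e^{-2(t'-1)}$, so the excess over $r^2$ is only $\tfrac{e^2}{6}r^2e^{-2t'}$. Since $\varrho^2\ge d_C(x)^2+r^2$, the claimed bound exceeds $g_C$ plus your (corrected) middle-piece cost by at least $(t'+1/t')e^{-2t'}r^2\ge2e^{-2t'}r^2$. This absorbs the excess.
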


\begin{proof}
Adding a constant to $g$ adds the same constant to $v_T$ and to $g_C$ without changing $\osc g$. So we may assume $\inf g=0$. Then $0\le g\le\osc g$. We see the statement about $v_T$ from \eqref{eq:log}, because for a constant $\kappa$
\[
-2T\log\E\exp\Big(-\frac{g(X^x_t)+\kappa}{2T}\Big)=\kappa-2T\log\E\exp\Big(-\frac{g(X^x_t)}{2T}\Big).
\]

(a) By Lemma \ref{lem:invariant}, $P_t(x,A)\to\gamma_T^C(A)$ for every Borel set $A$ as $t\to\infty$. A bounded Borel function is a uniform limit of finite linear combinations of indicator functions of Borel sets. Therefore the convergence for every Borel set implies the convergence of the integrals of every bounded Borel function. We apply this to the function $e^{-g/2T}$. So $\E\exp(-g(X^x_t)/2T)\to\int e^{-g/2T}d\gamma_T^C$ as $t\to\infty$. By the definition of the Gibbs state,
\[
\int e^{-g/2T}\,d\gamma_T^C=\cZ_T(C)^{-1}\int_{\R^d}e^{-(g+d_C^2)/2T}\,dx .
\]
This number is positive. Taking $-2T\log$ we get
\[
\lim_{t\to\infty}v_T(t,x)=-2T\log\int_{\R^d}e^{-(g+d_C^2)/2T}\,dx+2T\log\cZ_T(C).
\]
We show in three steps that the right side tends to $g_C$ as $T\to0$.

First, $\cZ_T(C)\le\cZ_1(C)$ for $T\le1$, because $e^{-d_C^2/2T}\le e^{-d_C^2/2}$ for such $T$. If $c\in C$ then $d_C(x)\le|x-c|$ gives $\cZ_T(C)\ge\int e^{-|x-c|^2/2T}dx=(2\pi T)^{d/2}$. So for $0<T\le1$
\[
d\,T\log(2\pi T)\le2T\log\cZ_T(C)\le2T\log\cZ_1(C) .
\]
Both sides tend to zero as $T\to0$. Hence $2T\log\cZ_T(C)\to0$.

Next let $\lambda\in(0,1)$. Since $g+d_C^2\ge g_C\ge0$ and $g+d_C^2\ge d_C^2$, we have
\[
g+d_C^2=(1-\lambda)\big(g+d_C^2\big)+\lambda\big(g+d_C^2\big)\ge(1-\lambda)\,g_C+\lambda\,d_C^2 .
\]
Therefore
\[
\int e^{-(g+d_C^2)/2T}\,dx\le e^{-(1-\lambda)g_C/2T}\int e^{-\lambda d_C^2/2T}\,dx=e^{-(1-\lambda)g_C/2T}\,\cZ_{T/\lambda}(C).
\]
Taking $-2T\log$ of both sides we get
\[
-2T\log\int e^{-(g+d_C^2)/2T}\,dx\ge(1-\lambda)\,g_C-\lambda\cdot\frac{2T}{\lambda}\log\cZ_{T/\lambda}(C) .
\]
By the first step, used with $T/\lambda$ in place of $T$, the last term tends to zero as $T\to0$. Therefore $\liminf_{T\to0}(-2T\log\int e^{-(g+d_C^2)/2T}dx)\ge(1-\lambda)g_C$.

Finally let $y_*$ be a point where the minimum $g_C$ is attained and let $\eta>0$. The function $g+d_C^2$ is continuous and it is equal to $g_C$ at $y_*$. By continuity there is a ball $B$ around $y_*$ on which $g+d_C^2\le g_C+\eta$. Integrating only over this ball we get
\[
\int e^{-(g+d_C^2)/2T}\,dx\ge\Leb(B)\,e^{-(g_C+\eta)/2T},
\]
where $\Leb(B)$ is the volume of $B$. Taking $-2T\log$ we get
\[
-2T\log\int e^{-(g+d_C^2)/2T}\,dx\le g_C+\eta-2T\log\Leb(B) ,
\]
so that $\limsup_{T\to0}(-2T\log\int e^{-(g+d_C^2)/2T}dx)\le g_C+\eta$.
Letting $\lambda\to0$ and $\eta\to0$ proves (a).

(b) \emph{Existence of the limit.} The drift $b(x)=-(x-P_Cx)$ is Lipschitz by \ref{P2}. Hence the large deviation principle for small additive noise applies \cite[Section 5.6]{DemboZeitouni1998}. We use it with the noise parameter $2T$, after a translation of the starting point. As $T\to0$, the laws of $(X^x_u)_{u\le t}$ on the space of continuous paths from $[0,t]$ to $\R^d$ satisfy the large deviation principle with speed $1/2T$ and with a good rate function. The rate function is $\frac12\int_0^t|\phi'(u)-b(\phi(u))|^2du$ when $\phi$ is absolutely continuous with $\phi(0)=x$ and square integrable derivative. It is $+\infty$ for all other paths. Since $\phi'-b(\phi)=\phi'+\phi-P_C\phi$, the rate function of such a path is $J_t(\phi)$. The functional $\phi\mapsto g(\phi(t))$ is bounded and continuous. So the Laplace principle for large deviations \cite[Section 4.3]{DemboZeitouni1998} gives \eqref{eq:v0}. In detail, this principle gives
\[
\lim_{T\to0}2T\log\E\exp\Big(-\frac{g(X^x_t)}{2T}\Big)=\sup\big\{-g(\phi(t))-J_t(\phi):\ \phi(0)=x\big\},
\]
and changing the sign we get \eqref{eq:v0}.

\emph{Lower bound.} Take a curve $\phi$ with $\phi(0)=x$ and write $y=\phi(t)$, $r=d_C(y)$ and $d_t=e^{-t}d_C(x)$. By \eqref{eq:energybound}, $J_t(\phi)\ge(r-d_t)_+^2$. Indeed, \eqref{eq:energybound} gives
\[
r\le d_t+\big((1-e^{-2t})\,J_t(\phi)\big)^{1/2}\le d_t+J_t(\phi)^{1/2},
\]
so $J_t(\phi)^{1/2}\ge r-d_t$. Of course also $J_t(\phi)^{1/2}\ge0$. It is enough to consider curves with $g(y)+J_t(\phi)\le\osc g$, because the curve $u\mapsto\varphi_u(x)$ has zero energy by \eqref{eq:flowode} and gives the value $g(\varphi_t(x))\le\osc g$. For such curves $J_t(\phi)\le\osc g$, since $g\ge0$. So $(r-d_t)_+^2\le\osc g$, which gives $r\le d_t+\sqrt{\osc g}$. Now
\[
r^2-(r-d_t)_+^2\le2r d_t\le2d_t\big(d_t+\sqrt{\osc g}\big).
\]
For the first inequality there are two cases. If $r\ge d_t$ the left side is
\[
r^2-(r-d_t)^2=2r d_t-d_t^2\le2r d_t ,
\]
and if $r<d_t$ then $(r-d_t)_+=0$, so that the left side is $r^2\le r d_t\le2r d_t$. The second inequality holds because $r\le d_t+\sqrt{\osc g}$. Therefore
\[
g(y)+J_t(\phi)\ge g(y)+r^2-2d_t\big(d_t+\sqrt{\osc g}\big)\ge g_C-2d_t\big(d_t+\sqrt{\osc g}\big).
\]
The first inequality uses $J_t(\phi)\ge(r-d_t)_+^2$. The second uses $g(y)+r^2=g(y)+d_C(y)^2\ge g_C$. Taking the infimum over the curves $\phi$ and recalling $d_t=e^{-t}d_C(x)$, we get from \eqref{eq:v0} the first inequality in \eqref{eq:v0bounds}.

\emph{Upper bound.} Let $y_*$ be a minimiser in the definition of $g_C$. Put $p=P_Cy_*$, $r=d_C(y_*)$. Since $d_C(p)=0$ we have $g(y_*)+r^2=g_C\le g(p)\le\osc g$. As $g(y_*)\ge0$, it follows that $r\le\sqrt{\osc g}$. Let $t\ge3$ and $t'=t/3$. We define a curve $\phi$ from $x$ to $y_*$ in three pieces.

On $[0,t']$ the first piece is $\phi(u)=\varphi_u(x)$. By \eqref{eq:flowode} the control $a$ vanishes on this interval. This piece has zero energy and ends at $x_1=\varphi_{t'}(x)$, with $d_C(x_1)=e^{-t'}d_C(x)$. The last equality holds because $x_1=P_Cx+e^{-t'}(x-P_Cx)$ by \eqref{eq:flow} and $P_Cx_1=P_Cx$ by \ref{P5}.

The last piece, on $[2t',3t']$, is $\phi(u)=p+e^{-(t-u)}(y_*-p)$. As in the proof of Proposition \ref{prop:reach}(ii), $a(u)=2(\phi(u)-p)$ and the energy of this piece is $r^2(1-e^{-2t'})\le r^2$. Indeed, $|a(u)|^2=4r^2e^{-2(t-u)}$ and $t-2t'=t'$, so
\[
\frac12\int_{2t'}^{t}|a(u)|^2\,du=2r^2\int_{2t'}^{t}e^{-2(t-u)}\,du=r^2\big(1-e^{-2t'}\big) .
\]
It starts at $y_1=p+e^{-t'}(y_*-p)$ and ends at $y_*$. Here $d_C(y_1)=e^{-t'}r$.

It remains to define the middle piece, which joins $x_1$ to $y_1$. On $[t',2t']$ let $\phi$ be the segment from $x_1$ to $y_1$ run at constant speed, that is,
\[
\phi(u)=x_1+\frac{u-t'}{t'}\,(y_1-x_1)\qquad(t'\le u\le2t').
\]
Then $|\phi'|=|y_1-x_1|/t'$. Since $d_C$ is convex, we also have
\[
|\phi(u)-P_C\phi(u)|=d_C(\phi(u))\le\max\{d_C(x_1),d_C(y_1)\}\le e^{-t'}\varrho .
\]
In the last step we used $d_C(x_1)=e^{-t'}d_C(x)$, $d_C(y_1)=e^{-t'}r$ and $\max\{d_C(x),r\}\le d_C(x)+\sqrt{\osc g}=\varrho$. Also
\[
|y_1-x_1|\le d_C(x_1)+|P_Cx_1-P_Cy_1|+d_C(y_1)\le D+e^{-t'}(d_C(x)+r)\le D+e^{-t'}\varrho ,
\]
because the two projections are points of $C$ and $D$ is the diameter of $C$. So on this interval
\[
|a(u)|\le|\phi'(u)|+|\phi(u)-P_C\phi(u)|\le\frac{|y_1-x_1|}{t'}+e^{-t'}\varrho .
\]
The interval has length $t'$. By the inequality $(\alpha+\beta)^2\le2\alpha^2+2\beta^2$, the energy of this piece is at most
\[
\frac{t'}2\Big(\frac{|y_1-x_1|}{t'}+e^{-t'}\varrho\Big)^2\le\frac{|y_1-x_1|^2}{t'}+t'\,e^{-2t'}\varrho^2\le\frac{(D+e^{-t'}\varrho)^2}{t'}+t'\,e^{-2t'}\varrho^2 .
\]
The three pieces fit together at the points $x_1$ and $y_1$, so $\phi$ is an absolutely continuous curve with square integrable derivative, $\phi(0)=x$ and $\phi(t)=y_*$. Its energy is the sum of the energies of the three pieces. The first piece has energy zero and the last piece has energy at most $r^2$. We estimated the energy of the middle piece in the last display. We also have $v_0(t,x)\le g(y_*)+J_t(\phi)$ by \eqref{eq:v0}. So
\[
v_0(t,x)\le g(y_*)+r^2+\frac{(D+e^{-t'}\varrho)^2}{t'}+t'\,e^{-2t'}\varrho^2 .
\]
Since $g(y_*)+r^2=g_C$, $1/t'=3/t$, $e^{-t'}=e^{-t/3}$ and $t' e^{-2t'}=\frac t3e^{-2t/3}$, this is the second inequality in \eqref{eq:v0bounds}.

Both bounds in \eqref{eq:v0bounds} tend to $g_C$ as $t\to\infty$, since $e^{-t}\to0$, $1/t\to0$ and $te^{-2t/3}\to0$. Hence $v_0(t,x)\to g_C$ as $t\to\infty$. Together with (a) this gives the last statement of (b).
\end{proof}

\begin{remark}
We compare Corollary \ref{cor:commute} and Theorem \ref{thm:hjb}(b). For the law of the state the two limits do not commute even for a convex set, unless the initial law is special. For the value function they commute for every compact convex set and every bounded Lipschitz terminal cost. The proof of Theorem \ref{thm:hjb}(b) shows the reason. Inside a convex set the drift vanishes. Over a long horizon a control can cross the set with an energy which tends to zero, so in the limit the value does not depend on the initial state. Without control the state needs a time of order $1/T$ to cross the set. Now let $C$ be a finite union of pairwise disjoint compact convex sets $C_i$ and let $\delta_i$ be the distance from $C_i$ to the nearest other component. Then, for a suitable terminal cost and a starting point in $C_i$, the two limits of the value function differ by $\delta_i^2/4$ \cite[Theorem~4.1]{DarapaneniMeta}.
\end{remark}

\section{Open questions}\label{sec:open}

We end with some open questions.

\begin{enumerate}[label=\arabic*.,leftmargin=2em]
\item \emph{The optimal rate and the logarithm.} Is the rate $T^{1/4}$ in Theorem \ref{thm:diagram}(b) optimal for general convex sets (Remark \ref{rem:rate})? Is the factor $\log(1/T)$ in Corollary \ref{cor:mixtime} necessary? Is there a cutoff, that is, does the total variation distance to equilibrium fall from nearly one to nearly zero in a window of times which is short compared with the mixing time?
\item \emph{Slowly moving sets.} Theorem \ref{thm:envelope} treats a moving set, but Sections \ref{sec:diagram} and \ref{sec:hjb} treat only a fixed set. Suppose that the set moves with a speed $v$. If $v$ is small compared with $T$, one expects the law of the state to follow the uniform law on $C(t)$. If $v$ is large compared with $T$, the expected limit is the equation without noise, which is a regularisation of the sweeping process \cite{Moreau1977}. When $v$ is of the order of $T$, the limit should be described by reflected Brownian motion in a moving domain.
\item \emph{Infinite dimension.} Theorem \ref{thm:envelope} and Corollary \ref{cor:viability} hold in Hilbert space. The Gibbs state does not make sense there without a reference Gaussian measure. The Wills functional has a counterpart for Gaussian processes, that is, for subsets of Hilbert space \cite{Vitale1996}. Is there a form of Theorem \ref{thm:gibbs} in this setting?
\item \emph{Other norms and other sets.} Suppose that the distance to $C$ is measured in a norm which is not Euclidean. Then a point may have more than one nearest point in a convex set. The potential is then no longer of class $C^{1,1}$. We do not know which geometric property replaces convexity in Theorem \ref{thm:converse}. In the Euclidean case the local Steiner formula holds for small radii for sets of positive reach \cite{Federer1959}. This suggests a version of Theorem \ref{thm:gibbs} for such sets and a version of \cite[Theorem~3.6]{DarapaneniMeta} for unions of them.
\end{enumerate}

\end{document}